\newcommand\SmallMatrix[1]{{\text{\scriptsize$
\arraycolsep=0.3\arraycolsep\ensuremath{\begin{pmatrix}#1\end{pmatrix}}$}}}
\def\O{\mathcal O}
\def\DG{{\sc dg}}
\def\dg{\mathrm{dg}}
\def\CE{{\sc ce}}
\def\M{\mathcal{M}}
\def\Ext{\mathrm{Ext}}
\def\F{\mathcal{F}}
\def\Y{\mathcal{Y}}
\def\I{\mathcal{I}}
\def\Q{\mathrm{Q}}
\def\Mod{\mathrm{Mod}}
\def\Inj{\mathrm{Inj}}
\def\codim#1{\text{-}\mathrm{codim}(#1)}
\def\dim#1{\text{-}\mathrm{gl.dim}(#1)}
\def\Q{\mathbf Q}
\def\S{\mathbf S}
\def\y{\mathbf{y}}
\def\two{\mathbf{2}}
\def\Inj{\mathrm{Inj}}
\def\D{\mathcal D}
\def\I{\mathcal I}
\def\E{\mathcal E}
\def\T{\mathcal T}
\def\C{\mathcal C}
\def\hom{\mathrm{Hom}}
\def\Z{\mathbb Z}
\def\A{\mathcal A}
\def\Ker{\mathrm{Ker}}
\def\N{\mathbb N}
\def\id{\mathrm{id}}
\def\Ch{\mathrm{Ch}}
\def\Tot{\mathrm{Tot}}
\def\Ob{\mathrm{Ob}}
\def\Add{\mathrm{Add}}
\def\Ab{\mathrm{Ab}}
\def\G{\mathcal{G}}
\def\W{\mathcal W}
\def\op{{\mathrm{op}}}
\def\S{\mathbf S}
\def\coker{\mathrm{Coker}}
\def\X{\mathcal X}
\def\cone{\mathrm{cone}}
\def\Prod{\mathrm{Prod}}
\def\Im{\mathrm{Im}}
\def\Qcoh{\mathrm{Qcoh}}
\def\gdim{\mathrm{gl.dim}}
\def\pdim{\mathrm{p.dim}}
\numberwithin{equation}{section}
\newtheorem{thm}{Theorem}[section]
\newtheorem{cor}[thm]{Corollary}
\newtheorem{prop}[thm]{Proposition}
\newtheorem{lem}[thm]{Lemma}
\newtheorem{lemdef}[thm]{Lemma-Definition}
\newtheorem{quest}[thm]{Question}
\theoremstyle{plain}
\newtheorem{defn}[thm]{Definition}
\theoremstyle{plain}
\newtheorem{rmk}[thm]{Remark}
\newtheorem{eg}[thm]{Example}
\newtheorem{con}[thm]{Construction}
\title[A poisonous example to unbounded resolutions]{A poisonous example to explicit resolutions\\ of unbounded complexes}
\author{Dolors Herbera}
\address{Dept.\ de Matem\`atiques,
	Universitat Aut\`onoma de Barcelona
	(08193) Bellaterra, Barcelona, Spain\\
	\indent
	Centre de Recerca Matem\`atica
	(08193) Bellaterra, Barcelona, Spain.}
\email{Dolors.Herbera@uab.cat}
\author{Wolfgang Pitsch}
\address{Dept.\  de Matem\`atiques,
	Universitat Aut\`onoma de Barcelona (08193) Bellaterra, Barcelona, Spain.}
\email{Wolfgang.Pitsch@uab.cat}
\author{Manuel Saor\'in}
\address{Facultad de Matem\'aticas, Universidad de Murcia,
	Avda.\ Teniente Flomesta 5 (30003) Murcia, Spain.}
\email{MSaorinc@um.es}
\author{Simone Virili}
\address{Dept.\  de Matem\`atiques,
	Universitat Aut\`onoma  de Barcelona
	(08193) Bellaterra, Barcelona, Spain.}
\email{Simone.Virili@uab.cat  or Virili.Simone@gmail.com}
\thanks{The first author was supported by the Spanish State Research Agency, through the Severo Ochoa and María de Maeztu Program for Centers and Units of Excellence in R\& D (CEX2020-001084-M). The first and the fourth authors were partially supported by the projects MIMECO  PID2020-113047GB-I00 and MIMECO  PID2023-147110NB-I00 financed by the Spanish Government. The first, second, and fourth authors are  members of the  LIGAT (Interactions Lab between Geometry, Algebra, and Topology), Research Group of the Generalitat de Catalunya number 2021 SGR 01015. The second author was supported  by the FEDER/MEC grant ``Homotopy theory of combinatorial and algebraic structures'' PID2020-116481GB-I00. The third author was supported by the grant PID2020-113206GB-I00, funded by MCIN/AEI/10.13039/501100011033, and the project 22004/PI/22 of the ``Fundaci\'on S\'eneca'' (Murcia).}
\date{\today}
\begin{document}

\begin{abstract}
	We show that various methods for explicitly building resolutions of unbounded complexes in fact fail when applied to a rather simple and explicit complex. We show that one way to rescue these methods is to assume Roos (Ab.4$^*$)-$k$ axiom, which we adapt to encompass also resolutions in the framework of relative homological algebra. In the end we discuss the existence of model structures for relative homological algebra for unbounded complexes under the relative (Ab.4$^*$)-$k$ condition, and present a variety of examples where our results apply.
\end{abstract}

\subjclass[2020]{Primary 18G25, Secondary 18E10, 18G10.}

\maketitle
\setcounter{tocdepth}{1}
\tableofcontents

\section*{Introduction}

After the publication of Verdier's seminal thesis in 1967 (see \cite{tesis_Verdier} for the later complete publication), the study of the derived categories of Abelian categories has received considerable attention and  found widespread applications across many areas of abstract mathematics. It is important to underline that the initial focus was mostly restricted to bounded or half-bounded derived categories, where it was already known that, when starting with an Abelian category with enough injectives (or projectives),  each lower (upper) bounded complex is quasi-isomorphic to a lower (upper) bounded complex of injectives (projectives). Additionally, the morphisms in the derived category between two such bounded complexes of injectives (projectives) correspond precisely to the homotopy classes of chain maps between them. This suffices to ensure that the corresponding half-bounded derived categories are locally small, i.e., they have small $\hom$-sets. In contrast, the extension of this result to unbounded derived categories presented a considerable challenge.

One of the main difficulties that arises in the unbounded setting is the necessity to find, for each complex, a quasi-isomorphism to a complex of injectives that, furthermore, is $K$-injective (or, dually, from a $K$-projective complex of projectives). In other words, one needs to build \DG-injective (\DG-projective) resolutions of unbounded complexes. Even so, such constructions were not available at the time, even for commonly used Grothendieck categories, like categories of modules or of quasi-coherent sheaves. The first breakthrough in this direction was pioneered by Spaltenstein \cite{Spal} in 1988, who was able to build a \DG-projective resolution for each $X^\bullet\in \Ch(\Mod (R))$, with $R$ a ring, and a \DG-injective resolution for each $X^\bullet\in \Ch(\A)$, where $\mathcal{A}=\mathrm{Sheaf}_\Theta(X)$ is the category of sheaves of modules, over a sheaf of rings  $\Theta$ on a topological space $X$.  These  constructions have the advantage of not being merely existence results, as they rely on very explicit constructions of resolutions. 

The history of the proof of the existence of  \DG-injective resolutions of unbounded complexes for general Grothendieck categories and of the techniques used to obtain the result is complicated. In 2000, Alonso, Jeremias and Souto \cite[Theorem~5.4]{tarrio2000localization} proved it as a consequence of the existence of \DG-injective resolutions for module categories combined with the Gabriel and Popescu embedding Theorem via Bousfield localization. In 2003, C.~Serp\'e, in \cite[Theorem~3.13]{serpe}, gives an alternative proof by extending Spaltenstein's approach to general Grothendieck categories. Serp\'e's proof has a variation of the small object argument as a central point.

The first published proof of the existence of a model structure over a Grothendieck category is due to Hovey \cite{7fd5ffe3-5ff1-3e28-b6fb-e47f75d8437a} and it was published in 2001.  A very similar argument had already been outlined by Joyal in a private correspondence with Grothendieck dated back to 1984 (nowadays, the letter is available at \cite{letter_joyal_groth}). Both authors use variations of the small object argument to establish the existence of \DG-injective resolutions in complete generality. The resolutions produced by these methods are non-explicit in nature, rendering them impractical for the computation of \DG-resolutions in concrete situations.

At this point, the following questions  arise naturally: 

\smallskip\noindent {\bf Questions.}\ {\em
Let $\A$ be a complete Abelian category with enough injectives, and take $X^\bullet\in \Ch(\A)$.
\begin{enumerate}
\item[\rm (Q.1)] Is it always possible to find a \DG-injective resolution for $X^\bullet$?
\item[\rm (Q.2)] If $X^\bullet$ has a \DG-injective resolution, can one build such a resolution explicitly (i.e., with a step-by-step procedure that allows for a complete description of the final result, thus excluding, in principle, all constructions based on the small object argument)?
\end{enumerate} }

In the literature, one can find currently two separate papers -- \cite{saneblidze2007derived} and \cite{yang2015question} -- each claiming to give positive answers to both questions. Unfortunately, both constructions fail when applied to the ``poisonous example'' mentioned in the title, which is an unbounded complex $X^\bullet\in \Ch(\G)$, where $\G$ is a localization of $\Mod(R)$, with $R$ being Nagata's classical example of a commutative Noetherian ring with infinite Krull dimension (see \cite[Appendix~A.1]{Nagata}). Let us clarify that we are not claiming that the above questions have a negative answer, and so we do not discard the possibility that the derived category $\mathcal{D}(\mathcal{A})$ is locally small whenever $\A$ is a complete Abelian category with enough injectives. For example, for our ``poisonous example'' $X^\bullet$, both (Q.1) and (Q.2) can be answered in the positive. Indeed, take a \DG-injective resolution $X^\bullet\to E^\bullet$ in $\Ch(\Mod\text{-}R)$ (e.g., using Spaltenstein's construction); the inclusion of the  torsion part $T^\bullet\leq E^\bullet$ is degree-wise split (and, arguably, easy to describe), so $E^\bullet/T^\bullet$ is a complex of torsion-free injectives. In particular, this quotient lives in $\Ch(\G)$, and $X^\bullet\to E^\bullet/T^\bullet$ is a \DG-injective resolution (see Section~\ref{sec_three} for details).

On the positive side, the second named author together with Chachólski, Neeman and Scherer~\cite{zbMATH06915995} gave affirmative answers to both questions, adapting Spaltenstein's construction, under the extra hypothesis that the category $\mathcal{A}$ is (Ab$4^*$)-$k$ in the sense of Roos \cite[Definition~1.1]{Roos}, for some $k\in \N$, i.e., the $n$-th derived functor of the product vanishes for all $n>k$ (e.g., for $\A$ to be (Ab$4^*$)-$0$ it means precisely that products are exact in $\A$). More generally, given an injective class (i.e., a preenveloping class  closed under summands) $\I\subseteq\A$, they proved  that any complex $X^\bullet\in\Ch(\A)$ has an $\I$-fibrant replacement, provided $\A$ satisfies the (Ab.4$^*$)-$\I$-$k$ condition -- a version of Roos' (Ab$4^*$)-$k$ relative to $\I$. To recover the results about \DG-injective resolutions, it is then enough to take $\I=\Inj(\A)$, the class of all injective objects in $\A$. The existence of $\I$-fibrant replacements, when combined with results of \cite{CH}, allows us to build and study a suitable model structure in $\Ch(\mathcal{A})$ whose homotopy category is the $\mathcal{I}$-derived category $\mathcal{D}(\mathcal{A};\, \mathcal{I})$ from \cite{zbMATH06915995,CH}; the usual derived category $\D(\A)$ is equivalent to $\D(\A;\, \Inj(\A))$. 

\medskip
The paper is organized as follows: In Section~\ref{sec_one}, after fixing the needed conventions and notations about cochain complexes, we review some basic definitions and  results regarding weak factorization systems, model categories, cotorsion pairs and Hovey's correspondence. In Section~\ref{sec_two} we show that, whenever $\A$ is a complete (Ab.$4^*$)-$k$ Abelian category with enough injectives, any $X^\bullet\in\Ch(\A)$ has a \DG-injective resolution. Although this may be deduced from more general results in \cite{zbMATH06915995}, we include a short direct proof. The ``poisonous example'' mentioned in the title is introduced and studied in Section~\ref{sec_three}, where it is  used to show that the (Ab.$4^*$)-$k$ condition is really necessary for the constructions of Section~\ref{sec_two}. Section~\ref{sec_four} is devoted to injective Cartan-Eilenberg (\CE-)resolutions: after reviewing the definitions and some classical results, we prove that, to build a \DG-injective resolution of an $X^\bullet\in\Ch(\A)$ (with $\A$ as in Section~\ref{sec_two}) one can just totalize a \CE-resolution of $X^\bullet$. Furthermore, this strategy fails when applied to our ``poisonous example'', showing that (Ab.$4^*$)-$k$ is necessary also in this approach. In Section~\ref{sec_five} we briefly review the construction of \cite{saneblidze2007derived} and we show that it fails to produce a \DG-injective resolution for the ``poisonous example''. Similarly,  in Section~\ref{sec_six}, we review the construction of \cite{yang2015question} and we show that it does not work when applied to our ``poisonous example'', also observing that the gap in the argument disappears if one assumes that the ambient category is (Ab.$4^*$). In Section~\ref{sec_seven}, we concentrate on the construction of the so-called $\I$-injective model structure on $\Ch(\A)$, relative to a given injective class $\I\subseteq \A$. After recalling some general results from \cite{CH}, we describe the construction of Spaltenstein  towers of partial $\I$-injective resolutions given in \cite{zbMATH06915995}. In the last part of the section, we prove a number of results related to the (Ab.4$^*$)-$\I$-$k$ condition, showing that it implies the existence of the $\I$-injective model structure on $\Ch(\A)$. In Section~\ref{ex_and_app_sec}, we include a number of examples and applications of the formalism introduced in Section~\ref{sec_seven}. In particular, we first consider  those injective classes with the additional property of being cogenerating and, as an application, we give a partial positive answer to Gillespie's question; we then study the injective classes contained in $\Inj(\A)$, and the injective class of the pure-injective objects in a locally finitely presented Grothendieck category. Moreover, we introduce a notion of global dimension for $\A$ relative to an injective class $\I$; if this invariant is finite, then $\A$ is (Ab.4$^*$)-$\I$-$k$. Using the relative global dimension, we give a new characterization of the $n$-tilting cotorsion pairs. Finally, we give an application to categories of quasi-coherent sheaves over a scheme.

\medskip\noindent
{\bf Acknowledgments.} The initial results that persuaded us to write this paper date back to 2019, during an ``Intensive Research Period in Small Groups'' at the CRM (Barcelona), supported by the Barcelona Graduate School of Mathematics (BGSMath) as a Mar\'\i a de Maeztu Excellence Centre 2015--2019.\\
We would like to thank Sergio Estrada, for several fruitful conversations on the topics of this paper and, in particular, for sharing with us his doubts regarding \cite{yang2015question}, Leo Alonso and Ana Jeremías, for their remarks on the history of the existence of \DG-injective resolutions over general Grothendieck categories, and Leonid Positselski, for bringing  \cite{positselski2024roos} to our attention, and for his  remarks and insights on quasi-coherent sheaves; this interaction motivated us to add Section~\ref{geometry_subs} to the paper.
\\
Finally, we are grateful to two anonymous referees for their insightful comments and constructive criticism, that helped us to improve the exposition of our results.

\section{Preliminaries and notations}\label{sec_one}
\noindent
{\bf Cochain complexes.} 
Given an Abelian category $\A$, we denote by $\Ch(\A)$ the category of unbounded cochain complexes and maps of complexes. In particular, given
\[
X^\bullet:=(\xymatrix@C=18pt{\cdots \ar[r]& X^{n-1}\ar[r]^-{d^{n-1}}&X^n\ar[r]^-{d^n}&X^{n+1}\ar[r]&\cdots})\in\Ch(\A),
\]
we denote the $n$-th {\bf coboundaries}, {\bf cocycles}, and {\bf cohomology} (with $n\in \Z$) of $X^\bullet$, respectively, by $B^n(X^\bullet):=\Im(d^{n-1})$, $Z^n(X^\bullet):=\Ker(d^{n})$, and $H^n(X^\bullet):=Z^n(X^\bullet)/B^n(X^\bullet)$. Moreover, we denote by $\Sigma\colon \Ch(\A)\to \Ch(\A)$ the ``shift to the left'' auto-equivalence of $\Ch(\A)$, that is,  the $n$-th component of $\Sigma X^\bullet$ is $(\Sigma X^\bullet)^n:=X^{n+1}$, while its $n$-th differential is $-d^{n+1}\colon X^{n+1}\to X^{n+2}$ (i.e., the $(n+1)$-th differential of $X^\bullet$, with a minus sign).

A given morphism of complexes $\phi^\bullet\colon X^\bullet \to Y^\bullet$ is said to be a {\bf quasi-isomorphism} if $H^n(\phi^\bullet)$ is an isomorphism for all $n\in\Z$. We also say that a complex $X^\bullet$ is {\bf exact} (or {\bf acyclic}) if $H^n(X^\bullet)=0$, for all $n\in\Z$ or, equivalently, $X^\bullet\to 0$ is a quasi-isomorphism.

For each $k\in\Z$, we denote by $\Ch^{\geq k}(\A)$ (resp., $\Ch^{\leq k}(\A)$) the full subcategory of  the $X^\bullet\in\Ch(\A)$ such that $X^n=0$, for all $n<k$ (resp., $n>k$). Moreover, $\Ch^+(\A):=\bigcup_{n\in\Z}\Ch^{\geq n}(\A)$ denotes the full subcategory of bounded below complexes. Given $k\in\Z$, the inclusion  $\Ch^{\geq k}(\A)\to \Ch(\A)$ has a left adjoint, called the {$k$-th \bf left truncation functor} $\tau^{\geq k}\colon \Ch(\A)\to \Ch^{\geq k}(\A)$ such that:
\[
\tau^{\geq k}(X^\bullet):=(\xymatrix@C=18pt{\cdots \ar[r]& 0\ar[r]&X^k/B^k(X^\bullet)\ar[r]^-{\bar d^k}&X^{k+1}\ar[r]^-{d^{k+1}}&\cdots})\in\Ch^{\geq k}(\A).
\]
Similarly, the inclusion  $\Ch^{\leq k}(\A)\to \Ch(\A)$ has a right adjoint, called the {$k$-th \bf right truncation functor} $\tau^{\leq k}\colon \Ch(\A)\to \Ch^{\leq k}(\A)$ such that:
\[
\tau^{\leq k}(X^\bullet):=(\xymatrix@C=18pt{\cdots \ar[r]&X^{k-2}\ar[r]^-{d^{k-2}}& X^{k-1}\ar[r]^-{d^{k-1}}&Z^k(X^\bullet)\ar[r]&0\ar[r]&\cdots})\in\Ch^{\leq k}(\A).
\]
 Given an object $A\in\A$ and $k\in\Z$ we let $S^k(A)$ denote the following {\bf stalk complex}:
\[
S^k(A):=(\xymatrix@C=18pt{\cdots \ar[r]& 0\ar[r]&A\ar[r]&0\ar[r]&\cdots})\in\Ch^{\geq k}(\A)\cap\Ch^{\leq k}(\A);
\] 
for $n=0$ we sometimes denote $S^0(A)$ simply by $A$.
Moreover, we denote by $D^k(A)$ the  {\bf disk complex}:
\[
D^k(A):=(\xymatrix@C=18pt{\cdots \ar[r]& 0\ar[r]&A\ar[r]^-{\id_A}&A\ar[r]&0\ar[r]&\cdots})\in\Ch^{\geq k}(\A)\cap\Ch^{\leq k+1}(\A).
\] 
Observe that $D^k(A)$ is an exact complex, that is, $H^i(D^k(A))=0$ for all $i\in \Z$. Moreover, as co/products of isomorphisms are isomorphisms, any co/product of disk complexes is still exact. Finally, observe that, if $Y^\bullet$ is such a co/product of disk complexes, and $X^\bullet\in \Ch(\A)$ is any complex, then $H^i(X^\bullet\oplus Y^\bullet)\cong H^i(X^\bullet)\oplus H^i(Y^\bullet) \cong H^i(X^\bullet)$, for all $i\in \Z$. This simple observation is often useful when we need to modify a given complex $X^\bullet$ without changing its cohomologies.

\smallskip
\noindent
{\bf Weak factorization systems.} Given a category $\A$, denote by $\A^\two$ its category of morphisms. Given $\phi$ and $\psi\in \A^\two$, one says that $\phi$ is {\bf left weakly orthogonal} to $\psi$ (and, equivalently, that $\psi$ is {\bf right weakly orthogonal} to $\phi$), in symbols $\phi\boxslash\psi$, if for any solid commutative square 
\[
\xymatrix@R=20pt@C=40pt{
X_0\ar[d]_{\phi}\ar[r]^{a}&Y_0\ar[d]^{\psi}\\
X_1\ar[r]_{b}\ar@{.>}[ur]|{\exists \,d}&Y_1
}
\]
there is a (not necessarily unique) diagonal $d\colon X_1\to Y_0$ such that $a=d\circ \phi$ and $b=\psi\circ d$.

Given a class of objects $\mathcal X\subseteq \A^\two$, we denote by
\[
{}^{\boxslash}\mathcal X:=\{l\in \A^\two:l\boxslash x,\ \forall x\in \mathcal X\} \quad \text{and} \quad \mathcal X^{\boxslash}:=\{r\in \A^\two:x\boxslash r,\ \forall x\in \mathcal X\},
\]
the classes of all morphisms in $\A$ that are weakly left or right orthogonal, respectively, to  $\mathcal X$. Using another common terminology, $l\in {}^{\boxslash}\mathcal X$ (resp., $r\in \mathcal X^{\boxslash}$), if it has the {\bf left} ({\bf right}) {\bf lifting property} with respect to $\mathcal X$. The following general lemma is often useful in practice:
\begin{lem}[{\cite[Lemma~11.1.4]{riehl2014categorical}}]\label{riehl_lem}
Any class of arrows of the form ${}^{\boxslash}\mathcal X$ is closed under coproducts, push-outs, transfinite composition, retracts, and it contains the isomorphisms. The class $\mathcal X^{\boxslash}$ has  dual  properties.
\end{lem}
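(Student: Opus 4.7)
The plan is to verify each closure property for $\mathcal{L} := {}^{\boxslash}\mathcal X$ by direct diagram chasing; the closure properties for $\mathcal{R}:= \mathcal X^{\boxslash}$ then follow by passing to the opposite category. All arguments are routine, so the challenge is mostly organizational rather than mathematical.

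First I would dispose of the easy cases. For isomorphisms: given $\phi$ an iso and any square with right leg $\psi\in\mathcal X$, the diagonal $d:=a\circ\phi^{-1}$ does the job. For retracts: if $\phi'$ is a retract of $\phi\in\mathcal{L}$, composing a given lifting problem against $\phi'$ with the retraction data yields a lifting problem against $\phi$; pulling the lift back via the section gives the required diagonal. For coproducts $\coprod_i\phi_i$ (all $\phi_i\in\mathcal L$): a lifting problem against $\coprod_i\phi_i$ restricts along each coproduct inclusion to a lifting problem against $\phi_i$; the chosen lifts $d_i$ assemble via the universal property of the coproduct to a global diagonal.

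Next I would handle push-outs. Suppose $\phi\in\mathcal L$ and form the push-out $\phi'\colon Z_0\to Z_1:= X_1\cup_{X_0}Z_0$ along some $X_0\to Z_0$. Given a lifting problem for $\phi'$ against some $\psi\in\mathcal X$, precomposing with the structural maps $X_0\to Z_0$ and $X_1\to Z_1$ produces a lifting problem for $\phi$; let $d\colon X_1\to Y_0$ be a lift. The pair $(d,a)$ agrees on $X_0$ (both equal the original top arrow composed with $X_0\to Z_0$, followed by appropriate maps), so by the universal property of the push-out it induces $\tilde d\colon Z_1\to Y_0$, which one checks is the desired diagonal.

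Finally, for transfinite composition, let $\phi\colon X_0\to X_\kappa$ be the colimit of a smooth chain $(X_\alpha,\phi_\alpha)_{\alpha<\kappa}$ with each successor map $\phi_\alpha\colon X_\alpha\to X_{\alpha+1}$ in $\mathcal L$. Given a lifting problem $(a,b)$ against $\psi\in\mathcal X$, I would build compatible diagonals $d_\alpha\colon X_\alpha\to Y_0$ by transfinite recursion: $d_0:=a$; at successor stages, solve the lifting problem for $\phi_\alpha$ with top arrow $d_\alpha$ and bottom arrow $b\circ(\text{structure map})$ to obtain $d_{\alpha+1}$; at limit stages, take the colimit of the previously constructed $d_\beta$'s. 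The universal property of $X_\kappa=\mathrm{colim}_\alpha X_\alpha$ then assembles the $d_\alpha$'s into the required $d\colon X_\kappa\to Y_0$. The main (modest) obstacle is verifying that the coherence conditions hold at each limit stage, which is ensured by how the $d_\beta$'s were defined to be compatible with the transition maps. The dual properties for $\mathcal X^{\boxslash}$ (closure under products, pull-backs, transfinite towers, retracts, and containing isomorphisms) follow by reading the above arguments in $\A^{\op}$.
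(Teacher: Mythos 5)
Your proof is correct and is essentially the standard verification: the paper does not prove this lemma itself but cites \cite[Lemma~11.1.4]{riehl2014categorical}, whose argument is exactly the routine diagram chase you outline (isomorphisms and retracts directly, coproducts and push-outs via universal properties, transfinite composition by recursion, and the dual class by passing to $\A^{\op}$). No gaps.
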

In particular, we will need the following  consequence of the above lemma: Suppose that $(A_n)_{n\in\N}$ is an inverse system in $\A$ such that $\varprojlim_\N A_n$ exists in $\A$. If $(A_{n+1}\to A_n)\in \mathcal X^{\boxslash}$ for all $n\in\N$, then the canonical map $\varprojlim_\N A_n\to A_0$ is also in $\mathcal X^{\boxslash}$.

\smallskip
A pair of subclasses $(\mathcal X,\mathcal Y)$ of $\A^\two$ is a {\bf weak factorization system} if the following hold:
\begin{enumerate}
\item[{\rm ({\sc wfs}.1)}] $\mathcal X^{\boxslash}=\mathcal Y$ and $\mathcal X={}^{\boxslash}\mathcal Y$;
\item[{\rm ({\sc wfs}.2)}] each  $\phi\in\A^\two$ can be written as $\phi=y\circ x$, with $x\in \mathcal X$ and $y\in \mathcal Y$. 
\end{enumerate}

\smallskip
\noindent
{\bf Model categories.}
A {\bf model structure} on a bicomplete category $\M$ is a triple $(\W,\C,\F)$ of classes of maps, called respectively {\bf weak equivalences}, {\bf cofibrations} and {\bf fibrations}, such that:
\begin{enumerate}
\item[{\rm ({\sc ms}.1)}] $\W$ contains all the isomorphisms and has the $2$-out-of-$3$ property;
\item[{\rm ({\sc ms}.2)}] both $(\C\cap \W,\F)$ and $(\C,\F\cap \W)$ are weak factorization systems.
\end{enumerate}
In this case, the quadruple $(\M,\W,\C,\F)$ is called a {\bf model category}. The morphisms in $\C\cap \W$ and $\F\cap \W$ are called {\bf trivial cofibrations} and {\bf trivial fibrations}, respectively. Furthermore, an object $F\in \M$ is said to be {\bf fibrant} if the terminal map $F\to *$ is a fibration while, for $\M$ pointed, we say that $F$ is {\bf trivial} if the trivial map $0\to F$ is a weak equivalence. Finally, a {\bf fibrant replacement} for an object $X\in \M$, is a weak equivalence $X\to F$, with $F$ fibrant. Cofibrant objects and cofibrant replacements are defined dually. 

\begin{rmk}\label{rem_lim_of_fib}
Suppose that we are in the following situation: we have a bicomplete category $\M$ and two classes of maps: $\W$, whose members we call weak equivalences, and $\C$, whose members we call cofibrations, and suppose that we want to use them to construct a model structure on $\M$. In this setting, we are forced to take $\F:=(\C\cap \W)^{\boxslash}$ as a class of fibrations. Even if we do not know whether $(\W,\C,\F)$ is a model structure, let us abuse terminology and call an object $X\in \M$ fibrant if $X\to *$ belongs to $\F$. Consider an inverse system $(F_n)_{\N}$ of fibrant objects and suppose that $(F_{n+1}\to F_n)\in \F$ for all $n\in\N$, then $F:=\varprojlim_\N F_n$ is a fibrant object. 
To see this, consider the inverse system $(F'_n)_{\N}$, where $F'_0=*$ is the terminal object, and $F'_{n+1}:=F_n$ for all $n\in\N$. Then, as $\F=(\C\cap \W)^{\boxslash}$, the unique map $\varprojlim_\N F_n\cong \varprojlim_\N F_n'\to F'_0=*$ still belongs to $\F$, as discussed briefly after Lemma~\ref{riehl_lem}. 
\end{rmk}

\smallskip
\noindent
{\bf Cotorsion pairs.}
Let $\A$ be an Abelian category, and let $\C\subseteq \A$ be a subclass. We say that $\C$ is ({\bf co}){\bf generating} if any object of $\A$ is isomorphic to a quotient (resp., subobject) of an object in $\C$. Given $A,\,B\in \A$ and $n\geq 1$, we denote by $\Ext^n_\A(A,B)$ the usual group of equivalence classes of $n$-extensions; in general, we do not assume that $\A$ has enough injectives nor projectives, so we cannot exclude the possibility that the underlying sets of these groups of extensions are ``big'', see \cite[Ch.XII.5]{MR1344215} and \cite[I.6]{McL} for  details.
 
For any  $I\subseteq \Z$, we use the following notations for the corresponding right and left orthogonal classes of $\C$:
\[
\C^{\perp_I}:=\{A\in \A:\Ext_\A^i(C,A)=0,\text{ for all $i\in I$ and all $C\in \C$}\} \quad \text{and} 
\]
\[ 
{}^{\perp_I}\C:=\{A\in \A:\Ext_\A^i(A,C)=0,\text{ for all $i\in I$ and all $C\in \C$}\}.
\]
Moreover, let  $\C^{\perp_n}:=\C^{\perp_{\{n\}}}$ (for any $n\in\Z$), and $\C^{\perp}:=\C^{\perp_{0}}$, with analogous conventions on the left. 
\begin{rmk}\label{rem_on_hereditary}
Let $\A$ be an Abelian category, and let $\C\subseteq \A$ be a subclass. Then:
\begin{itemize}
    \item ${}^{\perp_{>0}}\C$ is closed under kernels of epimorphisms. In fact, if $0\to K\to X_1\to X_2\to 0$ is a short exact sequence in $\A$ with  $X_1,\,X_2\in{}^{\perp_{>0}}\C$ then $\Ext_\A^n(K,C)=0$, for any $n>0$ and any $C\in \C$, as such an $\Ext$-group fits in the following exact sequence:
    \[
    0=\Ext_\A^n(X_1,C)\to\Ext_\A^n(K,C)\to\Ext_\A^{n+1}(X_2,C)=0.
    \]
    \item If $\C$ is generating and closed under kernels of epimorphisms, then $\C^{\perp_{>0}}=\C^{\perp_1}$. Indeed, it is enough to show that, for all $n\geq 1$, if $X\in \C^{\perp_n}$, then $X\in \C^{\perp_{n+1}}$: let $C\in \C$ and consider an extension $\epsilon:=[0\to X\to A_1\to \dots\to A_n\to A_{n+1}\to C\to 0]\in\Ext_\A^{n+1}(C,X)$. If $C'\twoheadrightarrow A_{n+1}$ is an epimorphism, with $C'\in \C$, we get a new representative for the extension $\epsilon$  by taking a pull-back:
    \[
    \xymatrix@R=15pt{
    \epsilon:=&[0\ar[r]& X\ar[r]\ar@{=}[d]& A_1\ar[r]\ar@{=}[d]& \dots\ar[r]& A_n\ar[r]\ar@{<-}[d]\ar@{}[dr]|{P.B.}& A_{n+1}\ar[r]\ar@{<<-}[d]& C\ar[r]\ar@{=}[d]& 0]\\
    \epsilon=&[0\ar[r]& X\ar[r]& A_1\ar[r]& \dots\ar[r]& A'_n\ar[r]& C'\ar[r]& C\ar[r]& 0].
    }
    \] 
    To conclude, consider $\xi:=[0\to K\to C'\to C\to 0]\in \Ext_\A^1(C,K)$, where $K\in \C$ by hypothesis, and $\epsilon':=[0\to X\to A_1\to \dots\to A'_n\to K\to 0]\in\Ext_\A^{n}(K,X)$: since $X\in \C^{\perp_n}$ by assumption, then $\epsilon'=0$, and so $\epsilon=0$, as it is the Yoneda product of $\xi$ and $\epsilon'$ (see \cite[Ch.VIII.4]{MR1344215}).
\end{itemize}
\end{rmk}
A pair $(\X,\Y)$ of classes in $\A$ is said to be a {\bf cotorsion pair} if $\X={}^{\perp_1}\Y$, and $\Y=\X^{\perp_1}$. A cotorsion pair $(\X,\Y)$ is {\bf left} (resp., {\bf right}) {\bf complete} if for all $A\in\A$ there is an exact sequence
\[
0\longrightarrow Y\longrightarrow X\longrightarrow A\longrightarrow 0\quad (\text{resp., }0\longrightarrow A\longrightarrow Y'\longrightarrow X'\longrightarrow 0),
\]
with $X,X'\in \X$ and $Y,Y'\in \Y$, called a {\bf left} (resp., {\bf right}) $(\X,\Y)${\bf -approximation} of $A$. Moreover, we say that $(\X,\Y)$ is {\bf complete} if it is left and right complete.
\begin{rmk}\label{rem_on_completeness}
Let $\A$ be an Abelian category. Then, a cotorsion pair $(\X,\Y)$ in $\A$ is complete if, and only if, it is left-complete and the class $\Y$ is cogenerating. The non-trivial implication can be verified as follows: given $A\in \A$, use that $\Y$ is cogenerating to find a short exact sequence $0\to A\to Y'\to B\to0$, with $Y'\in \Y$. Consider now a left $(\X,\Y)$-approximation $0\to Y''\to X\to B\to 0$; taking the pull-back of the first sequence along $X\to B$, we get the desired right approximation $0\to A\to Y\to X\to 0$, where $Y\in \Y$ since it is an extension of $Y''$ by $Y'$, and they are both objects in $\Y$.
\end{rmk}
A cotorsion pair $(\X,\Y)$ in $\A$ is called {\bf hereditary} if $\Ext_\A^n(X,Y)=0$, for all $X\in \X$, $Y\in \Y$,  $n\geq 1$.
\begin{lem}\label{her_com_cot_pair_lem}
Let $\A$ be an Abelian category, and let $\X$ and $\Y\subseteq \A$ be two subclasses such that $\X$ is generating and $\Y$ is cogenerating. Then, the following are equivalent:
\begin{enumerate}
    \item $\X^{\perp_{>0}}=\Y$, and $^{\perp_{>0}}\Y=\X$;
    \item $(\X,\Y)$ is a cotorsion pair and $\X$ is closed under kernels of epimorphisms;
    \item $(\X,\Y)$ is a cotorsion pair and $\Y$ is closed under cokernels of monomorphisms.
\end{enumerate}
In this case, $(\X,\Y)$ is a hereditary cotorsion pair, which is left-complete if and only if it is right-complete, if and only if it is complete.
\end{lem}
\begin{proof}
The implications ``(1)$\Rightarrow$(2,3)'' follow by Remark~\ref{rem_on_hereditary} and its dual, so it is enough to check the implication ``(2)$\Rightarrow$(1)'', as ``(3)$\Rightarrow$(1)''  follows  dually. Indeed, if $(\X,\Y)$ satisfies (2), then $\Y=\X^{\perp_1}=\X^{\perp_{>0}}$ by the second part of Remark~\ref{rem_on_hereditary}. Moreover, by the dual of Remark~\ref{rem_on_hereditary}, we also get that $\Y(=\X^{\perp_{>0}})$ is closed under cokernels of monomorphisms and, therefore, $\X={}^{\perp_1}\Y={}^{\perp_{>0}}\Y$.\\
The last part of the statement about completeness follows by Remark~\ref{rem_on_completeness}.
\end{proof}

\smallskip
\noindent
{\bf Hovey correspondence.}
A {\bf model structure} on a bicomplete Abelian category $\A$ is said to be {\bf Abelian} if:
\begin{itemize}
\item the (trivial) cofibrations are  the monomorphisms that have a (trivially) cofibrant cokernel;
\item the (trivial) fibrations are the epimorphisms that have a (trivially) fibrant kernel.
\end{itemize}
By a famous result of Hovey (see \cite[Theorem~2.2]{hovey}), Abelian model structures on a bicomplete Abelian category $\A$ are in bijection with the so-called {\bf Hovey triples}, which are triples of classes of objects $(\O_\W,\O_\C,\O_\F)$ such that $\O_\W$ is thick (i.e., it is closed under summands and, given any short exact sequence $0\to A\to B\to C\to 0$ in $\A$, if two elements of the set $\{A,B,C\}$ belong in $\O_\W$, then so does the third) and both $(\O_\C,\O_\W\cap \O_\F)$ and $(\O_\C\cap \O_\W, \O_\F)$ are complete cotorsion pairs. In this case, the class $\O_\C$ is precisely the class of cofibrant
objects, $\O_\F$ is the class of fibrant objects, and $\O_\W$ is the class of trivial objects in
the corresponding Abelian model structure.

\smallskip
\noindent
{\bf Lifting of cotorsion pairs.}
Given a  cotorsion pair $(\X,\Y)$ in $\A$, Gillespie \cite[Definition~3.3]{gillespie2006flat}  introduced the following classes, with the goal of inducing a Hovey triple (and so  an Abelian model structure) in $\Ch(\A)$:
\begin{itemize}
\item $\widetilde \X$ (resp., $\widetilde \Y$) is the class of all the acyclic complexes $A^\bullet\in \Ch(\A)$ such that $Z^n(A^\bullet)\in \X$ (resp., $Z^n(A^\bullet)\in \Y$), for all $n\in\Z$;
\item $\mathrm{dg}\X$ is the class of those $A^\bullet\in \Ch(\A)$ such that $A^n\in \X$ for all $n\in\Z$ and for which the total hom-complex $\text{\sc hom}(A^\bullet,Y^\bullet)$ is exact for all $Y^\bullet\in\widetilde Y$;
\item $\mathrm{dg}\Y$ is the class of those $A^\bullet\in \Ch(\A)$ such that $A^n\in \Y$ for all $n\in\Z$ and for which the total hom-complex $\text{\sc hom}(X^\bullet,A^\bullet)$ is exact for all $X^\bullet\in\widetilde X$.
\end{itemize}

\begin{prop}[{\cite[Lemma~3.4, Proposition~3.6, Lemma~3.14]{gillespie2006flat}} and {\cite[Lemma~6.2]{hovey}}]\label{prop_known_facts_cotors}
Let $\A$ be a bicomplete Abelian category,  $(\X,\Y)$  a cotorsion pair, and  $\E$  the class of exact complexes. Then, the following  hold true
\begin{enumerate}
\item  a complex $A^\bullet\in \Ch^{+}(\A)$ belongs to $\mathrm{dg}\Y$ if and only if $A^n\in\Y$, for all $n\in\Z$;
\item if $(\X,\Y)$ is complete,  both $(\widetilde \X,\mathrm{dg}\Y)$ and $(\mathrm{dg}\X,\widetilde \Y)$ are cotorsion pairs in $\Ch(\A)$. Hence, if $(A_n)_\N$ is an inverse system such that $A_0\in \mathrm{dg}\Y$ and $A_{n+1}\to A_n$ is an epimorphism with $\ker(A_{n+1}\to A_n)\in \mathrm{dg}\Y$ (for all $n\in\N$), then $\lim_\N A_n\in \mathrm{dg}\Y$;
\item if $(\mathrm{dg}\X,\widetilde \Y)$ is a complete cotorsion pair, and $\mathrm{dg}\X\cap \E=\widetilde \X$, then $\mathrm{dg}\Y\cap \E=\widetilde \Y$. 
\end{enumerate}
\end{prop}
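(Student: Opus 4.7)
The plan is to treat the three assertions separately, as each relies on a different ingredient: (1) is a dimension-shift argument exploiting that $A^\bullet$ is bounded below; (2) is the core of Gillespie's construction, together with a tower-stability property for the right class of the lifted cotorsion pair; and (3) is a short splitting trick.

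For (1), the ``only if'' direction is immediate. For the converse, let $A^\bullet\in\Ch^+(\A)$ have $A^n\in\Y$ for all $n$ and $A^n=0$ for $n<k$, and let $X^\bullet\in\widetilde\X$. I would build a null-homotopy of an arbitrary chain map $f\colon X^\bullet\to A^\bullet$ by inducting upward on degree: setting $h^n=0$ for $n\le k$ is forced. At the $n$-th step, one checks (from the chain-map relation and the inductive hypothesis) that the map $f^n-d^{n-1}h^n\colon X^n\to A^n$ vanishes on $Z^n(X^\bullet)$ and hence factors through $X^n/Z^n\cong B^{n+1}(X^\bullet)$, giving a map $B^{n+1}(X^\bullet)\to A^n$ which one wants to extend across the inclusion $B^{n+1}(X^\bullet)\hookrightarrow X^{n+1}$. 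The cokernel of this inclusion is $X^{n+1}/B^{n+1}=X^{n+1}/Z^{n+1}\cong Z^{n+2}(X^\bullet)\in\X$ by acyclicity, and the obstruction to the extension lies in $\Ext^1_\A(Z^{n+2}(X^\bullet),A^n)$, which vanishes because $(\X,\Y)$ is a cotorsion pair. Applying the same reasoning to the shifts $\Sigma^m X^\bullet$ (which are also in $\widetilde\X$) yields exactness of $\underline{\hom}(X^\bullet,A^\bullet)$ in every degree, hence $A^\bullet\in\mathrm{dg}\Y$.

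For (2), the cotorsion-pair assertions are the core of Gillespie's construction: given $M^\bullet\in\Ch(\A)$, one approximates each $M^n$ via $(\X,\Y)$ and assembles these approximations using the disk complexes $D^k(-)$, identifying the resulting classes with the $\Ext^1$-orthogonals through the standard isomorphism between $\Ext^1_{\Ch(\A)}(X^\bullet,S^n(A))$ and the appropriate cohomology of $\underline{\hom}(X^\bullet,S^n(A))$. For the inverse-limit statement (\cite[Lemma~3.14]{gillespie2006flat}), the key observation is that for any $X^\bullet\in\widetilde\X$ the long $\Ext$-sequence attached to $0\to K_n\to A_{n+1}\to A_n\to 0$ (with $K_n:=\ker(A_{n+1}\to A_n)\in\mathrm{dg}\Y=\widetilde\X^{\perp_1}$) forces the transition map $\hom_{\Ch(\A)}(X^\bullet,A_{n+1})\to\hom_{\Ch(\A)}(X^\bullet,A_n)$ to be surjective, so the tower $(\hom_{\Ch(\A)}(X^\bullet,A_n))_n$ is Mittag-Leffler in $\Ab$ and a standard $\lim^1$-vanishing argument gives $\Ext^1_{\Ch(\A)}(X^\bullet,\lim_\N A_n)=0$, hence $\lim_\N A_n\in\mathrm{dg}\Y$.

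For (3), take $A^\bullet\in\mathrm{dg}\Y\cap\E$ and, using completeness of $(\mathrm{dg}\X,\widetilde\Y)$, choose a short exact sequence $0\to A^\bullet\to Y^\bullet\to X^\bullet\to 0$ with $Y^\bullet\in\widetilde\Y$ and $X^\bullet\in\mathrm{dg}\X$. Since $A^\bullet$ and $Y^\bullet$ are acyclic, the long exact cohomology sequence forces $X^\bullet$ to be acyclic too, so $X^\bullet\in\mathrm{dg}\X\cap\E=\widetilde\X$ by hypothesis. Since $\mathrm{dg}\Y=\widetilde\X^{\perp_1}$ by part (2), $\Ext^1_{\Ch(\A)}(X^\bullet,A^\bullet)=0$, the sequence splits, and $A^\bullet$ becomes a direct summand of $Y^\bullet\in\widetilde\Y$; as $\widetilde\Y$ is closed under direct summands (cocycles of a summand are summands of cocycles, and $\Y$ is closed under summands as the right class of $(\X,\Y)$), we conclude $A^\bullet\in\widetilde\Y$. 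The most delicate step is the inverse-limit argument in (2): one must phrase Mittag-Leffler at the level of hom-groups in $\Ab$, rather than try to manipulate exact sequences of inverse systems in $\A$ itself, which would quietly invoke the (Ab.4$^*$) axiom that this paper carefully avoids.
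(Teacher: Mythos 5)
The paper gives no proof of this proposition --- it is imported wholesale from Gillespie and Hovey --- so your attempt has to be judged against the cited arguments. Your proof of (1) is correct and is the standard one: the inductive construction of the null-homotopy, with the obstruction to extending the induced map $B^{n+1}(X^\bullet)\to A^n$ over $X^{n+1}$ killed by $\Ext^1_\A(Z^{n+2}(X^\bullet),A^n)=0$. Your proof of (3) is also essentially correct; just record the easy reverse inclusion $\widetilde \Y\subseteq \mathrm{dg}\Y\cap\E$ (it follows from the hypothesis $\widetilde\X=\mathrm{dg}\X\cap\E\subseteq\mathrm{dg}\X$ by applying $(-)^{\perp_1}$ to both sides). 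For the first assertion of (2) you only sketch the strategy of Gillespie's Proposition~3.6, which is where most of the cited work actually lives; since the paper also treats it as a black box I will not press this, but be aware that what you wrote is a plan, not a proof.

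The genuine gap is the tower statement in (2). Your Mittag--Leffler observation --- surjectivity of $\hom_{\Ch(\A)}(X^\bullet,A_{n+1})\to\hom_{\Ch(\A)}(X^\bullet,A_n)$ because $\Ext^1_{\Ch(\A)}(X^\bullet,K_n)=0$ --- is correct and is indeed the key ingredient, but ``a standard $\varprojlim^1$-vanishing argument'' does not exist in this generality. The usual bridge from $\varprojlim^1_\N\hom_{\Ch(\A)}(X^\bullet,A_n)=0$ to $\Ext^1_{\Ch(\A)}(X^\bullet,\varprojlim_\N A_n)=0$ runs through the exactness of $0\to\varprojlim_\N A_n\to\prod_\N A_n\xrightarrow{1-t}\prod_\N A_n\to 0$ in $\Ch(\A)$, and the surjectivity of $1-t$ is itself a $\varprojlim^1$-vanishing statement \emph{inside} $\A$: the paper obtains it only for degreewise split transition maps, by writing an explicit right inverse (Lemma~\ref{lem_cohomologies_of_lim_vs_prod}), and it can fail for plain epimorphisms in a bicomplete Abelian category without (Ab.4$^*$) --- which is the entire theme of this paper. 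Your transition maps are only epimorphisms with kernel in $\mathrm{dg}\Y$, so the danger you flag in your closing sentence is real and your argument does not actually evade it: keeping Mittag--Leffler ``at the level of hom-groups'' gives $\varprojlim^1\hom=0$, but you still owe a construction connecting that to $\Ext^1$ of the limit, and every off-the-shelf such construction smuggles in exactness of products.

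The repair is Hovey's actual proof of his Lemma~6.2, which avoids products entirely. Given an extension $0\to\varprojlim_\N A_n\to E\to X^\bullet\to 0$ with $X^\bullet\in\widetilde\X$, push it out along each projection $\varprojlim_\N A_n\to A_n$ to get a tower of extensions $0\to A_n\to E_n\to X^\bullet\to 0$; each splits because $A_n\in\mathrm{dg}\Y$ (each $A_n$ lies in $\mathrm{dg}\Y$ by induction, as an extension of $A_{n-1}$ by $K_{n-1}$). Two sections of $E_n\to X^\bullet$ differ by a morphism $X^\bullet\to A_n$, so your surjectivity statement lets you correct an arbitrary section at level $n+1$ to one compatible with the chosen section at level $n$. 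The compatible sections assemble into a section of $\varprojlim_\N E_n\to X^\bullet$, and a diagram chase using only the left-exactness of $\varprojlim$ (so that $\Ker(\varprojlim_\N E_n\to X^\bullet)=\varprojlim_\N A_n$) shows the canonical map $E\to\varprojlim_\N E_n$ is an isomorphism, splitting the original sequence. Replacing your one-line invocation by this argument closes the gap.
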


As a consequence of the above proposition, one can deduce the following criterion for a cotorsion pair in $\A$ to induce a model structure on complexes:

\begin{cor}\label{coro_model_struct_via_hovey}
Let $\A$ be a bicomplete Abelian category, let $(\X,\Y)$ be a cotorsion pair in $\A$, let $\E$ be the class of exact complexes and suppose that:
\begin{enumerate}
\item $(\X,\Y)$, $(\mathrm{dg}\X,\widetilde \Y)$ and $(\widetilde \X, \mathrm{dg}\Y)$ are complete;
\item $\mathrm{dg}\X\cap \E=\widetilde \X$.
\end{enumerate}
Then $(\E,\mathrm{dg}\X,\mathrm{dg}\Y)$ is a Hovey triple and, therefore, it induces an Abelian model structure on $\Ch(\A)$ such that the weak equivalences are the quasi-isomorphisms,  the cofibrations are the  monomorphisms with cokernel in $\mathrm{dg}\X$, and the
  fibrations are the epimorphisms with kernel in $\mathrm{dg}\Y$.
\end{cor}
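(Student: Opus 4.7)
The plan is to apply Hovey's correspondence to the candidate triple $(\mathcal E,\mathrm{dg}\mathcal X,\mathrm{dg}\mathcal Y)$. To do this, I must check three things: that $\mathcal E$ is a thick class, and that both $(\mathrm{dg}\mathcal X,\mathcal E\cap\mathrm{dg}\mathcal Y)$ and $(\mathcal E\cap\mathrm{dg}\mathcal X,\mathrm{dg}\mathcal Y)$ are complete cotorsion pairs. Thickness of $\mathcal E$ is essentially free: closure under summands uses $H^i(A^\bullet\oplus B^\bullet)\cong H^i(A^\bullet)\oplus H^i(B^\bullet)$, while the 2-out-of-3 property for $\mathcal E$ within a short exact sequence $0\to A^\bullet\to B^\bullet\to C^\bullet\to 0$ is read off the associated long exact sequence in cohomology.

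The second step is the identification of the two cotorsion pairs we need. By hypothesis~(2), $\mathcal E\cap\mathrm{dg}\mathcal X=\widetilde{\mathcal X}$; combining this with Proposition~\ref{prop_known_facts_cotors}(3) (whose hypotheses are supplied by~(1) and~(2)), one also obtains $\mathcal E\cap\mathrm{dg}\mathcal Y=\widetilde{\mathcal Y}$. Therefore the two pairs we need to exhibit coincide with $(\mathrm{dg}\mathcal X,\widetilde{\mathcal Y})$ and $(\widetilde{\mathcal X},\mathrm{dg}\mathcal Y)$, both of which are cotorsion pairs (because $(\mathcal X,\mathcal Y)$ is complete, by Proposition~\ref{prop_known_facts_cotors}(2)) and, by hypothesis~(1), both are complete. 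This is exactly the data for a Hovey triple, and Hovey's theorem now delivers an abelian model structure in which $\mathrm{dg}\mathcal X$ is the class of cofibrant objects, $\mathrm{dg}\mathcal Y$ is the class of fibrant objects, and $\mathcal E$ is the class of trivial objects; consequently, cofibrations are monomorphisms with cokernel in $\mathrm{dg}\mathcal X$ and fibrations are epimorphisms with kernel in $\mathrm{dg}\mathcal Y$, as claimed.

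It remains to verify that the weak equivalences of this model structure coincide with the quasi-isomorphisms. For one direction, any weak equivalence $f^\bullet$ factors as a trivial cofibration followed by a trivial fibration, i.e.\ as a monomorphism with cokernel in $\widetilde{\mathcal X}\subseteq\mathcal E$ followed by an epimorphism with kernel in $\widetilde{\mathcal Y}\subseteq\mathcal E$; the associated long exact cohomology sequences show that both factors, and hence $f^\bullet$, are quasi-isomorphisms. Conversely, given a quasi-isomorphism $f^\bullet$, factor it as a cofibration $i^\bullet$ followed by a trivial fibration $p^\bullet$; then $p^\bullet$ is a quasi-isomorphism, so $i^\bullet$ is too, and its cokernel lies in $\mathrm{dg}\mathcal X\cap\mathcal E=\widetilde{\mathcal X}$, making $i^\bullet$ a trivial cofibration. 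Hence $f^\bullet$ is a weak equivalence.

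I do not foresee a genuine obstacle: once Proposition~\ref{prop_known_facts_cotors} is in hand, the argument is essentially bookkeeping around Hovey's correspondence. The only place that demands a little care is the identification $\mathcal E\cap\mathrm{dg}\mathcal Y=\widetilde{\mathcal Y}$, which is not a direct hypothesis but an easy consequence of~(1), (2), and part~(3) of the proposition.
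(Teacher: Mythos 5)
Your proof is correct and follows exactly the route the paper intends: the corollary is stated there as a direct consequence of Proposition~\ref{prop_known_facts_cotors} together with Hovey's correspondence, and your argument supplies precisely the missing bookkeeping — thickness of $\E$, the identifications $\E\cap\mathrm{dg}\X=\widetilde\X$ (hypothesis) and $\E\cap\mathrm{dg}\Y=\widetilde\Y$ (via part~(3) of the proposition), and the standard factorization argument showing the weak equivalences are the quasi-isomorphisms.
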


\section{\DG-injective resolutions of complexes via truncations} \label{DG_inj_sec}\label{sec_two}

Let $\A$ be an Abelian category with enough injectives, that is, for all $A\in \A$ there is a monomorphism $A\to E$, with $E\in \Inj(\A)$, and consider the complete cotorsion pair $(\A,\Inj(\A))$ in $\A$. Then,
\begin{itemize}
\item $\widetilde \A=\E$ is the class of exact complexes;
\item $\widetilde {\Inj(\A)}$ is the class of contractible complexes of injectives, that coincides with the class $\Inj(\Ch(\A))$ of injective objects in $\Ch(\A)$ (see \cite[Exercise~2.2.1]{Weibel});
\item $\mathrm{dg}\A=\Ch(\A)$ and $\mathrm{dg}(\Inj(\A))$ is the class of the so-called  {\DG}{-injective complexes}:
\end{itemize}%
\begin{defn}
A complex $X^\bullet\in \Ch(\A)$ is {\DG}{\bf-injective} if it satisfies the following two conditions:
\begin{itemize}
    \item $X^n$ is an injective object in $\A$, for all $n\in \Z$;
    \item the total hom-complex $\text{\sc hom}(E^\bullet,X^\bullet)\in \Ch(\Ab)$ is exact, for any exact complex $E^\bullet\in \E$.
\end{itemize}
\end{defn}
In particular, the cotorsion pairs induced  in $\Ch(\A)$ are $(\Ch(A),\Inj(\Ch(\A)))$, which is complete (see \cite[Exercise~2.2.2]{Weibel}), and $(\E,\mathrm{dg}(\Inj(\A)))$. Furthermore, condition (2) in Corollary~\ref{coro_model_struct_via_hovey} is trivial in this case. Hence, when $\A$ is bicomplete, we get a model structure in $\Ch(\A)$ provided each $X^\bullet\in \Ch(\A)$ has a \DG{\bf -injective resolution}, that is,  a quasi-isomorphism $\lambda^\bullet\colon X^\bullet\to E^\bullet$, with $E^\bullet\in \mathrm{dg}(\Inj(\A))$. In fact, this is equivalent to the fact that, in the short exact sequence 
\begin{equation}\label{eq_tria_left_complete}
0\longrightarrow \Sigma^{-1}E^\bullet\longrightarrow \Sigma^{-1}\cone(\lambda^\bullet)\longrightarrow   X^\bullet \longrightarrow 0,
\end{equation}
 $\cone(\lambda^\bullet)\in \E$ and $E^\bullet\in \mathrm{dg}(\Inj(\A))$, so $(\E,\mathrm{dg}(\Inj(\A)))$ is left complete. Note also that, up to adding a bunch of disc complexes to $E^\bullet$, we may assume that $\lambda^\bullet$ is a monomorphism. Hence, we also get a right approximation sequence $0\to X^\bullet\to E^\bullet\to E^\bullet/X^\bullet\to 0$, showing that $(\E,\mathrm{dg}(\Inj(\A)))$ is complete.

\smallskip
In this section we give a complete proof, based on ideas from \cite{zbMATH06915995}, of the fact that Spalstenstein's construction of \DG-injective resolutions in the category of complexes of modules (over a ring or, more generally, over a sheaf of rings) works in every complete Abelian category $\A$ with enough injectives that satisfies Roos' condition (Ab.$4^*$)-$k$ (for some $k\in \N$).

\subsection{Resolution of bounded-below complexes}\label{bounded_subs}
Consider a complex $X^\bullet\in \Ch^{\geq k}(\A)$ (for some $k\in\Z$), where $\A$ is any Abelian category with enough injectives. In what follows we show how to construct a monomorphic quasi-isomorphism $\lambda^\bullet\colon X^\bullet\to E^\bullet$ with $E^\bullet\in \Ch^{\geq k}(\A)$ and $E^i$ injective for all $i\in\Z$; then this is a \DG-injective resolution, by Proposition~\ref{prop_known_facts_cotors}(1). Suppose for simplicity that $k=0$, that is, we start with $X^\bullet\in \Ch^{\geq0}(\A)$, and we proceed inductively to construct
\[
\lambda^\bullet\colon X^\bullet\longrightarrow E^\bullet:=(\xymatrix@C=18pt{\cdots\ar[r]& 0\ar[r]& E^0\ar[r]^-{e^0}& E^1\ar[r]^-{e^1}&\cdots}).
\] 
Indeed, let $E^{n}=0$, $e^n=0$ and $\lambda^n=0$, for all $n<0$ and, for $n=0$, define $\lambda^0\colon X^0\to E^0$ to be a chosen monomorphism with $E^0\in \Inj(\A)$. Now, given $n\geq 0$, if we have already built $e^{i-1}\colon E^{i-1}\to E^i$ and $\lambda^i\colon X^i\to E^i$, for all $i\leq n$, such that: 
\begin{enumerate}
\item[(1{$_n$})] $\lambda^i\circ d^{i-1}=e^{i-1}\circ \lambda^{i-1}$ for all $i\leq n$;
\item[(2{$_n$})] $e^{i-1}\circ e^{i-2}=0$ for all $i\leq n$;
\item[(3{$_n$})] the map $ H^i(X^\bullet)\to H^i(E^\bullet)$ induced by $\lambda^i$ is an isomorphism for all $i\leq n-1$;
\item[(4{$_n$})] the map $\bar\lambda^n\colon X^n/B^n(X^\bullet)\to E^n/B^n(E^\bullet)$ induced by $\lambda^n$ is a monomorphism;
\end{enumerate}
we consider the following push-out diagram:
\[
\xymatrix{
X^n/B^n(X^\bullet)\ar@{}[dr]|{\text{P.O.}}\ar[d]_-{\bar\lambda^n}\ar[r]^-{\bar d^n}&X^{n+1}\ar[d]^-{\mu^{n+1}}\\
E^n/B^n(E^\bullet)\ar[r]_-{y^n}&P^{n+1}.
}
\]
In particular, the map $\bar\mu^{n+1}\colon (X^{n+1}/B^{n+1}(X^\bullet)\cong)\coker(\bar d^{n})\to \coker(y^n)$ is an isomorphism. 
Moreover, as $\bar\lambda^n$ is a monomorphism by hypothesis, the above square is also a pullback, so that $\bar \lambda^n$ induces an isomorphism between $\Ker(\bar d^n)=H^n(X^\bullet)$ and $\Ker(y^n)$. Choose now a monomorphism $\iota^{n+1}\colon P^{n+1}\to E^{n+1}$, with $E^{n+1}\in\Inj(\A)$. Then, the following square remains a pullback:
\[
\xymatrix@C=40pt{
X^n/B^n(X^\bullet)\ar@{}[dr]|{\text{P.B.}}\ar[d]_-{\bar\lambda^n}\ar[r]^-{\bar d^n}&X^{n+1}\ar[d]^-{\lambda^{n+1}:=\iota^{n+1}\circ\mu^{n+1}}\\
E^n/B^n(E^\bullet)\ar[r]_-{\iota^{n+1}\circ y^n}&E^{n+1}.
}
\]
Finally define $e^n:=\iota^{n+1}\circ y^n\circ\pi^n\colon E^n\to E^{n+1}$, where $\pi^n\colon E^n\to E^n/B^n(E^\bullet)$ is the canonical projection. Conditions (1$_{n+1}$)--(4$_{n+1}$) hold by construction, so we can continue with the induction.

\subsection{Spaltenstein towers of partial resolutions}
Let $\A$ be an Abelian category with enough injectives and fix a complex $X^\bullet\in\Ch(\A)$. We start by considering the following inverse system of successive truncations:
\[
\xymatrix{
\cdots\ar[r]^-{\rho_2^\bullet}&\tau^{\geq-2}(X^\bullet)\ar[r]^-{\rho_1^\bullet}&\tau^{\geq-1}(X^\bullet)\ar[r]^-{\rho_0^\bullet}&\tau^{\geq0}(X^\bullet),
}
\]
where $\rho_n^\bullet$ is the canonical epimorphism. As $\tau^{\geq-n}(X^\bullet)\in \Ch^{\geq-n}(\A)$, the argument of Subsection~\ref{bounded_subs} gives a monomorphic \DG-injective resolution $\lambda_n^\bullet\colon\tau^{\geq-n}(X^\bullet)\to E^\bullet_n$, with $E_n^\bullet\in\Ch^{\geq -n}(\Inj(\mathcal{A}))$,  for all $n\in\mathbb{N}$. We get the following solid diagram:
\begin{equation}\label{inverse_syst_for_res_eq}
\xymatrix@C=30pt@R=18pt{
\cdots\ar[r]^-{\rho_2^\bullet}&\tau^{\geq-2}(X^\bullet)\ar[d]^-{\lambda_2^\bullet}\ar[r]^-{\rho_1^\bullet}&\tau^{\geq-1}(X^\bullet)\ar[d]^-{\lambda_1^\bullet}\ar[r]^-{\rho_0^\bullet}&\tau^{\geq0}(X^\bullet)\ar[d]^-{\lambda_0^\bullet} \\
\cdots\ar@{.>}[r]_-{ t_2^\bullet}&E^\bullet_2\ar@{.>}[r]_-{ t_1^\bullet}&E^\bullet_1\ar@{.>}[r]_-{ t_0^\bullet}&E^\bullet_0;
}
\end{equation}
and we claim that it can be completed in a commutative way by the dotted arrows. Indeed, consider the following short exact sequence for each $n>0$ 
\[
0\rightarrow \tau^{\geq -n}(X^\bullet)\stackrel{\lambda_n^\bullet}{\longrightarrow}E_n^\bullet\longrightarrow\coker(\lambda_n^\bullet)\rightarrow 0,
\]
and apply the functor $\hom_{\Ch(\mathcal{A})}(-,E_{n-1}^\bullet)$ to get the following exact sequence in $\Ab$:  
\[
\hom_{\Ch(\mathcal{A})}(E_n^\bullet,E_{n-1}^\bullet)\stackrel{(\lambda_n^\bullet)^*}{\longrightarrow}\hom_{\Ch(\mathcal{A})}( \tau^{\geq -n}(X^\bullet),E_{n-1}^\bullet)\longrightarrow\Ext_{\Ch(\mathcal{A})}^1(\coker(\lambda_n^\bullet),E_{n-1}^\bullet )=0.
\]
 The last equality to zero is due to the fact $\coker(\lambda_n^\bullet)\in\mathcal{E}$, $E_{n-1}^\bullet\in\mathrm{dg}(\Inj(\mathcal{A}))$ and $(\mathcal{E}, \mathrm{dg}(\Inj(\mathcal{A})))$ is a cotorsion pair in $\Ch(\mathcal{A})$. Then it follows that $\lambda_{n-1}^\bullet\circ\rho_{n-1}^\bullet\in\Im((\lambda_n^\bullet)^*)$, so we have a map $t_{n-1}^\bullet\colon E_n^\bullet\to E_{n-1}^\bullet$, such that $t_{n-1}^\bullet\circ\lambda_n^\bullet =\lambda_{n-1}^\bullet\circ\rho_{n-1}^\bullet$, as desired.

Observe also that, possibly adding a number of disk complexes to our partial resolutions if needed, one can always suppose that $t_n^\bullet$ is a (degree-wise split) epimorphism with $\ker(t_n^\bullet)\in \Ch^+(\Inj(\A))$, whence with \DG-injective kernel, for all $n \in\N$. 
\begin{defn}
A {\bf Spaltenstein  tower} of partial resolutions for $X^\bullet\in\Ch(\A)$ is a commutative diagram like the one in \eqref{inverse_syst_for_res_eq} where $\lambda^\bullet_n$ is a \DG-injective resolution for all $n\in\N$, and $t_n^\bullet$ is a (degree-wise split) epimorphism with \DG-injective kernel, for each $n\in\N$. 
\end{defn}

In particular, in the above discussion we have just shown that:

\begin{prop}\label{prop_existence_towers}
Let $\A$ be an Abelian category with enough injectives. Then any complex in $\Ch(\A)$ has a Spaltenstein  tower of partial resolutions.
\end{prop}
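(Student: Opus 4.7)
My plan is simply to formalize the construction already sketched in the paragraphs leading up to the proposition, organizing it into three clean steps.

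\medskip\noindent
\textbf{Step 1 (partial resolutions).} For the input complex $X^\bullet \in \Ch(\A)$, I would form the canonical inverse tower of left truncations
\[
\cdots\xrightarrow{\rho_2^\bullet}\tau^{\geq -2}(X^\bullet)\xrightarrow{\rho_1^\bullet}\tau^{\geq -1}(X^\bullet)\xrightarrow{\rho_0^\bullet}\tau^{\geq 0}(X^\bullet),
\]
which consists of bounded-below complexes. I would then feed each $\tau^{\geq -n}(X^\bullet)$ into the inductive construction of Subsection~\ref{bounded_subs} to produce a monomorphic \DG-injective resolution $\lambda_n^\bullet\colon\tau^{\geq -n}(X^\bullet)\to E_n^\bullet$ with $E_n^\bullet\in \Ch^{\geq -n}(\Inj(\A))$ (so $E_n^\bullet\in \mathrm{dg}(\Inj(\A))$ by Proposition~\ref{prop_known_facts_cotors}(1)).

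\medskip\noindent
\textbf{Step 2 (connecting maps).} The point is to promote the commuting squares on the level of truncations to squares at the level of resolutions. For each $n>0$ I would consider the short exact sequence
\[
0\longrightarrow \tau^{\geq -n}(X^\bullet)\xrightarrow{\lambda_n^\bullet} E_n^\bullet\longrightarrow\coker(\lambda_n^\bullet)\longrightarrow 0
\]
in $\Ch(\A)$. Since $\lambda_n^\bullet$ is a quasi-isomorphism, $\coker(\lambda_n^\bullet)\in \E$; since $E_{n-1}^\bullet\in\mathrm{dg}(\Inj(\A))$ and $(\E,\mathrm{dg}(\Inj(\A)))$ is a cotorsion pair in $\Ch(\A)$ (as recalled at the start of Section~\ref{sec_two}), applying $\hom_{\Ch(\A)}(-,E_{n-1}^\bullet)$ gives the vanishing $\Ext^1_{\Ch(\A)}(\coker(\lambda_n^\bullet),E_{n-1}^\bullet)=0$. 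Hence the composite $\lambda_{n-1}^\bullet\circ\rho_{n-1}^\bullet\colon\tau^{\geq -n}(X^\bullet)\to E_{n-1}^\bullet$ lifts through $\lambda_n^\bullet$, yielding a chain map $t_{n-1}^\bullet\colon E_n^\bullet\to E_{n-1}^\bullet$ making diagram~\eqref{inverse_syst_for_res_eq} commute.

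\medskip\noindent
\textbf{Step 3 (splitting the transition maps).} The resulting $t_n^\bullet$ need not be epic nor have \DG-injective kernel, so I would correct them by the standard trick of adding disk complexes. Concretely, I would replace $E_n^\bullet$ by $\widetilde E_n^\bullet := E_n^\bullet\oplus\bigoplus_{i\geq n+1} D^i(E_{n-1}^i)$ (or an analogous construction using disks based on the relevant $E_{n-1}^i$'s in nonnegative enough degrees), extend $\lambda_n^\bullet$ by zero in the new summands, and modify $t_{n-1}^\bullet$ so that in each degree it becomes the projection off these disks composed with the original lift. Since disk complexes are acyclic and injective in each degree, this does not disturb the fact that $\lambda_n^\bullet$ remains a monomorphic quasi-isomorphism into a \DG-injective complex, while ensuring that $t_{n-1}^\bullet$ is a degreewise split epimorphism with \DG-injective kernel (the kernel being a bounded-below complex of injectives). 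The only mildly delicate point is bookkeeping the disk summands consistently across the whole tower; everything else is routine. Once Step 3 is done, diagram~\eqref{inverse_syst_for_res_eq} is exactly a Spaltenstein tower of partial resolutions for $X^\bullet$.
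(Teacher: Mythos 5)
Your proposal follows the paper's argument step for step: the paper's proof of this proposition is precisely the discussion preceding it (truncate, resolve each truncation via Subsection~\ref{bounded_subs}, lift the connecting maps using $\Ext^1_{\Ch(\A)}(\coker(\lambda_n^\bullet),E_{n-1}^\bullet)=0$ from the cotorsion pair $(\E,\mathrm{dg}(\Inj(\A)))$, then add disks), so your Steps 1 and 2 are identical to the paper's. The one thing to fix is the formula in Step 3: the corrected transition map must be the \emph{sum} of $t_{n-1}^\bullet$ with the canonical surjections $D^i(E_{n-1}^i)\to E_{n-1}^\bullet$ adjoint to $\id_{E_{n-1}^i}$ (with the disks ranging over $i\geq -n+1$, the degrees where $E_{n-1}^\bullet$ can be nonzero, rather than $i\geq n+1$), not ``the projection off these disks composed with the original lift'' --- the latter is just $t_{n-1}^\bullet$ extended by zero and need not be surjective. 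With that correction the argument is complete, and the bookkeeping concern you raise disappears if you perform the correction inductively, at each stage lifting $\lambda_{n}^\bullet\circ\rho_{n}^\bullet$ into the already-corrected $\widetilde E_{n-1}^\bullet$, which is still \DG-injective.
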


\subsection{Injective resolutions and the (Ab.$4^*$)-$k$ condition}\label{unbounded_subs}
The goal of this subsection is to prove that, under suitable conditions, the inverse limit of a Spaltenstein  tower of partial resolutions actually produces a \DG-injective resolution. We start with  two  easy observations:

\begin{lem}\label{lem_cohomologies_of_lim_vs_prod}
Let $\A$ be a complete Abelian category and consider a sequence of complexes 
\[
\xymatrix{
\cdots\ar[r]^-{t^\bullet_2}&X^\bullet_2\ar[r]^-{t^\bullet_1}&X^\bullet_1\ar[r]^-{t^\bullet_0}&X^\bullet_0
}
\]
such that, for each $n\in\N$, the chain map $t^\bullet_n$ is degree-wise a split-epimorphism. Then, there is the following degree-wise split-exact sequence in $\Ch(\A)$:
\[
\xymatrix{
0\ar[r]&\varprojlim_\N X^\bullet_n\ar[r]&\prod_\N X^\bullet_n\ar[r]^-{1-t}&\prod_\N X^\bullet_n\ar[r]&0,
}
\]
where the map $1-t$ is described by a matrix with identities on the main diagonal, $-t^\bullet_n$ as the $n$-th entry of the first superdiagonal, and $0$'s everywhere else.
\end{lem}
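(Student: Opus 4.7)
The plan is to reduce the statement to a claim in $\A$, carried out separately in each degree. Specifically, for each $i\in\Z$, I will show that if $(A_n,t_n)_{n\in\N}$ is an inverse system in $\A$ whose transition maps $t_n\colon A_{n+1}\to A_n$ are all split epimorphisms, then the sequence $0\to\varprojlim_\N A_n\to\prod_\N A_n\xrightarrow{1-t}\prod_\N A_n\to 0$ is split exact in $\A$, where $t$ is the endomorphism of $\prod_\N A_n$ whose components are $\pi_n\circ t:=t_n\circ\pi_{n+1}$. Once established, applying this componentwise to $A_n=X_n^i$ (using that limits, products, and the shift map in $\Ch(\A)$ are all computed in each degree) immediately assembles into the desired degreewise split-exact sequence in $\Ch(\A)$.

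For the kernel identification, I would invoke the standard description of $\varprojlim_\N A_n$ as the equalizer of $\id$ and $t$, equivalently as $\ker(1-t)$; this is just the definition of the inverse limit transcribed into the abelian category. The only substantive point is therefore to produce a section of $1-t$, which will simultaneously give right exactness and splitting in $\A$.

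Using the hypothesis, I would choose for each $n\in\N$ a morphism $s_n\colon A_n\to A_{n+1}$ with $t_n\circ s_n=\id_{A_n}$; these sections need not be compatible as $n$ varies. Then I would define $\sigma\colon\prod_\N A_n\to\prod_\N A_n$ componentwise by $\pi_0\circ\sigma:=0$ and, for $n\geq 1$,
\[
\pi_n\circ\sigma \;:=\; -\sum_{k=0}^{n-1}(s_{n-1}\circ s_{n-2}\circ\cdots\circ s_k)\circ\pi_k.
\]
Each $\pi_n\circ\sigma$ is a \emph{finite} sum of morphisms in $\A$, so it defines a bona fide morphism $\sigma$ by the universal property of the product. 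A direct computation exploiting the identities $t_j\circ s_j=\id$ telescopes the sums: for the $n$-th component one finds $\pi_n\circ(1-t)\circ\sigma=\pi_n\circ\sigma-t_n\circ\pi_{n+1}\circ\sigma=\pi_n$, whence $(1-t)\circ\sigma=\id$. This proves that $1-t$ is split epi with splitting $\sigma$, and combined with the kernel description yields the desired split-exact sequence in $\A$.

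The main ``obstacle'' here is purely formal: one must take care that $\sigma$ is defined as an honest morphism in the abelian category rather than via element-wise or series-like manipulations, but this is handled by the observation that each $\pi_n\circ\sigma$ involves only finitely many summands. I want to stress that the hypothesis that $t^\bullet_n$ splits \emph{degreewise} (rather than as a chain map) is exactly what the argument needs and all it needs, since the construction of $\sigma$ is carried out independently in each degree and never requires the sections $s_n^i$ to assemble across $i$ into chain maps.
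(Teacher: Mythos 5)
Your proposal is correct and follows essentially the same route as the paper: the limit is identified with $\ker(1-t)$ via its universal property, and the section $\sigma$ you define componentwise by $\pi_n\circ\sigma=-\sum_{k=0}^{n-1}(s_{n-1}\circ\cdots\circ s_k)\circ\pi_k$ is precisely the row-finite lower-triangular matrix of composites of sections that the paper writes down as the right inverse of $1-t$ in each degree. The telescoping verification and the remark that only degreewise (not chain-level) splittings are needed also match the paper's argument.
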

\begin{proof}
The universal property defining $\varprojlim_\N X^\bullet_n$ is clearly equivalent to the universal property defining the kernel of $1-t$. Moreover, to prove that $1-t$ is a degree-wise split-epimorphism, it is enough to find, for each $k\in\Z$, a right-inverse for the following matrix:
\[
\SmallMatrix{
\id_{X^k_0}&-t_0^k&0&0&0&\ldots\\
0&\id_{X^k_1}&-t_1^k&0&0&\ldots\\
\vdots&\ddots&\ddots&\ddots&\ddots&\ddots}
\]
Fix, for each $n\in \N$ and $k\in \Z$, a split-monomorphism $s^k_n\colon X^k_n\to X^{k}_{n+1}$ such that $t^k_n\circ s^k_n=\id_{X^k_n}$. Then, the right-inverse we are looking for is the following row-finite matrix:
\[
\SmallMatrix{
0&0&0&0&0&\ldots\\
-s_0^k&0&0&0&0&\ldots\\
-s^k_1\circ s_0^k&-s_1^k&0&0&0&\ldots\\
-s^k_2\circ s^k_1\circ s_0^k&-s^k_2\circ s_1^k&-s^k_2&0&0&\ldots\\
\vdots&\vdots&\vdots&\ddots&\ddots&\ddots}
\qedhere\]
\end{proof}

\begin{lem}\label{degree-wise_iso_and_prod_lem}
Let $\A$ be a complete Abelian category and fix some integer $k\in \Z$. Consider also two families of complexes $(X^\bullet_n)_{n\in\N}$, $(Y^\bullet_n)_{n\in\N}\subseteq \Ch(\A)$, and maps of complexes $\phi^\bullet_n\colon X^{\bullet}_n\to Y^\bullet_n$ (for each $n\in \N$), such that $\phi^{i}_n$ is an isomorphism for all $i\geq k$. Letting
\[
\xymatrix{\phi^\bullet:=\prod_\N\phi^\bullet_n\colon \prod_\N X^\bullet_n\longrightarrow \prod_\N Y^\bullet_n },
\]
$\phi^i$ is an isomorphism for all $i\geq k$. In particular, $H^i( \prod_\N X^\bullet_n)\cong H^i( \prod_\N Y^\bullet_n)$, for all $i>k$. 
\end{lem}
Let us point out that, in the above lemma, we are just claiming that there is an isomorphism for the $i$-th cohomologies of the products for all $i>k$, but we do not exclude that this may actually fail for $i=k$, even if $H^k(X^\bullet_n)\cong H^k(Y^\bullet_n)$, for all $n\in\N$. In fact,  for products in $\Ch(\A)$ to commute with cohomologies, one needs to assume that products are exact in $\A$ which, of course, is not always the case (see, for example, the category $\G$ that we will describe in Section~\ref{sec_three}).
\begin{proof}
As products are built component-wise, the result follows from the observation that the class of isomorphisms in $\A$ is closed under products. 
\end{proof}

The following definition was introduced by Roos \cite[Definition~1.1]{Roos}:

\begin{defn}\label{def_original_roos}
A complete Abelian category $\A$ with enough injectives is (Ab.$4^*$)-$k$, for some $k\in \N$ if, for any family $(A_\lambda)_{ \Lambda}\subseteq \A$, the $n$-th derived functor $\prod_\Lambda^{(n)}A_\lambda$ vanishes for all $n>k$.
\end{defn}
In particular, $\A$ is (Ab.$4^*$)-$0$ if and only if it satisfies Grothendieck's axiom (Ab.$4^*$), that is, products are exact in $\A$. In the following lemma we show a direct consequence of the \mbox{(Ab.$4^*$)-$k$} condition, which is precisely what is needed for the construction of \DG-injective resolutions:
\begin{lem}\label{useful_form_of_ab4*_k_lem}
Let $\A$ be a complete (Ab.$4^*$)-$k$ Abelian category (for some $k\in \N$) with enough injectives, and let $(X^\bullet_n)_{n\in \N}$ be a family of bounded below complexes that satisfies the following conditions:
\begin{enumerate}
\item $X^i_n$ injective for all $i\in\Z$ and $n\in \N$ (so each $K^\bullet_n$ is \DG-injective);
\item there is an integer $h\in\Z$ such that $H^i(X^\bullet_n)=0$ for all $n\in\N$ and all $i\geq h$.
\end{enumerate}
Then, $H^i\left(\prod_{\N}X_n^\bullet\right)=0$, for all $i>h+k+1$.
\end{lem}
\begin{proof}
Observe that $(\tau^{\geq h}X^\bullet_n)_{\N}$ is a family of injective resolutions for
 $(S^h(X^h_n/B^h(X_n^\bullet)))_{n\in\N}$. Hence, the $h+i$-th cohomology of $\prod_\N \tau^{\geq h}(X^\bullet_n)$ is precisely the $i$-th derived functor $\prod_{n\in\N}^{(i)}(X^h_n/B^h(X_n^\bullet))$. In particular, by Definition~\ref{def_original_roos}, we deduce that $H^i\left(\prod_{\N}\tau^{\geq h}X_n^\bullet\right)=0$, for all $i>h+k$. Moreover, for each $n\in\N$, the canonical map $X^\bullet_n\to \tau^{\geq h}X^\bullet_n$ is an isomorphism in all degrees $>h$. We can now conclude by the first part and Lemma~\ref{degree-wise_iso_and_prod_lem}.
\end{proof}

Combining the above lemma with Lemma~\ref{lem_cohomologies_of_lim_vs_prod}, we deduce the following:

\begin{prop}\label{prop_cohom_vanish}
Let $\A$ be a complete (Ab.$4^*$)-$k$ Abelian category (for some $k\in \N$) with enough injectives, and let $K^\bullet:=\varprojlim_\N K^\bullet_n$ be the limit of the following sequence of bounded below complexes:
\[
\xymatrix{
\cdots \ar[r]&K_2^\bullet\ar[r]^{t_1^\bullet}&K_1^\bullet\ar[r]^{t_0^\bullet}&K^\bullet_0.
}
\]
We suppose that this sequence satisfies the following conditions:
\begin{enumerate}
\item $K^i_n$ is injective for all $i\in\Z$ and $n\in \N$ (so each $K^\bullet_n$ is \DG-injective);
\item $t_n^\bullet$ is a degree-wise split-epimorphism (i.e., an $\Inj(\A)$-fibration, see Definition~\ref{I_mod_st_deff}), for all $n\in \N$;
\item there is an integer $h\in\Z$ such that $H^i(K^\bullet_n)=0$ for all $n\in\N$ and all $i\geq h$.
\end{enumerate}
Then, $H^i(K^\bullet)=0$, for all $i>h+k+2$.
\end{prop}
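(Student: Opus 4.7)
The plan is to reduce the vanishing of cohomology of the inverse limit to that of cohomology of the product, which we can then control via the (Ab.$4^*$)-$k$ condition.

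First, I would invoke Lemma~\ref{lem_cohomologies_of_lim_vs_prod}: because each $t^\bullet_n$ is degreewise split, there is a degreewise split-exact sequence
\[
0\longrightarrow \varprojlim_{\N} K^\bullet_n\longrightarrow \prod_{\N} K^\bullet_n \xrightarrow{\;1-t\;} \prod_{\N} K^\bullet_n\longrightarrow 0
\]
in $\Ch(\A)$. Being degreewise split, this is a short exact sequence of complexes, and hence gives rise to a long exact sequence in cohomology
\[
\cdots\longrightarrow H^{i-1}\!\bigl(\prod\nolimits_{\N} K^\bullet_n\bigr)\longrightarrow H^i\!\bigl(\varprojlim\nolimits_{\N} K^\bullet_n\bigr)\longrightarrow H^i\!\bigl(\prod\nolimits_{\N} K^\bullet_n\bigr)\longrightarrow \cdots
\]
Consequently, if the cohomology of the product $\prod_{\N}K^\bullet_n$ vanishes in degrees $i-1$ and $i$, then so does the cohomology of the inverse limit in degree $i$.

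The next step is to control $H^i(\prod_{\N}K^\bullet_n)$ using Lemma~\ref{useful_form_of_ab4*_k_lem}. Each $K^\bullet_n$ has injective components in every degree and satisfies $H^i(K^\bullet_n)=0$ for all $i\geq h$, hence in particular for all $i>h-1$. Applying Lemma~\ref{useful_form_of_ab4*_k_lem} to the family $(K^\bullet_n)_{n\in\N}$ with parameter $h-1$ in place of $h$ and using the (Ab.$4^*$)-$k$ hypothesis, I obtain
\[
H^i\!\bigl(\prod\nolimits_{\N} K^\bullet_n\bigr)=0 \qquad \text{for all } i>(h-1)+k+1=h+k.
\]
Combining this with the previous long exact sequence, both $H^{i-1}$ and $H^i$ of the product vanish as soon as $i-1>h+k$, which is certainly implied by $i>h+k+2$. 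The desired conclusion $H^i(\varprojlim_{\N} K^\bullet_n)=0$ for all $i>h+k+2$ then follows immediately.

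The two main things to be careful about are the hypotheses of Lemma~\ref{useful_form_of_ab4*_k_lem} (which require that both the injectivity of the components and the vanishing of cohomology hold past the same threshold, forcing a shift by one in the index $h$) and the fact that the sequence in Lemma~\ref{lem_cohomologies_of_lim_vs_prod} is genuinely short exact as a sequence of complexes, which uses the degreewise split-epi assumption on the $t^\bullet_n$. Neither step requires any further subtlety; the (Ab.$4^*$)-$k$ condition is being used only through its packaged form in Lemma~\ref{useful_form_of_ab4*_k_lem}, and the $\Ch^+(\A)$ assumption guarantees that each truncation $\tau^{\geq h}(K^\bullet_n)$ is really a (bounded-below) injective resolution.
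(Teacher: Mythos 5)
Your proof is correct and follows exactly the paper's argument: the Milnor-type short exact sequence from Lemma~\ref{lem_cohomologies_of_lim_vs_prod}, the vanishing $H^i(\prod_\N K_n^\bullet)=0$ for $i\geq h+k+1$ via Lemma~\ref{useful_form_of_ab4*_k_lem} (your explicit shift to $h-1$ is precisely what the paper's application implicitly does), and the long exact cohomology sequence. Nothing to add.
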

\begin{proof}
By Lemma~\ref{lem_cohomologies_of_lim_vs_prod} and condition (2), there is a short exact sequence
\begin{equation}\label{milnor_ses_for_K_eq}
\xymatrix{
0\ar[r]&\varprojlim_{\N} K_n^\bullet\ar[r]&\prod_{\N} K_n^\bullet\ar[r]&\prod_{\N} K_n^\bullet\ar[r]&0.
}
\end{equation}
Furthermore, by Lemma~\ref{useful_form_of_ab4*_k_lem} and conditions (1) and (3),  $H^i(\prod_{\N} K_n^\bullet)=0$ for all $i> h+k+1$. To conclude, consider the long exact sequence in cohomologies induced by \eqref{milnor_ses_for_K_eq}, and use the vanishing of cohomologies for the product to deduce that $H^i(K^\bullet)=0$, for all $i> h+k+2$.
\end{proof}

Let us also record the following useful consequence:

\begin{cor}\label{cor_unbound_res}
Let $\A$ be a complete (Ab.$4^*$)-$k$ Abelian category (for some $k\in \N$) with enough injectives, and let $E^\bullet:=\varprojlim_\N E^\bullet_n$ be the limit of the following sequence of bounded below complexes:
\[
\xymatrix{
\cdots \ar[r]&E_2^\bullet\ar[r]^{t_1^\bullet}&E_1^\bullet\ar[r]^{t_0^\bullet}&E^\bullet_0.
}
\]
We suppose that this sequence satisfies the following conditions:
\begin{enumerate}
\item $E^i_n$ is injective for all $i\in\Z$ and $n\in \N$ (so each $E^\bullet_n$ is \DG-injective);
\item $t_n^\bullet$ is a degree-wise split-epimorphism, for all $n\in \N$ (i.e., each $t^\bullet_n$ is an $\Inj(\A)$-fibration);
\item $H^i(t_n^\bullet)\colon H^i(E_{n+1}^\bullet)\to H^i(E_n^\bullet)$ is an isomorphism, for all $n\in \N$ and $i\geq -n$;
\item $H^{-n-1}(E_n^\bullet)=0$, for all $n\in \N$.
\end{enumerate}
Then, $H^i(E^\bullet)\cong H^i(E_h^\bullet)$, for all $h\in \N$ and $i\geq  -h+k+3$.
\end{cor}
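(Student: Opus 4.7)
Fix $h\in\N$. The strategy is to compare $E^\bullet$ with $E^\bullet_h$ by studying the kernel of the projection map and applying Proposition~\ref{prop_cohom_vanish}. For $n\geq h$, let $p_n^\bullet\colon E^\bullet_n\to E^\bullet_h$ denote the composition $t^\bullet_h\circ t^\bullet_{h+1}\circ\cdots\circ t^\bullet_{n-1}$, which by condition~(2) is a degreewise split-epimorphism, and set $K^\bullet_n:=\ker(p_n^\bullet)$. Each $K^\bullet_n$ is then a degreewise direct summand of $E^\bullet_n$, hence a bounded-below complex of injectives (so DG-injective by Proposition~\ref{prop_known_facts_cotors}(1)), and the induced maps $K^\bullet_{n+1}\to K^\bullet_n$ are degreewise split-epimorphisms. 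Taking limits along $n\geq h$, and using that degreewise-split inverse systems are preserved (so $\varprojlim$ commutes with the kernel), I get a short exact sequence
\begin{equation}\label{eq_plan_ses}
0\longrightarrow K^\bullet \longrightarrow E^\bullet \longrightarrow E^\bullet_h\longrightarrow 0,
\end{equation}
with $K^\bullet:=\varprojlim_{n\geq h} K^\bullet_n$.

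Next I compute the cohomologies of each $K^\bullet_n$. From the short exact sequence $0\to K^\bullet_n\to E^\bullet_n\xrightarrow{p_n^\bullet} E^\bullet_h\to 0$ and the long exact sequence in cohomology, it suffices to observe that $H^i(p_n^\bullet)$ is an isomorphism for every $i\geq -h$: indeed, by condition~(4) each of the factors $t^\bullet_m$ (with $h\leq m\leq n-1$) induces an isomorphism on $H^i$ for $i\geq -m$, and $-m\leq -h$. Combined with condition~(3), which gives $H^{-h-1}(E^\bullet_h)=0$, the long exact sequence then forces $H^i(K^\bullet_n)=0$ for all $i\geq -h$ and all $n\geq h$.

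Now I apply Proposition~\ref{prop_cohom_vanish} to the inverse system $(K^\bullet_n)_{n\geq h}$ (reindexed over $\N$): conditions (1), (2) and (3) of that proposition hold by construction, with the integer ``$h$'' of the proposition replaced by $-h$. The conclusion yields $H^i(K^\bullet)=0$ for all $i>-h+k+2$, i.e.\ for all $i\geq -h+k+3$.

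Finally, I feed this back into the long exact cohomology sequence associated to \eqref{eq_plan_ses}:
\[
\cdots\to H^i(K^\bullet)\to H^i(E^\bullet)\to H^i(E^\bullet_h)\to H^{i+1}(K^\bullet)\to\cdots
\]
For $i\geq -h+k+3$ both $H^i(K^\bullet)$ and $H^{i+1}(K^\bullet)$ vanish, so the projection induces an isomorphism $H^i(E^\bullet)\cong H^i(E^\bullet_h)$, as required. The only mildly delicate point in the argument is ensuring that \eqref{eq_plan_ses} is honestly a short exact sequence in $\Ch(\A)$, but this follows because degreewise-split surjections give rise to Mittag--Leffler systems whose $\varprojlim^1$ vanishes degreewise; everything else is just keeping track of indices.
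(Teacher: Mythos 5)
Your argument is correct and follows essentially the same route as the paper: form the kernels $K_n^\bullet$ of the composite split epimorphisms $E_n^\bullet\to E_h^\bullet$, show $H^i(K_n^\bullet)=0$ for $i\geq -h$ from conditions (3) and (4) via the long exact sequence, pass to the limit short exact sequence $0\to\varprojlim K_n^\bullet\to E^\bullet\to E_h^\bullet\to 0$, and invoke Proposition~\ref{prop_cohom_vanish}. The only difference is that you spell out a couple of points the paper leaves implicit (that the induced maps $K_{n+1}^\bullet\to K_n^\bullet$ are degreewise split epimorphisms, and why the limit sequence remains exact), which is harmless.
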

\begin{proof}
Fix an $h\in \N$ and, for each $n>h$, consider the composition 
$
c_n^\bullet:=t_{h}^{\bullet}\circ\ldots\circ t_{n-1}^{\bullet}\colon E^\bullet_n\to E_h^\bullet.
$ 
By (2), each $c_n^\bullet$ is a degree-wise split-epimorphism, so $K^\bullet_n:=\Ker(c_n^\bullet)$ is \DG-injective and it fits into a degree-wise split-exact sequence:
$0\to K_n^\bullet\to E^\bullet_n\to E_h^\bullet\to 0$.
Consider the associated long exact sequence of cohomologies and deduce by (3) and (4) that $H^i(K_n^\bullet)=0$, for all $i\geq -h$. Taking the limit for $n>h$, we get the following short exact sequence:
\begin{equation}\label{ses_def_K_eq}
\xymatrix{
0\ar[r]&\varprojlim_{n>h} K_n^\bullet\ar[r]&E^\bullet\ar[r]&E_h^\bullet\ar[r]&0,
}
\end{equation}
where the central term is $E^\bullet$ since $\N_{>h}$ is cofinal in $\N$. Consider now the long exact sequence in cohomologies induced by \eqref{ses_def_K_eq}; by Proposition~\ref{prop_cohom_vanish}, $H^i(\varprojlim_{\N}K_n^\bullet)=0$ for all $i>-h+k+2$, so that $H^i(E^\bullet)\cong H^i(E_h^\bullet)$ for all $i\geq  -h+k+3$, as desired.
\end{proof}

Finally, we are able to prove the main result of this section:

\begin{thm}\label{main_unbounded_dg_thm}
Let $\A$ be a complete (Ab.$4^*$)-$k$ (for some $k\in\N$) Abelian category with enough injectives and let $X^\bullet\in \Ch(\A)$. Then, there is a Spaltenstein tower of partial resolutions of $X^\bullet$:
\[
\xymatrix@C=30pt@R=18pt{
\cdots\ar[r]^-{}&\tau^{\geq-2}(X^\bullet)\ar[d]^-{\lambda_2^\bullet}\ar[r]^-{}&\tau^{\geq-1}(X^\bullet)\ar[d]^-{\lambda_1^\bullet}\ar[r]^-{}&\tau^{\geq0}(X^\bullet)\ar[d]^-{\lambda_0^\bullet} \\
\cdots\ar[r]_-{ t_2^\bullet}&E^\bullet_2\ar[r]_-{ t_1^\bullet}&E^\bullet_1\ar[r]_-{ t_0^\bullet}&E^\bullet_0. 
}
\]
Define  $E^\bullet:=\varprojlim_\N E^\bullet_n$, and let $\lambda^\bullet:=\varprojlim_\N\lambda_n^\bullet\colon  X^\bullet\cong \varprojlim_\N \tau^{\geq-n}(X^\bullet)\to E^\bullet$ be the induced map. Then, $\lambda^\bullet\colon X^\bullet \to E^\bullet$ is a \DG-injective resolution.
\end{thm}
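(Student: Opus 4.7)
The plan is to invoke Proposition~\ref{prop_existence_towers} to produce the Spaltenstein tower $(\lambda_n^\bullet, t_n^\bullet)_{n\in\N}$, and then verify the two properties required of $\lambda^\bullet\colon X^\bullet\to E^\bullet$: that $E^\bullet$ is \DG-injective, and that $\lambda^\bullet$ is a quasi-isomorphism. The existence of the tower and the identification $X^\bullet\cong\varprojlim_\N\tau^{\geq-n}(X^\bullet)$ are already in hand, so the entire task is to extract information about the limit.

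For \DG-injectivity, I would first observe that each $E_n^\bullet$ is a bounded below complex of injectives, hence in $\mathrm{dg}(\Inj(\A))$ by Proposition~\ref{prop_known_facts_cotors}(1). By the definition of a Spaltenstein tower, every $t_n^\bullet$ is a (degreewise split) epimorphism whose kernel is \DG-injective. The cotorsion pair $(\A,\Inj(\A))$ is trivially complete (take $0\to 0\to A\to A\to 0$ as a left approximation), so Proposition~\ref{prop_known_facts_cotors}(2) applies to the induced cotorsion pair $(\widetilde{\A},\mathrm{dg}(\Inj(\A)))=(\E,\mathrm{dg}(\Inj(\A)))$ in $\Ch(\A)$ and yields $E^\bullet=\varprojlim_\N E_n^\bullet\in\mathrm{dg}(\Inj(\A))$.

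For the quasi-isomorphism property, I would fix an arbitrary $i\in\Z$ and choose $h\in\N$ with $h\geq -i+k+3$, so that $i\geq -h+k+3$. Corollary~\ref{cor_unbound_res} applies to the tower $(E_n^\bullet)_{n\in\N}$, since each $E_n^\bullet$ is \DG-injective, each $t_n^\bullet$ is degreewise split epi, each $\lambda_n^\bullet$ is a quasi-isomorphism so its hypotheses (3) and (4) on cohomology follow from the corresponding properties of the truncations $\tau^{\geq-n}(X^\bullet)\to\tau^{\geq -n+1}(X^\bullet)$ passed through $\lambda_n^\bullet$. Hence the canonical projection $\pi_h^\bullet\colon E^\bullet\to E_h^\bullet$ induces an isomorphism $H^i(E^\bullet)\xrightarrow{\cong}H^i(E_h^\bullet)$. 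On the other hand, the tower is commutative, so $\pi_h^\bullet\circ\lambda^\bullet$ coincides with the composite $X^\bullet\to\tau^{\geq-h}(X^\bullet)\xrightarrow{\lambda_h^\bullet}E_h^\bullet$. Since $\lambda_h^\bullet$ is a quasi-isomorphism and the truncation $X^\bullet\to\tau^{\geq-h}(X^\bullet)$ is an isomorphism on $H^i$ for $i>-h$, which holds as $k\geq0$, this composite induces an isomorphism $H^i(X^\bullet)\xrightarrow{\cong}H^i(E_h^\bullet)$. Combining the two isomorphisms gives that $H^i(\lambda^\bullet)$ is an isomorphism, and as $i$ was arbitrary, $\lambda^\bullet$ is a quasi-isomorphism.

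The entire conceptual weight of the argument is carried by Corollary~\ref{cor_unbound_res}, which in turn rests on Proposition~\ref{prop_cohom_vanish} and ultimately on the (Ab.$4^*$)-$k$ hypothesis via Lemma~\ref{useful_form_of_ab4*_k_lem}; once the cohomology of the limit is controlled in a range that depends linearly on $h$, the proof becomes a bookkeeping exercise. The only potentially subtle point is ensuring that the comparison map $\pi_h^\bullet\circ\lambda^\bullet$ really does agree with $\lambda_h^\bullet$ composed with the truncation, which is immediate from the universal property of $\varprojlim$ applied to the commutative squares of the tower.
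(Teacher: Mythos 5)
Your proposal is correct and follows essentially the same route as the paper's own proof: existence of the tower via Proposition~\ref{prop_existence_towers}, \DG-injectivity of the limit via Proposition~\ref{prop_known_facts_cotors}(2), and the quasi-isomorphism property by applying Corollary~\ref{cor_unbound_res} for every $h$ and letting $h\to\infty$. The only difference is that you spell out the verification of hypotheses (3)--(4) of Corollary~\ref{cor_unbound_res} and the compatibility $\pi_h^\bullet\circ\lambda^\bullet=\lambda_h^\bullet\circ(\text{truncation})$, which the paper leaves implicit.
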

\begin{proof}
The existence of Spaltenstein towers follows by Proposition~\ref{prop_existence_towers}. Furthermore, by Proposition~\ref{prop_known_facts_cotors}(2), $E^\bullet$ is \DG-injective, so it is enough to prove that $\lambda^\bullet$ is a quasi-isomorphism. Observe that, for each $h\in\N$, Corollary~\ref{cor_unbound_res} implies that $H^i(E^\bullet)\cong H^i(E_h^\bullet)\cong H^i(X^\bullet)$, for all $i\geq  -h+k+3$. As this holds for all $h\in \N$, we conclude that $H^i(E^\bullet)\cong H^i(X^\bullet)$ for all $i\in\Z$.
\end{proof}
 
\section{The ``go-to'' counter-example for unbounded resolutions}\label{poison_sec}\label{sec_three}
The goal of this section is to construct a suitable bicomplete Abelian category with enough injectives $\G$ (that, in fact, will be a Grothendieck category) which is not (Ab.$4^*$)-$k$ for any $k\in \N$, and a particular unbounded complex $X^\bullet\in \Ch(\G)$ such that the construction discussed in Section~\ref{DG_inj_sec} fails to produce a \DG-injective resolution for $X^\bullet$,  showing that the (Ab.$4^*$)-$k$ condition is needed for that construction. 
As we will see in Sections~\ref{CE_sec}, \ref{Saneblidze_sec} and \ref{DY_sec}, the  complex $X^\bullet\in \Ch(\G)$ can be used to show that several other constructions of \DG-injective resolutions fail in general.

\subsection{Construction of the category $\G$}
We start recalling Nagata's construction \cite[Example~1, Appendix~A.1]{Nagata} of a commutative Noetherian ring of infinite Krull dimension. Let $\mathbf k$ be a field, consider the polynomial ring on countably many variables 
\[
\mathbf k[\underline x]:=\mathbf k[x_0,x_1, x_2, \ldots],
\] 
and the following sequence of prime ideals in $\mathbf k[\underline x]$:
\[
\mathfrak p_1: = (x_0,x_1),\ \mathfrak p_2: = (x_2, x_3,x_4),\ \mathfrak p_3 := (x_5, x_6, x_7, x_8),\ \ldots
\] 
where the depth of $\mathfrak p_i$ is  $i+1$, for all $i\geq 1$. Let $S$  be the multiplicative set of those elements of $\mathbf k[\underline x]$ which are not in any of the $\mathfrak p_i$'s, and define $R$ as the ring of $S$-fractions of $\mathbf k[\underline x]$, that is,
\[
S:=\mathbf k[\underline x]\setminus\bigcup_{i=1}^{\infty}\mathfrak p_i\quad\text{and}\quad R:= S^{-1}(\mathbf k[\underline x]).
\]
\begin{lem}[{\cite[Example~1, Appendix~A.1]{Nagata}}]
With the above notation, $R$ is a commutative Noetherian ring of infinite Krull dimension. In fact, the maximal ideals of $R$ are of the form $\mathfrak m_i := S^{-1}\mathfrak p_i$, with $i\geq 1$, and this is a sequence of ideals of strictly increasing height.
\end{lem}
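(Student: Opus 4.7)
The plan is to verify three claims in sequence -- (a) the maximal ideals of $R$ are exactly the $\mathfrak m_i:=S^{-1}\mathfrak p_i$, (b) $\mathrm{ht}(\mathfrak m_i)=i+1$ (so the Krull dimension of $R$ is infinite), and (c) $R$ is Noetherian -- with Nagata's classical criterion (a commutative ring is Noetherian provided every localization at a maximal ideal is Noetherian and every nonzero element is contained in only finitely many maximal ideals) doing the bulk of the work for (c).

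For (a), I would start from the standard bijection between primes of $R=S^{-1}\mathbf k[\underline x]$ and primes $\mathfrak q\subseteq \mathbf k[\underline x]$ disjoint from $S$, i.e., those with $\mathfrak q\subseteq \bigcup_i \mathfrak p_i$. Writing $V_i$ for the (finite, pairwise disjoint) set of variables generating $\mathfrak p_i$, any nonzero $f\in \mathbf k[\underline x]$ uses only finitely many variables, so $f$ lies in $\mathfrak p_i$ for only finitely many $i$ (precisely those with $V_i\cap\mathrm{supp}(f)\neq\emptyset$). Assuming $\mathbf k$ infinite (one may enlarge it if needed), the key lemma is that any prime $\mathfrak q\subseteq\bigcup_i\mathfrak p_i$ is in fact contained in a single $\mathfrak p_i$: otherwise, fix $0\neq f\in\mathfrak q$, let $\{i_1,\ldots,i_k\}$ be the finite set of indices with $f\in\mathfrak p_{i_j}$, and use finite prime avoidance to produce $g\in\mathfrak q$ with $g\notin \mathfrak p_{i_j}$ for all $j$. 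The pencil $\{f+\alpha g:\alpha\in\mathbf k\}$ lies in $\mathfrak q$, and each of its elements can be caught only by the finitely many $\mathfrak p_l$ with $V_l$ meeting $\mathrm{supp}(f)\cup\mathrm{supp}(g)$; so by pigeonhole, two distinct values of $\alpha$ send $f+\alpha g$ into the same $\mathfrak p_l$, forcing $g$ (and hence $f$) into $\mathfrak p_l$, a contradiction. Pairwise incomparability of the $\mathfrak p_i$'s (again by variable-disjointness) then pins the maximal ideals of $R$ down to exactly the $\mathfrak m_i$.

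For (b) and condition (i) of Nagata's criterion, I would observe that $\mathbf k[\underline x]_{\mathfrak p_i}\cong K_i[V_i]_{(V_i)}$, where $K_i:=\mathbf k(\{x_j:x_j\notin V_i\})$, by first inverting all nonzero polynomials in the variables outside $V_i$. This is a regular local ring of Krull dimension $|V_i|=i+1$, so $\mathrm{ht}(\mathfrak m_i)=\mathrm{ht}(\mathfrak p_i)=i+1$ and $R_{\mathfrak m_i}$ is Noetherian. Condition (ii) is immediate: any nonzero element of $R$ has a representative $g\in \mathbf k[\underline x]$ involving only finitely many variables, and $g\in\mathfrak p_i$ then holds for at most finitely many $i$. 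The principal obstacle in the whole argument is the infinite-union prime avoidance in (a); the feature making it work here is precisely the variable-disjoint structure of the $\mathfrak p_i$'s, without which the statement would generally fail.
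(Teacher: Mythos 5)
The paper offers no proof of this lemma---it is quoted directly from Nagata's book---so the only meaningful comparison is with the classical argument, and your proposal is exactly that argument: identify the primes of $R$ with the primes of $\mathbf k[\underline x]$ contained in $\bigcup_i\mathfrak p_i$, show by an infinite prime-avoidance lemma that each such prime sits inside a single $\mathfrak p_i$, compute $R_{\mathfrak m_i}\cong K_i[V_i]_{(V_i)}$ to get $\mathrm{ht}(\mathfrak m_i)=i+1$ and local Noetherianity, and invoke Nagata's criterion. All of these steps are correct as written.

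The one blemish is the hypothesis that $\mathbf k$ be infinite in the avoidance step. The lemma is stated for an arbitrary field, and ``enlarge $\mathbf k$ if needed'' does not prove the statement as given, since changing the field changes the ring (it is, of course, harmless for the paper's purposes, where one may simply fix $\mathbf k=\mathbb Q$). The finite-field case is true and needs only a small modification of your pigeonhole: take $f\in\mathfrak q$ with $T(f):=\{i: f\in\mathfrak p_i\}$ of minimal cardinality, pick $g\in\mathfrak q$ avoiding all $\mathfrak p_i$ with $i\in T(f)$, and replace the pencil $f+\alpha g$ by the sequence $h_n:=f+g\,w^n$, where $w$ is a product of one variable from each $V_i$ with $i\in T(g)$. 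Each $h_n$ avoids every $\mathfrak p_i$ with $i\in T(f)$ but lands in one of finitely many candidate primes, so two of them land in the same $\mathfrak p_l$; since $1-w^{m-n}$ has nonzero constant term it cannot lie in $\mathfrak p_l$, forcing $g\in\mathfrak p_l$ and then $f\in\mathfrak p_l$ with $l\notin T(f)$, a contradiction. With that patch the argument works over any field.
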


Consider  the hereditary torsion class $\mathcal T:=\mathrm{Loc}(R/\mathfrak m_i:i\geq 1)$ generated by the simple $R$-modules (that is, the first layer of the Gabriel filtration of $\mathrm{Mod}(R)$) and define 
\[
\G:=\mathrm{Mod}(R)/\mathcal T
\] 
as the Gabriel quotient of $\mathrm{Mod}(R)$ over $\mathcal T$, identified with the Giraud subcategory of $\mathrm{Mod}(R)$ of those $R$-modules $M$ such that both $M$ and $E(M)/M$ belong to 
\[
\F:=\{R/\mathfrak m_i:i\geq 1\}^\perp=\{M\in \Mod(R):\hom_R(R/\mathfrak m_i,M)=0,\text{ for all $i\geq 1$}\}\subseteq \mathrm{Mod}(R).
\] 
It is well-known that $\G$ is a Grothendieck category. Moreover, the inclusion $\iota\colon \G\to \mathrm{Mod}(R)$ has an exact left adjoint $\mathbf Q\colon \mathrm{Mod}(R)\to \G$ such that 
$
\Ker(\mathbf Q):=\{M\in\mathrm{Mod}(R):\mathbf Q(M)=0\}=\mathcal T$.
The product $\mathrlap{\text{\tiny \hspace{4.3pt}$\G$}}\prod_IX_i$ in $\G$ of a family $(X_i)_{I}\subseteq \G$  can be computed as $\mathbf Q(\mathrlap{\text{\tiny \hspace{3.5pt}$R$}}\prod_{I}X_i)$ (see \cite[Corollary~2.12]{virili2017exactness}), where $\mathrlap{\text{\tiny \hspace{3.5pt}$R$}}\prod$ denotes the product in the category $\mathrm{Mod}(R)$. 

The injective objects in $\G$ are exactly the injective $R$-modules in $\F$. Furthermore, as $R$ is commutative Noetherian, it is well-known (see \cite[Chap.\,VII, Proposition~4.5]{S75}) that  $\mathcal T$ is stable (i.e., closed under taking injective envelopes in $\mathrm{Mod}(R)$). As a consequence,  any injective module $E\in \mathrm{Mod}(R)$ decomposes as 
\[
E\cong t(E)\oplus \mathbf Q(E),
\] 
where $t(E)$ is an injective module in $\mathcal T$ and $\mathbf Q(E)$ is an injective object in $\G$. Note also that, if we denote by $\mathrm{Sp}(R)$ the prime ideal spectrum of $R$, and by $\mathrm{MSp}(R)=\{\mathfrak m_i:i\in \N\}\subseteq\mathrm{Sp}(R)$ the maximal ideal spectrum, by the classification of injectives over commutative Noetherian rings given by Matlis \cite{matlis}, we have that $E\cong \bigoplus_{\mathfrak p\in\mathrm{Sp}(R)}E(R/\mathfrak p)^{(I_{\mathfrak p})}$ (for suitable sets $I_{\mathfrak p}$), so that 
\[
t(E)\cong \bigoplus_{i=1}^\infty E(R/\mathfrak m_i)^{(I_{\mathfrak m_i})}\qquad\text{ and }\qquad \mathbf Q(E)\cong \bigoplus_{\mathfrak p\in\mathrm{Sp}(R)\setminus\mathrm{MSp}(R)}E(R/\mathfrak p)^{(I_{\mathfrak p})}.
\]

\subsection{The complex $X^\bullet\in \Ch(\G)$}
With the usual abuse of notation, let $\mathbf Q\colon \Ch(\Mod(R))\to \Ch(\G)$ be the functor on cochain complexes obtained by applying $\mathbf Q$ component-wise. Observe that, as $\mathbf Q$ is exact, it commutes with cohomologies, that is, given $X^\bullet\in \Ch(\Mod(R))$, we have:
\[
H^i_\G(\mathbf Q(X^\bullet))\cong \mathbf Q(H^i_R(X^\bullet)),\quad \text{for all $i\in \Z$,}
\]
where $H^i_\G(-)$ and $H^i_R(-)$ denote the $i$-th cohomology in $\Ch(\G)$ and in $\Ch(\Mod(R))$, respectively. On the other hand, let us remark that it is still important to distinguish between cohomologies taken in $\Ch(\G)$ from those taken in $\Ch(\Mod(R))$. In fact, since the inclusion of $\G$ in $\Mod(R)$ is not exact, there are complexes $X^\bullet\in \Ch(\G)$ such that $H^i_\G(X^\bullet)\neq H^i_R(X^\bullet)$ for some $i\in \Z$ (e.g., any short exact sequence in $\G$ that is not exact in $\Mod(R)$, viewed as a complex concentrated in degrees $0$, $1$, and $2$).

Let $\mu\colon R\to (\iota\circ \mathbf Q)(R)$ be the component at $R$ of the unit of the adjunction \mbox{$\mathbf Q\dashv\iota\colon \Mod(R)\rightleftarrows \G$} and fix an injective resolution $\lambda\colon R\to E^\bullet$ of $R$ in $\Mod (R)$. In the language of \cite{zbMATH06915995},  the composition $\mathbf Q(\lambda)\circ \mu\colon R\stackrel{}{\to}\mathbf Q(R)\to \mathbf Q(E^\bullet)$ is a relative $(\F\cap \Inj(\Mod(R))$-injective resolution. As a consequence:

\begin{lem}[{\cite[Lemma~8.2]{zbMATH06915995}}]\label{res_of_R_lem}
Let $\lambda\colon R\to E^\bullet:=( E^0\overset{e^0}\longrightarrow E^1\overset{e^1}\longrightarrow \dots \overset{}\longrightarrow E^n\overset{e^n}\longrightarrow \dots)$ be an injective resolution of $R$ as an $R$-module, and consider $\mathbf Q(E^\bullet)\in \Ch^{\geq0}(\Mod (R))$. Then, 
\[
H_R^{i}(\mathbf Q(E^\bullet))\cong \begin{cases}
R&\text{if $i=0$;}\\
E(R/\mathfrak m_i)&\text{otherwise;}
\end{cases} \ \ \text{and}\ \ H_\G^{i}(\mathbf Q(E^\bullet))\cong \begin{cases}
\mathbf Q(R)&\text{if $i=0$;}\\
\mathbf Q(E(R/\mathfrak m_i))=0&\text{otherwise.}
\end{cases}
\]
In particular, $\mathbf Q(\lambda)\colon \Q(R)\to \mathbf Q(E^\bullet)$ is an injective resolution in $\G$.
\end{lem}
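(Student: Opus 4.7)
The strategy is to exploit the torsion--torsion-free decomposition $E^i\cong t(E^i)\oplus \mathbf{Q}(E^i)$ (recalled in the excerpt via the stability of $\mathcal{T}$) to construct a short exact sequence of complexes, then read off both cohomology statements from its long cohomology sequence together with the exactness of $\mathbf{Q}$. First, since any $R$-linear map preserves torsion, $t(E^\bullet)\subseteq E^\bullet$ is a subcomplex; and since $\mathbf{Q}$ is exact, annihilates $t(E^\bullet)$, and leaves torsion-free modules unchanged (up to the inclusion $\iota$), the quotient $E^\bullet/t(E^\bullet)$ is canonically identified with $\iota\mathbf{Q}(E^\bullet)$. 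This gives a short exact sequence in $\Ch(\Mod(R))$,
\[
0\longrightarrow t(E^\bullet)\longrightarrow E^\bullet\longrightarrow \mathbf{Q}(E^\bullet)\longrightarrow 0,
\]
to which I will apply the long exact sequence in cohomology.

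The $\G$-cohomology part is then immediate from the componentwise exactness of $\mathbf{Q}$: one has $H^i_\G(\mathbf{Q}(E^\bullet))\cong \mathbf{Q}(H^i_R(E^\bullet))$, which evaluates to $\mathbf{Q}(R)\cong R$ in degree $0$ (using that $\mu\colon R\to\mathbf{Q}(R)$ is an isomorphism, as recalled just above the lemma) and to $0$ for $i\geq 1$. For the $R$-cohomology part, since $H^i_R(E^\bullet)=R$ for $i=0$ and vanishes otherwise, the long exact sequence collapses to the identities
\[
H^0_R(\mathbf{Q}(E^\bullet))\cong R \quad\text{and}\quad H^i_R(\mathbf{Q}(E^\bullet))\cong H^{i+1}_R(t(E^\bullet)) \ \text{for all } i\geq 1,
\]
provided I show that $H^0_R(t(E^\bullet))=H^1_R(t(E^\bullet))=0$, which will drop out of the computation below.

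Thus everything reduces to computing $H^n_R(t(E^\bullet))$. Because the $E^i$ are injective and hence $t$-acyclic, this cohomology equals the right derived functor $R^nt(R)$. The key observation is that the $\mathfrak{m}_i$ are pairwise comaximal maximal ideals, so the torsion functor decomposes as a direct sum $t=\bigoplus_{i\geq 1}\Gamma_{\mathfrak{m}_i}$ of the $\mathfrak{m}_i$-torsion functors; moreover $\Gamma_{\mathfrak{m}_i}$ preserves injectives (as each $\Gamma_{\mathfrak{m}_i}(E(R/\mathfrak{p}))$ is either $0$ or $E(R/\mathfrak{m}_i)$ by Matlis). Since $\Mod(R)$ is (Ab.4), direct sums commute with cohomology, and therefore
\[
R^nt(R)\cong \bigoplus_{i\geq 1}H^n_{\mathfrak{m}_i}(R).
\]
Now $R_{\mathfrak{m}_i}$ is a regular (hence Gorenstein) local ring of Krull dimension $\mathrm{ht}(\mathfrak{m}_i)=i+1$, so by the standard local cohomology computation for Gorenstein local rings, $H^n_{\mathfrak{m}_i}(R)$ vanishes for $n\neq i+1$ and is isomorphic to $E(R/\mathfrak{m}_i)$ in degree $i+1$. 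Substituting, $H^n_R(t(E^\bullet))=0$ for $n\in\{0,1\}$ and $H^n_R(t(E^\bullet))\cong E(R/\mathfrak{m}_{n-1})$ for $n\geq 2$, so the long exact sequence yields $H^i_R(\mathbf{Q}(E^\bullet))\cong E(R/\mathfrak{m}_i)$ for all $i\geq 1$.

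Finally, the concluding statement that $\mathbf{Q}(\lambda)$ is an injective resolution in $\G$ follows by combining the $\G$-cohomology computation with the fact that each $\mathbf{Q}(E^i)$, as a direct summand of the injective $R$-module $E^i$ lying in $\mathcal{F}$, is an injective object of $\G$. The main hurdle in this argument is the identification of $R^nt$ with the direct sum of the local cohomology functors $H^n_{\mathfrak{m}_i}$ together with the concentration of each $H^n_{\mathfrak{m}_i}(R)$ in the single degree $n=i+1$; once these are granted, the lemma drops out of the long exact sequence in a few lines.
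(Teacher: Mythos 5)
Your argument is correct, and it is essentially the intended one: the paper itself gives no proof of this lemma (it is quoted verbatim from \cite[Lemma~8.2]{zbMATH06915995}), and the proof there proceeds exactly as you do — split off the torsion subcomplex $t(E^\bullet)$ (degreewise a direct summand by stability of $\mathcal T$ and Matlis' classification), identify $H^n(t(E^\bullet))$ with $\bigoplus_{i\geq 1}H^n_{\mathfrak m_i}(R)$, and use Grothendieck's vanishing/non-vanishing for the regular local ring $R_{\mathfrak m_i}$ of dimension $i+1$ to conclude that this local cohomology is $E(R/\mathfrak m_i)$ concentrated in degree $i+1$, after which the long exact sequence gives both columns of the statement. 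One small presentational point: you should not quote ``$\mu\colon R\to\mathbf Q(R)$ is an isomorphism'' from the paragraph preceding the lemma, since the paper deduces that fact \emph{from} this lemma; but your own computation already yields it, because $\iota\mathbf Q(R)=\ker\bigl(\iota\mathbf Q(e^0)\bigr)=H^0_R(\mathbf Q(E^\bullet))\cong R$ by exactness of $\mathbf Q$, left-exactness of $\iota$, and your long exact sequence. With that substitution the argument is self-contained and complete.
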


We are now ready to introduce the central example for our paper. Consider:
\[\xymatrix{
\mathrlap{\text{\tiny \hspace{3.5pt}$R$}}\prod_{i\in \Z}S^i(R)}:=\ \big(\xymatrix{\cdots\ar[r]^-0&R\ar[r]^-0&R\ar[r]^-0&\cdots\ar[r]^-0&R\ar[r]^-0&\cdots}\big)\in  \Ch(\Mod(R)).
\]
Our example is the following complex in $\Ch(\G)$:
\[
\xymatrix{X^\bullet:= \mathbf Q(
\mathrlap{\text{\tiny \hspace{3.5pt}$R$}}\prod_{i\in \Z}S^i(R))\cong  \mathrlap{\text{\tiny \hspace{4.3pt}$\G$}}\prod_{i\in \Z}S^i(\Q(R))=}\big(\xymatrix@C=14pt{\cdots\ar[r]^-0&\Q(R)\ar[r]^-0&\Q(R)\ar[r]^-0&\cdots\ar[r]^-0&\Q(R)\ar[r]^-0&\cdots}\big)
.
\]

\begin{prop}\label{poison_in_action_prop}
In the above setting, let  $\lambda\colon R\to E^\bullet$ be an injective resolution in $\Mod (R)$. Then, 
\begin{enumerate}
\item $\mathrlap{\text{\tiny \hspace{3.5pt}$R$}}\prod_{i\in \Z}\Sigma^i\lambda\colon \mathrlap{\text{\tiny \hspace{3.5pt}$R$}}\prod_{i\in \Z}S^i(R)\to \mathrlap{\text{\tiny \hspace{3.5pt}$R$}}\prod_{i\in \Z}\Sigma^iE^\bullet$ is a quasi-isomorphism in $\Ch(\Mod(R))$;
\item $\mathrlap{\text{\tiny \hspace{4.3pt}$\G$}}\prod_{i\in \Z}\Sigma^i\mathbf{Q}(\lambda)\colon X^\bullet\to \mathrlap{\text{\tiny \hspace{4.3pt}$\G$}}\prod_{i\in \Z}\Sigma^i\mathbf{Q}(E^\bullet)$ is not a quasi-isomorphism in $\Ch(\G)$.
\end{enumerate}
\end{prop}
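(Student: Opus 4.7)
The plan is to compute the $n$-th cohomologies on both sides of the two maps and compare.

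For (1), the key fact is that $\Mod(R)$ satisfies Grothendieck's axiom (Ab.4$^*$), so the product of quasi-isomorphisms is again a quasi-isomorphism; as each $\Sigma^i\lambda$ is a quasi-iso (a shift of $\lambda$), so is their product. Equivalently, one can compute directly: exactness of products in $\Mod(R)$ lets $H^n_R$ commute with $\prod^R_{i\in\Z}$, and on both sides only one factor contributes to $H^n$ (the unique $i$ for which the shifted $R$ lands in degree $n$), so both cohomologies are $R$ and the induced map is the iso coming from $\lambda$ on $H^0$.

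For (2), the source is easy: exactness of $\mathbf Q$ commuting with cohomology gives $H^n_\G(X^\bullet)\cong \mathbf Q(R)\cong R$. For the target, using $\prod^\G=\mathbf Q\circ\prod^R\circ\iota$, exactness of $\mathbf Q$, and (Ab.4$^*$) in $\Mod(R)$,
\[
H^n_\G\!\left(\prod_{i\in\Z}^\G\Sigma^i\mathbf Q(E^\bullet)\right)\cong \mathbf Q\!\left(\prod_{i\in\Z}^R H^{n+i}_R(\iota\mathbf Q(E^\bullet))\right).
\]
By Lemma~\ref{res_of_R_lem}, $H^j_R(\iota\mathbf Q(E^\bullet))$ equals $R$ for $j=0$, $E(R/\mathfrak m_j)$ for $j\geq 1$, and $0$ for $j<0$. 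Splitting off the $R$-factor and using that $\mathbf Q$ preserves finite products, the target cohomology becomes
\[
R\oplus \mathbf Q\!\left(\prod_{k\geq 1}^R E(R/\mathfrak m_k)\right).
\]
Thus (2) reduces to showing $\mathbf Q(\prod_{k\geq 1}^R E(R/\mathfrak m_k))\neq 0$: any $R$-module isomorphism $R\cong R\oplus M$ forces $M=0$ for a commutative ring (a generator $(a,m)$ of $R\oplus M$ with trivial annihilator forces $a$ to be a unit, and then $M=0$), so if the extra summand is nonzero the two cohomologies are not isomorphic as $R$-modules and no chain map can be a quasi-iso.

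The main (and essentially only nontrivial) step is therefore to exhibit a nonzero element of $\mathbf Q(\prod_{k\geq 1}^R E(R/\mathfrak m_k))$. For each $k\geq 1$ let $e_k\in E(R/\mathfrak m_k)$ be the canonical image of $1\in R/\mathfrak m_k$, so that $\mathrm{ann}_R(e_k)=\mathfrak m_k$, and set $x:=(e_k)_k$; its annihilator is $\bigcap_k\mathfrak m_k$. The key combinatorial observation is that $\bigcap_k\mathfrak m_k=0$ in $R$: the primes $\mathfrak p_k$ are generated by pairwise disjoint blocks of the variables of $\mathbf k[\underline x]$, while every nonzero polynomial involves only finitely many variables, so no nonzero polynomial lies in every $\mathfrak p_k$, and the statement for $R$ follows by localization. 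Hence $Rx\cong R$ sits as a submodule of the product, and exactness of $\mathbf Q$ yields an injection $R\cong\mathbf Q(R)\hookrightarrow \mathbf Q(\prod_{k\geq 1}^R E(R/\mathfrak m_k))$, which is the required nonvanishing. Everything else is formal use of exactness of $\mathbf Q$, Lemma~\ref{res_of_R_lem}, and (Ab.4$^*$) in $\Mod(R)$; the hard part — and the precise point where Nagata's construction is essential — is the vanishing $\bigcap_k\mathfrak m_k=0$.
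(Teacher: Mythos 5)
Your proof is correct, and part (1) is argued exactly as in the paper (closure of quasi-isomorphisms under products in the (Ab.$4^*$) category $\Mod(R)$). For part (2) the underlying mathematical point is the same as the paper's -- everything hinges on the fact that $\prod_{k\geq 1}E(R/\mathfrak m_k)$ is \emph{not} in the torsion class $\mathcal T$, so that $\mathbf Q$ does not kill it -- but you reach it by a different and more self-contained route. The paper passes to the mapping cone $Z^\bullet$ of the map and cites the proof of \cite[Theorem~8.4]{zbMATH06915995} for the computation $H^n_R(Z^\bullet)\cong\prod_{j\geq 1}E(R/\mathfrak m_j)\notin\mathcal T$, whereas you compute the cohomology objects of source and target directly (using $\prod^{\mathcal G}=\mathbf Q\circ\prod^R\circ\iota$, exactness of $\mathbf Q$, and (Ab.$4^*$) in $\Mod(R)$ to commute $H^n$ past the product), reduce to $R\not\cong R\oplus M$ for $M\neq 0$ over a commutative ring, and then \emph{prove} the nonvanishing $\mathbf Q\bigl(\prod_{k\geq 1}E(R/\mathfrak m_k)\bigr)\neq 0$ by exhibiting the element $x=(e_k)_k$ with $\mathrm{ann}_R(x)=\bigcap_k\mathfrak m_k=0$, so that $R\cong Rx$ embeds in the product and survives $\mathbf Q$. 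The vanishing $\bigcap_k\mathfrak m_k=0$ is exactly right (each $\mathfrak p_k$ is a prime disjoint from $S$, so membership in $S^{-1}\mathfrak p_k$ is detected on numerators, and a nonzero polynomial misses all but finitely many of the disjoint variable blocks), and this is indeed the precise place where Nagata's example enters. What your version buys is independence from the external reference and an explicit identification of the cohomology of the target as $R\oplus\mathbf Q\bigl(\prod_{k\geq 1}E(R/\mathfrak m_k)\bigr)$; what the paper's version buys is brevity and the cleaner triangulated formulation via the cone, which localizes the failure of exactness in a single acyclicity statement.
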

\begin{proof}
(1) Since $\Mod(R)$ is (Ab.$4^*$), the class of quasi-isomorphisms is closed under products.

\smallskip\noindent
(2) Take the mapping cone $Z^\bullet:=\cone(\mathrlap{\text{\tiny \hspace{4.3pt}$\G$}}\prod_{i\in \Z}\Sigma^i\mathbf{Q}(\lambda))$. By the proof of \cite[Theorem~8.4]{zbMATH06915995}, viewing  $Z^\bullet$ as a complex in $\Ch(\Mod(R))$, one has $H_R^n(Z^\bullet)\cong \prod_{j=1}^\infty E(R/\mathfrak m_j)\notin \mathcal T$, for all $n\in \Z$. Hence, $H_\G^n(Z^\bullet)\cong \mathbf Q(H_R^n(Z^\bullet))\neq 0$, showing that $Z^\bullet$ is not exact as a complex in $\Ch(\G)$.
\end{proof}

Observe that, as $\G$ is a Grothendieck category, our complex $X^\bullet\in\Ch(\G)$ is known to have a \DG-injective resolution (see, e.g., \cite[Theorem~5.4]{tarrio2000localization} or \cite[Theorem~1.7]{7fd5ffe3-5ff1-3e28-b6fb-e47f75d8437a}). On the other hand,  the following corollary shows that the construction of Section~\ref{DG_inj_sec} fails to produce a \DG-injective resolution for $X^\bullet$:

\begin{cor}
In the above setting, the following statements hold for all $n\in\N$:
\begin{enumerate}
\item $\tau^{\geq-n}(X^\bullet)=\mathrlap{\text{\tiny \hspace{4.3pt}$\G$}}\prod_{i\geq -n}S^{i}(\Q(R))$;
\item the diagonal  $\tau^{\geq-n}(X^\bullet)\to \mathrlap{\text{\tiny \hspace{4.3pt}$\G$}}\prod_{i\leq n}\Sigma^{i}(\mathbf Q(E^\bullet))$ is a \DG-injective resolution;
\item the kernel of the projection $\mathrlap{\text{\tiny \hspace{4.3pt}$\G$}}\prod_{i\leq n+1}\Sigma^{i}(\mathbf Q(E^\bullet))\to \mathrlap{\text{\tiny \hspace{4.3pt}$\G$}}\prod_{i\leq n}\Sigma^{i}(\mathbf Q(E^\bullet))$ is \DG-injective;
\item  the diagonal map $X^\bullet\to \mathrlap{\text{\tiny \hspace{4.3pt}$\G$}}\prod_{i\in \Z}\Sigma^{i}(\mathbf Q(E^\bullet))$ is the limit of a Spaltenstein tower of partial resolutions of $X^\bullet$, but it is not a \DG-injective resolution of $X^\bullet$.
\end{enumerate}
\end{cor}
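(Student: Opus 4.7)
My plan is to verify items (1)--(4) in turn, relying on Lemma~\ref{res_of_R_lem}, the bounded-below construction of Subsection~\ref{bounded_subs}, and Proposition~\ref{poison_in_action_prop}(2) for the final negative assertion. For (1), every differential of $X^\bullet$ is zero (a product of zero maps in $\Ch(\G)$), so all coboundaries vanish; the truncation $\tau^{\geq -n}$ then simply erases the components in degrees below $-n$ and yields $\prod_{i\geq -n}^{\G}S^i(R)$.

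For (2), Lemma~\ref{res_of_R_lem} tells us that $\Sigma^{j}\mathbf{Q}(E^\bullet)$ is a DG-injective resolution of $S^{-j}(R)$, so the natural candidate is the product over $j\leq n$ of these component resolutions, and the diagonal map in the statement arises exactly as this product. The target is bounded below by $-n$, and each of its components is a product of injectives in $\G$, hence injective; Proposition~\ref{prop_known_facts_cotors}(1) therefore gives that it is DG-injective. The delicate point is the quasi-isomorphism claim. I will observe that in each fixed degree $k$ only finitely many $j$'s contribute a non-zero summand, namely the $j$'s with $-k\leq j\leq n$, because $\mathbf{Q}(E^{k+j})=0$ for $k+j<0$. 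Hence the product in each degree coincides with the direct sum, and so the product complex itself equals $\bigoplus_{j\leq n}\Sigma^{j}\mathbf{Q}(E^\bullet)$. Since $\G$ is a Grothendieck category (hence Ab.5), cohomology commutes with direct sums, and Lemma~\ref{res_of_R_lem} then gives the $k$-th cohomology as $R$ when $k\geq -n$ and $0$ otherwise, matching the cohomology of $\tau^{\geq -n}(X^\bullet)$, with the identification visibly induced by the diagonal.

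Part (3) is immediate: the projection is degreewise split (with section given by the canonical inclusion of the smaller product into the larger one), and its kernel is $\Sigma^{n+1}\mathbf{Q}(E^\bullet)$, a bounded-below complex of injectives, hence DG-injective by Proposition~\ref{prop_known_facts_cotors}(1). For (4), (2) and (3) assemble into a Spaltenstein tower of partial resolutions of $X^\bullet$; taking inverse limits, the source becomes $\varprojlim_{n}\tau^{\geq -n}(X^\bullet)\cong X^\bullet$ and the target becomes $\prod_{j\in\Z}^{\G}\Sigma^{j}\mathbf{Q}(E^\bullet)$, with the induced map precisely the diagonal of Proposition~\ref{poison_in_action_prop}(2). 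Since the latter already showed this map is not a quasi-isomorphism, it is not a DG-injective resolution.

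The only genuinely subtle step is the cohomology computation in (2), which works because the indexing set, though infinite, has finite support in each individual degree. In (4) this finiteness is destroyed upon passing to the limit, and it is exactly there that the failure of (Ab.$4^*$)-$k$ in $\G$ resurfaces through Proposition~\ref{poison_in_action_prop}(2).
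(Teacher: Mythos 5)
Your proof is correct and follows essentially the same route as the paper's: (1) and (3) by direct inspection, (2) by observing that the product is degreewise finite and hence a coproduct, which is exact in the Grothendieck category $\G$, and (4) by assembling the tower and invoking Proposition~\ref{poison_in_action_prop}(2). The extra detail you supply in (2) — identifying the finite support $\{-k,\dots,n\}$ in each degree $k$ — is exactly the point the paper compresses into one sentence.
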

\begin{proof}
(1) is trivial while (2) follows since, even if we are considering infinite products, they are degree-wise finite products, so they can be seen also as coproducts (which are exact in $\G$), showing that the map in the statement is a quasi-isomorphism. Moreover,  $\mathrlap{\text{\tiny \hspace{4.3pt}$\G$}}\prod_{i\leq n}\Sigma^{i}(\mathbf Q(E^\bullet))$ is \DG-injective by Proposition~\ref{prop_known_facts_cotors}(1). Part (3) is also trivial since the kernel of the projection in the statement is  $\Sigma^{n+1}(\mathbf Q(E^\bullet))$.  Finally, part (4) follows by (1--3) and Proposition~\ref{poison_in_action_prop}(2).
\end{proof}

\section{Cartan-Eilenberg resolutions and their totalization}\label{CE_sec}\label{sec_four}

In this section we recall the  construction of the Cartan-Eilenberg resolution of a complex and we show that its totalization, when applied to the example of Section~\ref{poison_sec} fails to produce a \DG-injective resolution. On the other hand, as for Spaltenstein's approach, Cartan-Eilenberg resolutions produce \DG-injective resolutions, provided the base category is (Ab.$4^*$)-$k$, for some $k\in\N$.

\subsection{Bicomplexes and their totalizations} As a Cartan-Eilenberg resolution is, actually, a bicomplex, we start recalling that  a {\bf bicomplex} $C^{\bullet,\bullet}$ over an Abelian category $\A$ is given by
\begin{itemize}
\item a family of objects $C^{\bullet,\bullet}:=(C^{n,m})_{(n,m)\in\Z\times\Z}$; 
\item {\bf differentials} $d_0^{\bullet,\bullet}\colon C^{\bullet,\bullet}\to C^{\bullet,\bullet+1}$ and  $d_1^{\bullet,\bullet}\colon C^{\bullet,\bullet}\to C^{\bullet+1,\bullet}$,  called {\bf horizontal} and {\bf vertical}, respectively, that satisfy the following formulas for all $(n,m)\in \Z\times \Z$:
\[
d_0^{n,m+1}\circ d_0^{n,m}=0,\ d_1^{n+1,m}\circ d_1^{n,m}=0,\ \text{and}\ d_1^{n,m+1}\circ d_0^{n,m}+d_0^{n+1,m}\circ d_1^{n,m}=0.
\]
\end{itemize}
In other words, a bicomplex $C^{\bullet,\bullet}$ is represented by the following diagram
\begin{equation}\label{picture_bicomplex_eq}
\xymatrix@R=3.7pt@C=27.5pt{
&\vdots\ar[d]&&\vdots\ar[d]&&\vdots\ar[d]&\\
\cdots \ar[r]&C^{n,m-1}\ar[dd]^-{d_1^{n,m-1}}\ar[rr]^-{d_0^{n,m-1}}&&C^{n,m}\ar[rr]^-{d_0^{n,m}}\ar[dd]^-{d_1^{n,m}}&&C^{n,m+1}\ar[r]\ar[dd]^-{d_1^{n,m+1}}&\cdots\\
\\
\cdots \ar[r]&C^{n+1,m-1}\ar[rr]^-{d_0^{n+1,m-1}}\ar[dd]&&C^{n+1,m}\ar[rr]^-{d_0^{n+1,m}}\ar[dd]&&C^{n+1,m+1}\ar[r]\ar[dd]&\cdots\\
\\
&\vdots&&\vdots&&\vdots}
\end{equation}
where all rows and columns are cochain complexes, and all the little squares anticommute. With reference to the  representation \eqref{picture_bicomplex_eq}, we say that $C^{\bullet,\bullet}$ is a {\bf lower half-plane} bicomplex provided $C^{n,m}=0$ for all $n<0$, $m\in\Z$. A {\bf morphism of bicomplexes} is just a family of morphisms $\phi^{\bullet,\bullet}\colon X^{\bullet,\bullet}\to Y^{\bullet,\bullet}$ that commutes both with horizontal and vertical differentials. We denote by $\mathrm{bCh}(\A)$ the category of bicomplexes over $\A$.  

\smallskip
In fact, $\mathrm{bCh}(\A)$ is equivalent to the category $\Ch(\Ch(\A))$ obtained as follows: start with  $\A$ and form the category of cochain complexes $\Ch(\A)$, which is itself an Abelian category; iterating the construction once more, one gets the category $\Ch(\Ch(\A))$ of {\bf double complexes}, where a double complex can be represented by a diagram like \eqref{picture_bicomplex_eq}, where all rows and columns are cochain complexes but the small squares actually commute. The canonical functor $\mathrm{bCh}(\A)\to\Ch(\Ch(\A))$ is constructed via the following {\bf sign trick}: given $C^{\bullet,\bullet}\in\mathrm{bCh}(\A)$ take, for each $m\in \N$, the  complex 
\[
C_0^{\bullet,m}=(\xymatrix@C=20pt{\cdots \ar[r]&C^{n-1,m}\ar[rr]^-{(-1)^md_1^{n-1,m}}&&C^{n,m}\ar[rr]^-{(-1)^md_1^{n,m}}&&C^{n+1,m}\ar[r]&\cdots})
\] 
Then, the horizontal differentials of $C^{\bullet,\bullet}$ induce chain maps $C_0^{\bullet,m}\to C_0^{\bullet,m+1}$ for all $m\in\Z$, so that
\[
C^{\bullet,\bullet}_0:=\quad (\xymatrix{
\cdots\ar[r]&C_0^{\bullet,m-1}\ar[r]^-{d_0^{\bullet,m-1}}&C_0^{\bullet,m}\ar[r]^-{d_0^{\bullet,m}}&C_0^{\bullet,m+1}\ar[r]^-{d_0^{\bullet,m+1}}&\cdots}) \in \Ch(\Ch(\A)).
\]
In particular, given $C^{\bullet,\bullet}\in \mathrm{bCh}(\A)$ we can consider, for each $n\in\Z$, the complexes (in $\Ch(\A)$) of $n$-coboundaries, $n$-cocycles, and $n$-cohomologies of the complex $C^{\bullet,\bullet}_0\in \Ch(\Ch(\A))$, that is:
\begin{enumerate}
    \item[--] 
    $B^n(C^{\bullet,\bullet}_0)=\Im(d_0^{\bullet,n-1})=(\xymatrix@R=5pt@C=10pt{\cdots \ar[r]&\Im(d_0^{m,n-1})\ar[r]
    &\Im(d_0^{m+1,n-1})\ar[r]&\cdots})\leq C_0^{\bullet,n}$;
    \item[--] 
    $Z^n(C^{\bullet,\bullet}_0)=\Ker(d_0^{\bullet,n})=(\xymatrix@R=30pt@C=10pt{\cdots \ar[r]&\Ker(d_0^{m,n})\ar[r]
    &\Ker(d_0^{m+1,n})\ar[r]&\cdots})\leq C_0^{\bullet,n}$;
    \item[--] $H^n(C^{\bullet,\bullet}_0)=K^n(C^{\bullet,\bullet}_0)/B^n(C^{\bullet,\bullet}_0)$.
\end{enumerate}

Finally, if $\A$ is complete, we can also consider the {\bf totalization} of the bicomplex $C^{\bullet,\bullet}$. This is the cochain complex $\Tot(C^{\bullet,\bullet})\in \Ch(\A)$, defined by letting $\Tot(C^{\bullet,\bullet})^n:=\prod_{i\in\Z} C^{i,n-i}$, for all $n\in\Z$, and with the $n$-th differential $\partial^n:=(\partial^n_i)_{i\in\Z}\colon \prod_{i\in\Z} C^{i,n-i}\to \prod_{i\in\Z} C^{i,n+1-i}$, where $\partial^n_i$ is just the map $(d_0^{i,n-i},\, d_1^{i,n-i})^t\colon C^{i,n-i}\to C^{i,n+1-i}\times C^{i+1,n-i}$ composed with the obvious projection and inclusion.

\subsection{Classical Cartan-Eilenberg resolutions} 
Let $\A$ be an Abelian category with enough injectives, and consider a (possibly unbounded) complex 
$A^\bullet\in \Ch(\A)$. An {\bf injective Cartan-Eilenberg resolution} or, more briefly, a \CE{\bf -resolution}  of $A^\bullet$ is given by a lower half-plane bicomplex $C^{\bullet,\bullet}$ of injectives, and a homomorphism of complexes $\lambda^\bullet\colon A^\bullet \to C^{0,\bullet}$:
\[
\xymatrix@C=50pt@R=3pt{
\cdots \ar[r]&A^{n-1}\ar[r]^-{a^{n-1}}\ar@/_-5pt/[dd]^(.35){\lambda^{n-1}}&A^n\ar[r]^-{a^n}\ar@/_-5pt/[dd]^(.35){\lambda^{n}}&A^{n+1}\ar[r]^-{a^{n+1}}\ar@/_-5pt/[dd]^(.35){\lambda^{n+1}}&\cdots\\
\ar@{.}[rrrr]&&&&\\
\cdots \ar[r]&C^{0,n-1}\ar[ddd]^-{d_1^{0,n-1}}\ar[r]^-{d_0^{0,n-1}}&C^{0,n}\ar[r]^-{d_0^{0,n}}\ar[ddd]^-{d_1^{0,n}}&C^{0,n+1}\ar[r]^-{d_0^{0,n+1}}\ar[ddd]^-{d_1^{0,n+1}}&\cdots\\
\\
\\
\cdots \ar[r]&C^{1,n-1}\ar[r]^-{d_0^{1,n-1}}\ar[dd]^-{d_1^{1,n-1}}&C^{1,n}\ar[r]^-{d_0^{1,n}}\ar[dd]^-{d_1^{1,n}}&C^{1,n+1}\ar[r]^-{d_0^{1,n+1}}\ar[dd]^-{d_1^{1,n+1}}&\cdots\\
\\
&\vdots&\vdots&\vdots}
\]
that satisfy the following properties:
\begin{enumerate}
\item[({\CE.}$1$)] for each $n\in \Z$, if $A^n=0$, then the column $C^{\bullet,n}$ is constantly $0$;
\item[({\CE.}$2$)] $B^n(\lambda^\bullet)\colon B^n(A^\bullet)\to B^n(C^{\bullet,\bullet}_0)$  is an injective resolution of $B^n(A^\bullet)$, for all $n\in\Z$;
\item[({\CE.}$3$)]  $H^n(\lambda^\bullet)\colon H^n(A^\bullet)\to H^n(C^{\bullet,\bullet}_0)$  is an injective resolution of $H^n(A^\bullet)$, for all $n\in\Z$.
\end{enumerate}
If these three properties hold true, then one also gets the following  two extra properties (which are sometimes  included in the definition) for free (see \cite[Exercise~5.7.1]{Weibel}):
\begin{enumerate}
\item[({\CE.}$4$)] $Z^n(\lambda^\bullet)\colon Z^n(A^\bullet)\to Z^n(C^{\bullet,\bullet}_0)$  is an injective resolution of $Z^n(A^\bullet)$, for all $n\in\Z$;
\item[({\CE.}$5$)]  $\lambda^n\colon A^n\to C_0^{\bullet,n}$  is an injective resolution of $A^n$, for all $n\in\Z$.
\end{enumerate}

It is well-known (see, e.g., \cite[Lemma~5.7.2]{Weibel}) that any complex $A^\bullet$ admits a \CE-resolution. Let us briefly go through the (dual of the) construction given by Weibel. The whole process is based on the Horseshoe Lemma, so let us briefly recall it here:
\begin{lem}[Horseshoe Lemma]
Let $\A$ be an Abelian category and consider the following solid diagram, whose first and third row are injective resolutions of $X$ and $Z$, respectively, the columns are exact, with $\iota^n$ and $\pi^n$ ($n\in\N$) being the obvious inclusions and projections:
\[
\xymatrix@C=30pt@R=5pt{
0\ar[d]&0\ar[d]&0\ar[d]&0\ar[d]\\
X\ar[dd]\ar[r]&E^0_X\ar[r]\ar[dd]^-{\iota^0}&E_X^1\ar[r]\ar[dd]^-{\iota^1}&E_X^2\ar[r]\ar[dd]^-{\iota^2}&\cdots\\
\\
Y\ar@{.>}[r]\ar[dd]&E^0_X\oplus E^0_Z\ar@{.>}[r]\ar[dd]^-{\pi^0}&E_X^1\oplus E^1_Z\ar@{.>}[r]\ar[dd]^-{\pi^1}&E_X^2\oplus E^2_Z\ar@{.>}[r]\ar[dd]^-{\pi^2}&\cdots\\
\\
Z\ar[d]\ar[r]&E^0_Z\ar[r]\ar[d]&E_Z^1\ar[d]\ar[r]\ar[d]&E_Z^2\ar[d]\ar[r]\ar[d]&\cdots\\
0&0&0&0&
}
\]
One can then complete the second row to an injective resolution, so that the diagram commutes. 
\end{lem}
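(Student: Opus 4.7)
The plan is to construct the middle injective resolution of $Y$ inductively, degree by degree, reducing at each step to the same Horseshoe set-up on cokernels. The key observation is that the middle terms are forced (as objects) to be $E_X^n \oplus E_Z^n$ with outer maps matching the inclusions $\iota^n$ and projections $\pi^n$, so only the map from $Y$ (or, inductively, from the $n$-th cokernel) into $E_X^n \oplus E_Z^n$ has to be constructed, and this is where the injectivity of $E_X^n$ enters crucially.

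For the base step, since $E_X^0$ is injective and $X \to Y$ is a monomorphism, the composite $X \to E_X^0$ extends to a morphism $\sigma^0 \colon Y \to E_X^0$. Pairing $\sigma^0$ with the composite $Y \to Z \to E_Z^0$ produces $\mu^0 \colon Y \to E_X^0 \oplus E_Z^0$, and the required commutativity of the upper two squares with $\iota^0$ and $\pi^0$ holds by construction. A snake-lemma argument applied to the diagram with rows $0 \to X \to Y \to Z \to 0$ and $0 \to E_X^0 \to E_X^0 \oplus E_Z^0 \to E_Z^0 \to 0$ and the vertical maps just described shows simultaneously that $\mu^0$ is a monomorphism and that the induced sequence of cokernels $0 \to \coker(X \to E_X^0) \to \coker(\mu^0) \to \coker(Z \to E_Z^0) \to 0$ is short exact.

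For the inductive step, observe that the two outer columns, truncated in degrees $\geq 1$, are injective resolutions of $\coker(X \to E_X^0)$ and $\coker(Z \to E_Z^0)$ respectively, so the new short exact sequence of cokernels is flanked by injective resolutions of its outer terms---exactly the same input data as the original problem, one degree higher. Iterating the base-step construction produces $\mu^n \colon \coker(\mu^{n-1}) \to E_X^n \oplus E_Z^n$ for every $n \geq 1$, and the middle differentials are then defined as the composites $E_X^{n-1} \oplus E_Z^{n-1} \to \coker(\mu^{n-1}) \xrightarrow{\,\mu^n\,} E_X^n \oplus E_Z^n$. Exactness of the middle column in degrees $\geq 1$ is built in (the kernel of the projection onto the cokernel is the image of $\mu^{n-1}$), and commutativity with $\iota^n$ and $\pi^n$ holds because the cokernel maps involved are themselves induced by $\iota^{n-1}$ and $\pi^{n-1}$, and the inductively produced $\mu^n$ intertwines them with $\iota^n, \pi^n$ by the base-step commutativity.

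The main obstacle---and essentially the only substantive point---is the snake-lemma verification at each stage that the three cokernels again form a short exact sequence; this is what legitimises the inductive re-packaging. Everything else is formal: the existence of $\sigma^n$ uses only injectivity of the term $E_X^n$, while the middle-column differentials, their squaring to zero, and the commutativity with the outer resolutions all fall out from the way the pieces are assembled.
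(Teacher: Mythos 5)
Your proof is correct and is the standard argument for the Horseshoe Lemma: extend $X\to E_X^0$ over the monomorphism $X\to Y$ using injectivity of $E_X^0$, pair with $Y\to Z\to E_Z^0$, apply the snake lemma to get both injectivity of the resulting map and a new short exact sequence of cokernels, and iterate. The paper states this lemma as a recalled classical fact without proof (its \CE-resolution construction merely invokes it, following Weibel), so there is nothing to compare against beyond noting that your write-up supplies exactly the expected textbook argument, including the one substantive verification (the snake-lemma step that legitimises the induction).
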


To construct a \CE-resolution of $A^\bullet$, one proceeds as follows: 
\begin{itemize}
\item[(step.1)] for all $n\in\Z$, fix injective resolutions $\lambda_{B^n}\colon B^n(A^\bullet)\to E_{B^n}^\bullet$ and $\lambda_{H^n}\colon H^n(A^\bullet)\to E_{H^n}^\bullet$;
\item[(step.2)] for all $n\in\Z$, use the  Horseshoe Lemma to combine these resolutions along the sequence  $0\to B^n(A^\bullet)\to Z^n(A^\bullet)\to H^n(A^\bullet)\to 0$, to get a new resolution  $\lambda_{Z^n}\colon Z^n(A^\bullet)\to E_{Z^n}^\bullet$; 
\item[(step.3)] for all $n\in\Z$, use the  Horseshoe Lemma to combine  the previous injective resolutions along  $0\to Z^n(A^\bullet)\to A^n\to B^{n+1}(A^\bullet)\to 0$, to get a new injective resolution  $\lambda_{A^n}\colon A^n\to E_{A^n}^\bullet$;
\item[(step.4)] for all $n\in\Z$ and $m\in \N$, let $C^{m,n}:=E_{A^n}^m$, with $d_1^{\bullet,n}$ using the differentials of $E_{A^n}^\bullet$ multiplied by $(-1)^{n}$,  build  $d_0^{\bullet,n}$ as a composition $E_{A^n}^\bullet\twoheadrightarrow E^\bullet_{B^{n+1}}\hookrightarrow E_{Z^{n+1}}^\bullet\hookrightarrow E_{A^{n+1}}^\bullet$ of the projection and embeddings given by the  Horseshoe Lemma, and let  $\lambda^\bullet\colon A^\bullet \to C^{0,\bullet}$ be $\lambda^n:=\lambda_{A^n}$.
\end{itemize}
By \cite[Lemma~5.7.2]{Weibel}, these $C^{\bullet,\bullet}$ and $\lambda^\bullet\colon A^\bullet \to C^{0,\bullet}$ are a \CE-resolution. Furthermore, the following lemma is a consequence of the dual of \cite[Exercise~5.7.1]{Weibel}:

\begin{lem}\label{CE_bounded_lemma}
Let $\A$ be a complete Abelian category with enough injectives, $A^\bullet\in \Ch^{+}(\A)$, and take a \CE-resolution $\lambda^\bullet\colon A^\bullet\to C^{\bullet,\bullet}$. Then, $\Tot(\lambda^\bullet)\colon A^\bullet\to \Tot(C^{\bullet,\bullet})$ is a quasi-isomorphism.
\end{lem}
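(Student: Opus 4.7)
Since $A^\bullet$ is bounded below, say $A^n = 0$ for $n < N$, condition (\CE.$1$) forces $C^{\bullet,n} = 0$ for all $n < N$; combined with the lower-half-plane condition $C^{m,\bullet} = 0$ for $m < 0$, the bicomplex sits in a quadrant-like region. In particular, each product $\prod_{i\in\Z} C^{i,k-i}$ defining $\Tot(C^{\bullet,\bullet})^k$ reduces to a \emph{finite} direct sum, so no infinite-product subtleties appear. This is precisely the situation that the rest of the paper will show to be pathological in the unbounded case.

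The cleanest route is to form the augmented bicomplex $\widetilde C^{\bullet,\bullet}$ obtained by inserting $A^\bullet$ as an extra row at height $m=-1$, with vertical differentials $A^n\to C^{0,n}$ given, with suitable signs, by the components of $\lambda^\bullet$. By (\CE.$5$) each column of $\widetilde C^{\bullet,\bullet}$ is an injective resolution of $A^n$, hence an exact complex. A direct unwinding of the definition identifies $\Tot(\widetilde C^{\bullet,\bullet})$, up to a degree shift, with the mapping cone of $\Tot(\lambda^\bullet)$; so showing that $\Tot(\lambda^\bullet)$ is a quasi-isomorphism amounts to proving that $\Tot(\widetilde C^{\bullet,\bullet})$ is acyclic.

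To do this I would run the spectral sequence of the double complex associated with the filtration of $\Tot(\widetilde C^{\bullet,\bullet})$ by columns. Since the totalization is degreewise a finite sum, this filtration is bounded in each total degree and the spectral sequence converges strongly to $H^{\ast}(\Tot(\widetilde C^{\bullet,\bullet}))$. Its $E_1$-page computes the vertical cohomology of each column of $\widetilde C^{\bullet,\bullet}$, which is identically zero by the exactness established above. Hence $E_1 = 0$, the spectral sequence collapses, and $\Tot(\widetilde C^{\bullet,\bullet})$ is acyclic, as required.

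The only real obstacle is the convergence of this spectral sequence, which is guaranteed precisely because $A^\bullet\in\Ch^+(\A)$: without the bounded-below hypothesis the totalization would involve genuine infinite products and the collapse of the sequence would no longer be automatic. This is in fact the exact point where Roos' (Ab.$4^*$)-$k$ axiom will become necessary in the subsequent sections of the paper.
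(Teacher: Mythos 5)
Your argument is correct and is essentially the standard one: the paper itself gives no proof of this lemma but simply invokes the dual of Weibel's Exercise~5.7.1, whose usual solution is exactly your augmented-bicomplex argument — degreewise finiteness of the totalization from $A^\bullet\in\Ch^+(\A)$ together with (\CE.$1$), exactness of the augmented columns from (\CE.$5$), and collapse of the (boundedly filtered, hence convergent) column spectral sequence to conclude that the cone of $\Tot(\lambda^\bullet)$ is acyclic. Your closing remark correctly identifies why this breaks for unbounded complexes, which is precisely the point of the paper's Theorem~\ref{CE_are_OK_thm} and the counterexample of Section~\ref{sec_three}.
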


Observe that, in the setting of the above lemma, $\Tot(C^{\bullet,\bullet})$ is degree-wise injective and, by (\CE.$1$), this complex is bounded below. Hence, by Proposition~\ref{prop_known_facts_cotors}, $\Tot(\lambda^\bullet)\colon A^\bullet\to \Tot(C^{\bullet,\bullet})$ is a \DG-injective resolution. Therefore, the totalization of \CE-resolutions provides a valid alternative to the construction in Subsection~\ref{bounded_subs}.

\subsection{{\rm\CE}-resolutions and the (Ab.$4^*$)-$k$ condition}
In \cite[Exercise~5.7.1]{Weibel}, Weibel suggests that Lemma~\ref{CE_bounded_lemma} works for unbounded complexes, provided $\A$ is (Ab.$4^*$). In fact, this assumption can be weakened to (Ab.$4^*$)-$k$:

\begin{thm}\label{CE_are_OK_thm}
Let $\A$ be a complete Abelian category with enough injectives, let $A^\bullet\in \Ch(\A)$ be a complex, and consider a \CE-resolution $\lambda^\bullet\colon A^\bullet\to C^{\bullet,\bullet}$. If $\A$ is (Ab.$4^*$)-$k$ for some $k\in\N$, the induced map $A^\bullet\to \Tot(C^{\bullet,\bullet})$ is a \DG-injective resolution of $A^\bullet$.\end{thm}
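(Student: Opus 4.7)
The plan is to build, out of the \CE-resolution $\lambda^\bullet\colon A^\bullet\to C^{\bullet,\bullet}$, a Spaltenstein tower of partial resolutions of $A^\bullet$ whose inverse limit is precisely $\Tot(C^{\bullet,\bullet})$; once this is in place, the argument of Theorem~\ref{main_unbounded_dg_thm} (that is, Proposition~\ref{prop_known_facts_cotors}(2) combined with Corollary~\ref{cor_unbound_res}) will conclude. The key auxiliary object is, for each $n\in\N$, a quotient bicomplex $\tilde C_n^{\bullet,\bullet}$ of $C^{\bullet,\bullet}$ that serves as a \CE-resolution of the smart truncation $\tau^{\geq -n}(A^\bullet)$; then $E_n^\bullet:=\Tot(\tilde C_n^{\bullet,\bullet})$ will be a bounded below complex of injectives, hence \DG-injective by Proposition~\ref{prop_known_facts_cotors}(1), and quasi-isomorphic to $\tau^{\geq -n}(A^\bullet)$ by Lemma~\ref{CE_bounded_lemma}, giving a legitimate \DG-injective resolution.

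Explicitly, I would set $\tilde C_n^{p,q}:=C^{p,q}$ for $q\geq -n+1$, $\tilde C_n^{p,-n}:=C^{p,-n}/\mathrm{Im}(d_0^{p,-n-1})$, and $\tilde C_n^{p,q}:=0$ for $q<-n$; equivalently, $\tilde C_n^{\bullet,\bullet}$ is the quotient of $C^{\bullet,\bullet}$ by the family $K_n^{\bullet,\bullet}$ that equals $C^{p,q}$ for $q<-n$, equals $B^{-n}(C_0^{\bullet,\bullet})^p=\mathrm{Im}(d_0^{p,-n-1})$ for $q=-n$, and vanishes for $q>-n$. A point worth checking is that $K_n^{\bullet,\bullet}$ is stable under both differentials: stability under $d_0$ is immediate from $d_0\circ d_0=0$, whereas stability under $d_1$ in the column $q=-n$ needs the anticommutativity relation $d_1 d_0+d_0 d_1=0$ (an element $d_0^{p,-n-1}(y)$ is carried by $d_1$ to $-d_0^{p+1,-n-1}(d_1^{p,-n-1}(y))\in\mathrm{Im}(d_0^{p+1,-n-1})$). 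Once $\tilde C_n^{\bullet,\bullet}$ is a bicomplex, (\CE.$2$) ensures that $B^{-n}(C_0^{\bullet,\bullet})$ is an injective resolution of $B^{-n}(A^\bullet)$, so in particular each $B^{-n}(C_0^{\bullet,\bullet})^p$ is injective; the resulting (split) short exact sequence $0\to B^{-n}(C_0^{\bullet,\bullet})^p\to C^{p,-n}\to\tilde C_n^{p,-n}\to 0$ makes each $\tilde C_n^{p,-n}$ injective and, passing to cohomology, identifies $\tilde C_n^{\bullet,-n}$ as an injective resolution of $A^{-n}/B^{-n}(A^\bullet)=\tau^{\geq -n}(A^\bullet)^{-n}$. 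Hence $\tilde C_n^{\bullet,\bullet}$ is a \CE-resolution of $\tau^{\geq -n}(A^\bullet)\in\Ch^+(\A)$.

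The compatible quotient morphisms $\tilde C_{n+1}^{\bullet,\bullet}\twoheadrightarrow\tilde C_n^{\bullet,\bullet}$ induce transition maps $t_n^\bullet\colon E_{n+1}^\bullet\to E_n^\bullet$, and a direct inspection shows that in each degree $k$ the map $t_n^k$ is degreewise split: it is the identity on the factors indexed by $0\leq p\leq k+n-1$, a split projection modulo the injective subobject $B^{-n}(C_0^{\bullet,\bullet})^{k+n}$ at $p=k+n$, and it simply forgets the factor at $p=k+n+1$; its kernel is bounded below and degreewise injective, hence \DG-injective. Thus $(\lambda_n^\bullet,t_n^\bullet)_{n\in\N}$ is a Spaltenstein tower of partial resolutions of $A^\bullet$. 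Since limits commute with products, $\varprojlim_n E_n^\bullet=\Tot(\varprojlim_n\tilde C_n^{\bullet,\bullet})=\Tot(C^{\bullet,\bullet})$ (the inner limit is computed bidegreewise, where $\tilde C_n^{p,q}$ stabilizes to $C^{p,q}$ for $n\gg 0$), so the argument of Theorem~\ref{main_unbounded_dg_thm} — which is precisely where the (Ab.$4^*$)-$k$ hypothesis is consumed, via Corollary~\ref{cor_unbound_res} — gives that $A^\bullet\cong\varprojlim_n\tau^{\geq -n}(A^\bullet)\to\Tot(C^{\bullet,\bullet})$ is a \DG-injective resolution. The main obstacle I foresee is the bicomplex-level bookkeeping verifying that $K_n^{\bullet,\bullet}$ is a sub-bicomplex and that $\tilde C_n^{\bullet,\bullet}$ remains a \CE-resolution after the column-modification, where the anticommutativity of the \CE-bicomplex is the crucial ingredient.
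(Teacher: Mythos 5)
Your proposal is correct and follows essentially the same route as the paper's proof: the paper also truncates the \CE-bicomplex columnwise, replacing the column in degree $-n$ by its quotient modulo $B^{-n}(C_0^{\bullet,\bullet})$ (written there as $E_{H^{-n}}^\bullet\oplus E_{B^{-n+1}}^\bullet$), totalizes to obtain a Spaltenstein tower over the $\tau^{\geq -n}(A^\bullet)$, and concludes via Theorem~\ref{main_unbounded_dg_thm}. Your verification that the quotient is a sub-bicomplex-quotient and still a \CE-resolution is exactly the bookkeeping the paper leaves implicit.
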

\begin{proof}
By definition of \CE-resolution, we get injective resolutions $\lambda_{B^n}\colon B^n(A^\bullet)\to E_{B^n}^\bullet:=B^n(C^{\bullet,\bullet}_0)$ and $\lambda_{H^n}\colon H^n(A^\bullet)\to E_{H^n}^\bullet:=H^n(C^{\bullet,\bullet}_0)$, for each $n\in\Z$, so that $C^{n,m}\cong E_{B^n}^m\oplus E_{H^n}^m\oplus E_{B^{n+1}}^{m}$, for all $m\in\N$ and $n\in\Z$. Observe that, for each $n\in\N$, there is a \CE-resolution $\lambda_n^\bullet\colon \tau^{\geq -n}(A^\bullet)\to C_{\geq -n}^{\bullet,\bullet}$, where $C_{\geq -n}^{\bullet,i}=C^{\bullet,i}$ for all $i>-n$, $C_{\geq -n}^{\bullet,i}=0$ for all $i<n$, and $C_{\geq -n}^{\bullet,n}=E_{H^n}^m\oplus E_{B^{n+1}}^{m}$ (with differentials induced by those of $C^{\bullet,\bullet}$). Denoting by $\pi_n^\bullet\colon \tau^{\geq -n-1}A^\bullet\to \tau^{\geq -n}(A^\bullet)$ and $\gamma^{\bullet,\bullet}_n\colon C_{\geq-n-1}^{\bullet,\bullet}\to C_{\geq -n}^{\bullet,\bullet}$ the obvious projections and taking the totalizations, we obtain the following Spaltenstein tower of partial resolutions (see  Lemma~\ref{CE_bounded_lemma}):
\[
\xymatrix@C=40pt@R=18pt{
\cdots\ar[r]^-{\pi^\bullet_{-2}}&\tau^{\geq-2}(A^\bullet)\ar[d]^-{\lambda_2^\bullet}\ar[r]^-{\pi^\bullet_{-1}}&\tau^{\geq-1}(A^\bullet)\ar[d]^-{\lambda_1^\bullet}\ar[r]^-{\pi^\bullet_{0}}&\tau^{\geq0}(A^\bullet)\ar[d]^-{\lambda_0^\bullet} \\
\cdots\ar[r]^-{\Tot(\gamma_{-2}^{\bullet,\bullet})}&\Tot(C^{\bullet,\bullet}_{\geq -2})\ar[r]^-{\Tot(\gamma_{-1}^{\bullet,\bullet})}&\Tot(C^{\bullet,\bullet}_{\geq -1})\ar[r]^-{ \Tot(\gamma_{0}^{\bullet,\bullet})}&\Tot(C^{\bullet,\bullet}_{\geq 0}). 
}
\]
By Theorem~\ref{main_unbounded_dg_thm} (that can be applied here as we are assuming that $\A$ is (Ab.$4^*$)-$k$ for some $k\in\N$), the complex $\varprojlim_{i\in \N}\Tot(C_{\geq-i}^{\bullet,\bullet})\cong \Tot(C^{\bullet,\bullet})$ is a \DG-injective resolution of $A^\bullet$. 
\end{proof}

 To understand what can go wrong if we do not assume (Ab.$4^*$)-$k$, let us construct a \CE-resolution for the complex $X^\bullet:= \mathrlap{\text{\tiny \hspace{4.3pt}$\G$}}\prod_{i\in \Z}S^i(\Q(R))\in \Ch(\G)$ from Section~\ref{poison_sec}. For this specific complex: $B^n(X^\bullet)=0$ and $Z^n(X^\bullet)\cong X^n\cong H^n(X^\bullet)\cong \Q(R)$, for all $n\in\Z$. As in Lemma~\ref{res_of_R_lem}, we choose an injective resolution 
$
\lambda\colon R\to E^\bullet
$ 
of $R$ in $\Mod(R)$, so that $\mathbf Q(E^\bullet)\in \Ch^{\geq0}(\G)$ is an injective resolution of $\Q(R)$ in $\G$. Hence, the following is a \CE-resolution of $X^\bullet$:
\begin{equation}\label{CE_of_poison_eq}
\xymatrix@C=50pt@R=3pt{
\cdots \ar[r]&\Q(R)\ar[r]^0\ar@/_-5pt/[dd]^(.35){\mathbf Q(\lambda)}&\Q(R)\ar[r]^0\ar@/_-5pt/[dd]^(.35){\mathbf Q(\lambda)}&\Q(R)\ar[r]^0\ar@/_-5pt/[dd]^(.35){\mathbf Q(\lambda)}&\cdots\\
\ar@{.}[rrrr]&&&&\\
\cdots \ar[r]&\mathbf Q(E^0)\ar[ddd]^-{\mathbf Q(e^0)}\ar[r]^-{0}&\mathbf Q(E^0)\ar[r]^-{0}\ar[ddd]^-{\mathbf Q(e^0)}&\mathbf Q(E^0)\ar[r]^-{0}\ar[ddd]^-{\mathbf Q(e^0)}&\cdots\\
\\
\\
\cdots \ar[r]&\mathbf Q(E^1)\ar[r]^-{0}\ar[dd]^-{\mathbf Q(e^1)}&\mathbf Q(E^1)\ar[r]^-{0}\ar[dd]^-{\mathbf Q(e^1)}&\mathbf Q(E^1)\ar[r]^-{0}\ar[dd]^-{\mathbf Q(e^1)}&\cdots\\
\\
&\vdots&\vdots&\vdots}
\end{equation}
Totalizing this \CE-resolution one gets the diagonal map $\mathrlap{\text{\tiny \hspace{4.3pt}$\G$}}\prod_{i\in \Z}\Sigma^i\mathbf{Q}(\lambda)\colon X^\bullet\to \mathrlap{\text{\tiny \hspace{4.3pt}$\G$}}\prod_{i\in \Z}\Sigma^i\mathbf Q(E^\bullet)$, which is not a quasi-isomorphism by Proposition~\ref{poison_in_action_prop}.

\section{Saneblidze's construction through multicomplexes}\label{Saneblidze_sec}\label{sec_five}

A different construction of general \DG-injective resolutions for (possibly unbounded) complexes is proposed in \cite[Proposition~3]{saneblidze2007derived}. Given an Abelian category $\A$, which is just supposed to have countable products and enough injectives, and $X^\bullet\in \Ch(\A)$, Saneblidze constructs what he calls a homological injective multicomplex $E^{\bullet,\bullet}$ (see below for unexplained terminology) whose totalization provides, supposedly, a \DG-injective resolution for $X^\bullet$. In this section, after recalling the necessary background and some details about the construction proposed in \cite{saneblidze2007derived}, we show that the same example considered in Section~\ref{poison_sec} is also a counterexample to  this construction.

\subsection{Cohomological injective multicomplexes and resolutions} 
Let $\A$ be an Abelian category. A {\bf multicomplex} $C^{\bullet,\bullet}$ over $\A$ is given by the following data:
\begin{itemize}
\item a family of objects $\C^{\bullet,\bullet}=\{C^{n,m}:(n,m)\in\Z\times \Z\}$ in $\A$;
\item for each $r\in\N$ a degree $r$ differential $d_r^{\bullet,\bullet}\colon C^{\bullet,\bullet}\to C^{\bullet+r,\bullet-r+1}$, such that the following condition is satisfied for all $i\in\N$ and all $(n,m)\in\Z\times \Z$
\begin{equation}\label{condition_multi_eq}
\sum_{r+s=i}d_s^{n+r,m-r+1}\circ d_r^{n,m}=0.
\end{equation}
\end{itemize} 
As we did for bicomplexes in Subsection~\ref{CE_sec}, we restrict our attention to {\bf lower half-plane multicomplexes} $C^{\bullet,\bullet}$, which can be visualized as follows (where labels denote the degree of the arrows) 
\[
\scalebox{0.85}{
\xymatrix@R=10pt@C=30pt{
\cdots \ar[r]|-{\boxed{\text{\tiny $0$}}}&C^{0,m-2}\ar[dd]|-{\boxed{\text{\tiny $1$}}}\ar[rr]|(.60){\boxed{\text{\tiny $0$}}}&&C^{0,m-1}\ar@/_12pt/[lldddd]|(.75){\boxed{\text{\tiny $2$}}}\ar[dd]|-{\boxed{\text{\tiny $1$}}}\ar[rr]|(.60){\boxed{\text{\tiny $0$}}}&&C^{0,m}\ar[rr]|(.60){\boxed{\text{\tiny $0$}}}\ar[dd]|-{\boxed{\text{\tiny $1$}}}\ar@/_10pt/[lllldddddd]|(.80){\boxed{\text{\tiny $3$}}}\ar@/_12pt/[lldddd]|(.75){\boxed{\text{\tiny $2$}}}&&C^{0,m+1}\ar@/_10pt/[lllldddddd]|(.80){\boxed{\text{\tiny $3$}}}\ar@/_12pt/[lldddd]|(.75){\boxed{\text{\tiny $2$}}}\ar[r]|-{\boxed{\text{\tiny $0$}}}\ar[dd]|-{\boxed{\text{\tiny $1$}}}&\cdots\\
\\
\cdots \ar[r]|-{\boxed{\text{\tiny $0$}}}&C^{1,m-2}\ar[rr]|(.60){\boxed{\text{\tiny $0$}}}\ar[dd]|(.56){\boxed{\text{\tiny $1$}}}&&C^{1,m-1}\ar@/_12pt/[lldddd]|(.75){\boxed{\text{\tiny $2$}}}\ar[rr]|(.60){\boxed{\text{\tiny $0$}}}\ar[dd]|(.56){\boxed{\text{\tiny $1$}}}&&C^{1,m}\ar@/_12pt/[lldddd]|(.75){\boxed{\text{\tiny $2$}}}\ar[rr]|(.60){\boxed{\text{\tiny $0$}}}\ar[dd]|-{\boxed{\text{\tiny $1$}}}&&C^{1,m+1}\ar@/_12pt/[lldddd]|(.75){\boxed{\text{\tiny $2$}}}\ar[r]|-{\boxed{\text{\tiny $0$}}}\ar[dd]|-{\boxed{\text{\tiny $1$}}}&\cdots\\
\\
\cdots \ar[r]|-{\boxed{\text{\tiny $0$}}}&C^{2,m-2}\ar[rr]|(.68){\boxed{\text{\tiny $0$}}}\ar[dd]|(.56){\boxed{\text{\tiny $1$}}}&&C^{2,m-1}\ar[rr]|(.70){\boxed{\text{\tiny $0$}}}\ar[dd]|(.56){\boxed{\text{\tiny $1$}}}&&C^{2,m}\ar[rr]|(.60){\boxed{\text{\tiny $0$}}}\ar[dd]|(.56){\boxed{\text{\tiny $1$}}}&&C^{2,m+1}\ar[r]|-{\boxed{\text{\tiny $0$}}}\ar[dd]|(.56){\boxed{\text{\tiny $1$}}}&\cdots\\
\\
\cdots \ar[r]|-{\boxed{\text{\tiny $0$}}}&C^{3,m-2}\ar[rr]|(.68){\boxed{\text{\tiny $0$}}}\ar[dd]&&C^{3,m-1}\ar[rr]|(.70){\boxed{\text{\tiny $0$}}}\ar[dd]&&C^{3,m}\ar[rr]|(.60){\boxed{\text{\tiny $0$}}}\ar[dd]&&C^{3,m+1}\ar[r]|-{\boxed{\text{\tiny $0$}}}\ar[dd]&\cdots\\
\\
&\vdots&&\vdots&&\vdots&&\vdots}}
\]
For $i=0$,  \eqref{condition_multi_eq} means that each row is a cochain complex while, for $i=1$, it says that the little squares anticommute. Moreover, if  $d_0^{\bullet,\bullet}=0$ or $d_2^{\bullet,\bullet}=0$, then \eqref{condition_multi_eq} for $i=2$ says that each column is a cochain complex. Hence, a bicomplex is just a multicomplex such that $d^{\bullet,\bullet}_r=0$, for all $r\geq 2$.

A {\bf morphism of multicomplexes} $(\phi_r^{\bullet,\bullet})_{r\in\N}\colon X^{\bullet,\bullet}\to Y^{\bullet,\bullet}$  is specified by a family of morphisms
$
\phi^{(n,m)}_r\colon X^{(n,m)}\to Y^{(n+r,m-r)}
$
which is compatible with differentials in the following sense:
$$
\sum_{r+s=i}\phi^{(n+r,m-r+1)}_s\circ d^{(n,m)}_r=\sum_{r+s=i}d^{(n+r,m-r)}_s\circ \phi^{(i,j)}_r\qquad\text{for all $(n,m)\in \mathbb Z\times \mathbb Z$ and $i\in\mathbb N$.}
$$
\begin{defn}[{\cite[Section~2]{saneblidze2007derived}}]
A lower half-plane multicomplex of injectives $E^{\bullet,\bullet}$ is said to be a {\bf homological injective multicomplex} if:
\begin{itemize}
\item $d_0^{\bullet,\bullet}=0$ (so that each column $E^{\bullet,m}\in \Ch^{\geq0}(\A)$ with the degree $1$ differentials);
\item for each $m\in \Z$, $H^i(E^{\bullet,m})=0$ for all $i\geq 1$, that is, the column $E^{\bullet,m}$ is an injective resolution of the object $H^0(E^{\bullet,m})=\Ker(d_1^{0,m})\in \A$. 
\end{itemize}
\end{defn}
 Finally, if $\A$ is complete, given a lower half-plane multicomplex $C^{\bullet,\bullet}$, one can consider its {\bf totalization}, which is a cochain complex $\Tot(C^{\bullet,\bullet})\in \Ch(\A)$ such that $\Tot(C^{\bullet,\bullet})^i:=\prod_{j=0}^\infty C^{j,i-j}$, for all $i\in \Z$, where the $n$-th differential is
\[
\partial^i:=(\partial^i_j)_{j\in\N}\colon \prod_{j\in\N} C^{j,i-j}\longrightarrow \prod_{j\in\N} C^{j,i+1-j},
\]
where $\partial^i_j$ is determined by its components $(d_0^{j,i-j},\, d_1^{j,i-j},\, d_2^{j,i-j}, \dots)^t\colon C^{j,i-j}\to  \prod_{r\in\N} C^{j+r,i+1-j-r}$.

\begin{defn}[{\cite[Section~2]{saneblidze2007derived}}]
A {\bf homological injective resolution} of a complex $X^\bullet \in\Ch(\A)$ is a morphism of multicomplexes $\lambda=(\lambda^\bullet_r)_{r\in \N}\colon X^\bullet \to E^{\bullet,\bullet}$ such that $E^{\bullet,\bullet}$ is a homological injective multicomplex and $\Tot(\lambda)\colon X^\bullet\to \Tot(E^{\bullet,\bullet})$ is a quasi-isomorphism. 
\end{defn}

\subsection{A counterexample for the construction}
In \cite[Proposition~3]{saneblidze2007derived}, Saneblidze states that, if $\A$ is a cocomplete Abelian category with enough projectives, then every complex has a homological projective resolution. In the proof of \cite[Proposition~3]{saneblidze2007derived} one can find a rather involved construction of a homological resolution of an arbitrary complex. Let us recall the first few steps (in the dual setting of homological injective resolutions). Indeed, let $A^\bullet\in \Ch(\A)$ be a given complex. To build a homological injective resolution $\lambda=(\lambda^\bullet_r)_{r\in \N}\colon A^\bullet \to E^{\bullet,\bullet}$, consider an injective resolution $\lambda^n\colon H^n(A^\bullet)\to E_{H^n}^\bullet=(E_{H^n}^0\to E_{H^n}^1\to \cdots)$, for each $n\in\Z$, and define:
\begin{itemize}
\item $E^{m,n}:=E^m_{H^n}$ for all $m\in\N$ and $n\in\Z$;
\item  define the differentials of degree $0$ of $E^{\bullet,\bullet}$ to be trivial;
\item  use the differentials of $E_{H^n}^\bullet$ to define the differentials of degree $1$ in $E^{\bullet,\bullet}$;
\item let $\lambda^n_0\colon A^n\to E^0_{H^n}$  be any lifting of the composition $Z^n(A^\bullet)\to H^n(A^\bullet)\to E_{H^n}^0$ along the inclusion $Z^n(A^\bullet)\to A^n$ (which exists by the injectivity of $E_{H^n}^0$). 
\end{itemize}
Hence, after this first step of the construction, we have all the object components of $E^{\bullet,\bullet}$, its horizontal and vertical differentials, and the $0$-degree component of $\lambda$. As it turns out, since our goal is to apply the construction to the complex $X^\bullet\in \Ch(\G)$ of Section~\ref{poison_sec}, which is a complex with trivial differentials, we can just stop here with the construction:

\begin{rmk}[{\cite[pag.\,320]{saneblidze2007derived}}]
As observed by Saneblidze  right after  \cite[Proposition~3]{saneblidze2007derived}, and as it can be easily checked in his proof, if all the differentials of $A^\bullet$ are trivial, that is, if $A^\bullet\cong \prod_{n\in\Z}S^{-n}(A^n)$, one can take $d^{\bullet,\bullet}_r=0$ for all $r\geq 2$ and $\lambda^\bullet_s=0$, for all $s\geq 1$. Hence, in this case, $E^{\bullet,\bullet}$ is just a bicomplex with trivial horizontal differentials, and $\lambda=\lambda^\bullet\colon A^\bullet \to E^{0,\bullet}$.
\end{rmk} 

By the above discussion and remark, it is  easy to see that the result of applying Saneblidze's construction to the complex $X^\bullet\in \Ch(\G)$ from Section~\ref{poison_sec} is the  bicomplex  we have constructed in \eqref{CE_of_poison_eq} when studying \CE-resolutions. As we have already seen, the totalization of that bicomplex is not quasi-isomorphic to $X^\bullet$ and, therefore, Saneblidze's construction does not produce a homological injective resolution in this case.

\section{Ding and Yang's construction via repeated killing of coboundaries}\label{DY_sec}\label{sec_six}

Let $\A$ be a bicomplete Abelian category, $(\X,\Y)$ a complete hereditary cotorsion pair in $\A$, and consider the induced cotorsion pairs $(\mathrm{dg}\X,\widetilde\Y)$ and  $(\widetilde\X,\mathrm{dg}\Y)$ in $\Ch(\A)$, introduced by Gillespie \cite[Definition~3.1]{gillespie2008cotorsion}. In many important cases, these cotorsion pairs are complete, hereditary and compatible,  so they  give rise to an Abelian model structure in $\Ch(\A)$. On the other hand, to the best of our knowledge, the following question remains open:
\begin{quest}\label{Gillespie_question}
Let $\A$ be a bicomplete Abelian category and let $(\X,\Y)$ be a complete and hereditary cotorsion pair in $\A$. Are the induced cotorsion pairs $(\mathrm{dg}\X,\widetilde\Y)$ and  $(\widetilde\X,\mathrm{dg}\Y)$ complete in $\Ch(\A)$?
\end{quest}

An attempt to solve this problem in the positive was made by N.\,Ding and X.\,Yang. The argument they used is based on \cite[Lemma~2.1]{yang2015question}, whose proof contains a very concrete construction, for each  $A^\bullet\in\Ch(\A)$,  of a $Y^\bullet\in\mathrm{dg}\Y$ which is quasi-isomorphic to $A^\bullet$. In particular, if we start with the trivial cotorsion pair $(\A,\Inj(\A))$ in $\A$ (assuming that $\A$ has enough injectives), this construction should produce a \DG-injective resolution of $A^\bullet$. 

In this section, we test the proof of \cite[Lemma~2.1]{yang2015question} against the complex $X^\bullet\in \Ch(\G)$ from Section~\ref{poison_sec}, showing that it actually fails to produce the desired resolution. Furthermore, we briefly analyze Ding and Yang's proof, pointing out a concrete gap in the argument. Moreover, we observe that the problem completely disappears if we assume that $\A$ is (Ab.$4^*$) for the first half of the statement and, dually, that $\A$ is (Ab.$4$) for the second half. 

\subsection{The Ding-Yang construction in concrete situations}\label{subs_tech_Ding_Yang}

Let $\A$ be a bicomplete Abelian category with enough injectives, so that the cotorsion pair $(\A,\Inj(\A))$ is complete. Let us start recalling the idea of the proof of  \cite[Lemma~2.1]{yang2015question}  in this very special case. Indeed, the construction of Ding and Yang is based on a very simple operation, of ``killing coboundaries" that we have condensed in the following 

\begin{con}[Killing coboundaries]\label{killing_cob_const}
Let $\A$ be an Abelian category with enough injectives,  fix a complex $X^\bullet\in \Ch(\A)$ and  $n\in \Z$. Denote by $\rho^n\colon X^n\to X^n/B^n(X^\bullet)$ the obvious projection and by $\iota^n\colon X^n/B^n(X^\bullet)\to E$ an inclusion into an injective object $E\in\Inj(\A)$. Define a new complex:
\[
K{(X^\bullet,n)}\ :\qquad \xymatrix@C=20pt{\cdots\ar[r]&X^{n-1}\ar[rr]^-{d^{n-1}}&&X^{n}\ar[rr]^-{\text{\tiny$\left[\begin{matrix}{d^n} \\ \iota_n\rho_n\end{matrix}\right]$}}&&X^{n+1}\oplus E\ar[rr]^-{\text{\tiny$\left[\begin{matrix}d^{n+1} & 0\end{matrix}\right]$}}&&X^{n+2}\ar[r]^-{d^{n+2}}&\cdots}
\]
and a morphism of complexes $\pi^\bullet_{n}\colon K{(X^\bullet,n)}\to X^\bullet$ such that 
\[
\pi_{n}^m:=\begin{cases} 
\id_{X^m}&\text{for all $m\neq n+1$;}\\
\text{the obvious projection $X^{n+1}\oplus E\to X^{n+1}$}&\text{otherwise.}
\end{cases}
\]
Then, $H^n(K{(X^\bullet,n)})=0$, $\pi^\bullet_{n}$ is a degree-wise split-epic, and $\Ker(\pi^\bullet_{n})=S^{n+1}(E)$ is \DG-injective.
\end{con}
Observe that $K{(X^\bullet,n)}$ depends on the choice of the embedding $\iota^n\colon X^n/B^n(X^\bullet)\to E$. In some cases this choice can be made canonical, e.g., whenever $X^n/B^n(X^\bullet)$ is trivial or, more generally, when it is an injective object, it is natural to take $\iota^n:=\id$.  
To illustrate the construction in some basic case, take an injective object $E\in \A$ and consider the stalk complex $S^i(E)$ and the disk complex $D^i(E)$, for some $i\in \Z$. Then,
\[
K(S^i(E),j)=\begin{cases}S^i(E)&\text{if $j\neq i$;}\\
D^i(E)&\text{if $j=i$;}
\end{cases}
\qquad \text{and}\qquad K(D^i(E),j)=\begin{cases}D^i(E)&\text{if $j\neq i$;}\\
D^i(E)\oplus S^{i+1}(E)&\text{if $j=i$.}
\end{cases}
\]
In fact, it is not completely trivial to verify that $K(D^i(E),i)\cong D^i(E)\oplus S^{i+1}(E)$ since the $i$-th differential of $K(D^i(E),i)$ is, in principle, just a triangular (but not necessarily a diagonal) matrix. On the other hand, most of these complications can be avoided by using the following lemma to simplify
\begin{lem}\label{reordering_differentials}
Let $X^\bullet=(X^n,d^n)_{n\in\Z}\in\Ch(\A)$ and suppose that, for a given $n\in \Z$, $X^{n}\cong Y^{(k)}$ is a coproduct of $k$-many copies of a given $Y\in \A$, and $d^{n-1}=
(\varphi,\varphi,\dots,\varphi)^t\colon X^{n-1}\to Y^{(k)}$ is the diagonal map for a suitable $\varphi\colon  X^{n-1}\to Y$. Then, there is an isomorphism $\phi^\bullet\colon X^\bullet\to (X')^{\bullet}=((X')^n,(d')^n)_{n\in\Z}$ in $\Ch(\A)$, where $(X')^i:=X^i$ for all $i\in \Z$ and, if $d^n=(\psi_1,\psi_2, \dots, \psi_k)\colon Y^{(k)}\to X^{n+1}$, 
\[
(d')^i:=\begin{cases}
(\varphi, 0, \dots, 0)^t&\text{if $i=n-1$;}\\
(\psi_1+\ldots+\psi_k,\psi_2, \dots, \psi_k)&\text{if $i=n$;}\\
  (d')^i:=d^i & \text{for all $i\in \Z\setminus\{ n-1,\ n\}$}.
\end{cases}
\]

\end{lem}
\begin{proof}
One can just define $\phi^i:=\id_{X^i}$ for all $i\neq n$, and 
\[
\phi^n:={\text{\tiny$\left[\begin{matrix}+\id_Y& 0 &  \ldots&0&0\\
-\id_Y&\id_Y&\ldots&0&0\\
\vdots&\vdots&\ddots&\vdots&\vdots\\
-\id_Y&0&\ldots&\id_Y&0\\
-\id_Y&0&\ldots&0&\id_Y\end{matrix}\right]$}}\in \mathrm{Mat}_{k\times k}(\mathrm{End}_\A(Y))\cong\mathrm{End}_\A(Y^{(k)}).\qedhere
\]
\end{proof}

Consider now a family of complexes $\{X^\bullet_n\}_{n\in \N}\in \Ch(\A)$ such that 
\[
\left|\{n\in \N:X_n^i\neq 0\}\right|<\infty\ ,\quad \text{ for each $i\in\Z$.}
\] 
Observe that, for such a family,  $\prod_\N X^\bullet_n=\bigoplus_\N X^\bullet_n$. Hence, letting $Y^\bullet:=\prod_\N X^\bullet_n=\bigoplus_\N X^\bullet_n$, we have that $B^i(Y^\bullet)=\bigoplus_\N B^i(X^\bullet_n)=\prod_\N B^i(X^\bullet_n)$ (where both equalities use that finite products coincide with finite coproducts in any Abelian category and, hence, they are exact). Similarly, one can verify that $Z^i(Y^\bullet)=\prod_{\N}Z^i(X^\bullet_n)=\bigoplus_{\N}Z^i(X^\bullet_n)$ and, combining these two, one even gets $H^i(Y^\bullet)=\prod_{\N}H^i(X^\bullet_n)=\bigoplus_{\N}H^i(X^\bullet_n)$.

\begin{lem}\label{killing_sums_and_products}
Let $\{X^\bullet_n\}_{n\in \N}\in \Ch(\A)$ be a family as above and let $Y^\bullet:=\prod_{\N}X^\bullet_n$. Then, for each $i\in \Z$, one can choose suitable embeddings into injectives in Construction~\ref{killing_cob_const} so to obtain $K{(Y^\bullet,i)}=\prod_{\N}K(X_n^\bullet,i)$.
\end{lem}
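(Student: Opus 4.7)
The plan is to exploit the observation, highlighted in the paragraph preceding the statement, that all cohomological operators ($B^i$, $Z^i$, $H^i$) commute with the degreewise-finite product $Y^\bullet = \prod_\N X_n^\bullet$; in particular, $Y^i/B^i(Y^\bullet) \cong \prod_\N (X_n^i/B^i(X_n^\bullet))$.

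First, I would fix, for each $n \in \N$, the data used by Construction~\ref{killing_cob_const} to build $K(X_n^\bullet,i)$: an injective object $E_n \in \A$ together with a monomorphism $\iota_n\colon X_n^i/B^i(X_n^\bullet) \to E_n$. Whenever $X_n^i = 0$ one may take $E_n = 0$; hence, by the degreewise finiteness hypothesis on the family, only finitely many $E_n$ are nonzero. Consequently, $E := \prod_\N E_n = \bigoplus_\N E_n$ is a finite coproduct of injectives, which is itself injective, so it qualifies as an ambient injective object for the killing construction applied to $Y^\bullet$.

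Next, I would feed Construction~\ref{killing_cob_const} applied to $Y^\bullet$ at position $i$ with the product embedding $\iota := \prod_\N \iota_n\colon Y^i/B^i(Y^\bullet) \to E$, which is a monomorphism because products of monomorphisms are monomorphisms. The verification that $K(Y^\bullet,i) = \prod_\N K(X_n^\bullet,i)$ then becomes a degreewise check: for each $m \neq i+1$ we have $K(Y^\bullet,i)^m = Y^m = \prod_\N X_n^m = \prod_\N K(X_n^\bullet,i)^m$, while in degree $i+1$ we get $K(Y^\bullet,i)^{i+1} = Y^{i+1} \oplus E = \prod_\N(X_n^{i+1} \oplus E_n)$, using once more the degreewise finiteness to exchange $\prod$ and $\oplus$.

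Finally, I would check that the differentials match, which is an immediate consequence of the universal property of products applied blockwise to the matrices $\bigl(\begin{smallmatrix} d^i \\ \iota\rho\end{smallmatrix}\bigr)$ and $\bigl(\begin{smallmatrix} d^{i+1} & 0\end{smallmatrix}\bigr)$ appearing in the construction. The argument is essentially a bookkeeping exercise; the only mildly delicate step is verifying that, under the product identifications above, both the $d^i$-component and the $\iota\rho$-component of the differential at position $i$ in $K(Y^\bullet,i)$ genuinely factor through the product as $\prod_\N d_n^i$ and $\prod_\N \iota_n \rho_n$ respectively, so that the combined map coincides with the product of the differentials at position $i$ in the individual $K(X_n^\bullet,i)$. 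Once this is in place, the equality of complexes holds on the nose.
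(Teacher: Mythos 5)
Your proof is correct and is exactly the argument the paper leaves implicit (the lemma is stated without proof, as an immediate consequence of the preceding paragraph on degreewise-finite products): take $E=\prod_\N E_n$ and $\iota=\prod_\N\iota_n$ as the chosen embedding for $Y^\bullet$, and the identification is then degreewise bookkeeping. The only cosmetic remark is that degreewise finiteness is not actually needed to exchange $\prod_\N$ with the binary $\oplus$ in degree $i+1$ (products always commute with finite biproducts); it is needed only for the identification $Y^i/B^i(Y^\bullet)\cong\prod_\N\bigl(X_n^i/B^i(X_n^\bullet)\bigr)$, which you correctly invoke.
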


Going back to Ding and Yang's proof of their Lemma~2.1, they fix an arbitrary surjection $\tau\colon \N\to \Z$ with $n_i:=\tau(i)$, for all $i\in \N$, with the property of having infinite fibers. As we are trying to do some concrete computations, we need to make a  choice. Indeed, we fix the following sequence of integers (in which every integer appears infinitely many times, as desired):
\begin{align*}
n_0&=0,\\
n_1=-1,\ n_2&=0,\ n_3=1,\\
n_4=-2,\ n_5=-1,\ n_6&=0,\ n_7=1,\ n_8=2,\\
n_9=-3,\ n_{10}=-2,\ n_{11}=-1,\ n_{12}&=0,\ n_{13}=1,\ n_{14}=2,\ n_{15}=3,\\
&\ldots
\end{align*}
Now, starting with our complex $X^\bullet\in\Ch(\A)$,  the idea of the Ding-Yang construction is to iterate Construction~\ref{killing_cob_const} in order to build the following sequential inverse system:
\[
\xymatrix{
\cdots\ar[r]&Y^\bullet_i\ar[r]^-{\pi^\bullet_i}&\cdots\ar[r]^-{\pi^\bullet_2}&Y^\bullet_1\ar[r]^-{\pi^\bullet_1}&Y^\bullet_0\ar[r]^-{\pi^\bullet_0}&Y^\bullet_{-1}:=X^\bullet
}
\qquad\text{in $\Ch(\A)$,}
\]
where $Y^\bullet_{i}:=K(Y^\bullet_{i-1},n_{i})$ and $\pi^\bullet_i$ is the obvious degree-wise split-epimorphism with \DG-injective kernel, for all $i\in \N$. Defining $Y^\bullet:=\varprojlim_{\N}Y^\bullet_i$, one gets an exact sequence $0\to E^\bullet\to Y^\bullet\to X^\bullet\to 0$ in $\Ch(\A)$, with $E^\bullet$ a \DG-injective complex. The claim made in \cite{yang2015question} is that the complex $Y^\bullet$ is exact, so there is a quasi-isomorphism $X^\bullet\to \Sigma E^\bullet$, which is precisely the \DG-injective resolution of $X^\bullet$ we are looking for. In the rest of this section we will apply this strategy in some easy cases: first with $X^\bullet$ a stalk complex, and then a product of stalk complexes. As we will see, if the product of stalk complexes $X^\bullet$ is the complex from Section~\ref{poison_sec}, then the resulting complex $Y^\bullet$ is not exact, showing that the proof of \cite[Lemma~2.1]{yang2015question} may fail even for the trivial cotorsion pair $(\G,\Inj(\G))$, with $\G$ a Grothendieck category.

\smallskip
As a first example, let us apply the Ding-Yang construction to a stalk complex $X=S^0(A)$ concentrated in degree $0$, for some $A\in \A$. Fix an injective resolution $\varphi\colon A\to E^\bullet$ in $\A$, where we consider $E^\bullet$ as a complex concentrated in degrees $\geq 1$:
\[
E^\bullet:=(\xymatrix@C=20pt{
\cdots\ar[r]&0\ar[r]& E^1\ar[r]^-{\lambda^1}& E^{2}\ar[r]^-{\lambda^2} & E^{3}\ar[r]^-{\lambda^3} &\cdots ).
}
\] 
Moreover, let us also consider the following exact complex concentrated in non-negative degrees:
\[
E_A^{[0,\infty)}:=(\xymatrix@C=20pt{
\cdots\ar[r]&0\ar[r]&A\ar@{^(->}[r]^-{\varphi} & E^1\ar[r]^-{\lambda^1}& E^{2}\ar[r]^-{\lambda^2} & E^{3}\ar[r]^-{\lambda^3} &\cdots ),
}
\]
and, for each $m\in\N$, let $E_A^{[0,m]}$ be the naive truncation of $E_A^{[0,\infty)}$ so that, for example, 
\[
E_A^{[0,0]}=S^0(A),\quad  E_A^{[0,1]}=(\cdots \longrightarrow 0\longrightarrow A\longrightarrow E^1\longrightarrow 0\longrightarrow\cdots),\quad \text{ and so on.}
\]
We can now start computing the complexes of the form $Y^\bullet_{i}:=K(Y^\bullet_{i-1},{n_{i}})$, for $i\in \N$:
\begin{enumerate}
\item[($i=0$):] $Y^\bullet_{0}=K(S^0(A),0)=E^{[0,1]}_A$;
\item[($i=1$):] $Y^\bullet_{1}=Y^\bullet_0$;
\item[($i=2$):] $Y^\bullet_{2}=E^{[0,1]}_A\oplus S^{1}(E^1)$ (by a suitable application of Lemma~\ref{reordering_differentials});
\item[($i=3$):] $Y^\bullet_{3}=E^{[0,2]}_A\oplus D^{1}(E^1)$ (use Lemma~\ref{killing_sums_and_products});
\item[($i=4,5$):] $Y^\bullet_5=Y^\bullet_{4}=Y^\bullet_{3}$;
\item[($i=6$):]  $Y^\bullet_{6}=E^{[0,2]}_A\oplus S^{1}(E^1)\oplus D^{1}(E^1)$;
\item[($i=7$):] $Y^\bullet_7=E^{[0,2]}_A\oplus  D^{1}((E^1)^{2})\oplus S^{2}(E^1\oplus E^2)$;
\item[($i=8$):] $Y^\bullet_8=E^{[0,3]}_A\oplus D^{1}((E^1)^2)\oplus D^{2}(E^1\oplus E^2)$;
\item[($i=9,10,11$):] $Y^\bullet_{11}=Y^\bullet_{10}=Y^\bullet_{9}=Y^\bullet_8$;
\item[($i=12$):] $Y^\bullet_{12}=E^{[0,3]}_A\oplus S^{1}(E^1) \oplus D^{1}((E^1)^2)\oplus D^{2}(E^1\oplus E^{2})$;
\item[$\dots\quad$] 
\end{enumerate}
After computing these initial steps, it is already evident that the limit $Y^\bullet:=\varprojlim(\dots \to Y^\bullet_n\to \dots\to Y^\bullet_0)$ is isomorphic to the following complex:
\[
\xymatrix{
Y^\bullet\cong E^{[0,\infty)}_A\times \prod_{i=1}^{\infty}\left(\prod_{j=1}^{i}(D^{i}(E^j))^\N\right).
}
\]
In other words, $Y^\bullet$ is a product of our chosen resolution of $A$ (including $A$, so this part is an exact complex) with a certain amount of disk complexes, so $Y^\bullet$ is an exact complex. In particular, this shows that the Ding-Yang construction works just fine when applied to single stalk complexes. 

\smallskip
Suppose now that, instead of a stalk complex alone, we take  $X^\bullet:=\bigoplus_{i\in\Z}S^i(A)=\prod_{i\in\Z}S^i(A)$ for a given object $A\in \A$, that is:
\[
X^\bullet:=\ (\xymatrix{\cdots\ar[r]^-0&A\ar[r]^-0&A\ar[r]^-0&\cdots\ar[r]^-0&A\ar[r]^-0&\cdots)}
\]
By Lemma~\ref{killing_sums_and_products}, if we are careful with the choice of embeddings into injective objects, the complex that we obtain when we go through the Ding-Yang construction for $X^\bullet$ is the following one:
\begin{align*}
\xymatrix{\prod_{i\in \Z}\Sigma^i(Y^\bullet)}&=\xymatrix{\prod_{i\in \Z}\Sigma^i\left(E^{[0,\infty)}_A\times D^{0}\left(\prod_{j=0}^{\infty}(E^j)^\N\right)\right)}\\
&=\xymatrix{\prod_{i\in \Z}\Sigma^iE^{[0,\infty)}_A\times \prod_{i\in \Z}D^{i}\left(\prod_{j=0}^{\infty}(E^j)^\N\right).}
\end{align*}
In particular, $\prod_{i\in \Z}\Sigma^i(Y^\bullet)$ is quasi-isomorphic to $\prod_{i\in \Z}\Sigma^i(E^{[0,\infty)}_A)$. Observe also that, for each $i\in \Z$, there is a degree-wise split-exact sequence $0\to \Sigma^i(E^\bullet)\to \Sigma^i(E^{[0,\infty)}_A)\to S^{-i}(A)\to 0$ and, taking the product over all $i\in \Z$, we get the following degree-wise split-exact sequence:
\begin{equation}\label{ses_final_DY}
\xymatrix{0\longrightarrow \prod_{i\in \Z}\Sigma^i(E^\bullet)\longrightarrow\prod_{i\in \Z}\Sigma^i(E^{[0,\infty)}_A)\longrightarrow X^\bullet\longrightarrow 0.}
\end{equation}
Hence, whenever the complex $\prod_{i\in \Z}\Sigma^i(E^{[0,\infty)}_A)$ (or, equivalently, $\prod_{i\in \Z}\Sigma^i(Y^\bullet)$) is exact, then one deduces from \eqref{ses_final_DY} that $H^{n}(\prod_{i\in \Z}\Sigma^{i}(E^\bullet))=H^{n-1}(X^\bullet)=A$, for all $n\in\Z$.

Finally, take $\A=\G$, $A:=\Q(R)\in \G$, and $X^\bullet:= \mathrlap{\text{\tiny \hspace{4.3pt}$\G$}}\prod_{i\in \Z}S^i(\Q(R))\in \Ch(\G)$ as in Section~\ref{poison_sec}. Then, 
$H_\G^{n}(\mathrlap{\text{\tiny \hspace{4.3pt}$\G$}}\prod_{i\in \Z}\Sigma^{i}(E^\bullet))\cong \Q(H_R^{n}(\mathrlap{\text{\tiny \hspace{4.3pt}$\G$}}\prod_{i\in \Z}\Sigma^{i}(E^\bullet)))\cong \Q(R)\times \Q(\mathrlap{\text{\tiny \hspace{3.7pt}$R$}}\prod_{i\geq 1}E^i)$, where  the last isomorphism follows from the isomorphism $H_R^{n}(\mathrlap{\text{\tiny \hspace{4.3pt}$\G$}}\prod_{i\in \Z}\Sigma^{i}(E^\bullet))\cong R\times \mathrlap{\text{\tiny \hspace{3.7pt}$R$}}\prod_{i\geq 1}E^i$, which is an important step in the proof of \cite[Theorem~8.4]{zbMATH06915995}, where it is also shown that $\mathrlap{\text{\tiny \hspace{3.7pt}$R$}}\prod_{i\geq 1}E^i\notin \T$. In particular, $\Q(\mathrlap{\text{\tiny \hspace{3.7pt}$R$}}\prod_{i\geq 1}E^i)\neq 0$, and so we can conclude that: 
\[
\xymatrix{
H_\G^{n}(\mathrlap{\text{\tiny \hspace{4.3pt}$\G$}}\prod_{i\in \Z}\Sigma^{i}(E^\bullet)) \cong \Q(R)\times \Q(\mathrlap{\text{\tiny \hspace{3.7pt}$R$}}\prod_{i\geq 1}E^i)\ncong \Q(R)=H_\G^n(X^\bullet).}
\]
By the previous discussion, this shows that the complex $\prod_{i\in \Z}\Sigma^i(Y^\bullet)$, obtained as a result of the Ding-Yang construction applied to $X^\bullet$, can not be exact in this particular case.

\subsection{The problem in the proof of \cite[Lemma~2.1]{yang2015question}}\label{subs_problem_in_lemma}

As shown in the previous subsection, the proof of \cite[Lemma~2.1]{yang2015question} fails in general. For this reason, it may be interesting to identify the concrete problem in Ding and Yang's argument and, if possible, to find additional hypotheses under which the proposed construction can be made to work. The unique problem we could identify in \cite{yang2015question} is the following: on page 3210 in [Op.Cit.], in the last part of the proof of Lemma~2.1, there is an inverse system (in the notation of [Op.Cit.]):  
\[
\cdots \overset{\mu^{i+1}}{\longrightarrow} Y^i\overset{\mu^{i}}{\longrightarrow}\cdots\overset{\mu^{2}}{\longrightarrow} Y^1\overset{\mu^{1}}{\longrightarrow} Y^0\qquad \subseteq \Ch(\G)
\] 
and an $l\in \Z$ such that $H_l(Y^i)=0$ for all $i\in \N$. The authors want to prove that the following map
\[
\xymatrix{
1-\nu\colon \prod_{i=0}^\infty Y^i\longrightarrow \prod_{i=0}^\infty Y^i}
\]
induces an isomorphism in homology at degree $l$. They call $\pi^j\colon \prod_{i=0}^\infty Y^i\to Y^j$ the canonical projection, for each $j\geq 0$, they consider $\pi^j\circ (1-\nu)=\pi^j-\mu^{j+1}\circ \pi^{j+1}$ and correctly verify that 
\begin{equation}\label{ref_correct_part_DY}
\mu_l^{j+1}\pi_l^{j+1}\left(Z_l^{\prod_{i=0}^\infty Y^i}\right)\subseteq B_l^{Y_j},\qquad \text{for all $j\in \Z$.} 
\end{equation}
Unfortunately, \eqref{ref_correct_part_DY} does not imply that 
$
Z_l^{\prod_{i=0}^\infty Y^i}\subseteq B_l^{\prod_{i=0}^\infty Y^i}
$
but just the  weaker inclusion:
$
Z_l^{\prod_{i=0}^\infty Y^i}\subseteq \prod_{i=0}^\infty B_l^{ Y^i}.
$ 
In fact, it may happen that $\prod_{i=0}^\infty B_l^{ Y^i}\neq B_l^{\prod_{i=0}^\infty Y^i}$ (e.g., in the setting of Section~\ref{poison_sec}): the property ``the boundaries of the product coincide with the product of  boundaries'' (if required for all possible products) is in fact equivalent to the (Ab.$4^*$) condition on $\G$. In particular, adding the (Ab.$4^*$) condition to the hypotheses, the proof of \cite[Lemma~2.1(1)]{yang2015question} works perfectly. We will also obtain the conclusion of  \cite[Lemma~2.1(1)]{yang2015question} under a  different set of hypotheses in Section~\ref{other_hyp_fixing_lemma21}.

\section{Model structures for relative homological algebra}\label{rel_homo_alg_sec}\label{sec_seven}
In this  section we  combine some of the main results of \cite{zbMATH06915995} about model approximations for relative homological algebra, and a general criterion for the existence of suitable model categories from \cite{CH} to verify that, if $\A$ is a bicomplete Abelian category, and  $\I$ an injective class of objects such that $\A$ is (Ab.$4^*$)-$\I$-$k$ (for some $k\in \N$), then there is an induced $\I$-injective model structure on $\Ch(\A)$; in particular, the $\I$-derived category $\D(\A;\I)$ is locally small.

\subsection{Injective classes}
Given a class $\I\subseteq \A$, a morphism $\phi\colon X\to Y$ in $\A$ is said to be an {\bf $\I$-monomorphism} if 
\[
\hom_\A(\phi,I)\colon \hom_\A(Y,I)\longrightarrow\hom_\A(X,I)
\]
is surjective (in $\Ab$), for all $I\in \I$. With this concept at hand, we can now introduce the following:
\begin{defn}
A class of objects $\I$ in an Abelian category $\A$ is said to be an {\bf injective class} if:
\begin{enumerate}
\item[\rm ({\sc ic}.1)] $\I$ is closed under products and summands;
\item[\rm ({\sc ic}.2)] for each $A\in \A$ there is an $\I$-monomorphism $\phi\colon A\to I$, with $I\in \I$.
\end{enumerate}
\end{defn}

Observe that a class $\I$ that satisfies the property ({\sc ic}.2) is usually called {\bf pre-enveloping}. In the following lemma we show that the above definition is slightly redundant, in fact, there is no need to ask that $\I$ is closed under products in ({\sc ic}.1):

\begin{lem}\label{pre-enveloping+summands->products}
Let $\A$ be a complete Abelian category. Then, a pre-enveloping class $\I\subseteq \A$ which is closed under direct summands is also closed under products.
\end{lem}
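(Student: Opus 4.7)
The plan is a direct diagram-chase exploiting the defining universal property of products together with the pre-enveloping condition and closure under summands.

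First, fix a family $(I_\lambda)_{\lambda \in \Lambda} \subseteq \I$ and form the product $P := \prod_{\lambda \in \Lambda} I_\lambda$ in $\A$, which exists because $\A$ is complete, with structural projections $\pi_\lambda \colon P \to I_\lambda$. By (IC.2), I can choose an $\I$-monomorphism $\phi \colon P \to I$ for some $I \in \I$. Since each $I_\lambda$ belongs to $\I$ and $\phi$ is an $\I$-monomorphism, applying $\hom_\A(-,I_\lambda)$ gives, for every $\lambda$, a morphism $\psi_\lambda \colon I \to I_\lambda$ such that $\psi_\lambda \circ \phi = \pi_\lambda$.

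Next, I will use the universal property of the product to assemble the family $(\psi_\lambda)_{\lambda \in \Lambda}$ into a single morphism $\psi \colon I \to P$ with $\pi_\lambda \circ \psi = \psi_\lambda$ for all $\lambda$. Then for each $\lambda$ one has $\pi_\lambda \circ (\psi \circ \phi) = \psi_\lambda \circ \phi = \pi_\lambda = \pi_\lambda \circ \id_P$, so by the uniqueness clause in the universal property of $P$, $\psi \circ \phi = \id_P$. Consequently $\phi$ is a split monomorphism and $P$ is a direct summand of $I$.

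Finally, since $I \in \I$ and $\I$ is closed under direct summands, I conclude $P \in \I$, as required. There is no real obstacle here: the whole argument is a one-step diagram chase, and the only place the hypotheses are used is (i) completeness, to form the product, (ii) the pre-enveloping property, to produce $\phi$ and factor the $\pi_\lambda$ through it, and (iii) closure under summands, to descend from $I$ to $P$.
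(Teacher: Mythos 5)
Your proof is correct and follows exactly the same argument as the paper: factor each projection $\pi_\lambda$ through the $\I$-monomorphism $\phi\colon P\to I$ using that $I_\lambda\in\I$, assemble the resulting maps into $\psi\colon I\to P$ via the universal property of the product, and conclude $\psi\circ\phi=\id_P$ by uniqueness, so $P$ is a summand of $I$ and hence lies in $\I$. No gaps.
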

\begin{proof}
Consider a set $\{X_\lambda\}_{\lambda\in\Lambda}$ of objects in $\I$, let $X:=\prod_\Lambda X_\lambda$ and for each $\lambda\in \Lambda$ denote by $\pi_\lambda\colon X\to X_\lambda$ the canonical projection. By hypothesis, there is an $\I$-monomorphism $\varphi\colon X\to I$, for some $I\in \I$. Since $X_\lambda\in \I$ for each $\lambda\in \Lambda$, the following map is surjective:
\[
\hom_\A(\varphi,X_\lambda)\colon \hom_\A(I,X_\lambda)\longrightarrow\hom_\A(X,X_\lambda),
\]
 so there is $\psi_\lambda\in \hom_\A(I,X_\lambda)$ such that $\psi_\lambda\circ \varphi=\pi_\lambda$. By the universal property of the product, there is a unique  $\psi\colon I\to X$ such that $\pi_\lambda\circ\psi=\psi_\lambda$, for all $\lambda\in \Lambda$. Moreover, for each $\lambda\in\Lambda$:
\[
\pi_\lambda\circ \psi\circ \varphi = \psi_\lambda \circ \varphi = \pi_\lambda=\pi_\lambda\circ \id_X.
\]
By the uniqueness in the universal property of the product, $\psi\circ \varphi =\id_X$ and, therefore, $X$ is a summand of $I\in \I$. Hence, $X\in \I$, as desired. 
\end{proof}

If $\A$ is an Abelian category with enough injectives (e.g., if $\A$ is Grothendieck) then  $\I:=\mathrm{Inj}(\A)$, the class of all the injective objects in $\A$, is an injective class for which the $\I$-monomorphisms are the usual monomorphisms. We refer to Section~\ref{ex_and_app_sec} for other concrete examples. 

\subsection{Relative $\I$-injective resolutions of objects}
Let $\A$ be an Abelian category. Given $X^\bullet\in\Ch(\mathcal{A})$ and  $A\in\mathcal{A}$, we define the cochain complex $\hom(X^\bullet ,A)\in\Ch(\Ab)$ as follows:
\begin{itemize}
\item for each $n\in\Z$, let $\hom(X^\bullet ,A)^n:=\hom_\mathcal{A}(X^{-n},A)$;
\item for each $n\in\Z$, the $(n-1)$-th differential of $\hom(X^\bullet ,A)$ is the following map: 
\[
(d^{-n})^*\colon \hom_\mathcal{A}(X^{-n+1},A)\longrightarrow\hom_\mathcal{A}(X^{-n},A),
\] 
such that $(d^{-n})^*(f):=f\circ d^{-n}$, for all $f\in \hom_\mathcal{A}(X^{-n+1},A)$. 
\end{itemize}
Moreover, any morphism $\phi^\bullet \colon X^\bullet\to Y^\bullet$ in $\Ch(\mathcal{A})$, induces the following morphism in $\Ch(\Ab)$: 
\[
\hom(\phi^\bullet ,A)\colon \hom(Y^\bullet ,A)\longrightarrow\hom(X^\bullet ,A)\quad\text{such that}\quad (\hom(\phi^\bullet ,A))(g):=g\circ \phi^{\bullet},
\] 
for all $g\in\hom(Y^\bullet ,A)$. In particular, this gives a functor $\hom(-,A)\colon (\Ch(\A))^{\mathrm{op}}\to \Ch(\Ab)$.

\begin{defn} \label{def.relative-I-injective-resolution}
Let $\mathcal{A}$ be a complete Abelian category, let $\mathcal{I}$ be an injective class, and let $A\in\mathcal{A}$. A {\bf relative $\I$-injective resolution} of $A$ is a pair $(I^\bullet,u\colon S^0(A)\to I^\bullet)$ such that:
\begin{itemize}
\item $I^\bullet\in \Ch^{\geq0}(\I)\subseteq \Ch^{\geq0}(\A)$;
\item $\hom(u,I)\colon\hom(I^\bullet,I)\rightarrow\hom(S^0(A),I)$ is a quasi-isomorphism, for all $I\in \I$.
\end{itemize}
\end{defn}

Let us give some equivalent reformulations of the above definition:

\begin{lem} \label{lem_relative_injective_resolution}
Let $\mathcal{A}$ be a complete Abelian category, $\mathcal{I}\subseteq \A$ an injective class, and take a  complex:
\[
\tilde{I}^\bullet\colon\quad\xymatrix@C=15pt{
\cdots\ar[r]& 0\ar[r]& A\ar[r]^-{d^{-1}}& I^0\ar[r]^-{d^{0}}&I^1\ar[r]^-{d^{1}}&\cdots\ar[r]&I^{n-1}\ar[r]^-{d^{n-1}}&I^n\ar[r]^-{d^{n}}&\cdots\ \ \in \Ch^{\geq -1}(\A),
}
\] 
with $I^j\in\mathcal{I}$, for all $j\geq 0$. Denote by $I^\bullet:=(\xymatrix@C=10pt{
\cdots\ar[r]& 0\ar[r]& I^0\ar[r]^-{d^{0}}&\cdots\ar[r]&I^{n-1}\ar[r]^-{d^{n-1}}&I^n\ar[r]^-{d^{n}}&\cdots})$ the  naive truncation  above $0$, and by $d^{-1}\colon S^{0}(A)\to I^{\bullet}$ the obvious map. Then, the following are equivalent:
\begin{enumerate}
\item $(I^\bullet,d^{-1}\colon S^{0}(A)\to I^{\bullet})$ is a relative $\I$-injective resolution of $A$;
\item  $\hom(\tilde{I}^\bullet, I)$ is  exact, for all $I\in\mathcal{I}$;
\item  $\tilde I^k/B^k(\tilde{I}^\bullet)\rightarrow I^{k+1}$  is an $\mathcal{I}$-monomorphism, for all $k\geq -1$.
\end{enumerate}
\end{lem}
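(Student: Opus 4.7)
My strategy is to verify (2) $\Leftrightarrow$ (3) by direct unravelling of cocycles and coboundaries in $\hom(\tilde{I}^\bullet, I)$, and then to establish (1) $\Leftrightarrow$ (2) by identifying $\tilde{I}^\bullet$ as the mapping cone of $d^{-1}$.

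For (2) $\Leftrightarrow$ (3), I fix $I \in \mathcal{I}$ and $k \geq -1$. In cohomological degree $-k$, the complex $\hom(\tilde{I}^\bullet, I)$ has the term $\hom(\tilde{I}^k, I)$, with incoming differential $(d^{k-1})^*$ (precomposition with $d^{k-1}$) and outgoing differential $(d^k)^*$. The cocycles $\{f \colon \tilde{I}^k \to I : f \circ d^{k-1} = 0\}$ identify naturally with $\hom(\tilde{I}^k/B^k(\tilde{I}^\bullet), I)$ via the universal property of the cokernel. The coboundaries, consisting of elements of the form $g \circ d^k$ for $g \in \hom(I^{k+1}, I)$, coincide with the image of the precomposition map $(\bar{d}^k)^* \colon \hom(I^{k+1}, I) \to \hom(\tilde{I}^k/B^k(\tilde{I}^\bullet), I)$, where $\bar{d}^k$ is the induced map $\tilde{I}^k/B^k(\tilde{I}^\bullet) \to I^{k+1}$ (well-defined since $d^k \circ d^{k-1} = 0$). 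Therefore $H^{-k}(\hom(\tilde{I}^\bullet, I)) = 0$ for all $I \in \mathcal{I}$ iff $(\bar{d}^k)^*$ is surjective for all $I \in \mathcal{I}$, which is exactly the condition that $\bar{d}^k$ be an $\mathcal{I}$-monomorphism. The top degree (cohomological degree $1$, corresponding to $k=-1$) is handled in the same way: the cocycles are all of $\hom(A,I)$ and the coboundaries are $\Im((d^{-1})^*)$, so vanishing of $H^1$ is equivalent to $d^{-1} \colon A \to I^0$ being an $\mathcal{I}$-monomorphism—consistent with the general formula since $\tilde{I}^{-1}/B^{-1}(\tilde{I}^\bullet) = A$.

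For (1) $\Leftrightarrow$ (2), I observe that $\tilde{I}^\bullet$ is naturally isomorphic to the mapping cone $\cone(u)$ of the chain map $u = d^{-1} \colon S^0(A) \to I^\bullet$; equivalently, there is a degreewise-split short exact sequence
\[
0 \longrightarrow I^\bullet \longrightarrow \tilde{I}^\bullet \longrightarrow S^{-1}(A) \longrightarrow 0
\]
in $\Ch(\A)$ (split because the quotient is concentrated in degree $-1$, where the middle term has $A$ as a direct summand of itself). Applying the additive contravariant functor $\hom(-, I)$ yields a degreewise-split short exact sequence of cochain complexes of abelian groups. Since $\hom(S^{-1}(A), I)$ is concentrated in cohomological degree $1$ with value $\hom(A, I)$, the associated long exact cohomology sequence collapses to isomorphisms $H^n(\hom(\tilde{I}^\bullet, I)) \cong H^n(\hom(I^\bullet, I))$ for all $n \neq 0, 1$, together with the four-term exact sequence
\[
0 \to H^0(\hom(\tilde{I}^\bullet, I)) \to H^0(\hom(I^\bullet, I)) \xrightarrow{\delta} \hom(A, I) \to H^1(\hom(\tilde{I}^\bullet, I)) \to 0.
\]
A snake-lemma computation identifies the connecting map $\delta$ with the map induced on $H^0$ by $\hom(u, I)$. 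Consequently, $\hom(\tilde{I}^\bullet, I)$ is acyclic for all $I \in \mathcal{I}$ iff $\hom(u, I)$ is a quasi-isomorphism for all $I \in \mathcal{I}$, establishing (1) $\Leftrightarrow$ (2).

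The main obstacle is careful bookkeeping of indices due to the contravariant nature of $\hom(-, I)$ combined with the convention $\hom(X^\bullet, A)^n = \hom(X^{-n}, A)$, and the need to handle separately the boundary cases $k = -1, 0$ in Step~1, where the coboundary $B^k(\tilde{I}^\bullet)$ involves $d^{-1}$ and thus differs from the analogous coboundary of $I^\bullet$; this is precisely why both (2) and (3) are phrased in terms of the augmented complex $\tilde{I}^\bullet$ rather than $I^\bullet$ alone.
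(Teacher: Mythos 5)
Your proof is correct and, for the substantive equivalence (2)$\Leftrightarrow$(3), follows exactly the paper's argument: left-exactness of $\hom_\A(-,I)$ identifies the cocycles at the spot $\hom_\A(\tilde I^k,I)$ with $\hom_\A(\tilde I^k/B^k(\tilde I^\bullet),I)$, so exactness there is precisely the statement that the induced map $\tilde I^k/B^k(\tilde I^\bullet)\to I^{k+1}$ is an $\I$-monomorphism. The paper dismisses (1)$\Leftrightarrow$(2) as trivial, whereas you justify it via the degreewise-split sequence $0\to I^\bullet\to\tilde I^\bullet\to S^{-1}(A)\to 0$ and the long exact cohomology sequence obtained after applying $\hom(-,I)$; this is a legitimate (if slightly heavier than necessary) way of making that step explicit, and your identification of the connecting map with the map induced by $\hom(u,I)$ is correct. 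One cosmetic slip: at the term $\hom(\tilde I^k,I)$ the \emph{incoming} differential is $(d^k)^*$ and the \emph{outgoing} one is $(d^{k-1})^*$ — you state the labels the other way around — but since you then correctly take cocycles to be $\ker((d^{k-1})^*)$ and coboundaries to be $\Im((d^k)^*)$, nothing in the argument breaks.
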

\begin{proof} 
The equivalence ``(1)$\Leftrightarrow$(2)'' is trivial; let us verify that (2) is also equivalent to (3). Indeed, for each $I\in\mathcal{I}$ and $k\geq -1$, consider the following sequence:
\begin{equation}\label{lem_relative_injective_resolution_eq}
\xymatrix{
\hom_\A(\tilde I^{k+1},I)\ar[r]^-{(d^k)^*}&\hom_\A(\tilde  I^{k},I)\ar[r]^-{(d^{k-1})^*}&\hom_\A(\tilde I^{k-1},I),
}
\end{equation} 
Observe that, being $\hom_\A(-,I)\colon \A^{\op}\to \Ab$ a left-exact functor, there is a canonical isomorphism $\hom_\A(\tilde I^k/B^k(\tilde{I}^\bullet),I)=\hom_\A(\coker(d^{k-1}),I)\cong \ker((d^{k-1})^*)$. In particular, the sequence \eqref{lem_relative_injective_resolution_eq} is exact precisely when the induced map $\tilde I^k/B^k(\tilde{I}^\bullet)\rightarrow I^{k+1}$ is an $\mathcal{I}$-monomorphism.
\end{proof}

Given an Abelian category $\A$ and an injective class $\I\subseteq \A$, since $\A$ has enough $\I$-injectives, it is easy to construct a relative $\mathcal{I}$-injective resolution for any object $A\in\A$ by induction, just using again and again the equivalence ``(1)$\Leftrightarrow$(3)'' in the above lemma.

In this relative context for resolutions, the classical statement about extension and unicity up to homotopy of maps between resolutions (see~\cite[Chap. III, Par. 6 and 7]{MR1344215}) still holds true

\begin{lem} \label{lem:htopy-unique-mapsfromI-injtoIacyclic}
Let $\A$ be an Abelian category, let $\I$ be an injective class in $\A$, let $(C^\bullet,\partial^\bullet)$ and $(D^\bullet,d^\bullet)\in\Ch(\A)$ be two cochain complexes, and let $0\leq r$ be a natural number such that, 
\begin{enumerate}
\item $D^i\in \I$, for all $i>r$;
\item $H^{-i}(\hom(C^\bullet,I))=0$, for all $I\in \I$ and all $i\geq r$.
\end{enumerate}
Then, any family of morphisms $(f^k\colon C^k\to D^k)_{k\leq r}$ such that $f^{k}\circ \delta^{k-1}=d^{k-1}\circ f^{k-1}$, for all $k\leq r$, extends to a morphism of complexes $f^\bullet\colon C^\bullet\to D^\bullet$. Moreover, any two such extensions are homotopic via a homotopy $(h^n\colon C^n\to D^{n-1})_{n\in \Z}$ such that $h^k=0$, for all $k\leq r$.
\end{lem}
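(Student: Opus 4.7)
The argument consists of two upward inductions, both starting from degree $r$ and each invoking hypothesis~(2) exactly once per step. The key observation in both cases is the same: given a map $C^n \to I$ with $I \in \I$ that vanishes after pre-composition with $\partial^{n-1}$, hypothesis~(2) produces the required factorization through $\partial^n$ as soon as $n \geq r$.

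For the existence of the extension, I would argue by induction on $n \geq r$. Assume $f^k$ is defined for all $k \leq n$ and satisfies $f^k \partial^{k-1} = d^{k-1} f^{k-1}$. The map $g := d^n \circ f^n \colon C^n \to D^{n+1}$ satisfies $g \circ \partial^{n-1} = d^n d^{n-1} f^{n-1} = 0$, so $g$ represents a cocycle in degree $-n$ of the complex $\hom(C^\bullet, D^{n+1})$. Since $n+1 > r$ we have $D^{n+1} \in \I$, and since $n \geq r$ hypothesis~(2) forces $H^{-n}(\hom(C^\bullet, D^{n+1})) = 0$. Hence $g = f^{n+1} \circ \partial^n$ for some $f^{n+1} \colon C^{n+1} \to D^{n+1}$, as required.

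For the homotopy uniqueness, given two extensions $f^\bullet$ and $\tilde f^\bullet$, set $\phi^\bullet := f^\bullet - \tilde f^\bullet$, a chain map with $\phi^k = 0$ for $k \leq r$. Define $h^k := 0$ for $k \leq r+1$; the homotopy relation $\phi^k = d^{k-1} h^k + h^{k+1} \partial^k$ then holds trivially for $k \leq r$. For $n \geq r+1$, assume $h^k$ is constructed for $k \leq n$ satisfying the homotopy relation for all $k \leq n-1$, and let $\psi := \phi^n - d^{n-1} h^n \colon C^n \to D^n$. Using the inductive hypothesis $\phi^{n-1} = d^{n-2} h^{n-1} + h^n \partial^{n-1}$ together with the chain-map identity $\phi^n \partial^{n-1} = d^{n-1} \phi^{n-1}$, one computes
\[
\psi \circ \partial^{n-1} \;=\; \phi^n \partial^{n-1} - d^{n-1} h^n \partial^{n-1} \;=\; d^{n-1} \phi^{n-1} - d^{n-1}(\phi^{n-1} - d^{n-2} h^{n-1}) \;=\; 0.
\]
Since $n > r$ gives $D^n \in \I$, and $n \geq r$ allows us to invoke hypothesis~(2), we obtain $h^{n+1} \colon C^{n+1} \to D^n$ with $\psi = h^{n+1} \partial^n$, closing the induction.

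The only real difficulty is careful bookkeeping of indices: at each inductive step one must check simultaneously that the target degree of $D^\bullet$ lies strictly above $r$ (so that $D^n$ or $D^{n+1}$ belongs to $\I$) and that the degree $-n$ satisfies $n \geq r$ (so that the relevant cohomology vanishes). The slight shift between the two inductions---the extension induction starts at $n = r$ while the homotopy induction effectively starts at $n = r+1$ (which is why $h^{r+1}=0$ is forced by $\phi^r = 0$ rather than produced by the inductive step)---is exactly what makes the two base cases match the hypotheses.
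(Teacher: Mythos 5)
Your proof is correct and follows essentially the same route as the paper's: both arguments run two upward inductions from degree $r$, using hypothesis (2) (exactness of $\hom_\A(C^{n+1},I)\to\hom_\A(C^n,I)\to\hom_\A(C^{n-1},I)$ for $I=D^{n+1}$, resp.\ $I=D^n$) to factor $d^n\circ f^n$ through $\partial^n$ and then to produce the homotopy components. Your index bookkeeping, including the shift between the two base cases, matches the paper's argument.
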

\begin{proof}
Our hypothesis (2) means that the following sequence is exact in $\Ab$
\begin{equation}\label{mapsfromI-injtoIacyclic_eq}
\xymatrix{
\hom_\A(C^{j+1},I)\ar[r]^-{(\delta^{j})^*}&\hom_\A(C^{j},I)\ar[r]^-{(\delta^{j-1})^*}&\hom_\A(C^{j-1},I),
}
\end{equation}
for all $I\in \I$ and all $j\geq r$. Then, by (1), the sequence \eqref{mapsfromI-injtoIacyclic_eq} is exact provided $I=D^{j+1}$ and, for $j>r$, also for $I=D^j$.

We can now proceed to construct $f^\bullet\colon C^\bullet\to D^\bullet$ by induction. Indeed, suppose that we have morphisms $(f^n\colon C^n  \rightarrow D^n)_{n\leq j}$, for some $j \geq r$, such that $f^{n}\circ \delta^{n-1}=d^{n-1}\circ f^{n-1}$, for all $n\leq j$. In particular, taking $I=D^{j+1}$ in \eqref{mapsfromI-injtoIacyclic_eq}, we deduce that  
\[
(\delta^{j-1})^*(d^j\circ f^j)=d^j\circ f^j\circ \delta^{j-1}=d^j\circ d^{j-1}\circ f^{j-1}=0, 
\] 
that is, $d^j\circ f^j\in \ker((\delta^{j-1})^*)=\Im((\delta^j)^*)$ and, therefore, there is some $f^{j+1}\in \hom_\A(C^{j+1},D^{j+1})$ such that $(\delta^j)^*(f^{j+1})=f^{j+1}\circ \delta^j=d^j\circ f^j$. The family $(f^n\colon C^n  \rightarrow D^n)_{n\leq j+1}$ now satisfies $f^{n}\circ \delta^{n-1}=d^{n-1}\circ f^{n-1}$, for all $n\leq j+1$, and the induction can continue.

Assume now that $f^\bullet,\, g^\bullet\in \hom_{\Ch(\A)}(C^\bullet, D^\bullet)$ are both extensions of the family $(f^k)_{k\leq r}$. We proceed by induction to construct $(h^n\colon C^{n+1}\to D^{n})_{n\in \Z}$ such that $f^{n}-g^n=d^{n-1}\circ h_{n-1}+h^{n}\circ \delta^n$, for all $n\in \Z$. Indeed, let $h^i=0$, for all $i\leq r$ and suppose that, for some $j>r$, we have constructed a family $(h^n\colon C^{n+1}\to D^{n})_{n\leq j}$ such that $f^{n}-g^n=d^{n-1}\circ h^{n-1}+h^{n}\circ \delta^n$, for all $n\leq j$. In particular, using that $(f^\bullet-g^\bullet)$ is a map of complexes, we deduce that
\[
(f^{j+1}-g^{j+1})\circ \delta^j=d^j\circ (f^j-g^j)=d^j\circ d^{j-1}\circ h^{j-1}+d^j\circ h^{j}\circ \delta^j=d^j\circ h^{j}\circ \delta^j.
\]
Hence, taking $I=D^j$  in \eqref{mapsfromI-injtoIacyclic_eq}, and using the above computation, we get:
\[
(\delta^{j})^*(f^{j+1}-g^{j+1}-d^j\circ h^{j})=(\delta^{j})^*(f^{j+1}-g^{j+1})-d^j\circ h^{j}\circ \delta^j=0,
\] 
that is, $f^{j+1}-g^{j+1}-d^j\circ h^{j}\in \ker((\delta^{j})^*)=\Im((\delta^{j+1})^*)$ and so there is some $h^{j+1}\in \hom_\A(C^{j},D^{j+1})$ such that $(\delta^{j+1})^*(h^{j+1})=f^{j+1}-g^{j+1}-d^j\circ h^{j}$, that is, $f^{n}-g^n=d^{n-1}\circ h^{n-1}+h^{n}\circ \delta^n$ holds for all $n\leq j+1$, and the induction can continue.
\end{proof}

As a consequence, we obtain that relative $\I$-injective resolutions are unique up to homotopy:

\begin{cor} \label{cor.relative-I-injresol-homotopy equivalent}
Let $\A$ be an Abelian category and let $\I\subseteq \A$ be an injective class. If $u\colon S^0(A)\rightarrow I^\bullet$ and $v\colon S^0(A)\rightarrow J^\bullet$  are both relative $\mathcal{I}$-injective resolutions of an object $A\in \A$, then there is a homotopy equivalence $f^\bullet\colon I^\bullet\rightarrow J^\bullet$ such that $f^\bullet\circ u=v$, and $f^\bullet$ is unique up to homotopy. 
\end{cor}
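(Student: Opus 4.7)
The plan is to derive the statement from Lemma~\ref{lem:htopy-unique-mapsfromI-injtoIacyclic} applied to the augmented complexes, followed by a standard formal argument to promote existence and uniqueness to a homotopy equivalence.

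First, I would form the augmented complexes $\tilde{I}^\bullet,\,\tilde{J}^\bullet\in\Ch^{\geq -1}(\A)$ with $A$ sitting in degree $-1$ via $u^0$ and $v^0$, respectively. By the equivalence (1)$\Leftrightarrow$(2) in Lemma~\ref{lem_relative_injective_resolution}, the complexes $\hom(\tilde{I}^\bullet,I)$ and $\hom(\tilde{J}^\bullet,I)$ are exact for every $I\in\I$. Since Lemma~\ref{lem:htopy-unique-mapsfromI-injtoIacyclic} is stated only for $r\geq 0$, I would shift by one: set $C^\bullet:=\Sigma^{-1}\tilde{I}^\bullet$ and $D^\bullet:=\Sigma^{-1}\tilde{J}^\bullet$, both concentrated in degrees $\geq 0$. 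Applying the existence part of Lemma~\ref{lem:htopy-unique-mapsfromI-injtoIacyclic} with $r=0$ (whose hypotheses hold: $D^i\in\I$ for $i>0$, and $\hom(C^\bullet,I)$ has vanishing cohomology in all non-positive degrees), starting from the initial datum $f^0:=\id_A$, produces a chain map $\tilde{f}^\bullet\colon\tilde{I}^\bullet\to\tilde{J}^\bullet$ extending $\id_A$. Restricting to degrees $\geq 0$ gives the desired $f^\bullet\colon I^\bullet\to J^\bullet$, and commutativity at the augmentation step, $\tilde{f}^0\circ u^0=v^0\circ\id_A$, is precisely $f^\bullet\circ u=v$.

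Next, for uniqueness, suppose $f^\bullet$ and $g^\bullet$ both satisfy $f^\bullet\circ u=g^\bullet\circ u=v$. Their augmentations $\tilde{f}^\bullet$ and $\tilde{g}^\bullet$ then coincide with $\id_A$ in degree $-1$ and, after the same shift, agree in degrees $\leq 0$. The uniqueness part of Lemma~\ref{lem:htopy-unique-mapsfromI-injtoIacyclic} applied with $r=0$ yields a homotopy $(\tilde{h}^n)$ between $\tilde{f}^\bullet$ and $\tilde{g}^\bullet$ that vanishes in degrees $\leq 0$ (of the shifted complexes); in particular its component $A\to J^{-1}=0$ is trivially zero, and the non-vanishing pieces assemble into an honest homotopy $f^\bullet\simeq g^\bullet$ in $\Ch(\A)$.

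Finally, to obtain a homotopy equivalence, I would invoke symmetry: swapping the roles of the two resolutions, the same construction produces $g^\bullet\colon J^\bullet\to I^\bullet$ with $g^\bullet\circ v=u$. Then both $g^\bullet\circ f^\bullet$ and $\id_{I^\bullet}$ compose with $u$ to give $u$, so by the uniqueness established above (applied to $I^\bullet$ as its own resolution) one has $g^\bullet\circ f^\bullet\simeq\id_{I^\bullet}$, and symmetrically $f^\bullet\circ g^\bullet\simeq\id_{J^\bullet}$.

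The only genuine subtlety is the index bookkeeping, namely the need to shift to bring the augmentation into the regime $r\geq 0$ required by Lemma~\ref{lem:htopy-unique-mapsfromI-injtoIacyclic}; once this is in place, the proof is entirely formal and mimics the classical comparison theorem for injective resolutions.
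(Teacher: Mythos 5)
Your proposal is correct and follows essentially the same route as the paper: the paper's proof also extends $\id_A$ in both directions via Lemma~\ref{lem:htopy-unique-mapsfromI-injtoIacyclic} and then uses the uniqueness-up-to-homotopy clause to conclude that $g^\bullet\circ f^\bullet$ and $f^\bullet\circ g^\bullet$ are homotopic to the identities. The only difference is that you spell out the degree-shift bookkeeping needed to place the augmentation in the regime $r\geq 0$, which the paper leaves implicit.
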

\begin{proof}
Use Lemma~\ref{lem:htopy-unique-mapsfromI-injtoIacyclic} to extend $\id_{A}\colon A\to A$ to two morphisms $f^\bullet\colon I^\bullet\to J^\bullet$ and $g^\bullet\colon J^\bullet\to I^\bullet$, such that $f^\bullet\circ u=v$ and $g^\bullet\circ v=u$. The uniqueness of liftings up to homotopy forces $g^\bullet\circ f^\bullet$ and $f^\bullet\circ g^\bullet$ to be homotopic to $\id_{I^\bullet}$ and  $\id_{J^\bullet}$, respectively.
\end{proof}

\subsection{Generalities about the $\I$-injective model structure}

Given an injective class $\I$ in an Abelian category $\A$, one can introduce the following classes of morphisms in $\Ch(\A)$ that, under suitable hypotheses, make $\Ch(\A)$ into a model category:

\begin{defn}\label{I_mod_st_deff}
Let $\A$ be an Abelian category and $\I\subseteq \A$ an injective class. We say that a morphism $\phi^\bullet\colon X^\bullet\to Y^\bullet$ is
\begin{itemize}
\item an {\bf $\I$-cofibration} if $\phi^n$ is an $\I$-monomorphism, for all $n\in\Z$;
\item  an {\bf $\I$-weak equivalence}  if $\hom(\phi^\bullet,I)$ is a quasi-isomorphism in $\Ch(\Ab)$, for all $I\in \I$;
\item an {\bf $\I$-fibration} if it is right weakly orthogonal to the trivial $\I$-cofibrations. 
\end{itemize}
We denote these classes of maps by $\C_\I$, $\W_\I$, and $\F_\I$, respectively. Furthermore, we say that a complex $X^\bullet\in \Ch(\A)$ is $\I${\bf -acyclic} provided $\hom(X^\bullet,I)$ is acyclic in $\Ch(\Ab)$, for all $I\in \I$.
\end{defn}

\begin{rmk}
We take advantage of this paper to correct an annoying misprint in the statement of \cite[Theorem~2.3]{zbMATH06915995}: the $\I$-cofibrations in the relative model structure on $\Ch_{\leq n}$ should be $\I$-monomorphisms in degrees $i <n$, and not $i \leq n$.
\end{rmk}

In the following lemma, which is a consequence of \cite[Proposition~2.5 and Lemma~2.7(b)]{CH}, we collect some properties of $\I$-fibrations and of $\I$-fibrant objects:

\begin{lem}\label{rel_fibrations_lem}
Let $\A$ be a bicomplete Abelian category and let $\I$ be an injective class in $\A$. Then:
\begin{enumerate}
\item the $\I$-fibrations are precisely the degree-wise split epimorphisms with $\I$-fibrant kernel;
\item a bounded below complex $X^\bullet\in \Ch^+(\A)$ is $\I$-fibrant, provided $X^i\in \I$, for all $i\in\Z$.
\end{enumerate}
\end{lem}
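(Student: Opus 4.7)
The plan is to prove (1) by extracting the degreewise splitting and the fibrancy of the kernel from the right lifting property against a well-chosen family of trivial $\I$-cofibrations, and to prove (2) by a degreewise induction that exploits the $\I$-injectivity of each $X^i$ together with the weak-equivalence hypothesis on the incoming trivial $\I$-cofibration.

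For the forward direction of (1), the key observation is that $0 \to D^n(A)$ is a trivial $\I$-cofibration for \emph{every} $A \in \A$ (not only $A \in \I$): it is trivially an $\I$-monomorphism in each degree, and $D^n(A)$ is contractible so that $\hom(D^n(A), I)$ is contractible, hence acyclic, for every $I \in \I$. A chain map $D^n(A) \to Y^\bullet$ is determined by its degree-$n$ component $A \to Y^n$, so the right lifting property of $f\colon X^\bullet \to Y^\bullet$ against the family $\{0 \to D^n(A)\}_{A \in \A,\, n \in \Z}$ translates into surjectivity of $\hom_\A(A, X^n) \to \hom_\A(A, Y^n)$ for all $A$ and $n$. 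Taking $A = Y^n$ and lifting $\id_{Y^n}$ produces a degreewise splitting of $f$. For the kernel $K^\bullet := \ker f$, any trivial $\I$-cofibration $i\colon A^\bullet \to B^\bullet$ and any map $g\colon A^\bullet \to K^\bullet$ fit, together with the zero map $B^\bullet \to 0 \to Y^\bullet$, into a commuting square to which the lifting property of $f$ applies; the resulting lift $B^\bullet \to X^\bullet$ lands in $K^\bullet$ because $f$ annihilates it, giving the required extension $B^\bullet \to K^\bullet$.

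For the converse direction of (1), assume $f$ is a degreewise split epimorphism with $\I$-fibrant kernel $K^\bullet$, and let $i\colon A^\bullet \to B^\bullet$ be a trivial $\I$-cofibration fitting into a lifting square. Passing to the pullback $P^\bullet := B^\bullet \times_{Y^\bullet} X^\bullet$ yields a degreewise split short exact sequence $0 \to K^\bullet \to P^\bullet \to B^\bullet \to 0$, and the problem reduces to finding a chain-map section of $P^\bullet \to B^\bullet$ extending the induced map $A^\bullet \to P^\bullet$. Choose any graded-object section $\sigma\colon B^\bullet \to P^\bullet$ of the projection; its chain-map defect is a genuine chain map $B^\bullet \to \Sigma K^\bullet$, and the discrepancy $\sigma \circ i - (A^\bullet \to P^\bullet)$ takes values in $K^\bullet$. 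The $\I$-fibrancy of $K^\bullet$ now lets us extend this combined corrective data along $i$ and adjust $\sigma$ into an honest chain-map lift. I expect this paragraph to be the main technical obstacle: the bookkeeping of signs and of compatibility between the correction of the chain-map defect and the correction of the discrepancy on $A^\bullet$ requires some care, and it is precisely this argument which is packaged in Proposition~2.5 and Lemma~2.7(b) of \cite{CH}.

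Part (2) then follows by a direct induction on the degree. Since $X^\bullet$ is bounded below, there is a $k$ with $X^i = 0$ for $i < k$; any map $g\colon A^\bullet \to X^\bullet$ is thus automatically zero below $k$, so one sets $h^j := 0$ for $j < k$. Having defined $h^{<n}$, one constructs $h^n\colon B^n \to X^n$ satisfying both $h^n \circ i^n = g^n$ and $h^n \circ d_B^{n-1} = d_X^{n-1} \circ h^{n-1}$ by combining (a) the $\I$-monomorphism property of $i^n$ together with $X^n \in \I$ to produce single-degree extensions, and (b) the vanishing of $H^{-n}(\hom(-, X^n))$ in the relevant range, a consequence of $i$ being an $\I$-weak equivalence, to reconcile these extensions with the chain condition. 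This is exactly the pattern of Lemma~\ref{lem:htopy-unique-mapsfromI-injtoIacyclic}, used here to extend rather than to compare chain maps; the bounded-below hypothesis is what makes the induction start trivially and what keeps each inductive step within reach of the $\I$-injectivity of the target.
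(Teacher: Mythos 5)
Your proposal is essentially correct, and it is worth noting that it does strictly more than the paper, which offers no proof of this lemma at all: the text simply records it as "a consequence of \cite[Proposition~2.5 and Lemma~2.7(b)]{CH}". Your forward direction of (1) is complete and sound: the observation that $0\to D^n(A)$ is a trivial $\I$-cofibration for \emph{arbitrary} $A$ (degreewise $\I$-mono trivially, and $\hom(D^n(A),I)$ contractible) is exactly the right generating family, and the two consequences you extract (degreewise splitting via $A=Y^n$, fibrancy of the kernel via the square over $B^\bullet\to 0\to Y^\bullet$) are correct. Part (2) also works as you describe: writing the two constraints $h^n\circ i^n=g^n$ and $h^n\circ d_B^{n-1}=d_X^{n-1}\circ h^{n-1}$ as a single map out of the pushout $A^n\sqcup_{A^{n-1}}B^{n-1}$, one first lifts $g^n$ through the $\I$-monomorphism $i^n$ using $X^n\in\I$, and then corrects the defect $\delta:=d_X^{n-1}h^{n-1}-\tilde g^n d_B^{n-1}$, which kills $i^{n-1}$ and is annihilated by $d_B^{n-2}$ (by the inductive chain condition), hence is a cocycle in $\ker\bigl(\hom(B^\bullet,X^n)\to\hom(A^\bullet,X^n)\bigr)$. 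One small precision: what you need is the acyclicity of this \emph{kernel} complex, which requires both that $\hom(i,X^n)$ is degreewise surjective (the cofibration hypothesis) and that it is a quasi-isomorphism (the weak-equivalence hypothesis); phrasing it as "vanishing of $H^{-n}(\hom(-,X^n))$" alone slightly understates the role of the surjectivity. The one genuinely incomplete step is the sufficiency half of (1), where you candidly defer the torsor/obstruction bookkeeping for sections of $P^\bullet\to B^\bullet$ to \cite{CH} --- but since that is precisely the citation the paper itself uses for the entire lemma, your treatment is at worst on par with, and in substance more informative than, the paper's.
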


More generally, let us recall the following important result by Christensen and Hovey (they actually state the dual result for projective classes, but it is an easy exercise to show that their statement is equivalent to the following one):

\begin{thm}[{\rm \cite[Theorem~2.2]{CH}}]\label{CH_main_thm}
Let $\A$ be a bicomplete Abelian category and $\I\subseteq \A$ an  injective class such that, for each $X^\bullet\in\Ch(\A)$, there is an $\I$-fibrant replacement $\lambda^\bullet\colon X^\bullet\to F^\bullet$ (i.e., $\lambda^\bullet\in \W_\I$ and $F^\bullet$ is $\I$-fibrant). Then, $(\Ch(\A), \W_\I,\C_\I,\F_\I)$ is a model category. 
\end{thm}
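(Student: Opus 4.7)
The plan is to verify axioms (MS.1) and (MS.2) of a model structure. Axiom (MS.1) is straightforward: the class $\W_\I$ contains all isomorphisms and satisfies 2-out-of-3 because these properties hold for quasi-isomorphisms in $\Ch(\Ab)$ and are pulled back along the exact functors $\hom(-,I)\colon \Ch(\A)^{\op}\to\Ch(\Ab)$ for $I\in\I$. Closure of $\W_\I$, $\C_\I$, and $\F_\I$ under retracts also follows trivially from the definitions, which will be needed below.

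For axiom (MS.2), the definition $\F_\I=(\C_\I\cap\W_\I)^{\boxslash}$ provides half of (WFS.1) for the pair $(\C_\I\cap\W_\I,\F_\I)$ automatically. The next step is to identify the class $\F_\I\cap\W_\I$ of trivial fibrations: combining Lemma~\ref{rel_fibrations_lem}(1) with an analysis of $\I$-acyclicity for hom-complexes, I would show that $\F_\I\cap\W_\I$ consists of the degreewise split epimorphisms whose kernel is both $\I$-fibrant and $\I$-acyclic. Since any degreewise split monomorphism is automatically an $\I$-monomorphism, this description will make the remaining orthogonality checks manageable.

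The heart of the argument is constructing the two required factorizations. For a given $\phi^\bullet\colon X^\bullet\to Y^\bullet$, to produce a trivial-cofibration followed by a fibration, I would form the mapping cylinder $\mathrm{Cyl}(\phi^\bullet)$, giving a factorization $X^\bullet\hookrightarrow \mathrm{Cyl}(\phi^\bullet)\twoheadrightarrow Y^\bullet$ where the first map is a degreewise split $\I$-monomorphism that is also an $\I$-weak equivalence, and the second map is a homotopy equivalence. Then I would apply the $\I$-fibrant replacement hypothesis to $\mathrm{Cyl}(\phi^\bullet)$ and rearrange using the 2-out-of-3 property to land in an $\I$-fibrant object, producing a factorization in $\C_\I\cap\W_\I$ followed by $\F_\I$. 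For the cofibration followed by trivial-fibration factorization, the strategy is dual: attach to $X^\bullet$ a suitable disk-like complex together with an $\I$-fibrant replacement of the cone of $\phi^\bullet$ (whose existence is guaranteed by the hypothesis) so that the total complex surjects onto $Y^\bullet$ with $\I$-fibrant and $\I$-acyclic kernel, yielding a factorization in $\C_\I$ followed by $\F_\I\cap\W_\I$.

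The main obstacle will be arranging the second factorization so that its kernel is simultaneously $\I$-fibrant \emph{and} $\I$-acyclic -- separately, either property can be ensured by ad hoc disk-complex constructions, but only the hypothesis on $\I$-fibrant replacements guarantees both at once in a coherent way. Once both factorizations are in place, the remaining orthogonality statements ${}^{\boxslash}\F_\I\subseteq\C_\I\cap\W_\I$ and ${}^{\boxslash}(\F_\I\cap\W_\I)\subseteq\C_\I$ follow from the standard retract argument applied to the factorizations; the reverse inclusions are immediate from the definition and the identification of $\F_\I\cap\W_\I$ above.
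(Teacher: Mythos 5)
First, a remark on the comparison itself: the paper does not prove this theorem --- it is quoted (in dualized form) from Christensen--Hovey, so the only available benchmark is their proof. Your overall architecture does match theirs: identify the (trivial) fibrations as degreewise split epimorphisms with $\I$-fibrant (and $\I$-acyclic) kernel, build the two factorizations from the fibrant-replacement hypothesis together with cylinder-type constructions, and close the weak factorization systems with the retract argument. Your identification of $\F_\I\cap\W_\I$ is correct: the degreewise splitting makes $0\to\hom(B^\bullet,I)\to\hom(E^\bullet,I)\to\hom(K^\bullet,I)\to 0$ exact for every $I\in\I$, so a fibration is a weak equivalence exactly when its kernel is $\I$-acyclic. (Your closing claim that $\C_\I\subseteq{}^{\boxslash}(\F_\I\cap\W_\I)$ is ``immediate'' does, however, hide a genuine obstruction-theoretic computation.)

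The factorization step contains a real error rather than a mere gap. For a general $\phi^\bullet\colon X^\bullet\to Y^\bullet$ the inclusion $X^\bullet\to\mathrm{Cyl}(\phi^\bullet)$ is \emph{not} an $\I$-weak equivalence: since $\mathrm{Cyl}(\phi^\bullet)\to Y^\bullet$ is a homotopy equivalence, $2$-out-of-$3$ would force every $\phi^\bullet$ to be an $\I$-weak equivalence. You have the cylinder and the cocylinder interchanged: $X^\bullet\hookrightarrow\mathrm{Cyl}(\phi^\bullet)\to Y^\bullet$ is the candidate for the (cofibration, trivial fibration) factorization --- and even there the kernel of the second map is $\cone(\id_{X^\bullet})$, which is contractible but has components $X^n\oplus X^{n+1}\notin\I$, hence is not $\I$-fibrant, so the construction must be corrected --- whereas the (trivial cofibration, fibration) factorization requires the mapping path object $X^\bullet\times_{Y^\bullet}P(Y^\bullet)$. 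Moreover, ``apply the fibrant replacement to $\mathrm{Cyl}(\phi^\bullet)$ and rearrange by $2$-out-of-$3$'' produces no map back to $Y^\bullet$: a weak equivalence $\mathrm{Cyl}(\phi^\bullet)\to F^\bullet$ cannot be composed with the projection $\mathrm{Cyl}(\phi^\bullet)\to Y^\bullet$, and there is no lift unless $Y^\bullet$ is already fibrant. The correct order of operations is: (i) upgrade the hypothesized weak equivalence $X^\bullet\to F^\bullet$ to a trivial cofibration into a fibrant object by taking the product of $F^\bullet$ with the disks $D^{n-1}(I^n)$ on $\I$-preenvelopes $X^n\to I^n$ (these disks are fibrant by Lemma~\ref{rel_fibrations_lem}(2) and $\I$-trivial, and they force the map to be a degreewise $\I$-monomorphism); (ii) factor maps with \emph{fibrant} target via the path object, whose relevant kernel is then fibrant; (iii) reduce the general case to the fibrant-target case. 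As written, your argument establishes neither factorization.
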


In the following proposition (adapted from \cite{zbMATH06915995}) we establish the existence of ``Spaltenstein towers of partial $\I$-fibrant replacements'', in complete analogy to our discussion in Section~\ref{DG_inj_sec} for the case $\I=\Inj(\A)$. This construction produces a standard candidate to $\I$-fibrant replacement for any given $X^\bullet\in \Ch(\A)$.  In Subsection~\ref{subs_I_inj_res}, we will discuss a sufficient condition to ensure that the construction below always produces an $\I$-fibrant replacement. 

\begin{prop}\label{prop_CNPS}
Let $\A$ be a bicomplete Abelian category and let $\I$ be an injective class in $\A$. Then, for each $X^\bullet\in \Ch(\A)$, there is an inverse system (a {\bf tower}) of complexes
\begin{equation}\label{fib_tower_eq}
\xymatrix{
\cdots\ar[r]^-{t_2^\bullet}&E_2^\bullet\ar[r]^-{t_1^\bullet}& E_1^\bullet \ar[r]^-{t_0^\bullet}& E_0^\bullet
}
\end{equation}
that satisfies the following conditions, for all $n\in\N$:
\begin{enumerate}
\item $E_n^\bullet\in \Ch^{\geq-n}(\A)$ and $E_n^i\in \I$, for all $i\in\Z$ (so $E_n^\bullet$ is $\I$-fibrant);
\item the morphism $t_n^\bullet$ is an $\I$-fibration;
\item there is an $\I$-weak equivalence $\lambda_n^\bullet \colon \tau^{\geq -n}(X^\bullet)\to E_n^\bullet$ such that 
\[
\lambda_n^\bullet \circ \pi_n^\bullet=t_n^\bullet\circ\lambda_{n+1}^\bullet, \quad\text{for all $n\in\N$,}
\] 
where $\pi_n^\bullet\colon \tau^{\geq -n-1}(X^\bullet)\to \tau^{\geq -n}(X^\bullet)$ is the canonical projection.
\end{enumerate}
\end{prop}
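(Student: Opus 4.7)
The plan is to adapt, almost verbatim, the Spaltenstein tower construction from Section~\ref{DG_inj_sec} to the relative setting. I proceed in two stages: first I build each $\lambda_n^\bullet$ independently, then I construct the transition maps $t_n^\bullet$ and upgrade them to $\I$-fibrations.

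For the first stage, I mimic the degreewise construction of Subsection~\ref{bounded_subs}, replacing monomorphisms by $\I$-monomorphisms throughout; this is legitimate since the injective class $\I$ is pre-enveloping by axiom (IC.2). Fixing $n\in\N$ and starting from degree $-n$, I inductively produce $E_n^i\in\I$ together with $\lambda_n^i$ and differentials $e_n^i$ by forming the same pushouts as in Subsection~\ref{bounded_subs} and then embedding the resulting pushouts $\I$-monomorphically into objects of $\I$; the resulting $\lambda_n^\bullet$ is then degreewise $\I$-monic. The analogue of Lemma~\ref{lem_relative_injective_resolution} for bounded-below complexes certifies that $\lambda_n^\bullet$ is an $\I$-weak equivalence, while Lemma~\ref{rel_fibrations_lem}(2) shows that each $E_n^\bullet\in\Ch^{\geq -n}(\I)$ is $\I$-fibrant.

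For the second stage, given $\lambda_n^\bullet$ and $\lambda_{n+1}^\bullet$, I seek $t_n^\bullet\colon E_{n+1}^\bullet\to E_n^\bullet$ with $t_n^\bullet\circ\lambda_{n+1}^\bullet=\lambda_n^\bullet\circ\pi_n^\bullet$. The argument parallels the Ext-vanishing trick of Section~\ref{DG_inj_sec}, where one exploited $\mathrm{Ext}^1_{\Ch(\A)}(\coker(\lambda_{n+1}^\bullet),E_n^\bullet)=0$; its relative substitute is Lemma~\ref{lem:htopy-unique-mapsfromI-injtoIacyclic}. Since $\lambda_{n+1}^\bullet$ is degreewise $\I$-monic, the cokernel $Q^\bullet:=\coker(\lambda_{n+1}^\bullet)$ satisfies $\hom(Q^\bullet,I)\in\Ch(\Ab)$ acyclic for every $I\in\I$, and this hom-acyclicity lets me strictly extend the chain map $\lambda_n^\bullet\circ\pi_n^\bullet$ from the subcomplex $\tau^{\geq-n-1}(X^\bullet)\subseteq E_{n+1}^\bullet$ to all of $E_{n+1}^\bullet$, landing in the bounded-below complex $E_n^\bullet\in\Ch^{\geq -n}(\I)$.

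Finally, to upgrade $t_n^\bullet$ into an $\I$-fibration I invoke Lemma~\ref{rel_fibrations_lem}(1), which reduces the task to making $t_n^\bullet$ a degreewise split epimorphism with $\I$-fibrant kernel. As in the classical setting, I achieve this by replacing $E_{n+1}^\bullet$ by $E_{n+1}^\bullet\oplus D^\bullet$ for a suitable sum $D^\bullet$ of disk complexes $D^i(J)$ with $J\in\I$; such a $D^\bullet$ is contractible and degreewise in $\I$, hence $\I$-fibrant and $\I$-acyclic, so it affects neither the $\I$-weak equivalence status of $\lambda_{n+1}^\bullet$ nor the $\I$-fibrancy of $E_{n+1}^\bullet$. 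The main obstacle is precisely the strict lifting of the second stage: one must produce $t_n^\bullet$ on the nose, not merely up to homotopy, and this is exactly what the inductive, degreewise argument embodied in Lemma~\ref{lem:htopy-unique-mapsfromI-injtoIacyclic} is designed to deliver.
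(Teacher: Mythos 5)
Your proposal is correct in substance, but it takes a genuinely different route from the paper. The paper's own proof of Proposition~\ref{prop_CNPS} is a one-line reduction: it invokes the model category $\mathrm{Tow}(\A,\I)$ of towers of complexes constructed in Sections~4--5 of \cite{zbMATH06915995} and observes that a diagram satisfying (1)--(3) is precisely a fibrant replacement of the tower of truncations $\mathrm{tow}(X^\bullet)$ in that model category; all the actual work is outsourced. You instead rebuild the tower by hand, transporting the Spaltenstein construction of Section~\ref{sec_two} to the relative setting. This buys a self-contained and explicit construction (in the spirit of the rest of the paper) at the cost of redoing the bounded-below case relative to $\I$; the paper's route buys brevity at the cost of opacity. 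Your three stages are the right ones and each can be completed.

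Two remarks on the details. First, your appeal to Lemma~\ref{lem:htopy-unique-mapsfromI-injtoIacyclic} in the lifting stage is not verbatim: that lemma extends a partial chain map upward in cohomological degree, whereas you need to extend $\lambda_n^\bullet\circ\pi_n^\bullet$ along the degreewise $\I$-monomorphism $\lambda_{n+1}^\bullet$, i.e.\ in the ``source'' direction. The step is nevertheless correct, and there is a cheaper way to see it: once stage one gives that $\lambda_{n+1}^\bullet$ is a degreewise $\I$-monic $\I$-weak equivalence (hence a trivial $\I$-cofibration) and Lemma~\ref{rel_fibrations_lem}(2) gives that $E_n^\bullet\in\Ch^{\geq-n}(\I)$ is $\I$-fibrant, the existence of a strict $t_n^\bullet$ with $t_n^\bullet\circ\lambda_{n+1}^\bullet=\lambda_n^\bullet\circ\pi_n^\bullet$ is literally the right lifting property defining $\I$-fibrations, applied to the square whose lower-right corner is $0$. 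Second, in stage one be aware that $\I$-monomorphisms need not be monomorphisms, so the pushout-is-also-pullback argument of Subsection~\ref{bounded_subs} does not transport literally; what one verifies instead, degree by degree, is the analogue of condition (3) of Lemma~\ref{lem_relative_injective_resolution} for the bounded-below complex $\tau^{\geq-n}(X^\bullet)$, which is exactly what makes $\hom(\lambda_n^\bullet,I)$ a quasi-isomorphism for every $I\in\I$. With these adjustments your argument goes through.
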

\begin{proof}
By the results in Sections~4-5 in \cite{zbMATH06915995}, there is a model category $\mathrm{Tow}(\A,\I)$ of towers of complexes. Given $X^\bullet\in \Ch(\A)$, one can consider $\mathrm{tow}(X^\bullet)\in \mathrm{Tow}(\A,\I)$, the tower of successive truncations of $X^\bullet$. A sequence \eqref{fib_tower_eq}, with the properties (1--3) in the statement, is precisely a fibrant replacement of  $\mathrm{tow}(X^\bullet)$ in the model category $\mathrm{Tow}(\A,\I)$.
\end{proof}

\subsection{The (Ab.$4^*$)-$k$ condition and the injective model structure relative to $\I$}\label{subs_I_inj_res}
Let $\A$ be a bicomplete Abelian category, $X^\bullet\in \Ch(\A)$, $\I\subseteq \A$ an injective class, and define $E^\bullet:=\varprojlim_\N E_n^\bullet$ as the inverse limit of the tower \eqref{fib_tower_eq} described by Proposition~\ref{prop_CNPS}. 
Then, $E^\bullet:=\varprojlim_\N E_n^\bullet$ is  $\I$-fibrant (by Remark~\ref{rem_lim_of_fib}) and, by condition (3) in the proposition, there is a canonical morphism $\lambda^\bullet\colon X^\bullet\to E^\bullet$. To determine whether  $\lambda^\bullet$ is an $\I$-weak equivalence (i.e., if $E^\bullet$ is an $\I$-fibrant replacement for $X^\bullet$), it is useful to introduce a relative version of Roos' (Ab.$4^*$)-$k$ condition (see \cite[Definition~6.1]{zbMATH06915995}). We do it after the following technical lemma:

\begin{lem} \label{lem.AB4-I-k}
Let $\mathcal{A}$ be a complete Abelian category, let $\I\subseteq \A$ be an injective class, and consider a family  $(u_\lambda\colon A_\lambda\rightarrow I_\lambda^\bullet)_{\Lambda}$   of relative $\mathcal{I}$-injective resolutions in $\mathcal{A}$. The following  are equivalent for any $k\geq 0$:
\begin{enumerate}
\item $H^{-n}(\hom(\prod_{\Lambda}I_\lambda^\bullet,I))=0$, for all  $I\in\mathcal{I}$ and all $n>k$;
\item the induced map $\coker(\prod d_\lambda^{n-1})\rightarrow\prod_{\Lambda}I_\lambda^{n+1}$ is an $\mathcal{I}$-monomorphsm, for all $n> k$.
\end{enumerate}
\end{lem}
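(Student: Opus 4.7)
\medskip\noindent
\textbf{Proof plan.} Let $P^\bullet := \prod_{\Lambda}I_\lambda^\bullet$ with differentials $D^\bullet := \prod_{\Lambda}d_\lambda^\bullet$; note that $P^\bullet\in\Ch^{\geq 0}(\A)$ since each $I_\lambda^\bullet$ is, so for any $n>k\geq 0$ the objects $P^{n-1}$, $P^n$, $P^{n+1}$ all make sense. The whole argument will then be a direct translation, via the functor $\hom_\A(-,I)$, between the vanishing of a cohomology group and the $\I$-monomorphism property, exactly in the spirit of the equivalence $(2)\Leftrightarrow(3)$ in Lemma~\ref{lem_relative_injective_resolution}.

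\medskip\noindent
First I would unwind the definition of the complex $\hom(P^\bullet,I)$ at the relevant spot. Fixing $n>k$ and $I\in\I$, the cohomology $H^{-n}(\hom(P^\bullet,I))$ is computed from the sequence
\[
\xymatrix@C=35pt{
\hom_\A(P^{n+1},I)\ar[r]^-{(D^n)^*}&\hom_\A(P^n,I)\ar[r]^-{(D^{n-1})^*}&\hom_\A(P^{n-1},I).
}
\]
By left-exactness of $\hom_\A(-,I)$, there is a canonical identification
\[
\Ker\!\bigl((D^{n-1})^*\bigr)\ \cong\ \hom_\A(\coker(D^{n-1}),I),
\]
under which the image of $(D^n)^*$ corresponds to those maps $\coker(D^{n-1})\to I$ that factor through the induced morphism $\overline D^n\colon \coker(D^{n-1})\to P^{n+1}$.

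\medskip\noindent
With this identification in hand, the cohomology group $H^{-n}(\hom(P^\bullet,I))$ vanishes for every $I\in\I$ if and only if every morphism $\coker(D^{n-1})\to I$ (with $I\in \I$) extends along $\overline D^n$; this is exactly the assertion that $\overline D^n$ is an $\I$-monomorphism. So, for each fixed $n>k$, the two vanishings/properties in (1) and (2) are equivalent; quantifying over all such $n$ yields the desired equivalence. No new obstacle is expected here, as the whole argument is formal; the only point worth double-checking is that products commute with $\coker$ in the sense needed, i.e.\ that $\coker(D^{n-1})$ receives a canonical map from each $\coker(d_\lambda^{n-1})$ and the induced square with $\overline D^n$ and $\prod_\Lambda \overline{d_\lambda^{\,n}}$ commutes, which is immediate from the universal property of the product.
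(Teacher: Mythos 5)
Your argument is correct and is essentially the paper's own proof: both unwind $H^{-n}(\hom(\prod_\Lambda I_\lambda^\bullet,I))$ at the three-term stretch around degree $n$, use left-exactness of $\hom_\A(-,I)$ to identify $\Ker((\prod d_\lambda^{n-1})^*)$ with $\hom_\A(\coker(\prod d_\lambda^{n-1}),I)$, and read off the surjectivity as the $\I$-monomorphism condition. (Your closing remark about compatibility with the maps $\coker(d_\lambda^{n-1})\to\coker(\prod d_\lambda^{n-1})$ is not actually needed, since statement (2) concerns only the cokernel of the product map.)
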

\begin{proof}
We set by convention $I_\lambda^{-1}:=A_\lambda$, for all $\lambda\in\Lambda$. Condition (1) means that the following sequences are exact in $\Ab$ for all  $I\in\mathcal{I}$ and all $n>k$:  
\begin{equation}\label{def_ab4*_lemma}
\xymatrix{\hom_\mathcal{A}\left(\prod_\Lambda I_\lambda^{n+1},I\right)\stackrel{(\prod d_\lambda^{n})^*}{\longrightarrow}\hom_\mathcal{A}\left(\prod_\Lambda I_\lambda^{n},I\right)\stackrel{(\prod d_\lambda^{n-1})^*}{\longrightarrow}\hom_\mathcal{A}\left(\prod_\Lambda I_\lambda^{n-1},I\right)},
\end{equation}
that is, we want the morphism $(\prod d_\lambda^{n})^*\colon  \hom_\mathcal{A}\left(\prod_\Lambda I_\lambda^{n+1},I\right)\to \Ker((\prod d_\lambda^{n-1})^*)$ to be surjective. As $\hom_\A(-,I)$ is left exact, there is an isomorphism  $\Ker((\prod d_\lambda^{n-1})^*)\cong\hom_\mathcal{A}(\coker(\prod d_\lambda^{n-1}),I)$. In particular, this shows that the sequences in \eqref{def_ab4*_lemma} are exact for all $I\in\I$ if, and only if, $\hom_\mathcal{A}\left(\prod_\Lambda I_\lambda^{n+1},I\right)\to \hom_\mathcal{A}(\coker(\prod d_\lambda^{n-1}),I)$ is surjective for all $I\in \mathcal{I}$, that is, if the morphism $\coker(\prod d_\lambda^{n-1})\to \prod_\Lambda I_\lambda^{n+1}$ is an $\I$-monomorphism, which is condition (2).
\end{proof}

\begin{defn} \label{def.AB4-I-k}
Let $\mathcal{A}$ be a complete Abelian category, let $\mathcal{I}\subseteq \A$ be an injective class, and let $k$ be a natural number. Then, $\mathcal{A}$ is said to be  {\bf (Ab.4$^*$)-$\mathcal{I}$-$k$} when, for each family $(A_\lambda )_{\Lambda}$ of objects of $\mathcal{A}$, and for some (or, equivalently, any) choice of relative $\mathcal{I}$-injective resolutions $(I_\lambda^\bullet,u_\lambda\colon S^0(A_\lambda)\rightarrow I_\lambda^\bullet)_{ \Lambda}$, the equivalent conditions (1) and (2) in Lemma~\ref{lem.AB4-I-k} hold true.
\end{defn}

Observe that varying the chosen relative $\I$-injective resolutions does not affect the above definition. This is a consequence of Corollary~\ref{cor.relative-I-injresol-homotopy equivalent}, which implies that, if $(I_\lambda^\bullet,u_\lambda)_{\Lambda}$ and $(J_\lambda^\bullet,v_\lambda)_{\Lambda}$ are two families of relative $\mathcal{I}$-injective resolutions corresponding to $(A_\lambda)_{\Lambda}$, then there are homotopy equivalences $(f_\lambda\colon I_\lambda^\bullet\to J_\lambda^\bullet)_{ \Lambda}$, whose product gives a homotopy equivalence $\prod_{\Lambda}f_\lambda\colon \prod_{\Lambda}I^\bullet_\lambda\to \prod_{\Lambda}J^\bullet_\lambda$, which then induces an homotopy equivalence between $\hom(\prod_{\Lambda}I^\bullet_\lambda,I)$ and $\hom(\prod_{\Lambda}J^\bullet_\lambda,I)$, for all $I\in\mathcal{I}$. In particular, these two complexes of Abelian groups have the same cohomology groups. Hence, condition (1) in Lemma~\ref{lem.AB4-I-k} can be checked on either complex with the same result. 

\begin{cor} \label{cor.AB4-I-0}
Let $\mathcal{A}$ be a complete Abelian category, let $\mathcal{I}\subseteq \A$ be an injective class, and consider the following assertions:
\begin{enumerate}
\item each product of $\mathcal{I}$-preenvelopes is an $\mathcal{I}$-preenvelope;
\item each product of $\mathcal{I}$-monomorphisms is an $\mathcal{I}$-monomorphism;
\item $\mathcal{A}$ is (Ab.4$^*$)-$\mathcal{I}:=$(Ab.4$^*$)-$\mathcal{I}$-$0$.
\end{enumerate}
Then, the equivalence ``(1)$\Leftrightarrow$(2)'' holds in general, while ``(1,2)$\Rightarrow$(3)''  holds if $\mathcal{A}$ is (Ab.4$^*$). 
\end{cor}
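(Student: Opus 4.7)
The plan is to handle the three implications in order: (2)$\Rightarrow$(1), (1)$\Rightarrow$(2), and (1,2)$\Rightarrow$(3). The first is essentially a tautology once one recalls that $\I$ is closed under products (by (IC.1)): a product of $\I$-preenvelopes has target in $\I$, and (2) gives directly that it is an $\I$-monomorphism, hence an $\I$-preenvelope.

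For (1)$\Rightarrow$(2), the idea is to reduce arbitrary $\I$-monomorphisms to $\I$-preenvelopes by enlarging their targets. Given $\I$-monomorphisms $u_\lambda\colon A_\lambda\to B_\lambda$, I would pick an $\I$-preenvelope $v_\lambda\colon B_\lambda\to I_\lambda$ for each $\lambda$ (using (IC.2)), and observe that $v_\lambda\circ u_\lambda\colon A_\lambda\to I_\lambda$ is itself an $\I$-preenvelope: the functor $\hom_\A(-,I)$ sends each of $u_\lambda$ and $v_\lambda$ to a surjection, so it sends their composition to a surjection. Then (1) applied to the family $(v_\lambda\circ u_\lambda)_{\Lambda}$ says $(\prod v_\lambda)\circ(\prod u_\lambda)$ is an $\I$-monomorphism, from which the $\I$-mono property of $\prod u_\lambda$ follows formally: any $f\colon \prod A_\lambda\to I$ with $I\in\I$ lifts along the composition to some $g\colon \prod I_\lambda\to I$, and then $g\circ\prod v_\lambda$ is a lift of $f$ along $\prod u_\lambda$.

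For (1,2)$\Rightarrow$(3) under (Ab.4$^*$), I would appeal to the reformulation in Lemma~\ref{lem.AB4-I-k}(2): it suffices to verify that, for any family of relative $\I$-injective resolutions $(u_\lambda\colon A_\lambda\to I_\lambda^\bullet)_{\Lambda}$ and every $n>0$, the induced map $\coker(\prod d_\lambda^{n-1})\to\prod I_\lambda^{n+1}$ is an $\I$-monomorphism. The hypothesis that $\A$ is (Ab.4$^*$) means products in $\A$ are exact and in particular commute with cokernels, so $\coker(\prod d_\lambda^{n-1})\cong\prod\coker(d_\lambda^{n-1})=\prod (I_\lambda^n/B^n(\tilde I_\lambda^\bullet))$, and the map in question is identified with the product of the natural morphisms $I_\lambda^n/B^n(\tilde I_\lambda^\bullet)\to I_\lambda^{n+1}$. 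Each of those is an $\I$-monomorphism by Lemma~\ref{lem_relative_injective_resolution}(3), so (2) yields the conclusion.

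There is no serious obstacle here; the only point deserving a moment's thought is the (1)$\Rightarrow$(2) step, where one needs to notice that (1) can be leveraged even without the targets of the given monomorphisms being in $\I$, provided one enlarges those targets by composing with $\I$-preenvelopes. The (Ab.4$^*$) assumption then enters in the final implication exactly where one would expect: to identify the cokernel of a product with the product of cokernels.
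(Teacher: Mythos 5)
Your proof is correct and follows essentially the same route as the paper's: the trivial direction, the reduction of (1)$\Rightarrow$(2) to the fact that a left factor of an $\I$-monomorphism is an $\I$-monomorphism, and the use of Lemma~\ref{lem.AB4-I-k}(2) together with exactness of products to identify the relevant cokernel with a product of the $\I$-monomorphisms from Lemma~\ref{lem_relative_injective_resolution}(3). The only cosmetic difference is in (1)$\Rightarrow$(2): you compose each given $\I$-monomorphism with a preenvelope of its \emph{target}, whereas the paper takes a preenvelope of the \emph{source} and factors it through the given map; both yield the same factorization $(\prod v_\lambda)\circ(\prod u_\lambda)$ of a product of preenvelopes.
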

\begin{proof}
As ``(2)$\Rightarrow$(1)'' is trivial, we just verify that ``(1)$\Rightarrow$(2)'':  let $(\iota_\lambda \colon X_\lambda\rightarrow Y_\lambda)_{\Lambda}$ be a family of $\mathcal{I}$-monomorphisms in $\A$. Choose, for each $\lambda\in\Lambda$, an $\mathcal{I}$-preenvelope $u_\lambda\colon X_\lambda\rightarrow I_\lambda$. As each $\iota_\lambda$ is an $\mathcal{I}$-monomorphism, there are morphisms $v_\lambda\colon Y_\lambda\rightarrow I_\lambda$ such that $v_\lambda\circ\iota_\lambda =u_\lambda$, for all $\lambda\in\Lambda$. By (1), the product map $\prod u_\lambda=(\prod v_\lambda)\circ (\prod \iota_\lambda)$ is an $\mathcal{I}$-preenvelope, and so also an $\mathcal{I}$-monomorphism. As a consequence, also $\prod \iota_\lambda$ is an $\mathcal{I}$-monomorphism, as desired. 

\smallskip
For the implication ``(2)$\Rightarrow$(3)'' recall that, by Lemma~\ref{lem.AB4-I-k}, for $\A$ to be (Ab.4$^*$)-$\mathcal{I}$ it is enough that, for any family $(X_\lambda)_{ \Lambda}$ and a corresponding choice of relative $\I$-injective resolutions $(I_\lambda^\bullet,u_\lambda)_\Lambda$, the induced map $(\prod_\Lambda I_\lambda^n)/B^n(\prod_\Lambda I_\lambda^\bullet)\to \prod_\Lambda I_\lambda^{n+1}$ is an $\I$-monomorphism for all $n\geq-1$. Assuming that $\mathcal{A}$ is (Ab.4$^*$), there is an isomorphism $(\prod_\Lambda I_\lambda^n)/B^n(\prod_\Lambda I_\lambda^\bullet)\cong \prod_\Lambda (I_\lambda^n/ B^n(I_\lambda^\bullet))$, so the condition becomes equivalent to the fact that $\prod_\Lambda (I_\lambda^n/ B^n(I_\lambda^\bullet))\to  \prod_\Lambda I_\lambda^{n+1}$ is an $\I$-monomorphism, for all $n\geq-1$. This is then a consequence of (2), as the map in question is the product of the family of $\I$-monomorphisms $(I_\lambda^n/ B^n(I_\lambda^\bullet)\to I_\lambda^{n+1})_{\Lambda}$. 
\end{proof}

The proof of the following theorem follows the same steps  used in the proof of Theorem~\ref{main_unbounded_dg_thm}:
 
\begin{thm}[{\rm \cite[Theorem~6.4]{zbMATH06915995}}]\label{thm_lim_fib_is_we}
Let $\A$ be a bicomplete Abelian category and $\I\subseteq \A$ an injective class such that $\A$ is (Ab.$4^*$)-$\I$-$k$ for some $k\in \N$. Given $X^\bullet\in\Ch(\A)$ and a tower
\[
\xymatrix{
\cdots\ar[r]^-{t_2^\bullet}&E_2^\bullet\ar[r]^-{t_1^\bullet}& E_1^\bullet \ar[r]^-{t_0^\bullet}& E_0^\bullet
}
\]
satisfying (1--3) in Proposition~\ref{prop_CNPS}, the map $\lambda^\bullet\colon X^\bullet\to E^\bullet:= \varprojlim_\N E_n^\bullet$ is an $\I$-weak equivalence. 
\end{thm}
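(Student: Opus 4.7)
The strategy is to parallel the argument of Section~\ref{sec_two} (culminating in Theorem~\ref{main_unbounded_dg_thm}) in the relative setting, systematically replacing ordinary cohomology $H^i(-)$ by the relative cohomology $H^i(\hom(-,I))$ for each $I\in\I$, the class of injective objects by the injective class $\I$, and the (Ab.$4^*$)-$k$ condition by its relative counterpart from Definition~\ref{def.AB4-I-k}. Fixing $I\in\I$, it suffices to show that $\hom(\lambda^\bullet,I)$ is a quasi-isomorphism in $\Ch(\Ab)$.

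Since each $t_n^\bullet$ is a degreewise split epimorphism by Lemma~\ref{rel_fibrations_lem}(1), Lemma~\ref{lem_cohomologies_of_lim_vs_prod} yields the degreewise split-exact sequence $0 \to E^\bullet \to \prod_{\N} E_n^\bullet \to \prod_{\N} E_n^\bullet \to 0$. Fixing $h\in\N$ and writing $K_n^\bullet := \ker(E_n^\bullet \to E_h^\bullet)$ for $n>h$, where the connecting map is the composition of $\I$-fibrations and hence itself an $\I$-fibration, one likewise obtains a degreewise split-exact sequence $0 \to \varprojlim_{n>h} K_n^\bullet \to E^\bullet \to E_h^\bullet \to 0$. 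Applying $\hom(-,I)$ (which preserves degreewise split-exactness) and taking long exact sequences in cohomology, the whole problem reduces to showing that $H^i(\hom(\varprojlim_{n>h} K_n^\bullet,I))$ vanishes in a range that grows to $+\infty$ with~$h$.

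The main obstacle, and the precise place where Definition~\ref{def.AB4-I-k} enters, is the relative analog of Lemma~\ref{useful_form_of_ab4*_k_lem}. First, from the fact that $\lambda_n$ and $\lambda_h$ are $\I$-weak equivalences and that $\tau^{\geq -n}(X^\bullet)\to\tau^{\geq -h}(X^\bullet)$ is the identity in degrees $>-h$, one deduces via the long exact sequence associated to $0 \to K_n^\bullet \to E_n^\bullet \to E_h^\bullet \to 0$ that the complex $\hom(K_n^\bullet,J)$ is uniformly acyclic in all sufficiently low degrees (in a range depending only on $h$), for every $J\in\I$. By Lemma~\ref{lem_relative_injective_resolution}, the naive truncation $\tau^{\geq h}(K_n^\bullet)$ is then a relative $\I$-injective resolution of $K_n^h/B^h(K_n^\bullet)$. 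Applying Definition~\ref{def.AB4-I-k} to the family $(K_n^h/B^h(K_n^\bullet))_\N$ gives the vanishing of $H^i(\hom(\prod_\N \tau^{\geq h}(K_n^\bullet),I))$ for $i$ beyond $h+k$, and this transfers to $\prod_\N K_n^\bullet$ via the observation in Lemma~\ref{degreewise_iso_and_prod_lem} that the truncation maps $K_n^\bullet \to \tau^{\geq h}(K_n^\bullet)$ are isomorphisms in all sufficiently high degrees. Combining this with the analog of Proposition~\ref{prop_cohom_vanish}, i.e.\ applying Lemma~\ref{lem_cohomologies_of_lim_vs_prod} to the tower $(K_n^\bullet)_{n>h}$ and taking the long exact sequence in $\hom(-,I)$-cohomology, yields $H^i(\hom(\varprojlim_{n>h} K_n^\bullet,I))=0$ in the required range.

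Plugging this vanishing back into the long exact sequence from $0 \to \varprojlim_{n>h} K_n^\bullet \to E^\bullet \to E_h^\bullet \to 0$ produces an isomorphism $H^i(\hom(E^\bullet,I))\cong H^i(\hom(E_h^\bullet,I))$ in the same range. Since $\lambda_h$ is an $\I$-weak equivalence and $X^\bullet\to\tau^{\geq -h}(X^\bullet)$ is the identity in degrees $>-h$, the right-hand side in turn agrees with $H^i(\hom(X^\bullet,I))$ for $i$ bounded in terms of $h$. As $h$ ranges over $\N$, every $i\in\Z$ is eventually covered, and naturality of all the maps involved ensures that the resulting isomorphism is induced by $\hom(\lambda^\bullet,I)$, completing the proof.
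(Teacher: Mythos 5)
Your proposal is correct and follows essentially the same route as the paper, which explicitly proves this theorem by transporting the argument of Theorem~\ref{main_unbounded_dg_thm} (via the chain Lemma~\ref{lem_cohomologies_of_lim_vs_prod} $\to$ Lemma~\ref{useful_form_of_ab4*_k_lem} $\to$ Proposition~\ref{prop_cohom_vanish} $\to$ Corollary~\ref{cor_unbound_res}) to the relative setting, with $H^i(\hom(-,I))$ in place of $H^i(-)$ and Definition~\ref{def.AB4-I-k} supplying the key vanishing. (Only cosmetic quibbles: the truncation you use is the good truncation $\tau^{\geq -h}$, not a naive one, and the split-epimorphy of the induced maps $K_{n+1}^\bullet\to K_n^\bullet$ follows because they are pullbacks of $\I$-fibrations.)
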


In particular, in the setting of the above theorem (also using Proposition~\ref{prop_CNPS} and Remark~\ref{rem_lim_of_fib}), any complex $X^\bullet\in \Ch(\A)$ has an $\I$-fibrant replacement $\lambda^\bullet\colon X^\bullet\to E^\bullet$. Combining this result with Theorem~\ref{CH_main_thm}, one immediately deduces that:

\begin{cor}\label{cor_rel_inj_model_structure}
Let $\A$ be a bicomplete Abelian category and let $\I$ be an injective class in $\A$. If $\A$ is (Ab.$4^*$)-$\I$-$k$ for some $k\in \N$, then $(\Ch(\A), \W_\I,\C_\I,\F_\I)$ is a model category.
\end{cor}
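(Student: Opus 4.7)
The plan is to assemble the corollary directly from the machinery built in the section. The only hypothesis required by Theorem~\ref{CH_main_thm} (beyond bicompleteness and having an injective class) is the existence, for every $X^\bullet\in\Ch(\A)$, of an $\I$-fibrant replacement. So the strategy is to produce, under the (Ab.$4^*$)-$\I$-$k$ hypothesis, a canonical such replacement for any given complex, and then invoke Christensen--Hovey.

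First I would fix an arbitrary $X^\bullet\in\Ch(\A)$ and apply Proposition~\ref{prop_CNPS} to obtain a Spaltenstein-type tower
\[
\xymatrix{\cdots\ar[r]^-{t_2^\bullet}&E_2^\bullet\ar[r]^-{t_1^\bullet}& E_1^\bullet \ar[r]^-{t_0^\bullet}& E_0^\bullet}
\]
of bounded below complexes of objects from $\I$, equipped with $\I$-weak equivalences $\lambda_n^\bullet\colon \tau^{\geq -n}X^\bullet\to E_n^\bullet$ compatible with the truncation projections, and with each $t_n^\bullet$ an $\I$-fibration. Set $E^\bullet:=\varprojlim_\N E_n^\bullet$; since $X^\bullet\cong \varprojlim_\N \tau^{\geq -n}X^\bullet$, the compatibility of the $\lambda_n^\bullet$ induces a canonical map $\lambda^\bullet\colon X^\bullet\to E^\bullet$.

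Next, I would verify the two properties making $\lambda^\bullet$ an $\I$-fibrant replacement. That $E^\bullet$ is $\I$-fibrant follows from Remark~\ref{rem_lim_of_fib}: indeed, the transition maps $t_n^\bullet$ are $\I$-fibrations by construction, and by Lemma~\ref{rel_fibrations_lem}(2) each bounded below $E_n^\bullet$ (with entries in $\I$) is $\I$-fibrant, so the hypotheses of Remark~\ref{rem_lim_of_fib} are met and the inverse limit of the tower is $\I$-fibrant. That $\lambda^\bullet$ is an $\I$-weak equivalence is precisely the content of Theorem~\ref{thm_lim_fib_is_we}, whose hypothesis is exactly the (Ab.$4^*$)-$\I$-$k$ condition we are assuming.

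Finally, having established that every complex in $\Ch(\A)$ admits an $\I$-fibrant replacement, Theorem~\ref{CH_main_thm} applies verbatim and produces the desired model structure $(\Ch(\A),\W_\I,\C_\I,\F_\I)$. There is no real obstacle here: all the substantive work (the construction of the tower, the analysis of how cohomology interacts with the limit, and the transition from fibrant replacements to a full model category) has been carried out earlier in Proposition~\ref{prop_CNPS}, Theorem~\ref{thm_lim_fib_is_we}, and Theorem~\ref{CH_main_thm}, respectively. The corollary is a clean assembly of these three inputs, and writing it out amounts to little more than naming each piece and checking compatibility of hypotheses.
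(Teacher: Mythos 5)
Your proposal is correct and follows exactly the route the paper takes: Proposition~\ref{prop_CNPS} produces the tower, Remark~\ref{rem_lim_of_fib} together with Lemma~\ref{rel_fibrations_lem}(2) shows the limit is $\I$-fibrant, Theorem~\ref{thm_lim_fib_is_we} (which is where the (Ab.$4^*$)-$\I$-$k$ hypothesis enters) shows the induced map is an $\I$-weak equivalence, and Theorem~\ref{CH_main_thm} then yields the model structure. Nothing is missing.
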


Observe that an advantage of Corollary~\ref{cor_rel_inj_model_structure} is that it applies to Abelian categories that are not necessarily Grothendieck, and where the small object argument may not be applicable. 

\section{Examples and applications}\label{ex_and_app_sec}

\subsection{On injective classes that are cogenerating}\label{subs_cogen_inj}

Let $\A$ be an Abelian category. A full subcategory $\mathcal{I}\subseteq \A$ is called {\bf cogenerating} if, for each $A\in\mathcal{A}$, there is a monomorphism $A\to I$, for some $I\in\mathcal{I}$.

\begin{prop}\label{char_cogen_prop}
The following are equivalent for an injective class $\mathcal{I}$ in an Abelian category $\mathcal{A}$:
\begin{enumerate}
\item $\mathcal{I}$ is a cogenerating class in $\A$;
\item $\mathcal{I}$-monomorphisms in $\A$ are, in particular, also monomorphisms;
\item $\mathcal{I}$-acyclic complexes in $\Ch(\A)$ are, in particular, also acyclic;
\item $\mathcal{I}$-weak equivalences  in $\Ch(\A)$ are, in particular, also quasi-isomorphisms.
\end{enumerate}
\end{prop}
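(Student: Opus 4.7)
The strategy is to establish the loop $(1)\Rightarrow(2)\Rightarrow(3)\Rightarrow(1)$ and, independently, the equivalence $(3)\Leftrightarrow(4)$, which is essentially formal.

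The equivalence $(1)\Leftrightarrow(2)$ is routine. For $(1)\Rightarrow(2)$, given an $\mathcal{I}$-monomorphism $\phi\colon X\to Y$, pick a monomorphism $\iota\colon X\to I$ with $I\in\mathcal{I}$; the $\mathcal{I}$-monomorphism property factors $\iota$ as $\psi\circ\phi$, and injectivity of $\iota$ then forces that of $\phi$. Conversely, (IC.2) provides an $\mathcal{I}$-monomorphism from every $A\in\mathcal{A}$ into some object of $\mathcal{I}$, which is a monomorphism under (2). For $(2)\Rightarrow(3)$, the key observation is a reformulation of $\mathcal{I}$-acyclicity in the style of Lemma~\ref{lem_relative_injective_resolution}(3) and Lemma~\ref{lem.AB4-I-k}(2): since $\hom_\A(-,I)$ is left exact, $\hom(X^\bullet,I)$ is exact if and only if the induced map $\bar d^n\colon X^n/B^n(X^\bullet)\to X^{n+1}$ is an $\mathcal{I}$-monomorphism for every $n\in\Z$. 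Under (2) each such $\bar d^n$ is a genuine monomorphism, and its kernel is $Z^n(X^\bullet)/B^n(X^\bullet)=H^n(X^\bullet)$, which therefore vanishes for all $n$.

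The core step is $(3)\Rightarrow(1)$. Given $A\in\mathcal{A}$, use (IC.2) to build a relative $\mathcal{I}$-injective resolution and form the augmented complex $\tilde I^\bullet:=(\cdots\to 0\to A\to I^0\to I^1\to\cdots)$ as in Lemma~\ref{lem_relative_injective_resolution}. By part (2) of that lemma, $\hom(\tilde I^\bullet,I)$ is exact for every $I\in\mathcal{I}$, so $\tilde I^\bullet$ is $\mathcal{I}$-acyclic; by (3) it is then acyclic, and vanishing of its cohomology at the position of $A$ forces $A\to I^0$ to be a monomorphism into an object of $\mathcal{I}$, proving (1).

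The equivalence $(3)\Leftrightarrow(4)$ is formal. The direction $(4)\Rightarrow(3)$ follows because $X^\bullet$ is $\mathcal{I}$-acyclic exactly when $X^\bullet\to 0$ is an $\mathcal{I}$-weak equivalence. For $(3)\Rightarrow(4)$, applying the contravariant functor $\hom(-,I)$ to the degreewise split short exact sequence $0\to Y^\bullet\to\cone(\phi^\bullet)\to\Sigma X^\bullet\to 0$ and reading off the resulting long exact cohomology sequence shows that $\cone(\phi^\bullet)$ is $\mathcal{I}$-acyclic whenever $\phi^\bullet$ is an $\mathcal{I}$-weak equivalence; by (3), $\cone(\phi^\bullet)$ is acyclic, so $\phi^\bullet$ is a quasi-isomorphism. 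I do not expect a serious obstacle: the only non-formal input is the relative $\mathcal{I}$-injective resolution used in $(3)\Rightarrow(1)$, and every other step reduces to a direct diagram chase or to a characterization already present in the paper.
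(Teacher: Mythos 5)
Your proof is correct, and apart from one step it follows the same route as the paper: the paper also proves $(1)\Leftrightarrow(2)$ by factoring a monomorphism $A\to I$ through the $\I$-monomorphism, proves $(2)\Rightarrow(3)$ by observing that $\I$-acyclicity of $X^\bullet$ is equivalent to each $\bar d^n\colon X^n/B^n(X^\bullet)\to X^{n+1}$ being an $\I$-monomorphism (quoting \cite[Proposition~1.15(7)]{zbMATH06915995} where you rederive it from left exactness of $\hom_\A(-,I)$), and proves $(3)\Leftrightarrow(4)$ via the mapping cone (quoting \cite[Proposition~1.15(2)]{zbMATH06915995} where you argue directly from the degreewise split sequence). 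The one genuine divergence is how you close the cycle: the paper gets $(3)\Rightarrow(2)$ cheaply by noting that $\psi\colon A\to B$ is an ($\I$-)monomorphism if and only if the three-term complex $0\to A\to B\to\coker(\psi)\to 0$ is ($\I$-)acyclic, whereas you prove $(3)\Rightarrow(1)$ by building a full relative $\I$-injective resolution, invoking Lemma~\ref{lem_relative_injective_resolution} to see the augmented complex is $\I$-acyclic, and reading off injectivity of $A\to I^0$ from its acyclicity. Both are valid; the paper's version needs only a single $\I$-preenvelope and a two-step complex, while yours leans on the (already established, but strictly stronger) existence of full relative resolutions. No gaps.
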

\begin{proof}
(1)$\Leftrightarrow$(2). Given  an $\mathcal{I}$-monomorphism $\phi\colon A\to B$, fix a monomorphism $u\colon A\to I\in \I$. The surjectivity of the map $\hom_\A(\phi,I)\colon \hom_\A(B,I)\to \hom_\A(A,I)$ implies that $u=v\circ \phi$, for some $v\in\hom_\mathcal{A}(B,I)$ and, therefore, $\phi$ is a monomorphism. The converse is trivial.

(2)$\Leftrightarrow$(3). Given $X^\bullet=(\cdots\overset{}{\to}X^n\overset{d^n}{\longrightarrow}X^{n+1}\to \cdots)\in \Ch(\A)$, let $\bar d^n\colon X^n/B^n(X^\bullet)\to X^{n+1}$ be the morphism induced by the differential, so that $H^n(X^\bullet)=\Ker(\bar d^n)$, for all $n\in \Z$. If $X^\bullet$ is $\mathcal{I}$-acyclic then $\bar d^n$ is an $\I$-monomorphism (by \cite[Proposition 1.15(7)]{zbMATH06915995}) for all $n\in\Z$, and so, by (2), $H^n(X^\bullet)=\Ker(\bar d^n)=0$, for all $n\in \Z$. Conversely, a map $\psi\colon A\to B$ in $\A$ is a(n) \mbox{($\mathcal{I}$-)}monomorphism if and only if the complex $(\cdots\to 0\rightarrow A\to B\to \coker(\psi)\rightarrow 0\to \cdots)$ in  $\Ch^{\geq-1}(\A)\cap \Ch^{\leq 1}(\A)$ is ($\mathcal{I}$-)acyclic. Hence, also the implication ``(3)$\Rightarrow$(2)'' holds.

\smallskip
(3)$\Leftrightarrow$(4).  It is well-known that a morphism $\phi^\bullet \colon A^\bullet\to B^\bullet$ in $\Ch(\mathcal{A})$ is a quasi-isomorphism if and only if $\cone(\phi^\bullet)$ is  acyclic, while $\phi^\bullet $ is an $\I$-weak equivalence if and only if $\cone(\phi^\bullet)$ is $\I$-acyclic (by \cite[Proposition 1.15(2)]{zbMATH06915995}). These two characterizations imply the desired equivalence. 
\end{proof}

\begin{rmk}
Let $\mathcal{A}$ be an (Ab.4$^*$) Abelian category, and $(X_\lambda^\bullet)_{\Lambda}$ a family in $\Ch(\A)$, then there are isomorphisms $B^n(\prod_\Lambda X^\bullet_\lambda)\cong \prod_\Lambda B^n(X^\bullet_\lambda)$, for all $n\in \Z$. Take  a cogenerating injective class $\I\subseteq \A$, and suppose that $X_\lambda^\bullet$ is $\I$-acyclic, for all $\lambda\in \Lambda$. By Proposition~\ref{char_cogen_prop}, each $X_\lambda^\bullet$ is also acyclic, and so $\coker(d_\lambda^{n-1})=X_\lambda^n/B^n(X_\lambda^\bullet)=X_\lambda^n/Z^n(X^\bullet_\lambda)\cong B^{n+1}(X^\bullet_\lambda)$, for all $n\in \Z$.  Combining these two observations we deduce that $\prod_\Lambda X^\bullet_\lambda$ is $\I$-acyclic if, and only if, the map $\prod_{\Lambda}B^{n}(X^\bullet_\lambda)\to \prod_{\Lambda}X^{n}_\lambda$ is an $\I$-monomorphism, for all $n\in\Z$. 

Using an analogous idea, one proves that an (Ab.4$^*$) Abelian category $\A$ is (Ab.4$^*$)-$\I$-$k$, with $\I$ cogenerating, if and only if the following variation of the condition (2) in Lemma \ref{lem.AB4-I-k} holds for any family of relative $\mathcal{I}$-injective resolutions $(u_\lambda\colon A_\lambda\rightarrow I_\lambda^\bullet)_{\Lambda}$ of objects of $\mathcal{A}$:
\begin{enumerate}
\item[(2')]  $\prod_\Lambda B^{n+1}(I_\lambda^\bullet)\to \prod_\Lambda I_\lambda^{n+1}$ is an $\I$-monomorphism, for all $n> k$;
\end{enumerate}
that is, we want the product functor to preserve the $\mathcal{I}$-monomorphisms $(B^{n+1}(I_\lambda^\bullet)\to I_\lambda^{n+1})_\Lambda$.
\end{rmk}

Let  $\I\subseteq \A$ be a cogenerating injective class. Then, a short  exact sequence in $\A$:
\[
0\longrightarrow X\overset{\iota}\longrightarrow Y\longrightarrow Z\longrightarrow 0
\] 
is called {\bf $\I$-exact} if $\iota$ is an $\I$-monomorphism or, equivalently, if the following sequences are exact: 
\[
0\longrightarrow \hom_\A(Z,I)\longrightarrow \hom_\A(Y,I)\overset{\iota^*}\longrightarrow \hom_\A(X,I)\longrightarrow0, \quad\text{for all $I\in \I$.}
\] 
We have the following variation of Corollary~\ref{cor.AB4-I-0} for cogenerating injective classes:

\begin{lem}\label{criterion_ab4*_I_cogen_lem}
Let $\mathcal{A}$ be a complete Abelian category, and $\I\subseteq \A$  a cogenerating injective class. Suppose that any family of $\I$-exact sequences $(0\to X_\lambda\to Y_\lambda\to Z_\lambda\to 0)_\Lambda$, gives a short exact sequence $0\to \prod_\Lambda X_\lambda\to \prod_\Lambda Y_\lambda\to \prod_\Lambda Z_\lambda\to 0$, which is moreover $\I$-exact. Then, $\A$ is (Ab.4$^*$)-$\I$.
\end{lem}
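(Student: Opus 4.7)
The plan is to verify condition (2) of Lemma~\ref{lem.AB4-I-k} with $k=0$. Fix an arbitrary family $(A_\lambda)_\Lambda$ in $\A$, together with a choice of relative $\I$-injective resolutions $u_\lambda\colon S^0(A_\lambda)\to I_\lambda^\bullet$. The goal is to show that, for every $n\geq 1$, the canonical map $\coker(\prod_\Lambda d_\lambda^{n-1})\to \prod_\Lambda I_\lambda^{n+1}$ is an $\I$-monomorphism.

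The key idea is to decompose each resolution into short $\I$-exact sequences, to which the hypothesis of the lemma can be applied directly. Setting $M_\lambda^{-1}:=A_\lambda$ and $M_\lambda^i:=I_\lambda^i/B^i(I_\lambda^\bullet)$ for $i\geq 0$, Lemma~\ref{lem_relative_injective_resolution}(3) tells us that the induced map $M_\lambda^i\to I_\lambda^{i+1}$ is an $\I$-monomorphism; since $\I$ is cogenerating, Proposition~\ref{char_cogen_prop} promotes it to a genuine monomorphism, whose cokernel is precisely $M_\lambda^{i+1}$. Hence, for every $i\geq -1$, we obtain a short $\I$-exact sequence
\[
0\longrightarrow M_\lambda^i\longrightarrow I_\lambda^{i+1}\longrightarrow M_\lambda^{i+1}\longrightarrow 0.
\]
The standing hypothesis then produces, for every $i\geq -1$, a short exact sequence, which is moreover $\I$-exact:
\[
(\dagger_i)\qquad 0\longrightarrow \prod_\Lambda M_\lambda^i\longrightarrow \prod_\Lambda I_\lambda^{i+1}\longrightarrow \prod_\Lambda M_\lambda^{i+1}\longrightarrow 0.
\]

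Splicing these together, for every $i\geq 0$ the map $\prod_\Lambda d_\lambda^i$ factors as
\[
\prod_\Lambda I_\lambda^i\twoheadrightarrow \prod_\Lambda M_\lambda^i\hookrightarrow \prod_\Lambda I_\lambda^{i+1},
\]
where the epimorphism is the product of the canonical projections (it is surjective thanks to the short-exactness of $(\dagger_{i-1})$), and the monomorphism is the $\I$-monomorphism of $(\dagger_i)$. Applying this for $i=n-1$ identifies $\Im(\prod_\Lambda d_\lambda^{n-1})$ with $\prod_\Lambda M_\lambda^{n-1}$, so that $\coker(\prod_\Lambda d_\lambda^{n-1})\cong \prod_\Lambda M_\lambda^n$ by the exactness of $(\dagger_{n-1})$. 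Under this identification, the canonical map $\coker(\prod_\Lambda d_\lambda^{n-1})\to \prod_\Lambda I_\lambda^{n+1}$ becomes the $\I$-monomorphism appearing in $(\dagger_n)$, which is exactly what was required.

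The argument is essentially formal and I do not anticipate any serious obstacle: everything reduces to the hypothesis about products of short $\I$-exact sequences, once one has observed that the $\I$-monomorphisms between the successive ``syzygies'' $M_\lambda^i$ of a relative $\I$-injective resolution can be upgraded to genuine short exact sequences by invoking the cogenerating property of $\I$.
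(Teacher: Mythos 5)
Your proof is correct and follows essentially the same route as the paper: both decompose each relative $\I$-injective resolution into the short $\I$-exact sequences of successive syzygies (your $M_\lambda^i$ are the paper's $B^i_\lambda$), apply the hypothesis to their products, and reassemble. The only cosmetic difference is that you verify condition (2) of Lemma~\ref{lem.AB4-I-k} directly, whereas the paper splices the product sequences back into a single relative $\I$-injective resolution of $\prod_\Lambda A_\lambda$; these amount to the same check.
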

\begin{proof}
Let $(X_\lambda)_{ \Lambda}\subseteq \A$ and, for each $\lambda\in \Lambda$, consider the following relative $\I$-injective resolution: 
\begin{equation}\label{rel_res_lambda_eq}
\xymatrix{
0\ar[r]&X_\lambda\ar[r]^-{u_\lambda}& E^0_\lambda\ar[r]^{d^0_\lambda}&E^1_\lambda\ar[r]^{d^1_\lambda}&\cdots\ar[r]^{d^{n-1}_\lambda}&E^n_\lambda\ar[r]^{d^n_\lambda}&\cdots
}
\end{equation}
with $E^i_\lambda\in \I$, for all $i\geq 0$. As $\I$ is cogenerating, \eqref{rel_res_lambda_eq} is also an exact sequence. In particular, letting $B^0_\lambda:=X_\lambda$ and $B^n_\lambda:=\Im(d^{n-1}_\lambda)$, for all $n\geq 1$, we have the following short exact sequences, for all $n\geq 0$ and $\lambda\in \Lambda$, that are also $\I$-exact:
\[
0\longrightarrow B^n_\lambda\longrightarrow E_\lambda^n\longrightarrow B_\lambda^{n+1}\longrightarrow 0.
\]
By hypothesis, for each $n\geq0$, the following short exact sequence is $\I$-exact:
\[
\xymatrix{
0\longrightarrow \prod_{\Lambda}B^n_\lambda\longrightarrow \prod_{\Lambda}E_\lambda^n\longrightarrow \prod_{\Lambda}B_\lambda^{n+1}\longrightarrow 0.
}
\]
These conditions tell us that the following is a relative $\I$-injective resolution:
\[
\xymatrix{
0\ar[r]&\prod_{\Lambda}X_\lambda\ar[r]^-{\prod u_\lambda}& \prod_{\Lambda}E^0_\lambda\ar[r]^{\prod d^0_\lambda}&\prod_{\Lambda}E^1_\lambda\ar[r]^{\prod d^1_\lambda}&\cdots\ar[r]^{\prod d^{n-1}_\lambda}&\prod_{\Lambda}E^n_\lambda\ar[r]^{\prod d^n_\lambda}&\cdots,
}
\]
showing that $\A$ is (Ab.4$^*$)-$\I$, as desired.
\end{proof}

Let $\A$ be an Abelian category with an injective class $\I$. As in \cite{zbMATH06915995}, consider the localization of $\Ch(\mathcal{A})$ with respect to $\mathcal{W}_\I$, the class of $\I$-weak equivalences, $\mathcal{D}(\mathcal{A};\mathcal{I}):=\Ch(\mathcal{A})[\mathcal{W}_\I^{-1}]$. In general, this category could fail to be {\bf locally small} (that is, the morphisms between two objects may form a proper class), while it is certainly locally small whenever $(\Ch(\A), \W_\I,\C_\I,\F_\I)$ is a model category (e.g., in the setting of Corollary~\ref{cor_rel_inj_model_structure}) as, in that case, $\mathcal{D}(\mathcal{A};\mathcal{I})$ is equivalent to the corresponding homotopy category. In what follows we denote by $\mathcal{K}(\mathcal{A})$ the homotopy category of $\A$, and by $\mathrm{Ac}(\mathcal{A})\subseteq \mathcal{K}(\mathcal{A})$ (resp., $\mathrm{Ac}_\I(\mathcal{A})\subseteq \mathcal{K}(\mathcal{A})$) its full subcategory of ($\I$-)acyclic complexes.

\begin{cor}\label{coro_verdier_quot}
Let $\A$ be an Abelian category and let $\I\subseteq \A$ be  an injective class. Then, 
\begin{enumerate}
\item if $\mathcal{D}(\mathcal{A};\mathcal{I})$ is locally small, it is a Verdier quotient $\mathcal{D}(\mathcal{A};\mathcal{I})\cong\mathcal{K}(\mathcal{A})/\mathrm{Ac}_\I(\mathcal{A})$;
\item if both $\mathcal{D}(\mathcal{A};\mathcal{I})$ and  $\mathcal{D}(\mathcal{A})$ are locally small and $\I$ is cogenerating, then there is a canonical Verdier quotient functor $\mathcal{D}(\mathcal{A};\mathcal{I})\to\mathcal{D}(\mathcal{A})$.
\end{enumerate}
\end{cor}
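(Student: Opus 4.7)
The plan is to factor the localization $\Ch(\mathcal{A})\to\mathcal{D}(\mathcal{A};\mathcal{I})$ through the homotopy category $\mathcal{K}(\mathcal{A})$, and then identify it with a Verdier quotient. First I would verify that $\mathcal{W}_\I$ is invariant under chain homotopy: if $f^\bullet\simeq g^\bullet$ in $\Ch(\mathcal{A})$ via some $h$, then for each $I\in\mathcal{I}$ the maps $\hom(f^\bullet,I)$ and $\hom(g^\bullet,I)$ are chain-homotopic via $\hom(h,I)$, hence induce the same map on cohomology. In particular $\mathcal{W}_\I$ contains every homotopy equivalence, so the universal property of $\mathcal{K}(\mathcal{A})$ (as the localization of $\Ch(\mathcal{A})$ at homotopy equivalences) gives a factorization $\Ch(\mathcal{A})\to\mathcal{K}(\mathcal{A})\to\mathcal{D}(\mathcal{A};\mathcal{I})$.

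The next step is to identify the image of $\mathcal{W}_\I$ inside $\mathcal{K}(\mathcal{A})$ as the saturated class associated to a thick subcategory. By \cite[Proposition~1.15(2)]{zbMATH06915995} (recalled in the proof of Proposition~\ref{char_cogen_prop}(3$\Leftrightarrow$4)), a map $\phi^\bullet$ belongs to $\mathcal{W}_\I$ if and only if $\cone(\phi^\bullet)$ is $\mathcal{I}$-acyclic. Thus I need to check that $\mathrm{Ac}_\I(\mathcal{A})$ is a thick triangulated subcategory of $\mathcal{K}(\mathcal{A})$: closure under summands is immediate from the definition of $\mathcal{I}$-acyclicity, closure under $\Sigma^{\pm 1}$ is clear, and the 2-out-of-3 in triangles follows from the fact that for each $I\in\mathcal{I}$ the functor $\hom(-,I)\colon\mathcal{K}(\mathcal{A})^{\mathrm{op}}\to\mathcal{K}(\Ab)$ is triangulated, so a triangle of complexes yields a long exact sequence in the cohomology of the $\hom$-complexes.

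Granted that $\mathrm{Ac}_\I(\mathcal{A})$ is thick, Verdier's construction yields a triangulated category $\mathcal{K}(\mathcal{A})/\mathrm{Ac}_\I(\mathcal{A})$ whose morphisms are given by a calculus of fractions with respect to exactly those maps whose cone lies in $\mathrm{Ac}_\I(\mathcal{A})$, i.e.\ the image of $\mathcal{W}_\I$. Hence $\mathcal{K}(\mathcal{A})/\mathrm{Ac}_\I(\mathcal{A})\cong\mathcal{K}(\mathcal{A})[\mathcal{W}_\I^{-1}]$; combined with the factorization above, this identifies $\mathcal{D}(\mathcal{A};\mathcal{I})\cong\mathcal{K}(\mathcal{A})/\mathrm{Ac}_\I(\mathcal{A})$, where the local smallness hypothesis on $\mathcal{D}(\mathcal{A};\mathcal{I})$ ensures that the Verdier quotient is an honest category. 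For part~(2), since $\mathcal{I}$ is cogenerating, Proposition~\ref{char_cogen_prop}(4) gives $\mathcal{W}_\I\subseteq\mathcal{W}$ and thus $\mathrm{Ac}_\I(\mathcal{A})\subseteq\mathrm{Ac}(\mathcal{A})$; applying (1) both to $\mathcal{I}$ and to $\Inj(\mathcal{A})$ (or, directly, the universal property of Verdier quotients) produces the desired canonical functor $\mathcal{D}(\mathcal{A};\mathcal{I})\cong\mathcal{K}(\mathcal{A})/\mathrm{Ac}_\I(\mathcal{A})\to\mathcal{K}(\mathcal{A})/\mathrm{Ac}(\mathcal{A})\cong\mathcal{D}(\mathcal{A})$.

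The main obstacle will be the thickness check for $\mathrm{Ac}_\I(\mathcal{A})$ together with verifying the calculus of fractions at the level needed to match Verdier localization with the naive localization $\Ch(\mathcal{A})[\mathcal{W}_\I^{-1}]$; once that is in place, everything else is a formal application of universal properties. The local smallness hypotheses are precisely what is needed to avoid set-theoretic pathologies in the Verdier construction.
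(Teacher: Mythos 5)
Your argument is correct and follows essentially the same route as the paper: part (1) is exactly the content of the cited results \cite[Proposition~2.3 and Corollary~2.4]{zbMATH06915995} (which you reconstruct in detail — factoring through $\mathcal{K}(\A)$, thickness of $\mathrm{Ac}_\I(\A)$, and matching Verdier localization with $\Ch(\A)[\mathcal{W}_\I^{-1}]$), and part (2) is the same application of Verdier's iterated-quotient theorem to the inclusion $\mathrm{Ac}_\I(\A)\subseteq\mathrm{Ac}(\A)$ supplied by Proposition~\ref{char_cogen_prop}. The only caveat is your parenthetical suggestion to obtain $\D(\A)\cong\mathcal{K}(\A)/\mathrm{Ac}(\A)$ by ``applying (1) to $\Inj(\A)$'', which would require $\A$ to have enough injectives (not assumed here); your alternative via the universal property of Verdier quotients is the right one and is what the paper uses.
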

\begin{proof}
(1) follows by \cite[Proposition 2.3 and Corollary 2.4]{zbMATH06915995}.

\smallskip
(2) By Proposition~\ref{char_cogen_prop}, there are inclusions  $\text{Ac}_\mathcal{I}(\mathcal{A})\subseteq\text{Ac}(\mathcal{A})\subseteq\mathcal{K}(\mathcal{A})$ of full triangulated subcategories. By \cite[Proposition 2.3.1 in Chap.\,II]{tesis_Verdier}, we know that $\text{Ac}(\mathcal{A})/\text{Ac}_\mathcal{I}(\mathcal{A})$ is a full triangulated subcategory of $\mathcal{K}(\mathcal{A})/\text{Ac}_\mathcal{I}(\mathcal{A})$ and that there is a canonical triangulated functor 
\[
\pi_\I\colon \mathcal{D}(\mathcal{A})\cong\mathcal{K}(\mathcal{A})/\text{Ac}(\mathcal{A})\longrightarrow\frac{\mathcal{K}(\mathcal{A})/\text{Ac}_\mathcal{I}(\mathcal{A})}{\text{Ac}(\mathcal{A})/\text{Ac}_\mathcal{I}(\mathcal{A})}\cong\frac{\mathcal{D}(\mathcal{A};\mathcal{I})}{\text{Ac}(\mathcal{A})/\text{Ac}_\mathcal{I}(\mathcal{A})}.
\]
By Proposition~\ref{char_cogen_prop}, $\mathcal{W}_\I\subseteq \W$, so the canonical functor $p\colon \Ch(\mathcal{A})\to\mathcal{D}(\mathcal{A})$ sends $\I$-weak equivalences to isomorphisms. Therefore, $p$ induces a unique functor $\mathcal{D}(\mathcal{A};\mathcal{I})=\Ch(\mathcal{A})[\mathcal{W}_I^{-1}]\to\mathcal{D}(\mathcal{A})$, which is a triangulated quasi-inverse to $\pi_\I$. 
\end{proof}

We now give an application to the derived category relative to a generator introduced in \cite{Gillespie:2016aa}:

\begin{eg} \label{ex.Gillespie}
Given a complete Abelian category $\A$ with a cogenerator $E$, we can consider the cogenerating injective class $\mathcal{I}=\Prod(E)$ of all the summands of products of copies of $E$. 
In particular, take $\mathcal{A}=\mathcal{G}^{\op}$ for a Grothendieck category $\mathcal{G}$, that is,  $\A$ is the category of strict complete topologically coherent left $R$-modules over some strict complete topologically left coherent and linearly compact ring $R$ (see \cite{OBERST1970473}). Then, $E$ is just a generator of $\mathcal{G}$, and the localization $\mathcal{D}(\mathcal{A};\mathcal{I})\cong \mathcal{D}_E(\mathcal{G})^\op$ is the dual of the $E$-derived category defined in \cite{Gillespie:2016aa}. Since $\mathcal{D}(\mathcal{G})^{op}\cong\mathcal{D}(\mathcal{A})$, Corollary~\ref{coro_verdier_quot} gives a Verdier quotient functor $\mathcal{D}_E(\mathcal{G})\to\mathcal{D}(\mathcal{G})$, for any generator $E$ of $\mathcal{G}$.
\end{eg}

\subsection{Applications to Gillespie's question}\label{other_hyp_fixing_lemma21}
Let $\A$ be a bicomplete Abelian category, and let $(\X,\Y)$ be a complete and hereditary cotorsion pair in $\A$. In this context, Gillespie asked whether $(\dg\X,\tilde \Y)$ and $(\tilde \X,\dg\Y)$ are always complete cotorsion pairs in $\Ch(\A)$ (see Question~\ref{Gillespie_question}). Let us focus our discussion on $(\tilde \X,\dg\Y)$, bearing in mind that dual considerations hold for $(\dg\X,\tilde \Y)$. Now, to show that $(\tilde \X,\dg\Y)$ is complete we need to find, for any given $A^\bullet\in \Ch(\A)$, two short exact sequences:
\begin{enumerate}
    \item $0\to Y_1^\bullet\to X_1^\bullet\to  A^\bullet\to 0$, with $X_1^\bullet\in \tilde \X$ and $Y_1^\bullet\in\dg\Y$;
    \item $0\to  A^\bullet\to Y_2^\bullet\to X_2^\bullet\to 0$, with $X_2^\bullet\in \tilde \X$ and $Y_2^\bullet\in\dg\Y$.
\end{enumerate}
A first simplification is the following: given $A^\bullet\in \Ch(\A)$ take, for each $n\in \Z$, an embedding $\iota^n\colon A^n\hookrightarrow Y^n$, with $Y^n\in \Y$ (they exist as $(\X,\Y)$ is complete), and consider the morphism of complexes $A^\bullet\to D^{n-1}(Y^n)$ whose non-trivial components are $\iota^n\colon A^n\to Y^n$, and $\iota^n\circ d^{n-1}_{A^\bullet}\colon A^{n-1}\to Y^n$. Then, the diagonal map $A^\bullet\to \prod_{\Z}D^{n-1}(Y^n)$ is a monomorphism, and $\prod_{\Z}D^{n-1}(Y^n)\in \dg\Y$. Hence, the class $\dg\Y$ is always cogenerating and so, by Remark~\ref{rem_on_completeness}, if the sequences in (1) do exist, those in (2) do too.

A second important reduction follows from the proof of \cite[Theorem~2.4]{yang2015question}: the sequences in (1) exist, provided there is, for all $A^\bullet\in \Ch(\A)$, a short exact sequence as follows:
\begin{enumerate}
    \item[(0)] $0\to Y_0^\bullet\to E^\bullet\to A^\bullet\to 0$, with $E^\bullet\in \E$ an exact complex, and $Y_0^\bullet\in\dg\Y$.
\end{enumerate}
In fact, to build (1), start with the sequence in (0) and consider the pull-back $Y_1^\bullet$ of the map $Y_0^\bullet\to E^\bullet$ along a special $\tilde \X$-precover $X_1^\bullet\twoheadrightarrow E^\bullet$ (which exists by \cite[Lemma~2.3]{yang2015question}). This gives a short exact sequence $0\to Y_1^\bullet\to X_1^\bullet\to  A^\bullet\to 0$, where $X_1^\bullet\in \tilde \X$ by construction, while $Y_1^\bullet\in \dg\Y$ since it is an extension of $Y_0^\bullet$ by $\Ker(X_1^\bullet\twoheadrightarrow E^\bullet)$, and both are objects in $\dg\Y$.

Now, as we have discussed in Section~\ref{sec_six}, the construction given in \cite[Lemma~2.1(1)]{yang2015question} to produce the sequences in (0) in full generality, fails unless we add the hypothesis that $\A$ is (Ab.$4^*$). On the other hand, whenever $(\X,\Y)$ is a right-complete cotorsion pair in a complete Abelian category  $\A$, the class $\Y$ is cogenerating. Moreover, $\Y$ is closed under summands in $\A$, and it is also pre-enveloping since any right $(\X,\Y)$-approximation sequence is clearly $\Y$-exact, so $\Y$ is an injective class by Lemma~\ref{pre-enveloping+summands->products}. In particular, whenever $\A$ is (Ab.$4^*$)-$\Y$-$k$ for some $k\in \N$, for any given complex $A^\bullet\in \Ch(\A)$ there is a $\Y$-fibrant replacement $\lambda^\bullet\colon A^\bullet\to Y^\bullet$ (see Corollary~\ref{cor_rel_inj_model_structure}); in fact, the explicit construction given in  Section~\ref{sec_seven} shows that $Y^\bullet$ can be built as a limit of a suitable tower of (left bounded) partial resolutions and, therefore, we can assume that $Y^\bullet  \in \mathrm{dg}\Y$. Furthermore, as $\Y$ is cogenerating, $\lambda^\bullet$ is a quasi-isomorphism (by Proposition~\ref{char_cogen_prop}) and, therefore, $E^\bullet:=\cone(\lambda^\bullet)$ (the usual mapping cone of a homomorphism of cochain complexes) is an exact complex. In particular, we obtain a  short exact sequence:
\[
0\longrightarrow \Sigma^{-1}Y^\bullet\longrightarrow \Sigma^{-1}E^\bullet \longrightarrow A^\bullet\longrightarrow 0,\qquad \text{with $\Sigma^{-1}Y^\bullet \in \mathrm{dg}\Y$ and $\Sigma^{-1}E^\bullet\in \E$.}
\]
Summarizing, we have the following result:
\begin{prop}\label{Proposition_8.6}
Let $\A$ be a bicomplete Abelian category, let $(\X,\Y)$ be a complete hereditary cotorsion pair in $\A$, and assume that the following two hypotheses are verified:
\begin{itemize}
    \item[\sc (h.1)] $\A$ is either (Ab.$4^*$), or (Ab.$4^*$)-$\Y$-$k$, for some $k\in \N$;
    \item[\sc (h.2)] $\A$ is either (Ab.$4$), or (Ab.$4$)-$\X$-$h$ (i.e., $\A^\op$ is (Ab.$4^*$)-$\X$-$h$), for some $h\in \N$.
\end{itemize}
Then, both $(\tilde \X,\dg\Y)$ and $(\dg\X,\tilde \Y)$ are complete cotorsion pairs in $\Ch(\A)$. Moreover, they are compatible, so that $(\E,\dg\X,\dg\Y)$ is a Hovey triple that corresponds to an Abelian model structure in $\Ch(\A)$ whose homotopy category is equivalent to the derived category $\D(\A)$.
\end{prop}
\begin{proof}
As we have discussed above, the completeness of $(\tilde \X,\dg\Y)$ follows if we can build, for each $A^\bullet\in \Ch(\A)$, a short exact sequence $0\to Y^\bullet\to E^\bullet\to A^\bullet\to 0$, with $Y^\bullet\in \dg\Y$, and $E^\bullet\in \E$.  Consider the two cases allowed by {\sc (h.1)}: if $\A$ is (Ab.$4^*$), one can use the Ding-Yang construction (see the proof of \cite[Lemma~2.1(1)]{yang2015question}) while, if $\A$ is (Ab.$4^*$)-$\Y$-$k$, then this follows from the existence of $\Y$-fibrant replacements, that can be built as a limit of a suitable tower of partial resolutions, as explained in Section~\ref{sec_seven}. The completeness of $(\dg\X,\tilde \Y)$ follows by duality, and the compatibility of the two cotorsion pairs is proved in \cite[Theorem~2.5]{yang2015question}.
\end{proof}
As a consequence of Proposition~\ref{Proposition_8.6}, we can answer Question~\ref{Gillespie_question} in the affirmative for any complete, hereditary cotorsion pair $(\X,\Y)$ in a bicomplete Abelian category $\A$ that is both (Ab.$4$) and (Ab.$4^*$) (e.g., when $\A$ is a category of modules). Moreover, a positive answer also follows, for example, when $\A$ is (Ab.$4$) and (Ab.$4^*$)-$\Y$-$k$ (for some $k\in \N$): we will see in Section~\ref{n_tilt_subs} that this is always the case when $\A$ is an (Ab.$4$) Abelian category such that $\D(\A)$ is locally small (e.g., a Grothendieck category), and $(\X,\Y)$ is a $k$-tilting cotorsion pair; hence, as $k$-tilting cotorsion pairs are all complete and hereditary, they induce Abelian model structures on $\Ch(\A)$ that enhance the derived category $\D(\A)$. The case of cotilting cotorsion pairs also follows by duality.

\subsection{Injective classes of injectives}
Let $\mathcal{A}$ be a complete Abelian category and $\I\subseteq \A$ an injective class. We have seen in Subsection~\ref{subs_cogen_inj} that,  whenever $\I$ is cogenerating, the classical derived category $\D(\A)$ is a Verdier quotient of $\mathcal{D}(\mathcal{A};\mathcal{I})$. In this subsection we concentrate on the injective classes $\I$ of injectives in a Grothendieck category $\G$, that is, $\I\subseteq \Inj(\G)$. These injective classes have the property that $\D(\G;\I)$ is always a Verdier quotient of $\mathcal{D}(\mathcal{G})$, so they are in some sense at the extreme opposite from the cogenerating ones; actually, the unique cogenerating injective class of injectives is $\Inj(\G)$, for which clearly $\mathcal{D}(\mathcal{G};\Inj(\G))\cong\D(\G)$.

\smallskip
The key to understand the injective classes of injectives in a Grothendieck category $\G$ is the observation that they correspond bijectively to the hereditary torsion pairs in $\G$ (see \cite[Theorem~4.8]{virili2017exactness}). More precisely,  an injective class  $\I\subseteq \Inj(\G)$ corresponds to the hereditary torsion pair $\tau_\I=(\T:={}^\perp\I,\F:=({}^\perp\I)^\perp)$, where $\F$ can also be described as $\mathrm{Cogen}(\I)$,  and $\I=\Inj(\G)\cap \F$.

\begin{rmk}
A consequence of the above connection with hereditary torsion pairs is that, in a Grothendieck category $\G$, a subclass of $\Inj(\G)$ is an injective class if and only if it is closed under taking products and summands. In fact, a similar characterization holds in any complete and well-powered Abelian category $\A$. Indeed, given $\I\subseteq \Inj(\A)$ closed under products and summands consider, for each $A\in \A$, the ``reject'':
\[
\mathrm{Rej}_\I(A):=\bigcap\{\ker(f):f\in \hom_\A(A,I),\, I\in \I\}\leq A,
\]
which is well-defined since $\A$ is well-powered. Moreover, being $\I$ closed under products, one can even find some $I\in \I$ and $f\colon A\to I$ such that $\mathrm{Rej}_\I(A)=\ker(f)$. In fact, such an $f$ is easily seen to be an $\I$-monomorphism, showing that $\I$ is an injective class.
\end{rmk}

\begin{lem}[{\cite[Lemmas~4.4 and 5.6]{virili2017exactness}}]
Let $\G$ be a Grothendieck category, $\I\subseteq \Inj(\G)$ an injective class of injectives, and let $\tau_\I=(\T,\F)$ be the associated hereditary torsion pair. Then
\begin{enumerate}
\item the following are equivalent for a morphism $\phi\colon X\to Y$ in $\G$:
\begin{enumerate}
\item[({1.}1)] $\phi$ is an $\I$-monomorphism;
\item[({1.}2)] the kernel of $\phi$ is $\tau_\I$-torsion, that is, $\Ker(\phi)\in\T$;
\end{enumerate} 
\item the following are equivalent for a morphism $\phi^\bullet\colon X^\bullet\to Y^\bullet$ in $\Ch(\G)$:
\begin{enumerate}
\item[({2.}1)] $\phi^\bullet$ is an $\I$-weak equivalence;
\item[({2.}2)] the cone $\cone(\phi^\bullet)$ is $\I$-acyclic;
\item[({2.}3)] the cone has $\tau_\I$-torsion cohomologies, that is, $H^n(\cone(\phi^\bullet))\in\T$, for all $n\in\Z$.
\end{enumerate} 
\end{enumerate}
\end{lem}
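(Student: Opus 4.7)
The key observation is that, by definition of the torsion pair $\tau_\I=(\T,\F)$, an object $M\in\G$ lies in $\T={}^{\perp}\I$ precisely when $\hom_\G(M,I)=0$ for every $I\in\I$, and the elements of $\I$ are injective in $\G$. These two facts will let us translate $\I$-monomorphisms into kernel conditions by a standard diagram chase. For part (2), I will then plug part (1) into the characterizations of $\I$-acyclic complexes and $\I$-weak equivalences already recorded from \cite[Prop.\,1.15]{zbMATH06915995} earlier in the paper.

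For (1.1)$\Rightarrow$(1.2), set $K:=\Ker(\phi)$ and pick any $f\in\hom_\G(K,I)$ with $I\in\I$. Since $I$ is injective and $K\hookrightarrow X$ is mono, $f$ extends to some $g\in\hom_\G(X,I)$. Because $\phi$ is an $\I$-monomorphism, $g$ lifts to $h\in\hom_\G(Y,I)$ with $h\circ\phi=g$; composing with the inclusion $K\hookrightarrow X$ gives $f=g|_K=h\circ\phi|_K=0$. Hence $\hom_\G(K,I)=0$ for every $I\in\I$, i.e.\ $K\in{}^{\perp}\I=\T$. Conversely, for (1.2)$\Rightarrow$(1.1), given $g\in\hom_\G(X,I)$ with $I\in\I\subseteq\F$, the restriction $g|_K=0$ because $\hom_\G(K,I)=0$ as $K\in\T$; thus $g$ factors through $X/K\cong\Im(\phi)\hookrightarrow Y$, and injectivity of $I$ extends this factorization to a map $Y\to I$ lifting $g$.

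For part (2), the equivalence (2.1)$\Leftrightarrow$(2.2) is exactly the content of \cite[Prop.\,1.15(2)]{zbMATH06915995}, already cited in the paper, so no further argument is needed. For (2.2)$\Leftrightarrow$(2.3), recall from \cite[Prop.\,1.15(7)]{zbMATH06915995} that a complex $C^\bullet$ is $\I$-acyclic iff the induced map $\bar d^{\,n}\colon C^n/B^n(C^\bullet)\to C^{n+1}$ is an $\I$-monomorphism for every $n\in\Z$. By part (1), this is equivalent to $\Ker(\bar d^{\,n})\in\T$ for all $n$; but $\Ker(\bar d^{\,n})=Z^n(C^\bullet)/B^n(C^\bullet)=H^n(C^\bullet)$. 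Applying this to $C^\bullet=\cone(\phi^\bullet)$ yields the equivalence with (2.3).

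I do not expect any serious obstacle: both halves of (1) are essentially one-line diagram chases using injectivity of the objects of $\I$ together with the orthogonality $\T={}^{\perp}\I$, and part (2) is a formal consequence of (1) combined with the already-available general characterizations of $\I$-acyclicity and $\I$-weak equivalence. The only subtlety worth flagging is making sure to quote the correct characterization of $\I$-acyclic complexes from \cite[Prop.\,1.15]{zbMATH06915995}, which has already been used in the paper (e.g.\ in the proof of Proposition~\ref{char_cogen_prop}), so it requires no new argument here.
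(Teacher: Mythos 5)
Your proof is correct. Note that the paper itself does not prove this lemma at all: it is quoted verbatim from \cite[Lemmas~4.4 and 5.6]{virili2017exactness}, so there is no internal argument to compare against. Your self-contained derivation is exactly the expected one: part (1) is the standard diagram chase combining injectivity of the objects of $\I$ (which gives the extension/restriction steps) with the Hom-orthogonality $\T={}^{\perp}\I$, and part (2) reduces to part (1) via the general characterizations of $\I$-weak equivalences and $\I$-acyclic complexes from \cite[Proposition~1.15]{zbMATH06915995}, which the paper invokes in the same way in the proof of Proposition~\ref{char_cogen_prop}. The only point worth making explicit is that you use the characterization ``$C^\bullet$ is $\I$-acyclic iff $\bar d^{\,n}\colon C^n/B^n(C^\bullet)\to C^{n+1}$ is an $\I$-monomorphism for all $n$'' as a genuine equivalence (you need the converse direction for (2.3)$\Rightarrow$(2.2)), whereas the paper only quotes the forward implication elsewhere; the equivalence does hold, by the same left-exactness computation as in Lemma~\ref{lem_relative_injective_resolution}, namely $\Ker((d^{n-1})^*)\cong\hom_\G(C^n/B^n(C^\bullet),I)$, so this is not a gap, merely a step you could justify in one extra line.
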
 

Consider  the Gabriel quotient $\G/\T$, that comes with an adjunction $\Q\dashv \S\colon \G\leftrightarrows \G/\T$ such that:
\begin{itemize}
\item the right adjoint $\S$ is fully faithful, so denoting by $\eta \colon\id_\G\Rightarrow \S\circ \Q$ and $\varepsilon \colon \Q\circ \S\Rightarrow \id_{\G/\T}$ the unit and the counit, both $\varepsilon$ and $\Q(\eta)$ are natural isomorphisms. Abusing notation, we will often write $\Q(\S(Y))=Y$ and $\Q(X)=\Q(\S(\Q(X)))$, if $Y\in \G/\T$ and $X\in \G$;
\item the left adjoint $\Q$ is an exact functor such that $\Ker(\Q)=\T$, that is, $\Q(X)=0$ if and only if $X\in \T$. In particular, given a morphism $\phi\colon X\to Y$ in $\G$, $\Q(\phi)$ is a monomorphism (resp., epimorphism) if, and only if, $\Ker(\phi)\in \T$ (resp., $\coker(\phi)\in \T$). In particular, $\Ker(\eta_X),\, \coker(\eta_X)\in \T$, for all $X\in \G$;
\item as $\Q$ is exact, $\S$ sends injectives in $\G/\T$ to injectives in $\G$. In fact, the adjunction $\Q\dashv \S$ restricts to an equivalence of categories $\Inj(\G/\T)\cong \F\cap \Inj(\G)=\I$.
\end{itemize}
With a slight abuse of notation,  denote by $\Q\dashv \S\colon \Ch(\G)\leftrightarrows \Ch(\G/\T)$ also the adjunction induced pointwise on complexes. It is easy to reformulate the above lemma as follows:
\begin{cor}[{\cite[Lemmas~4.4 and 5.6]{virili2017exactness}}]\label{cor_rel_inj_of_inj}
Let $\G$ be a Grothendieck category, $\I\subseteq \Inj(\G)$ an injective class of injectives, and let $\tau_\I=(\T,\F)$ be the associated hereditary torsion pair. Then
\begin{enumerate}
\item a morphism $\phi$ in $\G$ is an $\I$-monomorphism if, and only, $\Q(\phi)$ is a monomorphism in $\G/\T$; 
\item a morphism $\phi^\bullet$ in $\Ch(\G)$ is an $\I$-weak equivalence if, and only if, $\Q(\phi^\bullet)$ is a quasi-isomorphism or, equivalently, if $\Q(\cone(\phi^\bullet))$ is an acyclic complex. 
\end{enumerate}
In particular, a morphism $\phi^\bullet$ in $\Ch(\G)$ is a (trivial) $\I$-cofibration if and only if $\Q(\phi^\bullet)$ is a (trivial) $\Inj(\G/\T)$-cofibration in $\Ch(\G/\T)$.
\end{cor}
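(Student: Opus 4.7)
The plan is to deduce this corollary directly from the preceding lemma (parts (1.1)$\Leftrightarrow$(1.2) and (2.1)$\Leftrightarrow$(2.3)) together with the basic properties of the Gabriel quotient functor $\Q\colon \G\to \G/\T$ recalled just before the statement, in particular its exactness and the fact that $\Ker(\Q)=\T$.

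For part (1), I would argue as follows. An epi-mono factorization of $\phi\colon X\to Y$ in $\G$ gives an exact sequence $0\to \Ker(\phi)\to X\to Y$, and applying the exact functor $\Q$ produces $0\to \Q(\Ker(\phi))\to \Q(X)\to \Q(Y)$. Hence $\Q(\phi)$ is a monomorphism in $\G/\T$ precisely when $\Q(\Ker(\phi))=0$, which by $\Ker(\Q)=\T$ means $\Ker(\phi)\in\T$. By the equivalence (1.1)$\Leftrightarrow$(1.2) of the preceding lemma, this in turn is equivalent to $\phi$ being an $\I$-monomorphism.

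For part (2), the argument is parallel but passes through cohomologies. Since $\Q$ is exact, it commutes with the formation of the mapping cone and with cohomology, i.e., $\Q(\cone(\phi^\bullet))\cong \cone(\Q(\phi^\bullet))$ and $H^n(\Q(\cone(\phi^\bullet)))\cong \Q(H^n(\cone(\phi^\bullet)))$ for all $n\in\Z$. Thus $\Q(\phi^\bullet)$ is a quasi-isomorphism in $\Ch(\G/\T)$ iff $\Q(\cone(\phi^\bullet))$ is acyclic iff $\Q(H^n(\cone(\phi^\bullet)))=0$ for all $n\in\Z$ iff $H^n(\cone(\phi^\bullet))\in\T$ for all $n\in\Z$. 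By (2.1)$\Leftrightarrow$(2.3) in the preceding lemma, the last condition is equivalent to $\phi^\bullet$ being an $\I$-weak equivalence. The equivalence with ``$\Q(\cone(\phi^\bullet))$ is acyclic'' is read off directly from the computation.

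Finally, the ``in particular'' clause follows at once. An $\I$-cofibration is a degreewise $\I$-monomorphism, so by (1) applied degreewise it is the same as requiring $\Q(\phi^n)$ to be a monomorphism for every $n$, i.e., $\Q(\phi^\bullet)$ is an $\Inj(\G/\T)$-cofibration in $\Ch(\G/\T)$ (recall that in the Grothendieck category $\G/\T$ the $\Inj(\G/\T)$-monomorphisms are the ordinary monomorphisms). Combining this with (2) gives the analogous statement for trivial cofibrations. The only step that requires minor care is checking that the claimed equivalences truly match up on both sides; since exactness of $\Q$ does all the heavy lifting, there is no serious obstacle, and the corollary is essentially a direct translation of the preceding lemma across the Gabriel quotient equivalence $\I\simeq \Inj(\G/\T)$.
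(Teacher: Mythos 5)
Your proposal is correct and follows exactly the route the paper intends: the corollary is presented there as an immediate reformulation of the preceding lemma via the exactness of $\Q$ and the identity $\Ker(\Q)=\T$ (which give $\Ker(\Q(\phi))\cong\Q(\Ker(\phi))$ and $H^n(\Q(\cone(\phi^\bullet)))\cong\Q(H^n(\cone(\phi^\bullet)))$), plus the observation that $\Inj(\G/\T)$-monomorphisms in the Grothendieck category $\G/\T$ are the ordinary monomorphisms. Nothing is missing.
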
 

The following is a more precise version of \cite[Theorem~5.7]{virili2017exactness}:

\begin{thm}
Let $\G$ be a Grothendieck category, $\I\subseteq \Inj(\G)$ an injective class of injectives, and let $\tau_\I=(\T,\F)$ be the associated hereditary torsion pair. Then:
\begin{enumerate}
\item $\psi^\bullet$ is an $\Inj(\G/\T)$-fibration in $\Ch(\G/\T)$, if and only if $\S(\psi^\bullet)$ is an $\I$-fibration;
\item $E^\bullet\in \Ch(\I)\subseteq \Ch(\G)$ is $\I$-fibrant if, and only if, $\Q(E^\bullet)$ is \DG-injective in $\Ch(\G/\T)$;
\item any complex $X^\bullet\in \Ch(\G)$ admits an $\I$-fibrant replacement $\lambda^\bullet\colon X^\bullet\to E^\bullet\in \Ch(\I)$.
\end{enumerate}
In particular, $(\Ch(\G), \W_\I,\C_\I,\F_\I)$ is a model category, and the following is a Quillen equivalence
\[
\Q\dashv \S\colon \Ch(\G)\leftrightarrows \Ch(\G/\T),
\]
 when $\Ch(\G/\T)$ is endowed with the usual injective model structure. As a consequence, we can describe the homotopy category $\D(\G;\I):=\Ch(\G)[\W_\I^{-1}]$ as a Verdier quotient of the usual derived category as follows: $\D(\G;\I)\cong \D(\G/\T)\cong \D(\G)/\D_\T(\G)$ (where $\D_\T(\G)$ is the full, localizing, subcategory of those $X^\bullet\in \D(\G)$ such that $H^n(X^\bullet)\in \T$, for all $n\in \Z$). 
\end{thm}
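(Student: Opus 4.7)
The plan is to transport everything along the Gabriel adjunction $\Q\dashv\S\colon\Ch(\G)\leftrightarrows\Ch(\G/\T)$, where $\G/\T$ is itself a Grothendieck category and hence carries the usual injective model structure (existing by Joyal/Hovey). The crucial input is Corollary~\ref{cor_rel_inj_of_inj}, which identifies trivial $\I$-cofibrations in $\Ch(\G)$ with trivial cofibrations in $\Ch(\G/\T)$ under $\Q$; combined with the adjunction this is essentially all the arithmetic required.

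For part~(1), I would use the adjunction to see that $\S(\psi^\bullet)$ has the right lifting property against a map $\phi^\bullet$ in $\Ch(\G)$ if and only if $\psi^\bullet$ has the RLP against $\Q(\phi^\bullet)$. As $\phi^\bullet$ ranges over trivial $\I$-cofibrations, $\Q(\phi^\bullet)$ ranges (up to canonical isomorphism) over \emph{all} trivial cofibrations of $\Ch(\G/\T)$: one direction is Corollary~\ref{cor_rel_inj_of_inj}, and for the other, any trivial cofibration $\phi'^\bullet$ equals $\Q\S(\phi'^\bullet)$ via the iso $\Q\S\cong\id$, while $\S(\phi'^\bullet)$ is a trivial $\I$-cofibration again by the same corollary. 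Part~(2) is then immediate: since $E^n\in\I\subseteq\F$ the unit $\eta_{E^n}$ is an isomorphism, hence $E^\bullet\cong\S\Q(E^\bullet)$, and by~(1) the statement ``$E^\bullet\to 0$ is an $\I$-fibration'' translates to ``$\Q(E^\bullet)\to 0$ is an $\Inj(\G/\T)$-fibration'', i.e.\ $\Q(E^\bullet)$ is \DG-injective.

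For part~(3), given $X^\bullet\in\Ch(\G)$, I would pick a \DG-injective resolution $\mu^\bullet\colon\Q(X^\bullet)\to E^\bullet$ in $\Ch(\G/\T)$ and define $\lambda^\bullet\colon X^\bullet\to\S(E^\bullet)$ as the composition of the unit $X^\bullet\to\S\Q(X^\bullet)$ with $\S(\mu^\bullet)$. Then $\S(E^\bullet)\in\Ch(\I)$ is $\I$-fibrant by~(2), because $\Q\S(E^\bullet)=E^\bullet$ is \DG-injective, while applying $\Q$ to $\lambda^\bullet$ recovers $\mu^\bullet$, a quasi-isomorphism in $\Ch(\G/\T)$, so $\lambda^\bullet$ is an $\I$-weak equivalence by Corollary~\ref{cor_rel_inj_of_inj}. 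Combined with Theorem~\ref{CH_main_thm}, this establishes the model structure $(\Ch(\G),\W_\I,\C_\I,\F_\I)$.

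The remaining assertions are then formal: Corollary~\ref{cor_rel_inj_of_inj} and part~(1) show that $\S$ preserves fibrations and weak equivalences (hence trivial fibrations), so $\Q\dashv\S$ is a Quillen adjunction. Applying $\Q$ to an adjunct pair $C^\bullet\to\S(F^\bullet)$ and $\Q(C^\bullet)\to F^\bullet$ recovers the latter (using $\Q\S\cong\id$ and naturality of the unit), so by Corollary~\ref{cor_rel_inj_of_inj} the first is an $\I$-weak equivalence if and only if the second is a quasi-isomorphism, making $\Q\dashv\S$ a Quillen equivalence and inducing $\D(\G;\I)\simeq\D(\G/\T)$; the last identification $\D(\G/\T)\simeq\D(\G)/\D_\T(\G)$ is the classical description of the derived category of a Gabriel quotient. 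The only step requiring outside input is the fact, used throughout, that fibrant objects in the usual injective model structure on $\Ch(\G/\T)$ are exactly the \DG-injective complexes; once this is granted, the whole theorem is pure adjoint bookkeeping on top of Corollary~\ref{cor_rel_inj_of_inj}.
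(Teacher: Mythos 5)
Your proposal is correct and follows essentially the same route as the paper: part (1) via the adjunction transposing lifting squares together with Corollary~\ref{cor_rel_inj_of_inj}, part (2) from (1) using $E^\bullet\cong\S\Q(E^\bullet)$, part (3) by pushing a \DG-injective resolution of $\Q(X^\bullet)$ back along $\S$ and composing with the unit, and the Quillen (equivalence) statements by the same adjoint bookkeeping. The only nitpick is your justification "$E^n\in\I\subseteq\F$ implies $\eta_{E^n}$ is an isomorphism": membership in $\F$ alone does not suffice, but it does here because objects of $\I$ are moreover injective, so the adjunction restricts to the equivalence $\Inj(\G/\T)\cong\F\cap\Inj(\G)=\I$ recorded in the paper.
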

\begin{proof}
Fix a morphism $\psi^\bullet\colon Y_1^\bullet\to Y_2^\bullet$ in $\Ch(\G/\T)$ and let $\phi^\bullet\colon X_1^\bullet\to X_2^\bullet$ be a trivial $\I$-cofibration in $\Ch(\G)$. By Corollary~\ref{cor_rel_inj_of_inj} $\Q(\phi^\bullet)$ is a trivial $\Inj(\G/\T)$-cofibration, and all the trivial $\Inj(\G/\T)$-cofibrations in $\Ch(\G/\T)$ are of this form. Consider now the following commutative diagrams:
\[
\xymatrix@C=20pt@C=35pt{
X_1^\bullet\ar[d]_-{\phi^\bullet}\ar[r]|-{\ \alpha_1^\bullet\ }&\S(Y_1^\bullet)\ar[d]^-{\S(\psi^\bullet)}&&\Q(X_1)^\bullet\ar[d]_-{\Q(\phi^\bullet)}\ar[r]|-{\ \beta_1^\bullet\ }&Y_1^\bullet\ar[d]^-{\psi^\bullet}\\
X_2^\bullet\ar@{.>}[ur]\ar[r]|-{\ \alpha_2^\bullet\ }&\S(Y_2^\bullet)&&\Q(X_2)^\bullet\ar@{.>}[ur]\ar[r]|-{\ \beta_2^\bullet\ }&Y_2^\bullet,
}
\] 
where $\alpha_i^\bullet\in\hom_{\Ch(\G)}(X_i^\bullet, \S(Y_i^\bullet))$ and $\beta_i^\bullet\in \hom_{\Ch(\G/\T)}(\Q(X_i^\bullet), Y_i^\bullet)$ (for $i=1,2$) correspond to each other via the adjunction $\Q\dashv \S$. By adjunction, $\phi^\bullet \boxslash \S(\psi^\bullet)$ if, and only if, $\Q(\phi^\bullet) \boxslash \psi^\bullet$, proving (1). In particular, $E^\bullet\in \Ch(\I)\subseteq \Ch(\G)$ is $\I$-fibrant if, and only if, $E^\bullet\to 0$ is an $\I$-fibration, if and only if $\Q(E^\bullet)\to 0$ is an $\Inj(\G/\T)$-fibration, i.e., $\Q(E^\bullet)\in \Ch(\Inj(\G/\T))$ is a \DG-injective complex, verifying (2). For (3), let $X^\bullet\in \Ch(\G)$ and consider a \DG-injective resolution $\lambda^\bullet\colon \Q(X^\bullet)\to E^\bullet$ in $\Ch(\G/\T)$. We claim that  the following composition  is an $\I$-fibrant replacement
\[
\S(\lambda^\bullet)\circ \eta_{X^\bullet}\colon X^\bullet\longrightarrow \S(\Q(X^\bullet))\longrightarrow \S(E^\bullet).
\] 
Indeed, by Corollary~\ref{cor_rel_inj_of_inj}, both the unit $\eta_{X^\bullet}$  (which is sent to an isomorphism by $\Q$) and the map $\S(\lambda^\bullet)$ (that is sent by $\Q$ to $\lambda^\bullet=\Q(\S(\lambda^\bullet))$, which is a quasi-isomorphism by construction) are $\I$-weak equivalences. Hence, the composition $\S(\lambda^\bullet)\circ \eta_{X^\bullet}$ is an $\I$-weak equivalence, while the complex $\S(E^\bullet)\in \Ch(\I)$ is $\I$-fibrant by part (2). By Theorem~\ref{CH_main_thm}, this also shows that $(\Ch(\G), \W_\I,\C_\I,\F_\I)$ is a model category. By Corollary~\ref{cor_rel_inj_of_inj}, $\Q$ preserves both cofibrations and trivial cofibrations, so $\Q\dashv \S$ is a Quillen adjunction. Finally, to see that this is a Quillen equivalence, take $\phi^\bullet\colon X^\bullet\to \S(Y^\bullet)$ in $\Ch(\G)$, whose adjoint map is $\psi^\bullet:=\Q(\phi^\bullet)\colon \Q(X^\bullet)\to \Q(\S(X^\bullet))=X^\bullet$; by Corollary~\ref{cor_rel_inj_of_inj}, $\phi^\bullet\in \W_\I$ if and only if $\psi^\bullet\in \W_{\Inj(\G/\T)}$, concluding the proof.
\end{proof}

Observe that, for any object $X\in \G$, a morphism $\lambda\colon S^0(X)\to I^\bullet\in \Ch^{\geq0}(\I)$ is a relative $\I$-injective resolution if, and only if, $\Q(\lambda)\colon  S^0(\Q(X))\to \Q(I^\bullet)$ is a \DG-injective resolution of $\Q(X)\in \G/\T$. In particular, one can construct a resolution of a given $X\in \G$ as follows: first build a \DG-injective resolution $S^0(\Q(X))\to E^\bullet\in \Ch^{\geq0}(\Inj(\G/\T))$ of $\Q(X)$ in $\G/\T$, and then note that the composition $S^0(X)\to S^0(\S(\Q(X)))\to \S(E^\bullet)$ is the desired $\I$-injective resolution. Recall also that, being $\S$ a right adjoint, it commutes with all limits (and, in particular, with products). Thus, given $(Y^\bullet_\lambda)_\Lambda\subseteq \Ch(\G/\T)$, $\S(\prod_\Lambda Y^\bullet_\lambda)=\prod_\Lambda\S(Y^\bullet_\lambda)$. Exploiting these two ideas, we can show that the (Ab.4$^*$)-$\I$-$k$ condition in $\G$ is equivalent to the ``absolute'' (Ab.4$^*$)-$k$ condition in $\G/\T$:

\begin{prop}\label{exactness_of_products_on_quotients}
Let $\G$ be a Grothendieck category, $\I\subseteq \Inj(\G)$ an injective class of injectives, and let $\tau_\I=(\T,\F)$ be the associated hereditary torsion pair. Then, the following are equivalent:
\begin{enumerate}
\item $\mathcal{G}$ is (Ab.4$^*$)-$\mathcal{I}$-$k$;
\item $\mathcal{G}/\mathcal{T}$ is (Ab.4$^*$)-$k$ (in the sense of Roos).
\end{enumerate}
\end{prop}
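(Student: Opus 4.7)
The plan is to translate the (Ab.4$^*$)-$\I$-$k$ condition in $\G$ into the classical Roos condition in $\G/\T$ by means of the adjunction $\Q\dashv\S$, exploiting the observation made just before the statement: a map $\lambda\colon S^{0}(X)\to I^\bullet$ with $I^\bullet\in\Ch^{\geq 0}(\I)$ is a relative $\I$-injective resolution of $X$ if and only if $\Q(\lambda)$ is a \DG-injective resolution of $\Q(X)$ in $\G/\T$. First I would invoke Lemma~\ref{lem.AB4-I-k} to rephrase (1) as follows: for every family $(A_\lambda)_\Lambda\subseteq\G$ and every choice of relative $\I$-injective resolutions $u_\lambda\colon A_\lambda\to I_\lambda^\bullet$, one has
\[
H^{-n}\bigl(\hom\bigl(\textstyle\prod_\Lambda^\G I_\lambda^\bullet,\,I\bigr)\bigr)=0,\qquad \forall\,I\in\I,\ \forall\,n>k,
\]
where $\prod_\Lambda^\G$ denotes the product computed in $\G$. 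Applying the same lemma to the injective class $\Inj(\G/\T)\subseteq \G/\T$, condition (2) is equivalent to the analogous vanishing for arbitrary families in $\G/\T$, since in the Grothendieck category $\G/\T$ the class $\Inj(\G/\T)$ is cogenerating, so its relative Roos condition coincides with the absolute one.

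The core of the argument is a single adjunction computation. Since each $I_\lambda^i\in\I$ satisfies $I_\lambda^i=\S(\Q(I_\lambda^i))$ and $\S$, as a right adjoint, preserves products,
\[
\textstyle\prod_\Lambda^\G I_\lambda^\bullet \;\cong\; \S\bigl(\textstyle\prod_\Lambda^{\G/\T} \Q(I_\lambda^\bullet)\bigr).
\]
Combining this with the fact that every $I\in\I$ equals $\S(J)$ for a unique $J\in\Inj(\G/\T)$ and with the full-faithfulness of $\S$, I obtain the natural isomorphism
\[
\hom\bigl(\textstyle\prod_\Lambda^\G I_\lambda^\bullet,\,\S(J)\bigr) \;\cong\; \hom\bigl(\textstyle\prod_\Lambda^{\G/\T} \Q(I_\lambda^\bullet),\,J\bigr)
\]
of cochain complexes of Abelian groups, and hence the same isomorphism after applying $H^{-n}$. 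This identifies the two cohomology-vanishing conditions term by term.

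It then remains to match the families on both sides. For $(2)\Rightarrow(1)$, a family $(A_\lambda)_\Lambda\subseteq\G$ with relative $\I$-injective resolutions $u_\lambda\colon A_\lambda\to I_\lambda^\bullet$ yields $(\Q(A_\lambda))_\Lambda\subseteq\G/\T$, for which $\Q(I_\lambda^\bullet)$ is a \DG-injective resolution; thus (2) forces vanishing of the right-hand cohomology above, and hence of the left-hand one. Conversely, for $(1)\Rightarrow(2)$, any family $(B_\lambda)_\Lambda\subseteq\G/\T$ is of the form $(\Q(\S(B_\lambda)))_\Lambda$, and applying $\Q$ to a relative $\I$-injective resolution of $\S(B_\lambda)$ in $\G$ produces a \DG-injective resolution of $B_\lambda$, so the implication runs in the reverse direction. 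The hard part is really just the bookkeeping needed to keep the products in $\G$ and in $\G/\T$ distinct; once the identification $\prod_\Lambda^\G I_\lambda^\bullet\cong\S(\prod_\Lambda^{\G/\T} \Q(I_\lambda^\bullet))$ is in place, Lemma~\ref{lem.AB4-I-k} completes the argument on both sides.
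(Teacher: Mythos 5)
Your proof is correct and follows essentially the same route as the paper's: both translate the relative condition through the adjunction $\Q\dashv\S$, using that $\S$ preserves products, that $\Q\circ\S\cong\id_{\G/\T}$, that $\Q$ sends relative $\I$-injective resolutions to injective resolutions, and that $\I$ corresponds to $\Inj(\G/\T)$. The only cosmetic difference is that you identify the hom-complexes directly via full-faithfulness of $\S$, whereas the paper uses exactness of $\hom_\G(-,I)$ for $I\in\I\subseteq\Inj(\G)$ together with $\T={}^{\perp}\I$ to reduce to the statement that the cohomologies of $\prod_\Lambda\S(E_\lambda^\bullet)$ are torsion; the two reductions are equivalent.
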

\begin{proof}
Let $(X_\lambda)_\Lambda\subseteq \G$ and, for each $\lambda\in \Lambda$, take a \DG-injective resolution $\lambda_\lambda\colon S^0(\Q(X_\lambda))\to E_\lambda^\bullet$ in $\Ch^{\geq0}(\Inj(\G/\T))$, so that $\mu_\lambda\colon S^0(X_\lambda)\to \S(E_\lambda^\bullet)\in \Ch^{\geq0}(\I)$ is the corresponding relative $\I$-injective resolution, for all $\lambda\in \Lambda$, as discussed above. By Lemma~\ref{lem.AB4-I-k}(1), $\mathcal{G}$ is (Ab.4$^*$)-$\mathcal{I}$-$k$ if, and only if:
\begin{equation}\label{lem_AB4_I_k_part1_eq}
\xymatrix{H^{-n}(\hom(\prod_{\Lambda}\S(E_\lambda^\bullet) ,I))=0, \qquad\text{for all $n>k$, and all $I\in \I$.}}
\end{equation}
Since $\I\subseteq \Inj(\G)$ by hypothesis, the functor $\hom_\G(-,I)$ is exact and, therefore, it commutes with cohomologies, so \eqref{lem_AB4_I_k_part1_eq} becomes equivalent to:
\[
\xymatrix{\hom_\G(H^{n}(\prod_{\Lambda}(\S(E_\lambda^\bullet)) ,I)=0, \qquad\text{for all $n>k$, and all $I\in \I$.}}
\]
As, by definition, $\T:={}^{\perp}\I$, the above condition means exactly that $H^{n}(\prod_{\Lambda}(\S(E_\lambda^\bullet))\in \T$, for all $n>k$, that is, $\Q(H^{n}(\prod_{\Lambda}(\S(E_\lambda^\bullet)))=0$, for all $n>k$. Using that $\Q$ commutes with cohomologies, $\S$  commutes with products, and $\Q\circ \S=\id_{\G/\T}$, condition \eqref{lem_AB4_I_k_part1_eq} becomes equivalent to:
\[
\xymatrix{
\prod^{(n)}_\Lambda \Q(X_\lambda):=H^{n}(\prod_{\Lambda}E_\lambda^\bullet)=H^{n}(\Q\circ \S(\prod_{\Lambda}E_\lambda^\bullet))=0, \qquad\text{for all $n>k$.}
}
\] 
This last equation is precisely the (Ab.4$^*$)-$k$ condition for the quotient category $\mathcal{G}/\mathcal{T}$.
\end{proof}

\subsection{The pure derived category of a locally finitely presented Grothendieck category}\label{subs_purity}

Let $\G$ be a Grothendieck category and recall that an object $X$ in $\G$ is said to be {\bf finitely presented} if $\hom_\G(X,-)$ commutes with directed colimits. We denote by $\mathrm{fp}(\G)$ the full subcategory of finitely presented objects in $\G$, and we say that $\G$ is {\bf locally finitely presented} provided $\mathrm{fp}(\G)$ is essentially small, and it generates $\G$. Observe that, whenever $\G$ is locally finitely presented, the so-called {\bf restricted Yoneda embedding} gives a fully faithful functor 
\[
\y\colon \G\longrightarrow [\mathrm{fp}(\G)^{\mathrm{op}},\Ab], \text{ such that $\y(X)=\y_X:=\hom_\G(-,X)_{\restriction \mathrm{fp}(\G)}$.}
\] 
Observe that $\y$ is left exact, and that it commutes with products and directed colimits. The category $[\mathrm{fp}(\G)^{\mathrm{op}},\Ab]$ is generated by $\y(\mathrm{fp}(\G)):=\{\y_P:P\in \mathrm{fp}(\G)\}$, the essentially small subcategory of the representable (=finitely presented projective) functors, and so $[\mathrm{fp}(\G)^{\mathrm{op}},\Ab]$ is an (Ab.4$^*$) Grothendieck category (it is equivalent to a category of modules over a ring with enough idempotents). Let us also recall that:
\begin{itemize}
\item $F\colon \mathrm{fp}(\G)^{\mathrm{op}}\to \Ab$ is said to be {\bf flat} if it belongs to the class $\F:=\varinjlim \y(\mathrm{fp}(\G))$. The restricted Yoneda embedding induces an equivalence $\G\cong \y(\G)=\F$;
\item the functors in the class $\C:=\F^{\perp_1}=(\y(\mathrm{fp}(\G)))^{\perp_1}$ are called {\bf cotorsion}, and $(\F,\C)$ is a complete cotorsion pair in $[\mathrm{fp}(\G)^{\mathrm{op}},\Ab]$ (see \cite[Theorem~2.6]{doi:10.1081/AGB-120028794});
\item a short exact sequence $0\to X\to Y\to Z\to 0$ in $\G$ is said to be {\bf pure exact} if, and only if, $0\to \y_X\to \y_Y\to \y_Z\to 0$ is exact in $[\mathrm{fp}(\G)^{\mathrm{op}},\Ab]$. We say that $X\to Y$ is a {\bf pure monomorphism} if it can be completed to a pure exact sequence  $0\to X\to Y\to Z\to 0$;
\item an object $E\in \G$ is said to be {\bf pure injective} if it is injective with respect to all the pure exact sequences in $\G$, i.e., if $0\to \hom_\G(Z,E)\to \hom_\G(Y,E)\to \hom_\G(X,E)\to 0$ is exact in $\Ab$, for any pure exact sequence $0\to X\to Y\to Z\to 0$ in $\G$;  
\item if we denote by $\mathrm{P.Inj}(\G)$ the class of pure injectives in $\G$, the restricted Yoneda embedding induces an equivalence $\mathrm{P.Inj}(\G)\cong \y(\mathrm{P.Inj}(\G))=\F\cap \C$ (see \cite[Lemma~3]{doi:10.1142/S0219498803000581}).
\end{itemize}

\begin{prop}\label{p_der_cat_prop}
Let $\G$ be a locally finitely presented Grothendieck category, and  $\I:=\mathrm{P.Inj}(\G)$:
\begin{enumerate}
\item $\I$ is an injective class in $\G$, and the $\I$-monomorphisms are  the pure monomorphisms;
\item $\G$ is (Ab.4$^*$)-$\I$, and so $(\Ch(\G),\W_\I,\F_\I,\C_\I)$ is a model category.
\end{enumerate}
In particular, the homotopy category $\D(\G;\I):=\Ch(\G)[\W_\I^{-1}]$ is locally small, and it is equivalent to the so-called  {\bf pure derived category} $\mathcal{D}_{\mathrm{pure}}(\mathcal{G})$ introduced in \cite[Section~5.3]{CH} when $\mathcal{G}$ is a module category, and studied in \cite{Krause:2012aa}.
\end{prop}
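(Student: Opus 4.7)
My plan is to tackle the two parts in sequence.

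For part (1), I first observe that $\I = \mathrm{P.Inj}(\G)$ is closed under summands and products, both immediate from the lifting characterization of pure injectivity, and then I construct an $\I$-preenvelope for each $X \in \G$. The construction uses the complete cotorsion pair $(\F, \C)$ in $[\mathrm{fp}(\G)^{\op}, \Ab]$: right completeness applied to $\y_X \in \F$ yields an exact sequence $0 \to \y_X \to C \to F \to 0$ with $C \in \C$ and $F \in \F$. Since $\F$ is extension-closed and both ends lie in $\F$, the middle term $C$ belongs to $\F \cap \C$, so the equivalence $\mathrm{P.Inj}(\G) \cong \F \cap \C$ identifies $C$ with $\y_E$ for a pure injective $E$. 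Because every term of the sequence now lies in $\F$, it corresponds via the equivalence $\y\colon\G\cong\F$ to a pure exact sequence $0 \to X \to E \to Q \to 0$ in $\G$; in particular, the induced morphism $\iota \colon X \to E$ is a pure monomorphism, and it is automatically an $\I$-monomorphism by the defining lifting property of pure injective targets.

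For the characterization ``$\I$-monomorphism $=$ pure monomorphism'', one direction is that defining lifting property of pure injectivity. For the converse, given an $\I$-monomorphism $\phi\colon X \to Y$, I pick a pure embedding $\iota\colon X \to E$ as above and factor $\iota = \psi \circ \phi$ using the $\I$-lifting property. The standard equation-solving argument then transfers purity from $\iota$ to $\phi$: any finite system of linear equations with constants in $X$ that has a solution in $Y$ admits one in $E$ via $\psi$, hence in $X$ by purity of $\iota$.

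For part (2), since $\I$ is cogenerating (pure monomorphisms are, in particular, monomorphisms), I invoke Lemma~\ref{criterion_ab4*_I_cogen_lem}, which reduces the (Ab.$4^*$)-$\I$ condition to the closure of pure exact sequences in $\G$ under arbitrary products. Given a family $(0 \to X_\lambda \to Y_\lambda \to Z_\lambda \to 0)_\Lambda$ of pure exact sequences, applying $\y$ produces short exact sequences in $[\mathrm{fp}(\G)^{\op}, \Ab]$. This functor category is (Ab.$4^*$), with products computed pointwise from $\Ab$, so the product sequence remains short exact; moreover $\y$ commutes with products via $\y_{\prod X_\lambda}(P)=\hom_\G(P,\prod X_\lambda)=\prod\hom_\G(P,X_\lambda)$, so one obtains short exactness of $0 \to \y_{\prod X_\lambda} \to \y_{\prod Y_\lambda} \to \y_{\prod Z_\lambda} \to 0$. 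This is precisely the pure exactness of the product sequence in $\G$, using left exactness of products to furnish the kernel and the fact that $\y$ reflects epimorphisms (by full faithfulness) to furnish surjectivity of the last map. Corollary~\ref{cor_rel_inj_model_structure} then yields the model structure. The main delicacy is that products in $\G$ need not be right exact, so without the detour through the (Ab.$4^*$) functor category one cannot directly conclude short exactness of product sequences in $\G$.
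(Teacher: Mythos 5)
Your argument is essentially correct and, for part (2) and the converse direction of part (1), follows the same route as the paper: push everything through the restricted Yoneda embedding, exploit that $[\mathrm{fp}(\G)^{\op},\Ab]$ is (Ab.$4^*$) and that $\y$ commutes with products, and conclude via Lemma~\ref{criterion_ab4*_I_cogen_lem} and Corollary~\ref{cor_rel_inj_model_structure}. The one genuinely different ingredient is your construction of the $\I$-preenvelope: the paper simply invokes the existence of pure-injective envelopes $X\to PE(X)$ (citing the literature), whereas you derive a pure monomorphism $X\to E$ with $E$ pure-injective from right completeness of the cotorsion pair $(\F,\C)$ together with extension-closure of $\F$ and the identification $\mathrm{P.Inj}(\G)\cong\F\cap\C$. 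This is a clean, self-contained alternative that only uses facts already recalled in the preamble of the subsection. Two small caveats. First, in the converse of (1) you transfer purity from $\iota$ to $\phi$ via ``finite systems of linear equations''; in a general locally finitely presented Grothendieck category this language is not literally available, and the cleaner formulation (the one the paper uses) is that $\y_\iota=\y_\psi\circ\y_\phi$ is a monomorphism in $[\mathrm{fp}(\G)^{\op},\Ab]$, hence so is $\y_\phi$, which is exactly purity of $\phi$. Second, you do not address the final claim that $\D(\G;\I)$ is equivalent to the pure derived category $\D_{\mathrm{pure}}(\G)$; this requires checking that the $\I$-acyclic complexes are precisely the pure-acyclic ones, which the paper reduces to the identity ``$\I$-monomorphism $=$ pure monomorphism'' established in part (1). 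Neither point affects the substance of your argument.
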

\begin{proof}
Recall from \cite[Theorem~6]{doi:10.1142/S0219498803000581} that, for any $X\in \G$, there exists a pure exact sequence $0\to X\to PE(X)\to PE(X)/X\to 0$ in $\G$, with $PE(X)\in \I$. In particular, $\I$ is cogenerating.

\smallskip
(1) It is clear that, if $X\to Y$ is a pure monomorphism, then it is also an $\I$-monomorphism (by our definition of pure injective object). On the other hand, if $f\colon X\to Y$ is an $\I$-monomorphism, take a pure monomorphism $g\colon X\to PE(X)$, with $PE(X)\in \I$, and observe that (by definition of $\I$-monomorphism) $f^*\colon \hom_\G(Y,PE(X))\to \hom_\G(X,PE(X))$ is surjective. In particular, there exists $\tilde g\colon Y\to PE(X)$, such that $\tilde g\circ f=:f^*(\tilde g)=g$. In other words, $\y_g=\y_{\tilde g}\circ \y_{f}$ is a monomorphism in $[\mathrm{fp}(\G)^{\mathrm{op}}, \Ab]$, and so $\y_f$ is a monomorphism; i.e., $f$ is a pure monomorphism. 

\smallskip
(2) Consider a family of pure exact sequences $\{0\to X_\lambda\to Y_\lambda\to Z_\lambda\to 0\}_{\Lambda}$ in $\G$, which gives a family of short exact sequences of functors $\{0\to \y_{X_\lambda}\to  \y_{Y_\lambda}\to  \y_{Z_\lambda}\to 0\}_{\Lambda}$. Since $[\mathrm{fp}(\G)^{\mathrm{op}}, \Ab]$ is (Ab.4$^*$), we obtain a short exact sequence $0\to \prod_{\Lambda}  \y_{X_\lambda}\to   \prod_{\Lambda}\y_{Y_\lambda}\to  \prod_{\Lambda} \y_{Z_\lambda}\to 0$. Moreover, since the restricted Yoneda embedding commutes with products, we obtain the following short exact sequence:  $0\to  \y_{\prod_{\Lambda} \y_\lambda}\to   \y_{\prod_{\Lambda} Y_\lambda}\to   \y_{\prod_{\Lambda} Z_\lambda}\to 0$. Hence, $0\to  \prod_{\Lambda} X_\lambda\to   \prod_{\Lambda} Y_\lambda\to  \prod_{\Lambda} Z_\lambda\to 0$ is pure exact (i.e., $\I$-exact) in $\G$. One can now conclude by Lemma~\ref{criterion_ab4*_I_cogen_lem} and Corollary~\ref{cor_rel_inj_model_structure}.

\smallskip
For the last statements,  it is enough to check that the $\mathcal{I}$-acyclic complexes are precisely the pure-acyclic complexes. As both the $\mathcal{I}$-acyclic and the pure-acyclic complexes are acyclic, this reduces to prove that a short exact sequence $0\rightarrow A\to B\to C\rightarrow 0$ in $\mathcal{G}$ is pure if, and only if, it ``is" an $\mathcal{I}$-acyclic complex which, in turn, is equivalent to prove that a monomorphism $u\colon A\to B$ in $\mathcal{G}$ is pure if, and only if, it is an $\mathcal{I}$-monomorphism, that follows by (1).
\end{proof}

\subsection{Categories of finite $\I$-global dimension} A rich source of examples of  (Ab.4$^*$)-$\mathcal{I}$-$k$ Abelian categories is derived from the following concepts (see Corollary~\ref{cor.finite I-global dimension} below):

\begin{defn}
Let $\mathcal{A}$ be a complete Abelian category and let $\mathcal{I}\subseteq \A$ be an injective class. For each object $A$ in $\A$, define the following set:
\[
\mathbb{N}(A,\mathcal{I}):=\{n\in \N:\text{there is a relative $\I$-injective resolution $A\to I^\bullet\in \Ch^{\geq0}(\A)\cap\Ch^{\leq n}(\A)$}\}.
\]
Then, we define the {\bf $\mathcal{I}$-codimension} of $A$ as follows:
\[
\I\codim A:=\inf \mathbb{N}(A,\mathcal{I}),
\]
with the convention that $\inf \emptyset:=\infty$. Finally, we define the {\bf $\mathcal{I}$-global dimension of $\mathcal{A}$} as:
\[
\I\dim \A:=\sup \{\I\codim A:A\in \A\}\in \N\cup\{\infty\}.
\]
\end{defn}

We have the following immediate consequence.

\begin{cor} \label{cor.finite I-global dimension}
Let $\mathcal{A}$ be a complete Abelian category and let $\mathcal{I}\subseteq \A$ be an injective class such that $\mathcal{I}\dim \A=d<\infty$. Then, $\mathcal{A}$ is (Ab.4$^*$)-$\mathcal{I}$-$d$.
\end{cor}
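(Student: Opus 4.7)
The proof will be essentially immediate once the right choice of resolutions is made. The strategy is to exploit the freedom granted by Definition~\ref{def.AB4-I-k}, which allows us to verify the (Ab.4$^*$)-$\mathcal{I}$-$d$ condition using \emph{any} fixed choice of relative $\mathcal{I}$-injective resolutions, combined with the fact that a finite $\mathcal{I}$-global dimension forces the existence of particularly short resolutions.

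In detail, I would start with an arbitrary family $(A_\lambda)_{\Lambda}$ in $\A$. Since $\mathcal{I}\gdim\A = d$, for each $\lambda\in\Lambda$ we have $d\in\N(A_\lambda,\mathcal{I})$, so there exists a relative $\mathcal{I}$-injective resolution $u_\lambda\colon S^0(A_\lambda)\to I_\lambda^\bullet$ with $I_\lambda^\bullet\in\Ch^{\geq 0}(\mathcal{I})\cap\Ch^{\leq d}(\mathcal{I})$. The key observation is then that the product complex $\prod_{\Lambda}I_\lambda^\bullet$ sits in the same bounded range: since products of zero objects are zero, we have $(\prod_\Lambda I_\lambda^\bullet)^n = 0$ whenever $n\notin[0,d]$.

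Consequently, for any $I\in\mathcal{I}$, the complex $\hom(\prod_\Lambda I_\lambda^\bullet,I)$ is concentrated in degrees $[-d,0]$, since $\hom(\prod_\Lambda I_\lambda^\bullet,I)^n = \hom_\A((\prod_\Lambda I_\lambda^\bullet)^{-n},I) = 0$ for all $n$ with $-n\notin[0,d]$. In particular, $H^{-n}(\hom(\prod_\Lambda I_\lambda^\bullet,I))=0$ for every $n>d$, which is precisely condition (1) of Lemma~\ref{lem.AB4-I-k} with $k=d$. By the equivalence of ``for some'' and ``for any'' choice of relative resolutions spelled out right after Definition~\ref{def.AB4-I-k}, this suffices to conclude that $\A$ is (Ab.4$^*$)-$\mathcal{I}$-$d$.

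There is no substantial obstacle here, so this is really a bookkeeping argument rather than a proof requiring new ideas; the only small subtlety worth flagging in the write-up is that one must invoke the independence of the (Ab.4$^*$)-$\mathcal{I}$-$k$ condition on the chosen resolutions, since a priori the resolutions produced by the inductive construction need not be of length $\leq d$, and it is the hypothesis $\mathcal{I}\gdim\A = d$ that guarantees one may pick such bounded resolutions at will.
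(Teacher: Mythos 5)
Your proof is correct and follows exactly the paper's argument: pick length-$\leq d$ relative $\mathcal{I}$-injective resolutions (possible by the finite $\mathcal{I}$-global dimension), note that the product complex is still concentrated in degrees $[0,d]$, so $\hom(\prod_\Lambda I_\lambda^\bullet,I)$ vanishes outside $[-d,0]$ and the cohomology vanishing of Lemma~\ref{lem.AB4-I-k}(1) holds trivially. The remark about the independence of the (Ab.4$^*$)-$\mathcal{I}$-$k$ condition from the chosen resolutions is a correct and appropriate point to flag, and is exactly what the ``some/any'' clause of Definition~\ref{def.AB4-I-k} provides.
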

\begin{proof}
Let $(A_\lambda)_{\lambda\in\Lambda}$ be a family of objects in $\mathcal{A}$ and choose, for each $\lambda$, a relative $\I$-injective resolution $u_\lambda\colon A_\lambda\to I^\bullet_\lambda\in \Ch^{\geq0}(\A)\cap\Ch^{\leq d}(\A)$. Observe then that $\prod_\Lambda I^\bullet_\lambda\in \Ch^{\geq0}(\A)\cap\Ch^{\leq d}(\A)$ and, therefore, $\hom(\prod_\Lambda I^\bullet_\lambda,I)\in \Ch^{\leq0}(\Ab)\cap\Ch^{\geq -d}(\Ab)$. In particular,  $H^k(\hom(\prod_\Lambda I^\bullet_\lambda,I))=0$, for all $k<-d$, that is, $\mathcal{A}$ is (Ab.4$^*$)-$\mathcal{I}$-$d$.
\end{proof}

We are now going to describe a few concrete examples of bicomplete Abelian categories $\A$ that admit a suitable injective class $\I\subseteq \A$ for which $\I\dim \A<\infty$. By the above corollary, these are also examples where $\A$ is (Ab.4$^*$)-$\I$-$k$, for some $k\in \N$. In particular, by Corollary~\ref{cor_rel_inj_model_structure}, in each of these cases $(\Ch(\A), \W_\I,\C_\I,\F_\I)$ is a model category, and so the corresponding homotopy category $\D(\A;\I):=\Ch(\A)[\W_\I^{-1}]$ is locally small. The first two examples below have also appeared in \cite[Section~9]{zbMATH06915995}, with different terminology.

\begin{eg}
Let $R$ be a $d$-Gorenstein ring, i.e., a (left and right) Noetherian ring  that has injective dimension $=d$ on both sides, for a given $d\in \N$, and consider the following class: 
\[
\mathcal{I}=\mathcal{GI}(R):=\{M\in\Mod\text{-}R:\text{ $M$ is Gorenstein injective}\}\subseteq \Mod\text{-}R=:\A.
\]
By \cite[Theorem~2.3]{RSMUP_2002__107__67_0}, $\mathcal{I}$ is an injective class in $\Mod\text{-}R$, and it follows by  \cite[Chapter~12]{enochs2011relative} that $\mathcal{I}\text{-}\gdim(\mathcal{A})=d$.
\end{eg}

\begin{eg}
Let $R$ be a ring of pure global dimension $=d$, for some $d\in \N$  (e.g., if $d>0$ and $|R|\leq\aleph_{d-1}$, then $R$ has pure global dimension $\leq d$). Let $\mathcal{A}:=\Mod\text{-}R$, and consider the injective class $\mathcal{I}:=\mathrm{P.Inj}(\A)$ of the pure-injective right $R$-modules. By definition (see \cite[Section~1]{430dc0673e7b4b03ac859c068dbe10e1}), the pure global dimension of $R$ is the supremum of the pure-injective dimensions of its modules. Hence,  $\mathcal{I}\text{-}\gdim(\mathcal{A})=d$ coincides with the pure global dimension of $R$.
\end{eg}

In a bicomplete Abelian category $\A$, a given $V\in \A$ is {\bf $1$-tilting} (\cite[Definition~6.1]{Parra:2023aa}) if:
\begin{itemize}
\item $V^{\perp_1}:=\Ker(\Ext^1_\A(V,-))=\mathrm{Gen}(V)=:\I$;
\item $\I$ is cogenerating in $\A$.
\end{itemize}
We refer to \cite[Section~6]{Parra:2023aa} for several characterizations of $1$-tilting objects and their connection with the traditional concept of a $1$-tilting module. In the following example we show that, if $V$ is $1$-tilting, then $\mathrm{Gen}(V)\text{-}\gdim(\A)=1$; for an extension to the $n$-tilting case, see Subsection~\ref{n_tilt_subs}.

\begin{eg}
Let $\mathcal{A}$ be bicomplete Abelian category with a generator, $V\in\A$ a $1$-tilting object, and let $\I:=\mathrm{Gen}(V)=V^{\perp_1}$. By \cite[Proposition~6.7]{Parra:2023aa}, there is a generator $G$ of $\mathcal{A}$ together with a short exact sequence $0\rightarrow G\stackrel{u}{\rightarrow} V_0\rightarrow V_1\rightarrow 0$ (with $V_0,\, V_1\in\Add(V)$), such that any coproduct of copies of the sequence remains exact. Now, given $A\in\mathcal{A}$, choose an epimorphism $\pi\colon G^{(S)}\rightarrow A$ and observe that $u^{(S)}\colon G^{(S)}\rightarrow V_0^{(S)}$ is an $\mathcal{I}$-preenvelope, since $\Ext_\mathcal{A}^1(V_1^{(S)},I)=0$, for all $I\in\mathcal{I}$. Taking the push-out of $u^{(S)}$ along $\pi$, one gets a short exact sequence $0\rightarrow A\stackrel{v}{\rightarrow} I_A\rightarrow V_1^{(S)}\rightarrow 0$, with $I_A\in\mathcal{I}$. By construction, $v$ is clearly an $\mathcal{I}$-preenvelope, so  $\mathcal{I}\codim A\leq 1$. Thus, $\mathcal{I}\text{-}\gdim(\mathcal{A})=1$.
\end{eg}

\begin{eg}
 Let $V$ be a projective variety over a field $K$, i.e., a closed set for the Zariski topology of $\mathbf{P}^n(K)$, for some integer $n>0$, let $\Gamma (V)$ be its (graded) ring of coordinates, and let $\A:=\mathrm{Gr}\text{-}\Gamma (V)$ be the category of graded $\Gamma (V)$-modules. If $V$ is regular of dimension $d$, let $\mathcal{I}$ be the class of injective objects of $\mathrm{Gr}\text{-}\Gamma (V)$ that have zero graded socle. Now, if $\mathcal{G}:=\mathrm{Qcoh}(V)$ is the Grothendieck category of quasi-coherent sheaves on $V$, when we view $V$ as a scheme in the usual way, the regularity of $V$ implies that its dimension coincides with the global dimension of $\mathcal{G}$, i.e., $\Inj(\mathcal{G})\text{-}\gdim(\mathcal{G})$ with our terminology. A well-known result of Serre (see Proposition~7.8 in Section~59, p.~252 in \cite{Serre_Faisceaux}) says that there is an equivalence of categories $\mathcal{G}\cong(\mathrm{Gr}\text{-}\Gamma(V))/\mathcal{T}$, where $\mathcal{T}\subseteq\mathrm{Gr}\text{-}\Gamma(V)$ is the hereditary torsion class  of the locally finite graded $\Gamma (V)$-modules. This is exactly the torsion class generated by the class $\mathcal{S}$ of graded-simple modules, and hence the corresponding torsion pair is cogenerated by $\mathcal{S}^\perp\cap\Inj(\Gamma (V))=\mathcal{I}$. By the discussion right before Proposition~\ref{exactness_of_products_on_quotients}, it is now easy to see that $\mathcal{I}\text{-}\gdim(\mathcal{A})=d$.
\end{eg}

\begin{prop} \label{propXX_adelanto}
Let $\mathcal{A}$ be an Abelian category such that $\D(\A)$ is locally small, and let $(\mathcal{X},\mathcal{Y})$ be a right complete hereditary cotorsion pair. Then, the following  are equivalent for any  $n\geq 0$:
\begin{enumerate}
\item $\Ext_\mathcal{A}^{k}(X',X)=0$, for all $X,X'\in\mathcal{X}$ and all $k>n$;
\item $\Ext_\mathcal{A}^{n+1}(X',X)=0$, for all $X,X'\in\mathcal{X}$;
\item for any $X\in\mathcal{X}$, there is an exact sequence of the form: $0\rightarrow X\rightarrow W^0\rightarrow\cdots\rightarrow W^n\rightarrow 0$, with $W^i\in \W:=\mathcal{X}\cap\mathcal{Y}$, for all $i=0,\dots,n$.
\end{enumerate}
Moreover, if $\mathcal{Y}$-$\gdim(\mathcal{A})\leq n$, then the above equivalent conditions are all verified.
\end{prop}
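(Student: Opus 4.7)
The plan is to prove the trivial implication (1)$\Rightarrow$(2), close the circle via (2)$\Rightarrow$(3)$\Rightarrow$(1), and deduce the ``moreover'' at the end by a direct dimension-shifting argument. The whole argument will rest on two facts that I will use freely: first, a hereditary cotorsion pair automatically satisfies $\Ext_\A^i(X',Y)=0$ for all $i\geq 1$, $X'\in\X$ and $Y\in\Y$ (local smallness of $\D(\A)$ ensures these $\Ext$-groups are sets); second, right completeness of $(\X,\Y)$ provides for each $A\in\A$ a short exact sequence $0\to A\to Y\to X\to 0$ with $Y\in\Y$ and $X\in\X$, which in particular shows that $\Y$ is cogenerating.

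For (2)$\Rightarrow$(3), I would fix $X\in\X$, set $X^0:=X$, and apply right completeness iteratively to produce short exact sequences $0\to X^i\to W^i\to X^{i+1}\to 0$ with $W^i\in\Y$ and $X^{i+1}\in\X$. The crucial point will be that each $W^i$ automatically lies in $\W:=\X\cap\Y$: applying the long exact sequence of $\Ext_\A^*(-,Z)$ for $Z\in\Y$ to the sequence, together with $X^i,X^{i+1}\in\X={}^{\perp_1}\Y$, forces $\Ext_\A^1(W^i,Z)=0$, so $W^i\in{}^{\perp_1}\Y=\X$. Splicing these sequences for $i=0,\dots,n-1$ produces an exact complex $0\to X\to W^0\to\cdots\to W^{n-1}\to X^n\to 0$, and it will remain to verify that $X^n\in\Y$. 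To that end, iterating dimension shifting through the sequences $0\to X^i\to W^i\to X^{i+1}\to 0$ (using $\Ext_\A^{j}(X',W^i)=0$ for all $j\geq 1$ and $X'\in\X$) should yield $\Ext_\A^1(X',X^n)\cong\Ext_\A^{n+1}(X',X)$, which vanishes for every $X'\in\X$ by (2); hence $X^n\in\X^{\perp_1}=\Y$, and we rename it $W^n$.

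For (3)$\Rightarrow$(1), I would set $K^0:=X$ and $K^i:=\Ker(W^i\to W^{i+1})$ (so $K^n=W^n$), and decompose the coresolution into short exact sequences $0\to K^i\to W^i\to K^{i+1}\to 0$. For $k\geq n+1$ and $X'\in\X$, the same dimension-shifting technique produces $\Ext_\A^k(X',X)\cong\Ext_\A^{k-n}(X',W^n)=0$. Finally, for the ``moreover'' statement, given $\Y\text{-}\gdim(\A)\leq n$ and $X\in\X$, a relative $\Y$-injective resolution of length at most $n$ gives a genuine exact sequence $0\to X\to Y^0\to\cdots\to Y^n\to 0$ with $Y^i\in\Y$; exactness in $\A$ follows because $\Y$ is cogenerating, via Proposition~\ref{char_cogen_prop}. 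A last dimension shift along this sequence yields $\Ext_\A^{n+1}(X',X)\cong\Ext_\A^1(X',Y^n)=0$ for all $X'\in\X$, which is (2). The main thing to be careful about will be the bookkeeping of dimension shifts, making sure each invocation of $\Ext_\A^{\geq 1}(\X,\Y)=0$ stays in the legitimate range; beyond that, the argument is essentially linear.
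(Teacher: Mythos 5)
Your proof is correct and follows essentially the same route as the paper: build the $\W$-coresolution by iterating right $(\X,\Y)$-approximations (with each $W^i\in\W$ because $\X$ is closed under extensions), and use dimension shifting along the resulting short exact sequences to identify $\Ext_\A^{n+1}(X',X)$ with $\Ext_\A^{1}(X',X^n)$, respectively to prove (3)$\Rightarrow$(1) and the final statement. The only cosmetic difference is that in (2)$\Rightarrow$(3) you conclude $X^n\in\Y$ directly from $X^n\in\X^{\perp_1}$, whereas the paper splits $X^n$ off as a summand of $W^n$; both arguments rest on the same fact, also used without comment in the paper, that $\Ext^{i}_\A(X',W)=0$ for all $i\geq 1$ when the pair is hereditary.
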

\begin{proof}
(1)$\Rightarrow$(2) is clear.

\smallskip
(2)$\Rightarrow$(3). Let $X\in \X$ and consider a right $(\X,\Y)$-approximation $0\to X\to W^0\to X^1\to 0$, where  $W^0\in \Y$, $X^1\in \X$ and, since $\X={}^{\perp_1}\Y$ is closed under extensions, also $W^0\in \X$, so that $W^0\in \W$. Continue by taking a right $(\X,\Y)$-approximation $0\to X^1\to W^1\to X^2\to 0$, with $W^1\in \W$ and $X^2\in \X$, and proceed inductively in this way to construct a $\W$-coresolution 
\begin{equation}\label{eq_W_cores}
\xymatrix{
0\ar[r]& X\ar[r]^-{u}& W^0\ar[r]^-{d^0}& \cdots\ar[r]^-{d^{n-1}}& W^n\ar[r]^-{d^n}& \cdots,
}
\end{equation}
with $\Im(d^i)\in \X$, for all $i\geq0$. Observe that, letting $X':=\Im(d^n)\in \X$, any object $W\in \W$ is $\hom_\A(X',-)$-acyclic (i.e., $\Ext^i_\A(X',W)=0$, for all $i>0$), and so the $\W$-coresolution in \eqref{eq_W_cores} gives the following formula: $\Ext^i_\A(X',X)\cong H^i(\hom_\A(X',W^\bullet))$, for all $i>0$. By (2), we deduce that $H^{n+1}(\hom_\A(X',W^\bullet))=0$ or, equivalently, that the map $\hom_\A(X',W^n)\to \hom_\A(X',X')$ is surjective, that is, $W^n\cong \Im(d^{n-1})\oplus X'$, showing that $\Im(d^{n-1})\in \W$, as this class is closed under summands.  In particular,  we obtain a $\W$-coresolution of the desired length:
\[
\xymatrix{
0\ar[r]& X\ar[r]^-{u}& W^0\ar[r]^-{d^0}& \cdots\ar[r]^-{d^{n-2}}& W^{n-1}\ar[r]^-{d^{n-1}}&\Im(d^{n-1})\ar[r]&0\ar[r]& \cdots.
}
\]

(3)$\Rightarrow$(1). Given $X,\, X'\in\mathcal{X}$, take a $\mathcal{W}$-coresolution $0\rightarrow X\rightarrow W^0\rightarrow\cdots\rightarrow W^n\rightarrow 0$ and observe that, as in the previous implication, $\Ext^i_\A(X',X)\cong H^i(\hom_\A(X',W^\bullet))$, for all $i>0$. Given $k>n$, $\hom_\A(X',W^k)=\hom_\A(X',0)=0$, and so $\Ext^k_\A(X',X)\cong H^k(\hom_\A(X',W^\bullet))=0$. 

\smallskip
For the final statement, let $X,\, X'\in\mathcal{X}$. If $\mathcal{Y}$-$\gdim(\mathcal{A})\leq n$, take a relative $\mathcal{Y}$-injective resolution 
\begin{equation}\label{eq_Y_cores_eq}
\xymatrix{
0\ar[r]& X\ar[r]^-{u}& Y^0\ar[r]^-{d^0}& \cdots\ar[r]^-{d^{n-1}}& Y^n\ar[r]&0\ar[r]& \cdots.
}
\end{equation}
Since $\Y$ is cogenerating, the sequence \eqref{eq_Y_cores_eq} is exact while, as any $Y\in \Y$ is $\hom_\A(X',-)$-acyclic, $\Ext^i_\A(X',X)\cong H^i(\hom_\A(X',Y^\bullet))$, for all $i>0$. This clearly implies (1). 
\end{proof}

As a direct consequence of Proposition~\ref{propXX_adelanto}, we get:

\begin{cor} \label{cor.final1}
Let $\mathcal{A}$ be an Abelian category such that $\D(\A)$ is locally small, let $(\mathcal{X},\mathcal{Y})$ be a right complete hereditary cotorsion pair in $\mathcal{A}$, and fix $n\in\mathbb{N}$  such that the equivalent conditions of Proposition~\ref{propXX_adelanto} are verified. Then, $\mathcal{Y}\text{-}\gdim(\mathcal{A})\leq n+1$ and, therefore, $\mathcal{A}$ is (Ab.4$^*$)-$\mathcal{Y}$-$(n+1)$.
\end{cor}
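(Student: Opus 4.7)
The plan is to combine the right-completeness of the cotorsion pair with the hypothesis that condition (3) of Proposition~\ref{propXX_adelanto} holds, in order to build, for every object $A\in\mathcal{A}$, an explicit relative $\mathcal{Y}$-injective resolution of length at most $n+1$. This immediately bounds $\mathcal{Y}\text{-}\gdim(\mathcal{A})\leq n+1$, and then the second statement follows directly from Corollary~\ref{cor.finite I-global dimension}.

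Given $A\in\mathcal{A}$, the first step is to invoke the right-completeness of $(\mathcal{X},\mathcal{Y})$ to obtain a right approximation
\[
0\longrightarrow A\stackrel{u}{\longrightarrow} Y\longrightarrow X\longrightarrow 0,
\]
with $Y\in\mathcal{Y}$ and $X\in\mathcal{X}$. The second step is to apply condition (3) of Proposition~\ref{propXX_adelanto} to $X$ to produce an exact sequence $0\to X\to W^0\stackrel{d^0}{\to}\cdots\stackrel{d^{n-1}}{\to}W^n\to 0$ with $W^i\in\mathcal{W}=\mathcal{X}\cap\mathcal{Y}\subseteq \mathcal{Y}$. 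I will then splice these sequences at $X$ to obtain an exact sequence
\[
\tilde{I}^\bullet:\ 0\longrightarrow A\stackrel{u}{\longrightarrow} Y\longrightarrow W^0\stackrel{d^0}{\longrightarrow}\cdots\stackrel{d^{n-1}}{\longrightarrow}W^n\longrightarrow 0,
\]
whose non-trivial terms (other than $A$) all lie in $\mathcal{Y}$, so the naive truncation $I^\bullet$ above $0$ lies in $\Ch^{\geq 0}(\mathcal{Y})\cap \Ch^{\leq n+1}(\mathcal{Y})$.

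To conclude that this is a relative $\mathcal{Y}$-injective resolution of $A$, I will verify the equivalent condition~(3) of Lemma~\ref{lem_relative_injective_resolution}, namely that every induced map from a cokernel to the next term is a $\mathcal{Y}$-monomorphism. The key observation is that, by construction (and because $\mathcal{X}={}^{\perp_1}\mathcal{Y}$ is closed under extensions, so $W^0\in\mathcal{X}$, forcing all syzygies to lie in $\mathcal{X}$), each of the short exact sequences into which $\tilde{I}^\bullet$ decomposes has its rightmost term in $\mathcal{X}$. Applying $\hom_{\mathcal{A}}(-,Y')$ for any $Y'\in\mathcal{Y}$ to such a short exact sequence yields an exact sequence in $\Ab$, because $\Ext^1_{\mathcal{A}}(\mathcal{X},\mathcal{Y})=0$; splicing these exact sequences back together shows that $\hom(\tilde{I}^\bullet,Y')$ is exact, giving condition~(2) of Lemma~\ref{lem_relative_injective_resolution}.

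No part of the argument is technically hard: the only point that needs a little care is checking that all intermediate cokernels in the $\mathcal{W}$-coresolution of $X$ remain in $\mathcal{X}$, which is precisely why the hypothesis that the cotorsion pair is \emph{hereditary} is needed (so that $\mathcal{X}$ is closed under kernels of epimorphisms between objects of $\mathcal{X}$, or equivalently $\mathcal{Y}$ is closed under cokernels of monomorphisms between objects of $\mathcal{Y}$). Once this is in place, the length bound $n+1$ on the resolution gives $\mathcal{Y}\text{-}\gdim(\mathcal{A})\leq n+1$, and Corollary~\ref{cor.finite I-global dimension} yields that $\mathcal{A}$ is (Ab.$4^*$)-$\mathcal{Y}$-$(n+1)$.
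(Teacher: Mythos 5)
Your proof is correct and follows essentially the same route as the paper: splice a right $(\X,\Y)$-approximation $0\to A\to Y\to X\to 0$ with the $\W$-coresolution of $X$ provided by Proposition~\ref{propXX_adelanto}(3), obtaining a relative $\Y$-injective resolution of length at most $n+1$, and then invoke Corollary~\ref{cor.finite I-global dimension}. The only difference is that the paper declares the spliced sequence to be ``clearly'' a relative $\Y$-injective resolution, while you spell out the verification (syzygies lie in $\X$ by heredity, and $\Ext^1_\A(\X,\Y)=0$ gives exactness of $\hom(-,I)$ for $I\in\Y$), which is a correct elaboration rather than a different argument.
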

\begin{proof}
Given $A\in\mathcal{A}$, take a right $(\X,\Y)$-approximation $0\rightarrow A\rightarrow Y\rightarrow X\rightarrow 0$, with $Y\in \Y$ and $X\in \X$. Take now a $\W$-coresolution of $X$ as in  Proposition~\ref{propXX_adelanto}(3), that is, an exact sequence $0\rightarrow X\rightarrow W^0\rightarrow \cdots\rightarrow W^n\rightarrow 0$, with $W^i\in\mathcal{W}$ for all $i=0,\dots,n$. We obtain an exact sequence $0\rightarrow A\rightarrow Y\rightarrow W^0\rightarrow \cdots\rightarrow W^n\rightarrow 0$, that is clearly a relative $\mathcal{Y}$-injective resolution of $A$.
\end{proof}

\subsection{$n$-Tilting cotorsion pairs}\label{n_tilt_subs} 
In this subsection we will work on an Abelian category $\mathcal{A}$ that is (Ab.4), and such that $\mathcal{D}(\mathcal{A})$ is locally small. In particular, $\mathcal{D}(\mathcal{A})$ has arbitrary coproducts, which are computed degree-wise. These conditions are always verified in each of the following cases:
\begin{itemize}
\item if $\A$ is a Grothendieck category;
\item if $\A$ is a bicomplete (Ab.4) Abelian category with enough projectives; this follows by \cite[Theorem~6.4]{zbMATH06915995}, or even Corollary~\ref{cor_rel_inj_model_structure}, applied to $\A^{\mathrm{op}}$ with the injective class $\Inj(\A^{\mathrm{op}})=(\mathrm{Proj(\A)})^{\mathrm{op}}$.
\end{itemize} 
Recall that the {\bf projective dimension} $\pdim_\A(X)$ of an object $X\in \A$ is defined as follows:
\[
\pdim_\A(X):=\inf\{n\in \N:\Ext_\mathcal{A}^k(X,A)=0,\,\forall k>n,\, \forall A\in\mathcal{A}\},
\]
with the convention that $\inf\emptyset=\infty$. The following definition comes from \cite[Definition~6.8]{NICOLAS20192273}:
\begin{defn} \label{def.n-tilting}
Let $\mathcal{A}$ be an (Ab.4) Abelian category such that $\D(\A)$ is locally small, and let $n>0$ be an integer. An object $T$ of $\mathcal{A}$ is called {\bf $n$-tilting} when the following conditions hold:
\begin{enumerate}
\item[(T.1)] $\Ext_\mathcal{A}^k(T,T^{(I)})=0$, for any set $I$ and all $k>0$;
\item[(T.2)] $\pdim_\A(T)=n$;
\item[(T.3)] there is a generating class $\mathcal{P}\subseteq\mathcal{A}$ such that, for each $P\in\mathcal{P}$, there is an exact sequence
\begin{equation*}\label{eq_Y_cores}
\xymatrix{
0\ar[r]& P\ar[r]^-{u}& T^0\ar[r]^-{d^0}& \cdots\ar[r]^-{d^{n-1}}& T^n\ar[r]&0,
}
\end{equation*} 
with $T^k\in \Add(T)$, for all $k=0,\dots, n$.
\end{enumerate}
\end{defn}

Any $n$-tilting object, has an associated cotorsion pair, that we describe in the following lemma:

\begin{lemdef} \label{lem.n-tilting cotorsion pair}
Let $\mathcal{A}$ be an (Ab.4) Abelian category such that $\D(\A)$ is locally small, let $T\in\mathcal{A}$ be $n$-tilting (with $n\geq 1$), and let $(\mathcal{X},\mathcal{Y}):=(^{\perp_{>0}} (T^{\perp_{>0}}),T^{\perp_{>0}})$. Then, the following statements hold true:
\begin{enumerate}
\item $(\X,\Y)$ is a complete and hereditary cotorsion pair in $\A$;
\item $\W:=\mathcal{X}\cap\mathcal{Y}=\Add(T)$;
\item $\Y\text{-}\gdim(\A)\leq n$ and, therefore, $\mathcal{A}$ satisfies the (Ab.4$^*$)-$\mathcal{Y}$-$n$ condition.
\end{enumerate}
The pair $(\mathcal{X},\mathcal{Y}):=(^{\perp_{>0}} (T^{\perp_{>0}}),T^{\perp_{>0}})$ is called the {\bf $n$-tilting cotorsion pair} associated to $T$.
\end{lemdef}
\begin{proof}
(1) Observe that the assignments $\mathcal S\mapsto{}^{\perp_{>0}}\mathcal S$ and $\mathcal S\mapsto\mathcal S^{\perp_{>0}}$ define a Galois connection of the (large) poset of subclasses of $\Ob(\A)$ with itself,  so $({}^{\perp_{>0}}(\mathcal S^{\perp_{>0}}))^{\perp_{>0}}=\mathcal S^{\perp_{>0}}$ and ${}^{\perp_{>0}}(({}^{\perp_{>0}}\mathcal S)^{\perp_{>0}})={}^{\perp_{>0}}\mathcal S$, for any  $\mathcal S\subseteq \Ob(\A)$. In particular, ${}^{\perp_{>0}}\Y=\X$ (by construction), and $\X^{\perp_{>0}}=\Y$ (by the previous discussion). Moreover, by the first part of Remark~\ref{rem_on_hereditary}, $\X$ is closed under kernels of epimorphisms so, in the notation of Definition~\ref{def.n-tilting}, the generating class $\mathcal P$ is contained in $ \X$ and, therefore, $\X$ is generating. On the other hand, for each $A\in \A$, \cite[Lemma~6.13]{NICOLAS20192273} gives us an exact sequence of the form:
\begin{equation}\label{eq_Y_cores_pf_lem}
\xymatrix{
0\ar[r]& A\ar[r]^-{u}& Y\ar[r]^-{d^0}& T^1\ar[r]^-{d^1}& \cdots\ar[r]^-{d^{n-1}}& T^n\ar[r]&0,
}
\end{equation} 
such that $Y\in\mathcal{Y}$ and $T^k\in\Add(T)$, for all $k=1,\dots,n$. Since $\mathcal{X}$ is closed under kernels of epimorphisms, $X^{i-1}:=\Ker(d^{i-1})\in\mathcal{X}$, for all $i=2,\dots,n$, so  that $0\rightarrow A\rightarrow Y\rightarrow X^1\rightarrow 0$ is a right $(\X,\Y)$-approximation of $A$; in particular, $\Y$ is cogenerating. It is now easy to conclude by Lemma~\ref{her_com_cot_pair_lem}.

\smallskip
(2) The inclusion ``$\Add(T)\subseteq\mathcal{W}$'' is clear. On the other hand, given $W\in\mathcal{W}\subseteq \Y\subseteq \mathrm{Gen}(T)$ (see \cite[Theorem~6.3 and Remark~6.2]{NICOLAS20192273} and their proofs), if we denote by $\varepsilon_f\colon T\to T^{(\hom_\mathcal{A}(T,W))}$ the inclusion in the coproduct of the copy of $T$ corresponding to the morphism $f\in \hom_\mathcal{A}(T,W)$, we get a canonical morphism $p_W\colon T^{(\hom_\mathcal{A}(T,W))}\rightarrow W$ such that $p_W\circ \varepsilon_f=f$, for all $f\in \hom_\mathcal{A}(T,W)$. The condition $W\in\mathrm{Gen}(T)$ can be expressed equivalently by saying that $p_W$ is an epimorphism; let $W':=\ker(p_W)$. The condition $f=p_W\circ \varepsilon_f=:p_W^*(\varepsilon_f)$ for all $f\colon T\to W$, shows the surjectivity of $p_W^*\colon \hom_\A(T,T^{(\hom_\mathcal{A}(T,W))})\rightarrow \hom_\A(T,W)$, so $\Ext^1_\A(T,W')=0$ by the exact sequence below:
\[
\xymatrix@C=10pt{
\hom_\A(T,T^{(\hom_\mathcal{A}(T,W))})\ar@{->>}[rr]^-{p_W^*}&& \hom_\A(T,W)\ar[r]&\Ext^1_\A(T,W')\ar[r]&\Ext^1_\A(T,T^{(\hom_\mathcal{A}(T,W))})=0,
}
\] 
where the last term on the right is $=0$ by condition (T.1). Furthermore, $\Ext_\A^k(T,W')=0$, for all $k\geq 2$, as shown by the following exact sequence:
\[
\xymatrix@C=15pt{
0=\Ext^{k-1}_\A(T,W)\ar[r]&\Ext^k_\A(T,W')\ar[r]&\Ext^k_\A(T,T^{(\hom_\mathcal{A}(T,W))})=0.
}
\] 
Thus, we have shown that $W'\in T^{\perp_{>0}}=\Y$. As a consequence, $\Ext_\mathcal{A}^1(W,W')=0$, and so the exact sequence $0\to W'\to T^{(\hom_\mathcal{A}(T,W))}\rightarrow W\to 0$ has to split. In particular, $W\in\text{Add}(T)$ as desired.

\smallskip
(3) Observe that, as in the proof of part (1), for each $A\in \A$ there is an exact sequence of the form \eqref{eq_Y_cores_pf_lem} such that $Y\in\mathcal{Y}$, $T^k\in\Add(T)$ (for all $k=1,\dots,n$), and  $X^{i-1}:=\Ker(d^{i-1})\in\mathcal{X}$ (for all $i=2,\dots,n$).
Moreover, $0\rightarrow X^{i-1}\rightarrow T^{i-1}\rightarrow X^{i}\rightarrow 0$ is a right $(\X,\Y)$-approximation (of $X^{i-1}$), for all $i=2,\dots,n$ and, therefore, the exact sequence in \eqref{eq_Y_cores_pf_lem} is a relative $\mathcal{Y}$-injective resolution of $A$, showing that $\Y\codim A\leq n$, as desired.
\end{proof}

In the following theorem we give a  characterization of the tilting cotorsion pairs:

\begin{thm} \label{propXX}
Let $\mathcal{A}$ be an (Ab.4) Abelian category such that $\D(\A)$ is locally small, and let $(\mathcal{X},\mathcal{Y})$ be a complete,  hereditary cotorsion pair in $\mathcal{A}$. If there is $W\in \A$ such that $\Add(W)=\mathcal{X}\cap\mathcal{Y}=:\mathcal{W}$, and $\pdim_\A(W)=n\leq m\in \N$, then the following are equivalent:
\begin{enumerate}
\item $\mathcal{Y}\text{-}\gdim(\mathcal{A})\leq m$;
\item $\Ext_\mathcal{A}^{m+1}(X',X)=0$, for all $X,X'\in\mathcal{X}$;
\item there is a subcategory $\mathcal{P}\subseteq \X$ which is generating in $\mathcal{A}$ and such that, for each $P\in\mathcal{P}$, there is an exact sequence $0\rightarrow P\rightarrow W^0\rightarrow \cdots \rightarrow W^{m}\rightarrow 0$, with $W^i\in \W$, for all $i=0,\dots,m$.
\end{enumerate}
If these conditions hold, then $W$ is $n$-tilting, and $(\X,\Y)$ is its associated $n$-tilting cotorsion pair. 
\end{thm}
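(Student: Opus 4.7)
Proof plan. I will prove the equivalence through the cyclic chain $(1)\Rightarrow (2)\Rightarrow (3)\Rightarrow (1)$ and then verify the final $n$-tilting statement by checking the three axioms of Definition~\ref{def.n-tilting}.

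The implication $(1)\Rightarrow (2)$ is direct: if $\Y\text{-}\gdim(\A)\leq m$, pick for each $X\in\X$ a $\Y$-coresolution $0\to X\to Y^0\to\cdots\to Y^m\to 0$; by heredity $\Ext^i_\A(X',Y^j)=0$ for $X'\in\X$ and $i\geq 1$, so $\hom_\A(X',Y^\bullet)$ computes $\Ext^*_\A(X',X)$, and being of length $m$ forces $\Ext^{m+1}_\A(X',X)=0$. For $(2)\Rightarrow (3)$ I simply apply Proposition~\ref{propXX_adelanto}, with $n:=m$, obtaining a $\W$-coresolution of length $m$ for every $X\in\X$; taking $\mathcal{P}:=\X$ (which is generating by hypothesis) gives (3).

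The hard direction is $(3)\Rightarrow (1)$. The key intermediate claim is that $\pdim_\A(X)\leq n$ for every $X\in\X$. First, for each $P\in\mathcal{P}$ with $\W$-coresolution $0\to P\to W^0\to\cdots\to W^m\to 0$, I introduce the syzygies $K^i:=\Im(W^{i-1}\to W^i)$. A descending induction starting from $K^{m-1}=W^m\in\W\subseteq\X$ and using the short exact sequences $0\to K^{i-1}\to W^i\to K^i\to 0$ combined with heredity places every $K^i$ in $\X$; a second descending induction, using $\pdim_\A(W)=n$ and the standard $\pdim$-formula for short exact sequences, yields $\pdim_\A(K^i)\leq n$, hence $\pdim_\A(P)\leq n$. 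Coproducts in the (Ab.4) category extend this to the closure $\mathcal{P}'$ of $\mathcal{P}$ under coproducts. Since $\mathcal{P}'$ generates $\A$ and $(\X,\Y)$ is hereditary, any $X\in\X$ admits a resolution $\cdots\to P_1\to P_0\to X\to 0$ with $P_i\in\mathcal{P}'$ and all syzygies in $\X$; a dimension-shift argument propagates the bound $\pdim_\A\leq n$ from $\mathcal{P}'$ to $X$.

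Granting $\pdim_\A(X)\leq n\leq m$ for all $X\in\X$, I conclude (1) by iterating right $(\X,\Y)$-approximations: for $A\in\A$, take $0\to A\to Y\to X_0\to 0$ with $Y\in\Y$ and $X_0\in\X$, then successively $0\to X_{i-1}\to Y^i\to X_i\to 0$ with $X_i\in\X$ and $Y^i\in\X\cap\Y=\W$. Dimension-shifting through these sequences gives $\Ext^1_\A(X',X_{m-1})\cong \Ext^{m+1}_\A(X',A)$ for $X'\in\X$, which vanishes because $\pdim_\A(X')\leq m$. Hence $X_{m-1}\in\Y$ (thus in $\W$), and splicing produces a $\Y$-coresolution of $A$ of length $m$.

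For the final statement, once (1)--(3) hold I verify Definition~\ref{def.n-tilting} for $W$: (T.1) follows from $W\in\X$, $W^{(I)}\in\Add(W)\subseteq\Y$, and heredity; (T.2) is the standing hypothesis; and (T.3) follows by truncating, for each $P\in\mathcal{P}$, the length-$m$ $\W$-coresolution provided by (3) to a length-$n$ one, using that the syzygy $K^{n-1}$ lies in $\X$ (by the heredity induction above) and in $\Y$ (because $\Ext^1_\A(X',K^{n-1})\cong\Ext^{n+1}_\A(X',P)=0$ by $\pdim_\A(X')\leq n$), hence $K^{n-1}\in\W$. Uniqueness of the tilting cotorsion pair, together with Lemma~\ref{lem.n-tilting cotorsion pair}, identifies $(\X,\Y)$ as the $n$-tilting cotorsion pair of $W$. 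The main obstacle is the propagation of the bound $\pdim_\A\leq n$ from the generating subcategory $\mathcal{P}$ to the whole of $\X$: this requires a careful resolution-and-dim-shift bookkeeping where the syzygies stay in $\X$ thanks to heredity, and the propagation terminates because the $\W$-coresolutions of $\mathcal{P}$-objects used in Step~3a themselves have bounded length.
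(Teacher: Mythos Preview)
Your implications $(1)\Rightarrow(2)$ and $(2)\Rightarrow(3)$ are fine and match the paper (both are applications of Proposition~\ref{propXX_adelanto}).  The problem is in your $(3)\Rightarrow(1)$, specifically in the ``propagation'' step where you claim that $\pdim_\A(X)\leq n$ for every $X\in\X$.  You correctly show $\pdim_\A(P)\leq n$ for $P\in\mathcal P$ (and for coproducts thereof), but the passage to an arbitrary $X\in\X$ via a resolution $\cdots\to P_1\to P_0\to X\to 0$ with $P_i\in\mathcal P'$ does not work: resolving an object by objects of projective dimension $\leq n$ does not bound its projective dimension (think of a projective resolution of a module of large projective dimension).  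The dimension-shift you invoke runs in the wrong direction here: from $0\to\Omega^1X\to P_0\to X\to 0$ you only get that $\Ext^{k+1}_\A(X,-)$ is a quotient of $\Ext^k_\A(\Omega^1X,-)$ for $k\geq n$, which pushes the problem to $\Omega^1X$ rather than solving it.  Since your verification of (T.3), and hence of the final statement, also rests on $\pdim_\A(X')\leq n$ for all $X'\in\X$ (to conclude $\Ext^{n+1}_\A(X',P)=0$), the whole argument from $(3)$ onward is circular.

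The paper bypasses this entirely.  Rather than trying to bound $\pdim_\A$ on all of $\X$, it proves directly that $W$ is $n$-tilting by induction on $m-n$, and then invokes Lemma~\ref{lem.n-tilting cotorsion pair}(4) to get $(1)$.  The inductive step works at the \emph{tail} of the given $\W$-coresolution $0\to P\to W^0\to\cdots\to W^m\to 0$: the last $n+2$ terms represent an element of $\Ext^{n+1}_\A(W^m,X^{m-n-2})$, which vanishes because $\pdim_\A(W^m)\leq n$ (this uses only the standing hypothesis $\pdim_\A(W)=n$, since $W^m\in\Add(W)$).  Hence $W^{m-1}\to W^m$ splits, the $(m-1)$-st syzygy lands in $\W$, and the coresolution shortens by one.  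The point is that the paper applies the $\pdim$-bound to $W^m$, which is known, whereas your argument needs it for an arbitrary $X'\in\X$, which is not.
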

\begin{proof}
The implications ``(1)$\Rightarrow$(2)'' and ``(2)$\Rightarrow$(3)''  follow by Proposition~\ref{propXX_adelanto}, while the implication ``(3)$\Rightarrow$(1)'' will follow, by Lemma~\ref{lem.n-tilting cotorsion pair}(3), as a consequence of the last part of the statement. Thus, to conclude our proof, it is enough to show that  (3) implies that $W$ is $n$-tilting. In fact, $W$ satisfies the condition (T.1) (see Definition~\ref{def.n-tilting}), since $(\X,\Y)$ is hereditary and $W\in \X\cap \Y$, while (T.2) is given by hypothesis, so we just need to take care of the axiom (T.3). We proceed by induction on $m-n\geq 0$. Indeed, if $m=n$, then clearly (3) implies (T.3), so $W$ is $n$-tilting. On the other hand, if $m>n$
 and given $P\in\mathcal{P}$, fix the following exact sequence as in (3): 
 \[
\xymatrix{
0\ar[r]& P\ar[r]^-{u}& W^0\ar[r]^-{d^0}& W^1\ar[r]^-{d^1}& \cdots\ar[r]^-{d^{m-1}}& W^m\ar[r]&0,
}
 \] 
Using again and again the fact that $\mathcal{X}$ is closed under taking kernels of epimorphisms, one gets that $X^k:=\Im(d^k)\in\mathcal{X}$, for $k=-1,0,\ldots,m$, where $X^{-1}:=P$, and $d^{-1}:=u$. 
Hence, the induced exact sequence  
$
0\rightarrow X^{m-n-2}\rightarrow W^{m-n-1}\rightarrow\cdots\rightarrow W^m\rightarrow 0
$ 
yields an element of $\Ext_\mathcal{A}^{n+1}(W^m,X^{m-n-2})=0$. Since $\mathcal{W}$ consists of $\hom_\mathcal{A}(W^m,-)$-acyclic objects, an argument similar to the one in the proof of implication ``(2)$\Rightarrow$(3)'' of Proposition~\ref{propXX_adelanto} shows that $d^{m-1}\colon W^{m-1}\rightarrow W^m$ is a retraction, so that $X^{m-2}\in\mathcal{W}$. Letting $\overline{W}^k:=W^k$ for $k=0,1,...,m-2$ and $\overline{W}^{m-1}:=X^{m-2}$, we get an exact sequence $0\rightarrow P\rightarrow\overline{W}^0\rightarrow \cdots\rightarrow\overline{W}^{m-1}\rightarrow 0$, so we can conclude by the inductive hypothesis.
\end{proof}

\begin{eg}\label{sup_noeth_eg}
Let $\mathcal{A}$ be a locally Noetherian Grothendieck category, and suppose that 
\[
\sup\{\pdim_\mathcal{A}(I):I\text{ is an indecomposable injective}\}=n>0,
\] 
and that there is a set $\mathcal{S}$ of generators,  each of which has injective dimension $\leq n$.  A coproduct $W$ of all the indecomposable injectives, one for each isoclass, is an $n$-tilting object. Indeed, as $\Inj(\mathcal{A})$ is closed under coproducts,  conditions (T.1--3) of Definition~\ref{def.n-tilting} hold.  In particular, if $\mathcal{Y}:=W^{\perp_{>0}}=(\Inj(\mathcal{A}))^{\perp_{>0}}$, Corollary~\ref{cor.finite I-global dimension}  and Lemma~\ref{lem.n-tilting cotorsion pair} tell us  that $\mathcal{A}$ is  (Ab.4$^*$)-$\mathcal{Y}$-$n$.
\end{eg}

Observe that any locally Noetherian Grothendieck category  $\mathcal{A}$ whose homological dimension is $n$ (i.e., $\Ext_\mathcal{A}^n(-,-)\neq 0=\Ext_\mathcal{A}^{n+1}(-,-)$) satisfies the conditions of the above example. In particular, we  get the following case, where we can completely describe the classes $\X$ and $\Y$:

\begin{eg}
Let $R$ be a $n$-Gorenstein ring, $\mathcal{A}:=\Mod\text{-}R$ and define $(\X,\Y)$ as in Example~\ref{sup_noeth_eg}. Then, the subcategory $\mathcal{Y}$ is precisely that of the Gorenstein injectives, and $\mathcal{X}:={}^{\perp_1}\mathcal{Y}$ is the subcategory of modules of finite injective (=finite projective) dimension (see \cite[Example~8.13]{GT}). 
\end{eg}

\begin{rmk}
If $\mathcal{A}$ is an (Ab.4$^*$) Abelian category such that $\mathcal{D}(\mathcal{A})$ is locally small, then $\A^{\mathrm{op}}$ satisfies the  hypotheses listed at the beginning of this subsection. This allows one to define an (Ab.4)-$\mathcal{X}$-$n$ condition and to relate it to $n$-cotilting objects and $n$-cotilting cotorsion pairs.
\end{rmk}

\subsection{Examples and applications for categories of quasi-coherent sheaves}\label{geometry_subs} 

In this final subsection we show that the Grothendieck category $\Qcoh(X)$, where $X$ is a scheme, is (Ab.4$^*$)-$n$ (for some $n$), whenever $X$ is quasi-compact and semi-separated (see Theorem~\ref{thm_semi_quasi_prods}).

The idea of the following technical lemma comes from \cite[Lemma~3.2 and Remark~3.3]{HX}. Here we prefer to give a more elementary inductive proof, avoiding the use of spectral sequences. We refer to \cite{HX} for other examples of (Ab.4$^*$)-$n$ Grothendieck categories in geometric contexts.  

\begin{lem} \label{resolution_implies_ab4*}
Let $\A$ be a complete Abelian category with enough injectives, and take a finite coresolution of the identity $\id_\A\colon\A\to\A$, i.e., an exact sequence of endofunctors of $\A$ of the form:
\begin{equation}\label{exact_sequence_of_functors_eq}
0\overset{}{\Longrightarrow} \id_\A\overset{\alpha_{0}}{\Longrightarrow} F_1\overset{\alpha_{1}}{\Longrightarrow} \cdots \overset{\alpha_{n}}{\Longrightarrow} F_{n+1}\overset{}{\Longrightarrow}0, \qquad \text{for some $n\geq 0$.}
\end{equation}
If $\prod^{(k)}_{J}\circ F_i=0$, for any set $J$,  $i=1,\ldots,n+1$, and all  $k\geq1$,  then $\A$ is (Ab.4$^*$)-$n$.
\end{lem}
\begin{proof} 
For each $i=1,\dots,n$, let $K_i:=\ker(\alpha_i)\colon\A\to\A$, i.e., $K_i(A)=\ker((\alpha_i)_A)$, for all $A\in \A$, and let $K_{n+1}:=F_{n+1}$. The exactness of \eqref{exact_sequence_of_functors_eq} allows us to build the following  short exact sequences:
\begin{equation}
\tag{$\dag_i$}    0 \Longrightarrow K_{i} \Longrightarrow F_{i} \Longrightarrow K_{i+1} \Longrightarrow 0,
\end{equation}
for all $i=1,\dots,n$ (of course, in ($\dag_1$), we have $K_1=\id_\A$). Given a set of objects $\{A_j\}_J\subseteq \A$, consider, for each $j\in J$, the short exact sequence $0\to K_i(A_j)\to F_i(A_j)\to K_{i+1}(A_j)\to 0$, obtained evaluating ($\dag_i$) at $A_j$. Taking products, we get the following long exact sequences:
\begin{align}
\tag{$\ddag_i$}{\resizebox{0.75\hsize}{!}{$0\to \prod_J K_{i}(A_j) \to\prod_J F_{i}(A_j) \to \prod_J K_{i+1}(A_j) \to \prod^{(1)}_J K_{i}(A_j) \to \prod^{(1)}_J F_{i}(A_j)-$}} \\
\notag{\resizebox{0.988\hsize}{!}{$\to \prod^{(1)}_JK_{i+1} (A_j) \to \prod^{(2)}_J K_{i}(A_j) \to \prod^{(2)}_J F_{i}(A_j) - \cdots
\to \prod^{(k-1)}_J K_{i+1}(A_j) \to \prod^{(k)}_J K_{i}(A_j) \to \prod^{(k)}_J F_{i}(A_j)   \cdots$}}
\end{align}
for all $i=1,\dots,n$. By hypothesis,  $\prod^{(k)}_{J} F_i(A_j)=0$ (for all $k\geq 1$ and $i=1,\ldots,n+1$). Hence, by the exactness of ($\ddag_n$), also $\prod^{(k)}_{J} K_n(A_j)=0$, for all $k\geq 2$, as it fits in between of the two vanishing objects $\prod^{(k-1)}_{J} F_{n}(A_j)=0=\prod^{(k)}_{J} F_{n+1}(A_j)=\prod^{(k)}_{J} K_{n+1}(A_j)$. Similarly, for $1\leq i<n$, the exactness of ($\ddag_i$), plus the conditions $\prod^{(k)}_{J} F_i(A_j)=0=\prod^{(k-1)}_{J} K_{i+1}(A_j)$, for all $k\geq n-i+2$, imply that  $\prod^{(k)}_{J} K_{i}(A_j)=0$, for all $k\geq n-i+2$. In particular, taking $i=1$, this gives $\prod^{(k)}_{J} A_j=\prod^{(k)}_{J} K_{1}(A_j)=0$ (as $K_1=\id_\A$), for all $k\geq n+1$. Thus, $\A$ is (Ab.4$^*$)-$n$, as desired. 
\end{proof}

\begin{thm}\label{thm_semi_quasi_prods}
Let $X$ be a semi-separated, quasi-compact scheme that admits an affine open cover of the form $\{V_1,\dots,V_{n+1}\}$ (for some $n\geq0$). Then,  $\Qcoh(X)$ is (Ab.4$^*$)-$n$.
\end{thm}
\begin{proof} 
$X$ being semi-separated is equivalent to its family of open affines being closed under finite intersections (see \cite[Section~2]{Leo_fsh}). Hence, the inclusion of an open affine $\iota_V\colon V\hookrightarrow X$ is an affine morphism of schemes and, therefore, both  $\iota_V^*=(-)_{\restriction V}\colon \Qcoh(X)\to \Qcoh(V)\cong \Mod \text{-}{\O_V}$ and its right adjoint $(\iota_V)_*\colon \Mod \text{-}{\O_V}\to \Qcoh(X)$ are exact: for the former just use that an open immersion is always a flat morphism of schemes, while for the latter we refer to \cite[\url{https://stacks.math.columbia.edu/tag/01XC}{Lemma~01XC}]{stacks-project}. Then $(\iota_V)_*$  preserves products (being a right adjoint), it sends injective objects to injective objects (as its left adjoint is exact), and it also commutes with cohomology (since it is exact): this shows that $(\iota_V)_*$ commutes with $\prod_J^{(k)}$ (for all $k\geq 0$, and any set $J$). Of course, the same holds if we take instead the coproduct (i.e., the ``disjoint union'') of a finite number of affine opens in $X$.

For $i=1,\ldots,n+1$, let $U_i := \bigsqcup \{V_{j_1,\dots,j_i}:=V_{j_1}\cap \ldots \cap V_{j_i}: 1\leq j_1<\dots<j_i\leq n+1\}$, denote by $\varphi_i:=\bigsqcup \iota_{V_{j_1,\dots,j_i}}\colon U_i\to X$ the natural map induced by the inclusions, and define an endofunctor $F_i:= (\varphi_i)_* \circ \varphi^*_i \colon \Qcoh(X)\to \Qcoh(X)$. Consider the following exact sequence of endofunctors of $\Qcoh(X)$, which gives the usual (ordered) \v Cech coresolution with respect to the cover  $\{V_1,\dots,V_{n+1}\}$ (see \cite[\url{https://stacks.math.columbia.edu/tag/01FG}{Tag 01FG}]{stacks-project}):
$
0\overset{}{\Longrightarrow} \id_\A\overset{}{\Longrightarrow} F_1\overset{}{\Longrightarrow} \cdots \overset{}{\Longrightarrow} F_{n+1}\overset{}{\Longrightarrow}0,
$
and observe that, for any set $J$, any $k\geq 1$, and any $i=1,\dots, n+1$, the following holds:
\[
\textstyle\prod_J^{(k)}\circ F_i=\prod_J^{(k)}\circ (\varphi_i)_*\circ \varphi_i^* = (\varphi_i)_*\circ \left(\prod_J^{(k)}\circ \varphi_i^*\right) = (\varphi_i)_*\circ 0 = 0,
\]
where the first equality comes from the definition of $F_i$, the second one is discussed in the first half of the proof, and third one holds since $\Qcoh(U_i)\cong \Mod \text{-}{\O_{U_i}}\cong \prod\Mod \text{-}{\O_{V_{j_1,\dots,j_i}}}$ is (Ab.4$^*$), being equivalent to a category of modules. One can now conclude using Lemma~\ref{resolution_implies_ab4*}.
\end{proof}

Under the hypotheses of the above theorem,  Positselski \cite{positselski2024roos} proved that $\Qcoh(X)$ has a generator of projective dimension $\leq n+1$, deducing that it is  (Ab.4$^*$)-$n+1$. He then asked in \cite[Question~3.14]{positselski2024roos} if this bound is sharp: his question is completely answered by the above theorem.

\bibliographystyle{abbrv.bst}
\bibliography{refs}

\vspace{0.2cm}
---------------------------------------

\end{document}